\title[ The $\ell^p$-boundedness of wave operators]{ The $\ell^p$-boundedness of wave operators for the fourth order Schr\"{o}dinger operators on the lattice $\Z$}
\author{Sisi Huang and Xiaohua Yao}
\address{Sisi Huang, Department of Mathematics, Central China Normal University, Wuhan, 430079, P.R. China}
\email{hss@mails.ccnu.edu.cn}
\address{Xiaohua Yao, Department of Mathematics and  Key Laboratory of Nonlinear Analysis and Applications(Ministry of Education), Central China Normal University, Wuhan, 430079, P.R. China}
\email{yaoxiaohua@ccnu.edu.cn}
\thanks{The work is partially supported by NSFC No.12171182, 12531005 and and the Fundamental Research Funds for the Central Universities}
\date{\today}
\keywords{$\ell^p$ boundedness, Discrete wave operators, Discrete singular integral, Classification of resonances.}
\newtheorem{definition}{Definition}[section]
\newtheorem{theorem}[definition]{Theorem}
\newtheorem{lemma}[definition]{Lemma}
\newtheorem{remark}[definition]{Remark}
\newtheorem{proposition}[definition]{Proposition}
\newcommand\R{\mathbb{R}}
\newcommand\Z{\mathbb{Z}}
\newcommand\B{\mathbb{B}}
\newcommand\N{\mathbb{N}}
\newcommand\T{\mathbb{T}}
\newcommand\mcaH{\mathcal{H}}
\newcommand\mcaF{\mathcal{F}}
\newcommand\mcaI{\mathcal{I}}
\newcommand\mcaA{\mathcal{A}}
\newcommand\mcaB{\mathcal{B}}
\newcommand\mcaC{\mathcal{C}}
\newcommand\mcaD{\mathcal{D}}
\newcommand\mcaK{\mathcal{K}}
\newcommand\mcaL{\mathcal{L}}
\newcommand\mcaM{\mathcal{M}}
\newcommand\mcaT{\mathcal{T}}
\numberwithin{equation}{section}
\begin{document}

\maketitle

\begin{abstract}
This paper investigates the $\ell^p$ boundedness of wave operators $W_\pm(H,\Delta^2)$ associated with discrete fourth-order Schr\"odinger operators $H = \Delta^2 + V$ on the lattice $\Z$, where
$$(\Delta\phi)(n)=\phi(n+1)+\phi(n-1)-2\phi(n),\quad n\in\Z,$$
and $V(n)$ is a real-valued potential on $\Z$.
Under suitable decay assumptions on $V$ (depending on the types of zero resonance of $H$), we show that the wave operators $W_{\pm}(H, \Delta^2)$ are bounded on $\ell^p(\mathbb{Z})$ for all $1 < p < \infty$:
$$
\|W_{\pm}(H, \Delta^2) f\|_{\ell^p(\mathbb{Z})} \lesssim \|f\|_{\ell^p(\mathbb{Z})}.
$$
In particular, if both thresholds $0$ and $16$ are regular points of $H$, we prove that $W_{\pm}(H, \Delta^2)$ are neither bounded on the endpoint space $\ell^1(\mathbb{Z})$ nor on $\ell^\infty(\mathbb{Z})$. We remark that the proof of these bounds relies fundamentally on the asymptotic expansions of the resolvent of $H$ near the thresholds $0$ and $16$, and on the theory of {\it discrete singular integrals} on the lattice.

As applications, we derive the following sharp $\ell^p-\ell^{p'}$ decay estimates for solutions to the discrete beam equation with a parameter $a\in\R$ on the lattice $\Z$:
\begin{equation*}
\|{\rm cos}(t\sqrt {H+a^2})P_{ac}(H)\|_{\ell^p\rightarrow\ell^{p'}}+\left\|\frac{{\rm sin}(t\sqrt {H+a^2})}{t\sqrt {H+a^2}}P_{ac}(H)\right\|_{\ell^p\rightarrow\ell^{p'}}\lesssim|t|^{-\frac{1}{3}(\frac{1}{p}-\frac{1}{p'})},\quad t\neq0,
\end{equation*}
where $1<p\le 2$, ${p'}$ is the conjugated index of $p$ and $P_{ac}(H)$ denotes the spectral projection onto the absolutely continuous spectrum space of $H$.

\end{abstract}

\tableofcontents

\section{Introduction and main results}
\subsection{Introduction}
Let $\Delta$ denote the discrete Laplacian on the lattice $\Z$, defined by
\begin{equation}\label{defin of laplacian}
(\Delta\phi)(n)=\phi(n+1)+\phi(n-1)-2\phi(n),\quad n\in\Z.
\end{equation}

We consider the following  fourth-order Schr\"{o}dinger operators $H$ acting on the space $\ell^2(\Z)$:
\begin{equation}\label{bi-Schrodinger}
H=\Delta^2+V,
\end{equation}
where $V(n)$ is a real-valued potential on $\Z$ and satisfies $|V(n)|\lesssim \left<n\right>^{-\beta}$ for some $\beta>0$ with $\left<n\right>=(1+|n|^2)^{\frac{1}{2}}$. Both $\Delta^2$ and $H$ are bounded self-adjoint operators on $\ell^2(\Z)$, generating the associated unitary groups $e^{it\Delta^2}$ and $e^{itH}$, respectively.

The {\textit{wave operators}} associated with $H$ are defined as the strong limits on $\ell^2(\Z)$:
\begin{equation}\label{defin of wave operators}
W_{\pm}:=W_{\pm}(H,\Delta^2)=s-\lim\limits_{t\rightarrow\mp\infty}e^{itH}e^{-it\Delta^2}.
\end{equation}
When $\beta>1$, it is known~(cf. \cite[Section XI.3]{RS79}) that \eqref{defin of wave operators} exist as partial isometries from $\ell^2(\Z)$ to $\mcaH_{ac}(H)$~(the absolutely continuous spectral subspace of $H$) and are asymptotically complete (i.e., Ran$W_+=$Ran$W_-=\mcaH_{ac}(H)$). Moreover, the {\textit{inverse}}~({\textit{dual}})~{\textit{wave operators}}
\begin{equation*}
W_{\pm}(\Delta^2,H)=s-\lim\limits_{t\rightarrow\mp\infty}e^{it\Delta^2}e^{-itH}P_{ac}(H)
\end{equation*}
also exist and satisfy $W_{\pm}(\Delta^2,H)=W^*_{\pm}$, where $P_{ac}(H)$ denotes the spectral projection onto $\mcaH_{ac}(H)$.

Such wave operators, initially introduced in quantum scattering theory by Moller and Friedrichs and later developed by Jauch, Cook and Kato etc.,
serve as indispensable tools for understanding the long-time behavior of evolution equations, cf.\cite{Kat95,RS79,Bec14}. Owing to their fundamental role in scattering theory, non-linear partial differential equations, and spectral theory, the study of wave operators occupies an important position in modern mathematical physics. In particular, the analysis of their $L^p$-boundedness
 has drawn growing interest and achieved substantial progress. This paper aims to investigate the $\ell^p$ boundedness of $W_{\pm}(H,\Delta^2)$ and $W^{*}_{\pm}(H,\Delta^2)$ associated with \eqref{bi-Schrodinger}. 

 In Euclidean space $\R^d$, it is  known that the study of $L^p$ boundedness of wave operators was initiated by K. Yajima in his seminal works \cite{Yaj93,Yaj95a,Yaj95b,Yaj97}
for second-order Schr\"{o}dinger operators $-\Delta_{\R^d}+V$ on $\R^d$ with $d\geq3$, where he proved that wave operators are $L^p$ bounded for all $1\leq p\leq\infty$ if zero energy is regular. Subsequently, this topic has been developed for lower dimensions, for instance, Jensen-Yajima \cite{Yaj99,JY02}, Erdo\u gan-Goldberg-Green\cite{EGG18} for $d=2$ and Weder \cite{Wed99}, Galtbayar-Yajima \cite{GY00}, D'Ancona-Fanelli \cite{DF06} for $d=1$ and references therein.

Moreover, significant advances of this issue have been made for the higher-order Schr\"{o}dinger operators $(-\Delta_{\R^d})^m+V$ with $m\geq2$, although which has only begun in very recent several years, compared to the second-order case studied since the 1990s. The first work \cite{GG21} by Goldberg and Green established $L^p$ boundedness for $1 < p < \infty$ in the regular case for $(m,d) = (2,3)$, which was subsequently extended to general case $d > 2m$ by Erdo\u{g}an and Green in \cite{EG22,EG23}. Further developments include Mizutani, Wan and Yao's investigation \cite{MWY24a,MWY25} of endpoint behavior and zero energy resonances for $(m,d) = (2,3)$ and their complete analysis \cite{MWY24} of all zero resonance types in $(m,d) = (2,1)$, along with Galtbayar-Yajima's study \cite{GY24} of the $(m,d) = (2,4)$ case. More recently, Erdo\u gan-Green-LaMaster \cite{EGL25} considered the case $d>4m$ while Cheng, Soffer, Wu and Yao \cite{CSWY25,CSWY25b} covered the remaining cases $1 \leq d \leq 4m$.

As a natural extension, the scattering theory on the lattice $\Z^d$ has attracted much attention in recent two decades and undergone significant developments~(cf.\cite{BEI24,AIM18,AIM16,IM15,IK12,BS12}). In contrast, the $\ell^p$ boundedness of wave operators on lattices seems largely unexplored. 
To the best of our knowledge, the only known result 
is due to Cuccagna \cite{Cuc09}, who established the $\ell^p$ bounds of wave operators  for $-\Delta+V$ on $\Z$. The corresponding problem for higher-order wave operators, even for  $H=\Delta^2+V$ on $\Z$,  remains open, which consists of
the main goal of this paper.

Furthermore, interestingly, 
once we have established the $\ell^p$-boundedness of $W_\pm(H,\Delta^2)$ and $W^*_\pm(H,\Delta^2)$, the following {\textit{intertwining property}}
\begin{equation}\label{interwinning property}
f(H)P_{ac}(H)=W_{\pm}f(\Delta^2)W^*_{\pm}
\end{equation}
enables us to reduce $\ell^p-\ell^q$ estimates for the perturbed operator $f(H)$ to the corresponding estimates for the free operator $f(\Delta^2)$:
\begin{equation*}
\|f(H)P_{ac}(H)\|_{\ell^p\rightarrow\ell^q}\leq\|W_{\pm}\|_{\ell^q\rightarrow\ell^q}\|f(\Delta^2)\|_{\ell^p\rightarrow\ell^q}\|W^*_{\pm}\|_{\ell^p\rightarrow\ell^p},
\end{equation*}
where $f$ is any Borel function on $\R$.

As applications, we will establish the time decay estimates for the solution to the discrete beam equation with parameter $a\in\R$ on the lattice $\Z$:
\begin{equation}\label{discrete Beam equation}
\left\{\begin{aligned}&(\partial_{tt}u)(t,n)+\big((H+a^2)u\big)(t,n)=0,\ \ (t,n)\in\R\times\Z,\\
&u(0,n)=\varphi_1(n),\ \left(\partial_{t}u\right)(0,n)=\varphi_2(n),\end{aligned}\right.
\end{equation}
whose solution is given by
\begin{equation*}
u_a(t,n)={\rm cos}(t\sqrt {H+a^2})\varphi_1(n)+\frac{{\rm sin}(t\sqrt {H+a^2})}{\sqrt {H+a^2}}\varphi_2(n).
\end{equation*}
Note that in the free case~(i.e., $V\equiv0$), by means of Fourier method, the following sharp $\ell^p-\ell^{p'}$ decay estimates hold for all $a\in\R$ (see Theorem \ref{free decay} below): 
\begin{equation*}
\|{\rm cos}(t\sqrt {\Delta^2+a^2})\|_{\ell^p\rightarrow\ell^{p'}}+\left\|\frac{{\rm sin}(t\sqrt {\Delta^2+a^2})}{t\sqrt {\Delta^2+a^2}}\right\|_{\ell^p\rightarrow\ell^{p'}}\lesssim|t|^{-\frac{1}{3}(\frac{1}{p}-\frac{1}{p'})},\quad t\neq0, 
\end{equation*}
where $1\le p\leq2$ and $\frac{1}{p}+\frac{1}{p'}=1$.
When $V\not\equiv0$, the decay estimates for the solution operators of equation \eqref{discrete Beam equation} are affected by the spectrum of $H$, which in turn depends on the conditions of potential $V$. In this paper, assuming that the potential $V$ has fast decaying and $H$ has no embedded positive eigenvalues in the continuous spectrum interval $(0,16)$, we prove that the wave operators $W_{\pm}(H,\Delta^2)$ are  bounded on $\ell^p(\Z)$ for all $1<p<\infty$ (see Theorem \ref{main theorem} below). As a consequence of this boundedness and the intertwining property \eqref{interwinning property}, we obtain the following $\ell^p-\ell^{p'}$ decay estimates:
\begin{equation*}
\|{\rm cos}(t\sqrt {H+a^2})P_{ac}(H)\|_{\ell^p\rightarrow\ell^{p'}}+\left\|\frac{{\rm sin}(t\sqrt {H+a^2})}{t\sqrt {H+a^2}}P_{ac}(H)\right\|_{\ell^p\rightarrow\ell^{p'}}\lesssim|t|^{-\frac{1}{3}(\frac{1}{p}-\frac{1}{p'})},\quad t\neq0,
\end{equation*}
for all $1<p\leq2$ and $a\in \R$. For more details, we refer to Section \ref{sec of application}.

To obtain the $\ell^p$ boundedness of $W_\pm(H,\Delta^2)$, our starting point is the stationary representation of $W_{\pm}$. We first establish the limiting absorption principle for the operator $H$, and then study the asymptotic expansions of $R^{\pm}_V(\lambda)$ near thresholds $0$ and $16$ for all resonance types~(see Definition \ref{defin of resonance types 0 and 16} below). Finally, we employ the Schur test lemma and the theory of discrete singular integral on the lattice to derive the desired boundedness.

\subsection{Main results}
In this subsection, we present our main results. First, we give some definitions. 
Unlike the continuous case where zero is the only critical value, our discrete setting will involve two critical values:~$0$ (degenerate) and $16$ (non-degenerate)~(see Subsection \ref{subsec of LAP}). This gives rise to more diverse resonance types, which can be characterized respectively by the solutions to difference equations $H\phi = 0$ and $H\phi = 16\phi$ in some intersection spaces
$W_{\sigma}(\Z)$:=$\underset{s>\sigma}{\bigcap}\ell^{2,-s}(\Z)$ with $\sigma\in\R$ and
$$\ell^{2,s}(\Z)=\Big\{\phi=\left\{\phi(n)\right\}_{n\in\Z}:\|\phi\|^2_{\ell^{2,s}}=\sum_{n\in\Z}^{}\left<n\right>^{2s}|\phi(n)|^2<\infty\Big\}.$$
More precisely, let $a\lesssim b$ denote $a\leq cb$ with some constant $c>0$ for $a,b\in\R^{+}$, we have 
\begin{definition}\label{defin of resonance types 0 and 16}
 Let $H=\Delta^2+V$ be defined on the lattice $\Z$ and $|V(n)|\lesssim \left<n\right>^{-\beta}$ for some $\beta>0$. 
\vskip0.2cm
{\bf{(I)~Classification of resonances at threshold 0}}
\vskip0.2cm
\begin{itemize}
\item[{\rm(i)}] $0$ is a regular point of $H$ if $H\phi=0$ has only zero solution in $W_{3/2}(\Z)$.
    \vskip0.1cm
\item [{\rm(ii)}] $0$ is a first kind resonance of $H$ if  $H\phi=0$ has nonzero solution in $W_{3/2}(\Z)$ but only zero solution in $W_{1/2}(\Z)$.
\vskip0.1cm
\item [{\rm(iii)}] $0$ is a second kind resonance of $H$ if $H\phi=0$ has nonzero solution in $W_{1/2}(\Z)$ but only zero solution in $\ell^2(\Z)$.
\vskip0.1cm
\item [{\rm(iv)}] $0$ is an eigenvalue of $H$ if  $H\phi=0$ has nonzero solution in $\ell^2(\Z)$.
\end{itemize}
\vskip0.2cm
\ \  {\bf{(II)~Classification of resonances at threshold 16}}
\vskip0.2cm
\begin{itemize}
\item [{\rm(i)}]  $16$ is a regular point of $H$ if $H\phi=16 \phi$ has only zero solution in $W_{1/2}(\Z)$.
\vskip0.1cm
\item [{\rm(ii)}]  $16$ is a resonance of $H$ if  $H\phi=16\phi$ has nonzero solution in $W_{1/2}(\Z)$ but only zero solution in $\ell^2(\Z)$.
\vskip0.1cm
\item [{\rm(iii)}] $16$ is an eigenvalue of $H$ if $H\phi=16\phi$ has nonzero solution in $\ell^2(\Z)$.
\end{itemize}
\end{definition}
Obviously, when $V\equiv0$, both 0 and 16 are the resonances of $H$. This can be verified by taking $\phi_1(n)=cn+d$ and $\phi_2(n)=(-1)^{n}c$ with $c\neq0$, which satisfy $\Delta^2\phi_1=0$ and $\Delta^2\phi_2=16\phi_2$. Beyond this special case, there are some other non-trivial zero/sixteen resonance examples.

{\bf{\underline{ 1. Example of resonance.}}} Consider the function $\phi(n)=2$ for $n=0$ and $\phi(n)=1$ for $n\neq0$ and define the potentials $V_1(n)=-\frac{(\Delta^2\phi)(n)}{\phi(n)}$ and $V_2(n)=16+V_1(n)$. Then
$$(\Delta^2+V_1)\phi=0,\quad (\Delta^2+V_2)\phi=16\phi.$$
In this case, we have
$$V_1(n)=\left\{\begin{aligned}-3,&\quad n=0,\\4,&\quad n=\pm1,\\-1,&\quad n=\pm2,\\
0,&\quad {\rm else},\end{aligned}\right.,\quad V_2(n)=\left\{\begin{aligned}13,&\quad n=0,\\20,&\quad n=\pm1,\\15,&\quad n=\pm2,\\
16,&\quad {\rm else}.\end{aligned}\right.$$
 It indicates that $0$ persists a resonance even for such compactly supported potential, and by shifting the potential by $16$ one can turns the resonance at $0$ into a resonance at $16$.

{\bf{\underline{2. Example of eigenvalue.}}} Take $\phi(n)=(1+n^2)^{-s}$ with $s>\frac{1}{4}$, $V_1(n)=-\frac{(\Delta^2\phi)(n)}{\phi(n)}$ and $V_2(n)=16+V_1(n)$. At this time,
$$V_1(n)=O(\left<n\right>^{-4}),\quad |n|\rightarrow\infty.$$
This implies that $0$ becomes an eigenvalue of $H$ under such slowly decaying potential. However, for potentials exhibiting faster decay with $\beta>9$, we can preclude the zero eigenvalue case. A detailed explanation of this can be found in \cite[Lemma 5.2]{HY25}.
\begin{remark}
{\rm We remark that  zero~(resp. sixteen) resonance are closely related to the asymptotic expansions of the resolvent $R_V(z)$ of the operator $H$ near zero~(resp. sixteen) energy, and are further connected to the asymptotic expansions of $M^{-1}(\mu)$ near $\mu=0$ (resp. $\mu=2$) through the formula \eqref{inverti relation}.

 For instance, the concepts of these zero resonances originate from the invertibility of specific operators $T_j$
restricted to the ranges of orthogonal projection operators
$S_j$ on $\ell^2(\Z)$ for $j=0,1,2$ during the computation of $M^{-1}(\mu)$ near $\mu=0$. This invertibility is equivalent to whether the corresponding kernel subspace
${\rm Ker}T_j\big|_{S_j\ell^2(\Z)}$ is non-trivial, which further manifests as whether the associated projection space $S_{j+1}\ell^2(\Z)$  is non-trivial.
The non-triviality of these projection spaces $S_{j+1}\ell^2(\Z)$~ in turn is equivalent to the existence of non-zero solutions to difference equation $H\phi=0$ in a suitable weighted space $W_{\sigma}(\Z)$
 (see Subsection \ref{subsec:asymptotics} below for more details).
}
\end{remark}
We now illustrate the main result. Denote by $\B(X,Y)$ the space of all bounded linear operators from $X$ to $Y$ and abbreviate $\B(X,X)$ as $\B(X)$ when $X=Y$.

\begin{theorem}\label{main theorem}
Let $H=\Delta^2+V$ with $|V(n)|\lesssim \left<n\right>^{-\beta}$ for some $\beta>0$. Suppose that $H$ has no positive eigenvalues in the interval $\rm{(}0,16\rm{)}$. If
\begin{align}\label{case and condition}
 \beta>\left\{\begin{aligned}&17,\ 0\ is\ a\ regular\ point\ of\ H,\\
&19,\ 0\ is\ a\ first\ kind\ resonance\ of\ H,\\
&27,\ 0\ is\ a\ second\ kind\ resonance\ of\ H,\end{aligned}\right.
\end{align}
then the wave operators $W_{\pm}\in\B(\ell^p(\Z))$ for all $1<p<\infty$.
\end{theorem}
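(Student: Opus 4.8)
\emph{Proof strategy.} The plan is to run the stationary approach to $\ell^p$-boundedness of wave operators, going back to Yajima and adapted to the lattice by Cuccagna in the second-order case, now in the fourth-order setting with the two critical values $0$ and $16$. Factor the potential as $V=vw$ with $v(n)=|V(n)|^{1/2}$ and $w(n)=|V(n)|^{1/2}\operatorname{sgn}V(n)$, and set $M^{\pm}(\lambda)=I+wR_0^{\pm}(\lambda)v$ with $R_0^{\pm}(\lambda)=(\Delta^2-\lambda\mp i0)^{-1}$. Combining the symmetric resolvent identity $R_V^{\mp}V=R_0^{\mp}v\,(M^{\mp}(\lambda))^{-1}w$ with the stationary representation of the wave operator, one obtains, for $f$ in a dense class,
\begin{equation*}
W_{\pm}f=f-\frac{1}{2\pi i}\int_0^{16}R_0^{\mp}(\lambda)\,v\,\bigl(M^{\mp}(\lambda)\bigr)^{-1}w\,\bigl(R_0^{+}(\lambda)-R_0^{-}(\lambda)\bigr)f\,d\lambda .
\end{equation*}
The identity term is bounded on every $\ell^p$, so the whole matter is the integral term. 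By the limiting absorption principle for $H$ proved earlier and the hypothesis that $H$ has no eigenvalue in $(0,16)$, $M^{\pm}(\lambda)$ is invertible in $\B(\ell^{2,-s},\ell^{2,-s})$ for each $\lambda\in(0,16)$ (in a suitable range of $s$), and near the thresholds $(M^{\pm})^{-1}$ obeys the resonance-type-dependent asymptotic expansions established in Subsection~\ref{subsec:asymptotics}.

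Next I would insert a partition of unity $1=\chi_0(\lambda)+\chi_{\mathrm{reg}}(\lambda)+\chi_{16}(\lambda)$ on $[0,16]$, localizing to a neighborhood of $0$, a compact subset of $(0,16)$, and a neighborhood of $16$, and treat the three contributions separately, passing in each region to the natural spectral parameter $\mu$: since the Fourier symbol $(2-2\cos\theta)^2$ vanishes to fourth order at $\theta=0$, near $0$ one uses $\lambda\sim\mu^4$, whereas at $\theta=\pi$ the symbol is non-degenerate, so near $16$ one uses $16-\lambda\sim\mu^2$ as at an ordinary Schr\"odinger threshold. On the regular piece $(M^{\mp})^{-1}$ is smooth and uniformly bounded, and one substitutes the explicit free resolvent kernel $R_0^{\pm}(\lambda)(n,m)$; performing the $\lambda$-integration and exploiting oscillation (stationary and non-stationary phase in $\theta$) shows that the resulting operator has an integral kernel dominated by a Calder\'on--Zygmund kernel on $\Z$, hence is bounded on $\ell^p(\Z)$ for all $1<p<\infty$ by the discrete singular-integral theory on the lattice; the sub-pieces whose kernels turn out to have uniformly bounded row and column sums are instead disposed of by the Schur test and are in fact bounded on all $1\le p\le\infty$.

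For the two threshold pieces I substitute the asymptotic expansion of $(M^{\mp}(\mu))^{-1}$. Its regular remainder contributes a term of the same type as the regular piece. Its singular part is a finite sum of terms $\mu^{-k}$ times finite-rank operators built from the orthogonal projections $S_j$ of the inversion scheme; inserting these into the integral and using the orthogonality relations satisfied by the $S_j$ (they annihilate $v$, $Pv$, and so on, on the relevant subspaces) produces extra vanishing of the amplitude at the threshold that cancels the $\mu^{-k}$ and renders the $\mu$-integral convergent, provided $v$ and $w$ decay fast enough — this is exactly where the numerical thresholds $\beta>17$ (regular), $\beta>19$ (first kind resonance) and $\beta>27$ (second kind resonance) enter, the worse the singularity the more decay being needed. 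After the $\mu$-integration each such term again has a kernel that is either absolutely summable or of Calder\'on--Zygmund type on $\Z$, so the tools of the previous paragraph finish the estimate, and assembling the three localized bounds yields $W_{\pm}\in\B(\ell^p(\Z))$ for $1<p<\infty$.

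The main obstacle I anticipate is the most singular term at threshold $0$ in the second-kind resonance case: the fourth-order degeneracy of the symbol makes the low-energy free resolvent kernel markedly less regular than in the second-order lattice problem, so the oscillatory-integral bounds and the bookkeeping of how much cancellation the projections $S_1,S_2$ actually supply must be carried out carefully to land precisely in the discrete Calder\'on--Zygmund class rather than merely in $\B(\ell^2)$. A secondary difficulty is matching this with the analysis at the non-degenerate threshold $16$, where the symbol behaves like an ordinary Schr\"odinger operator but in the reflected variable $\theta\mapsto\theta-\pi$, producing alternating-sign $(-1)^n$ factors in the kernels. Finally, the restriction to $1<p<\infty$ is intrinsic: the Hilbert-transform-type terms that persist even when $0$ and $16$ are both regular are genuinely unbounded at $p=1$ and $p=\infty$, which is the content of the endpoint statement announced in the abstract and need not be used here.
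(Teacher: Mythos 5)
Your proposal follows essentially the same route as the paper: the stationary representation with the symmetric resolvent identity, a three-piece energy localization, the threshold expansions of $M^{-1}$ whose projection orthogonality supplies the cancellation of the $\mu^{-k}$ singularities, the Schur test for the absolutely summable pieces, and the discrete Calder\'on--Zygmund theory for the surviving Hilbert-transform-type terms, which is exactly how the paper organizes Sections \ref{sect of mcaK1}--\ref{sec of mcaK3}. The only loose phrase is ``dominated by a Calder\'on--Zygmund kernel'' --- a size bound alone does not give $\ell^p$-boundedness; one must, as in Lemma \ref{C-Z lemma}, identify the leading kernels $k^{\pm}_{1},k^{\pm}_{2}$ explicitly and reduce them by reflection to genuine CZ kernels before invoking Theorem \ref{Tdis lemma}.
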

Furthermore, we establish the following unboundedness results at the endpoints.
\begin{theorem}\label{theorem at endpoints}
Let $H=\Delta^2+V$ with $|V(n)|\lesssim \left<n\right>^{-\beta}$ for some $\beta>0$, and suppose that $H$ has no positive eigenvalues in the interval $\rm{(}0,16\rm{)}$.
\vskip0.15cm
{\rm(i)}~If both $0$ and $16$ are regular points of $H$ and $\beta>15$, then $W_{\pm}\notin\B(\ell^{1}(\Z))\cup\B(\ell^{\infty}(\Z))$.
\vskip0.15cm
{\rm(ii)}~Let  $V$ be compactly supported. Then
\begin{itemize}
\item If $0$ is a regular point and $16$ is a~{\rm(}an{\rm)} resonance or eigenvalue of $H$,  then $W_{\pm}\not\in\B(\ell^{\infty}(\Z))$. Moreover, $W_{\pm}\notin\B(\ell^1(\Z))$ given that in addition
\begin{align*}
 16(1\mp3\sqrt2)\neq\left\{\begin{aligned}&\mcaC_1,\ 16\ is\ a\ resonance\ of\ H,\\
&\mcaC_2,\ 16\ is\ an\ eigenvalue\ of\ H,\end{aligned}\right.
\end{align*}
 where $\mcaC_1$ and $\mcaC_2$ are constants given in \eqref{mcaC1} and \eqref{mcaC2}, respectively.
\vskip0.15cm
\item If $0$ is a first kind resonance of $H$ and  $\mcaC_3\neq0$, then  $W_{\pm}\not\in\B(\ell^{\infty}(\Z)).$ Moreover, $W_{\pm}\not\in\B(\ell^{1}(\Z))$ given additionally that
\begin{align*}
 192|D|\neq\left\{\begin{aligned}&16,\ 16\ is\ a\ regular\ point\ of\ H,\\
&\big|16-\mcaC_1\big|,\ 16\ is\ a\ resonance\ of\ H,\\
&\big|16-\mcaC_2\big|,\ 16\ is\ an\ eigenvalue\ of\ H,\end{aligned}\right.
\end{align*}
\noindent where $\mcaC_3$ and $D$ are  constants defined in \eqref{C} and \eqref{D}, respectively.
\vskip0.15cm
\item If $0$ is a second kind resonance of $H$ and $\mcaC_4\neq0$, then  $W_{\pm}\not\in\B(\ell^{\infty}(\Z))$. Moreover, $W_{\pm}\not\in\B(\ell^{1}(\Z))$  given additionally that
\begin{align*}
 192|E|\neq\left\{\begin{aligned}&16,\ 16\ is\ a\ regular\ point\ of\ H,\\
&\big|16-\mcaC_1\big|,\ 16\ is\ a\ resonance\ of\ H,\\
&\big|16-\mcaC_2\big|,\ 16\ is\ an\ eigenvalue\ of\ H,\end{aligned}\right.
\end{align*}
\noindent where $\mcaC_4$ and $E$ are  constants defined in \eqref{C4} and \eqref{E}, respectively.

\end{itemize}
\end{theorem}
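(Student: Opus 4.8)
The plan is to reduce both theorems (Theorem~\ref{main theorem} and Theorem~\ref{theorem at endpoints}) to the analysis of the stationary representation of the wave operators. Starting from the formula $W_\pm = I - \frac{1}{2\pi i}\int_0^{16} R_V^\pm(\lambda)\, V\, \big(R_0^+(\lambda) - R_0^-(\lambda)\big)\, d\lambda$, one changes variables near the two thresholds (using a parametrization $\lambda = \lambda(\mu)$ with $\mu$ small near $0$ and near $16$) and substitutes the resolvent identity $R_V^\pm(\lambda) = R_0^\pm(\lambda) - R_0^\pm(\lambda) V R_V^\pm(\lambda)$ together with the symmetric factorization $R_V^\pm = R_0^\pm - R_0^\pm v M^\pm(\mu)^{-1} v R_0^\pm$, where $v = |V|^{1/2}$ and $M(\mu) = U + v R_0(\mu) v$. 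The key inputs are (a) the limiting absorption principle for $H$ (to control the resolvent away from thresholds, where the no-embedded-eigenvalue hypothesis and the Fredholm alternative give a uniformly bounded $M^{-1}$), and (b) the asymptotic expansions of $M^\pm(\mu)^{-1}$ near $\mu=0$ and $\mu=2$ for each resonance type, stated earlier in the excerpt; the decay exponents $17$, $19$, $27$ in \eqref{case and condition} are exactly what is needed for these expansions to converge and for the remainders to be controllable.

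For the positive result (Theorem~\ref{main theorem}) I would split the kernel $W_\pm(n,m)$ of the wave operator into a ``low-energy'' piece coming from neighborhoods of $0$ and $16$ and a ``high-energy/regular'' piece from the rest of the spectrum. The regular piece is handled by the Schur test after using the LAP bounds and the smoothness of $R_0^\pm$ and $v M^{-1} v$ away from thresholds, exploiting the decay of $V$ to gain summability in $n,m$. The delicate low-energy pieces decompose, after inserting the leading terms of the expansion of $M^{-1}$, into a finite sum of operators whose kernels are, up to rapidly decaying factors, \emph{discrete singular-integral} operators on $\Z$ (convolution-type kernels with a Hilbert-transform-like singularity coming from the oscillatory factors $e^{\pm i\theta(\mu)(n-m)}$ in $R_0^\pm$). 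One then invokes the theory of discrete singular integrals referenced in the introduction to obtain $\ell^p$-boundedness for $1<p<\infty$, with the rapidly decaying weights absorbed by Schur's lemma. The more singular terms appearing in the first- and second-kind resonance cases are precisely why the decay threshold on $\beta$ must be raised from $17$ to $19$ and $27$: those terms carry extra factors of $\mu^{-1}$ or $\mu^{-2}$ in the expansion of $M^{-1}$, and more derivatives/decay of $V$ are needed to keep the associated kernels in the discrete Calderón--Zygmund class.

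For the endpoint unboundedness (Theorem~\ref{theorem at endpoints}) the strategy is the opposite: extract the leading-order singular term of $W_\pm$ explicitly and show it fails to map $\ell^1 \to \ell^1$ or $\ell^\infty \to \ell^\infty$. Concretely, when $0$ and $16$ are both regular (part (i)), the leading contribution to $W_\pm(n,m)$ behaves like a constant times a discrete Hilbert transform $\sum_m \frac{f(m)}{n-m}$ (plus harmless remainders once $\beta>15$), and the discrete Hilbert transform is the classical example of an operator bounded on $\ell^p$, $1<p<\infty$, but unbounded on $\ell^1$ and $\ell^\infty$; one tests against $f = \delta_0$ for the $\ell^\infty$ failure and against a slowly decaying sequence (or duality) for $\ell^1$. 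For part (ii), with a compactly supported $V$ and a resonance/eigenvalue at $16$, the resonance function contributes an additional oscillatory leading term of the form $(-1)^{n}$ times a Hilbert-transform-type kernel; computing its explicit coefficient produces the constants $\mcaC_1,\dots,\mcaC_4, D, E$, and the non-vanishing conditions (e.g.\ $16(1\mp3\sqrt2)\neq\mcaC_1$, $192|D|\neq 16$, etc.) are exactly the genericity conditions ensuring that this singular term does not accidentally cancel against the one from threshold $0$. The main obstacle, as usual in this circle of problems, is the careful bookkeeping of the asymptotic expansions of $M^\pm(\mu)^{-1}$ through all resonance types and the verification that the remainder terms, after the change of variables and the insertion of $R_0^\pm$ on both sides, produce kernels that are genuinely in the discrete singular-integral class (rather than merely bounded on $\ell^2$); getting the decay exponents sharp in terms of $\beta$ is where the real work lies.
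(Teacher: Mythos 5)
Your high-level plan (isolate the leading singular term of $W_\pm$ near the thresholds and test it against explicit sequences) is the right one, but as stated it has two concrete gaps. First, the leading singular kernel is not a discrete Hilbert transform $\frac{1}{n-m}$: after the expansions of $M^{-1}(\mu)$ are inserted, the surviving singular pieces are $K_{P_1}$ and $K_{\widetilde P_1}$, whose kernels are (up to $O(\langle |n|\pm|m|\rangle^{-2})$ errors) of the form $\bigl(\tfrac{i-1}{8}+\tfrac{(-1)^{n+m}}{4}\bigr)\bigl(\tfrac{1}{|n|+|m|}+\tfrac{1}{|n|-|m|}\bigr)$ plus a term $\tfrac{(i-1)|n|}{4(n^2+m^2)}$ which is actually bounded on $\ell^\infty$. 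The dependence on $|n|\pm|m|$ rather than $n-m$, and the parity factor $(-1)^{n+m}$ coming from the threshold $16$, dictate the choice of test function: the paper takes $f_N=\chi_{[-N,N]}$ and evaluates at $n=N+2$, which turns the kernel into the harmonic sum $\tfrac{i-1}{4}\sum_{k=2}^{2N+2}\tfrac1k$ plus a convergent alternating sum, giving the $\ell^\infty$ blow-up; the $\ell^1$ failure comes from $f_1$ and an explicit rational function $G_{a,b,c}(n)\sim(|3(a+b)|-|c|)/n$. Your proposed test $f=\delta_0$ does not detect the $\ell^\infty$ failure (the image of $\delta_0$ under a kernel bounded by $\langle n\rangle^{-1}$ is bounded), and ``a slowly decaying sequence'' is not in $\ell^1$, so neither test as described closes the argument.

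Second, and more seriously, the reduction to ``the leading singular term plus harmless remainders'' is not available here: several of the remaining operators — $K_1$ (from the $\mu QA_1Q$ term) in the regular case, and the operators $K_A$, $A\in\mcaA_{11}$ or $\mcaA_{21}$, and $\widetilde K_{-1},\widetilde K_{02}$, etc.\ in the resonance cases — are themselves \emph{not} bounded on $\ell^1(\Z)$ or $\ell^\infty(\Z)$, so they cannot be absorbed into an error term, and in principle they could cancel the divergence produced by $K_{P_1}+K_{\widetilde P_1}$. The paper's proof must therefore show that each such operator, while unbounded at the endpoints, acts \emph{boundedly on the specific test functions} $f_N$: this hinges on the fact that their kernels, e.g.\ $k_1(N_1,M_2)=({\rm sign}(N_1))({\rm sign}(M_2))h_{i,1}(N_1,M_2)$, are odd in $M_2=m-\rho_2m_2$, so the sum over $m\in[-N,N]$ collapses to at most $2|m_2|$ uncancelled terms (this is the content of Proposition~\ref{proposition of Counterexample regular}(2) and its analogues). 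Your proposal does not identify this cancellation mechanism, and without it the conclusion $W_\pm\notin\B(\ell^1)\cup\B(\ell^\infty)$ does not follow from the divergence of the $K_{P_1}+K_{\widetilde P_1}$ piece alone. Relatedly, the genericity constants $\mcaC_1,\dots,\mcaC_4,D,E$ do not arise from a cancellation ``between the two thresholds'' in the vague sense you describe, but as the explicit coefficients of the non-alternating harmonic sum (for $\ell^\infty$) and of the $1/n$ tail of $G_{a,b,c}$ (for $\ell^1$) after all the surviving $O(1)$-class operators are summed; pinning these down requires carrying the full expansions \eqref{asy expan of 1st 0}--\eqref{asy expan eigenvalue 2} through the test-function computation.
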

\begin{remark}
{\rm Further remarks on Theorem \ref{main theorem} and Theorem \ref{theorem at endpoints} are given as follows.
\begin{itemize}
\item[{\rm (1)}]Although two resonance types of $H$ may coexist, we emphasize that the decay rate of the potential $V$ in \eqref{case and condition} above is fundamentally determined by the types of zero energy. The required rate $\beta$ of $V$ in Theorem \ref{main theorem}, derived from the asymptotical expansion of $R^\pm_V(\mu^4)$ 
(see Lemma \ref{Asymptotic expansion theorem} below), might not be optimal.
\item[{\rm (2)}] We point out that the coefficients $\mcaC_j$, $D$ and $E$ above are closely related to the operators in our expansions of $M^{-1}(\mu)$.

\item[{\rm (3)}]Our approach of proofs of Theorems \ref{main theorem} and  \ref{theorem at endpoints} is motivated by the recent work of Mizutani, Wan and Yao \cite{MWY24} on the continuous counterpart of this problem. However, some distinctive challenges arise in our discrete setting. First, the diversity of resonance types significantly complicates our boundedness analysis, particularly concerning endpoint unboundedness. Second, establishing $\ell^p$-boundedness in our framework involves {\it the discrete singular integral theory on lattices}.

\end{itemize}
}
\end{remark}

\begin{remark} {\rm As an application of Theorem \ref{main theorem} and the intertwining property \eqref{interwinning property}, it is known that the following $\ell^p-\ell^{p'}$ decay estimates for solutions to the discrete beam equation \eqref{discrete Beam equation} with a parameter $a\in \R$ hold:
\begin{equation}\label{general a cos-sin decay-estimate}
\|{\rm cos}(t\sqrt {H+a^2})P_{ac}(H)\|_{\ell^p\rightarrow\ell^{p'}}+\left\|\frac{{\rm sin}(t\sqrt {H+a^2})}{t\sqrt {H+a^2}}P_{ac}(H)\right\|_{\ell^p\rightarrow\ell^{p'}}\lesssim|t|^{-\frac{1}{3}(\frac{1}{p}-\frac{1}{p'})},
\end{equation}
 where $t\neq0$, $1<p\le 2$ and $\frac{1}{p}+\frac{1}{p'}=1$.
We remark that when $a = 0$, this estimate remains valid even for the endpoint case $p = 1$, as established in our previous work \cite{HY25} through a direct approach. While the unboundedness of the wave operators on $\ell^1(\Z)$ here is not valid to yield this decay estimate for the endpoint case $p=1$, it nevertheless offers valuable insight for establishing such $\ell^p-\ell^{p'}$ decay estimates for arbitrary $a\in\R$.
Actually, using the idea developed in \cite{HY25}, one can also prove that \eqref{general a cos-sin decay-estimate} is true for $p=1$ and  $a\neq 0$.}
\end{remark}
\subsection{The outline of the proof}\label{subsec outline}
In this subsection, we are devoted to presenting the key ideas of the proof for the above theorems. In view of the relation $W_{-}f=\overline{W_+\overline{f}}$, it suffices to analyze $W_+$ alone. Starting from Stone's formula, we obtain the representation for $W_+$:
\begin{align}\label{expre of Wpm}
W_{+}=I-\frac{2}{\pi i}\int_{0}^{2}\mu^3R^{+}_{V}(\mu^4)V(R^{+}_{0}-R^{-}_0)(\mu^4)d\mu.
\end{align}

The first problem arisen here is to establish the existence of boundary values $R^{\pm}_V(\mu^4)$. It is well-known that the limiting absorption principle (LAP) generally states that the resolvent $(H-z)^{-1}$ may converge in a suitable way as $z$ approaches spectrum points, which plays a fundamental role in spectral and scattering theory. 
For instance, see Agmon's work \cite{Agm75} for the Schr\"{o}dinger operator $-\Delta_{\R^d}+V$ in $\R^{d}$. In the discrete setting, the LAP for discrete Schr\"{o}dinger operators $-\Delta_{\Z^d}+V$ on $\Z^d$ has been extensively studied (cf.\cite{Esk67,SV01,BS98,BS99,KKK06,KKV08,IK12,Man17,PS08} and references therein).

However, to the best of our knowledge, it seems that LAP is open for higher-order Schr\"{o}dinger operators on the lattice $\Z^d$. In our recent work~\cite[Section 2]{HY25}, we addressed this issue for the fourth-order operator $H=\Delta^2+V$ defined in \eqref{bi-Schrodinger} by employing commutator estimates and Mourre theory (cf.\cite{JMP84,Mou81,Mou83}). Specifically, under appropriate conditions on $V$, we proved that  $R^{\pm}_{V}(\mu^4)$ for $H$ exist as elements of some weighted spaces $\B(\ell^{2,s}(\Z),\ell^{2,-s}(\Z))$~(see Lemma \ref{LAP-lemma}).

Throughout the paper, we denote by $K$ the operator with kernel $K(n,m)$, i.e.,
 $$(Kf)(n):=\sum\limits_{m\in\Z}^{}K(n,m)f(m).$$
We can obtain the explicit expression for the kernel of free resolvent $R^{\pm}_0(\mu^4)$~(see Lemma \ref{lemma of kernel R0 boundary}):
\begin{align}\label{kernel of R0}
R^{\pm}_{0}(\mu^4,n,m)
=\frac{1}{4\mu^3}\Big(\frac{\pm ie^{\mp i\theta_{+}|n-m|}}{\sqrt{1-\frac{\mu^2}{4}}}-\frac{e^{b(\mu)|n-m|}}{\sqrt{1+\frac{\mu^2}{4}}}\Big),
\end{align}
where $\theta_+:=\theta_{+}(\mu)$ satisfies $2-2{\rm cos}\theta_{+}=\mu^2$ with $\theta_{+}\in(-\pi,0)$ and $b(\mu)={\rm ln} \big(1+\frac{\mu^2}{2}-\mu(1+\frac{\mu^2}{4})^{\frac{1}{2}}\big)$.
We note that the resolvent $R^\pm_0(\mu^4)$ exhibits singular behavior with order $O(\mu^{-3})$ near $\mu=0$ and $O((2-\mu)^{-1/2})$ near $\mu=2$. Based on this observation, given a sufficiently small fixed positive constant $0<\mu_0\ll1$, we consider the partition of unity:
$$\chi_1(\mu)+\chi_2(\mu)+\chi_3(\mu)=1,$$
where $\chi_1(\mu)\in C^{\infty}_0([0,\mu_0])$, $\chi_2(\mu)\in C^{\infty}_0([\mu_0,2-\mu_0])$ and $\chi_3(\mu)\in C^{\infty}_0([2-\mu_0,2])$. Correspondingly,
\begin{align}\label{expre of W-}
W_+=I-\frac{2}{\pi i}\sum\limits_{j=1}^{3}\int_{0}^{2}\mu^3\chi_j(\mu)R^{+}_{V}(\mu^4)V(R^{+}_{0}-R^{-}_0)(\mu^4)d\mu:=I-\frac{2}{\pi i}\sum\limits_{j=1}^{3}\mcaK_j.
\end{align}
Therefore, the $\ell^p$ boundedness of $W_+$ reduces to establishing the boundedness of each part $\mcaK_j$. Among these parts, the intermediate energy part $\mcaK_2$ is easier to handle 
since the resolvent does not have singularity in the interval $[\mu_0,2-\mu_0]$. Indeed, as shown in Section \ref{sec of W2}, $\mcaK_2\in\B(\ell^p(\Z))$ for all $1\leq p\leq\infty$. Consequently, much effort in this article is devoted to handling the low energy part $\mcaK_1$ and the high energy part $\mcaK_3$, which constitutes the main challenge of this paper.

To overcome this, a key point involves analyzing the asymptotic behaviors of $R^{+}_V(\mu^4)$ near $\mu=0$ and $\mu=2$. To this end, we introduce
\begin{equation*}
M(\mu)=U+vR^{+}_{0}(\mu^4)v,\quad\mu\in(0,2),\quad U={\rm sign}\left(V(n)\right),\quad v(n)=\sqrt{|V(n)|}.
\end{equation*}
As established in Lemma \ref{existence of inverse of Mmu}, $M(\mu)$ is invertible on $\ell^2(\Z)$ and its inverse $M^{-1}(\mu)$ satisfies
 \begin{equation*}
R^{+}_V(\mu^4)V=R^{+}_0(\mu^4)vM^{-1}(\mu)v.
\end{equation*}
This allows us to reformulate $\mcaK_j$ as
\begin{align}\label{mcaKj}
\mcaK_j&=\int_{0}^{2}\mu^3\chi_j(\mu)R^{+}_{0}(\mu^4)v M^{-1}(\mu)v(R^{+}_{0}-R^{-}_0)(\mu^4)d\mu,\quad j=1,3.
\end{align}
Hence, the asymptotic expansions of $M^{-1}(\mu)$ near $\mu=0$ and $\mu=2$ are crucial. These expansions were derived in our recent work \cite[Theorem 1.8]{HY25} and will be restated in Lemma \ref{Asymptotic expansion theorem}. The basic idea behind the expansions of $M^{-1}(\mu)$ is the Neumann expansion, which in turn depends on the expansion of $R^{+}_{0}(\mu^4)$. In this respect, Jensen and Kato initiated their seminal work in \cite{JK79} for Schr\"odinger operator $-\Delta_{\R^3}+V$ on $\R^3$. Since then, the method has been widely applied (cf. \cite{JN01, SWY22}).
When considering the discrete bi-Laplacian $\Delta^2$ on the lattice $\Z$, we will face two distinct difficulties.
Firstly, compared with Laplacian $-\Delta$ on $\Z$, the threshold $0$ now is a {\bf degenerate critical value}~(i.e., $\mcaM(0)=\mcaM^{'}(0)=\mcaM^{''}(0)=0$, where the symbol $\mcaM(x)=(2-2{\rm cos}x)^2$ is defined in \eqref{unitary equivalent}). This degeneracy leads to additional steps to expand the $M^{-1}(\mu)$. Secondly, in contrast to the continuous analogue \cite{SWY22}, we encounter another threshold 16~(i.e., corresponding to $\mu=2$).

With these expansions in hand, the next crucial step is to utilize them to establish the desired boundedness of $\mcaK_1$ and $\mcaK_3$. For simplicity,
we consider the representative case where both $0$ and $16$ are regular points of the operator $H$. The subsequent analysis will be divided into two parts.

\subsubsection{\bf{On the $\ell^p$ boundedness}}
\begin{itemize}
\item \underline{\bf{For the low energy part} ${\bm{\mcaK_1}}.$} We first note that in expression \eqref{kernel of R0},  $\theta_{+}$ and $b(\mu)$ exhibit
\end{itemize}
the following behaviors, respectively:
$$\theta_{+}=-\mu+o(\mu),\quad b(\mu)=-\mu+o(\mu),\quad \mu\rightarrow0^+.$$
This shows that the kernel $R^{\pm}_0(\mu^4,n,m)$ closely resembles its continuous counterpart~(cf. \cite{SWY22}):
$$R^{\pm}_0(\mu^4,x,y)=\frac{1}{4\mu^3}\big(\pm ie^{\pm i\mu|x-y|}-e^{-\mu|x-y|}\big),\quad x,y\in \R.$$
This observation inspires us that we may also try to combine the Taylor expansion of $R^{\pm}_0(\mu^4,n,m)$ and the orthogonality
\begin{align*}
    Qv=0=S_0(v_k),\quad \left<Qf,v\right>=0=\left<S_0f,v_k\right>,\quad v_k(n)=n^kv(n),\quad  k=0,1,
\end{align*}
of the orthogonal projection operators $Q,S_0$ in the expansion \eqref{asy expan of regular 0}
\begin{align}\label{new asy expan regular 0}
\begin{split}
M^{-1}(\mu)&=S_0A_0S_0+\mu QA_1Q+\mu^2(QA^{0}_{21}Q+S_0A^{0}_{22}+A^{0}_{23}S_0)\\
&\quad+\mu^3(QA^{0}_{31}+A^{0}_{32}Q)+\mu^3P_1+\Gamma^{0}_{4}(\mu)
\end{split}
\end{align}
to eliminate the singularity of $\mcaK_1$ near $\mu=0$ as its continuous analogue \cite[Lemma 2.5]{MWY24}. However, unlike the perfect form in the continuous case, the more complex structure of the kernel $R^{\pm}_0(\mu^4,n,m)$ introduces significant technical challenges. To address this issue, we establish a modified cancelation Lemma \ref{cancelation lemma} to derive
\begin{align*}
R^{\pm}_0(\mu^4)vQ=O(\mu^{-2}),\  R^{\pm}_0(\mu^4)vS_0=O(\mu^{-1}),\
QvR^{\pm}_0(\mu^4)=O(\mu^{-2}),\ S_0vR^{\pm}_0(\mu^4)=O(\mu^{-1}).
\end{align*}
By virtue of this property, we can classify the operators in \eqref{new asy expan regular 0} above into the following two groups according to the order of $K_A(n,m)$ with regard to $\mu$:
$$O(\mu):S_0A_0S_0,\mu^2QA^{0}_{21}Q,\mu^2S_0A^{0}_{22},\mu^2A^{0}_{23}S_0,\mu^3QA^{0}_{31},\mu^3A^{0}_{32}Q,\Gamma^0_4(\mu),\quad O(1):\mu QA_1Q,\mu^3P_1,$$
where\begin{equation}\label{KA}
K_A(n,m)=\int_{0}^{2}\mu^3\chi_1(\mu)\big[R^{+}_{0}(\mu^4)vAv(R^{+}_{0}-R^{-}_0)(\mu^4)\big](n,m)d\mu.
\end{equation}
Substituting \eqref{new asy expan regular 0} into \eqref{mcaKj} for $j=1$, we can further express $\mcaK_1$ as the sum
\begin{align*}
\mcaK_1=\sum\limits_{A\in O(\mu)}K_A+\sum\limits_{A\in O(1)}K_A.
\end{align*}

For the operators in class $ O(\mu)$, we can prove that $K_A\in\B(\ell^p(\Z))$ for all $1\leq p\leq\infty$. We shall explain this for $A=S_0A_0S_0$ as a model case. In this case, by means of Lemma \ref{cancelation lemma}

and the variable substitution:
\begin{equation}\label{varible substi 0}
 {\rm cos}\theta_{+}=1-\frac{\mu^2}{2} \Longrightarrow\  \frac{d\mu}{d\theta_+}=-\sqrt{1-\frac{\mu^2}{4}},\quad \theta_+\rightarrow0\ {\rm as}\ \mu\rightarrow0\ {\rm and}\ \theta_+\rightarrow-\pi\ {\rm as}\ \mu\rightarrow2,
\end{equation}
we can rewrite \eqref{KA} as a linear combination of the following functions:
\begin{align*}
K^{\pm,1}_0(n,m)&=\int_{-\pi}^{0}e^{-i\theta_+(|n|\pm|m|)}g_1(\theta_+)\chi_{1}(\mu(\theta_+))L^{\pm,1}_{0}(\theta_+,n,m)d\theta_+,\\
K^{\pm,2}_0(n,m)&=\int_{0}^{2}e^{b(\mu)|n|\pm i\theta_+|m|}g_2(\mu)\chi_{1}(\mu)L^{\pm,2}_{0}(\mu,n,m)d\mu,\\
K^{\pm,3}_0(n,m)&=\int_{-\pi}^{0}e^{\pm i\theta_+(|n|+|m|)}g_3(\theta_+){\chi}_{1}(\mu(\theta_+))L^{\pm,3}_{0}(\theta_+,n,m)d\theta_+,
\end{align*}
where $g_j$ and $L^{\pm,j}_0$ satisfy the following properties for any $k=0,1,2$, respectively:
\begin{align}
&\lim\limits_{\theta_+\rightarrow0}g^{(k)}_j(\theta_+)\quad{\rm and }\quad  \lim\limits_{\mu\rightarrow0}g^{(k)}_2(\mu)\quad {\rm exist},\label{property in mcaK1}\\
\sum\limits_{j\in\{1,3\}}^{}\sup\limits_{\theta_{+}\in(-\pi,0)}\big|(\partial^k_{\theta_+}&L^{\pm,j}_{0})(\theta_+,n,m)\big|+\sup\limits_{\mu\in(0,\mu_0]}\big|e^{b(\mu)|n|}(\partial^k_\mu L^{\pm,2}_{0})(\mu,n,m)\big|\lesssim \|\left<\cdot\right>^{2k+4}V(\cdot)\|_{\ell^1},\notag
\end{align}
uniformly in $n,m\in\Z$.
Moreover, we have $\lim\limits_{\theta_+\rightarrow0}g_j(\theta_+)=0$ and $\lim\limits_{\mu\rightarrow0}g_2(\mu)=0$. This property, combined with
\begin{align*}
 \frac{1}{\big|b'(\mu)|n|\pm i\theta'_+(\mu)|m|\big|^2}\lesssim (|n|+|m|)^{-2},\quad {\rm uniformly\ in}\ (n,m)\neq(0,0)\ {\rm and}\ \mu\in(0,2),
 \end{align*}
enables us to apply integration by parts twice to $K^{\pm,j}_0(n,m)$ obtaining
\begin{equation*}
|K^{\pm,j}_{0}(n,m)|\lesssim \left<|n|\pm|m|\right>^{-2},\quad j=1,2,3.
\end{equation*}
Then the $\ell^p$ boundedness for $1\leq p\leq\infty$ is derived by the Shur test Lemma \ref{shur test}.

In the preceding analysis, we notice that a crucial point is that the limits of $g_j$ vanish as $\theta_+\rightarrow0$ and $\mu\rightarrow0$. However, for operators in class $O(1)$, which lack this vanishing property, the singular terms $O(\left<|n|\pm|m|\right>^{-1})$ will emerge. To solve this problem, we shall appeal to the theory of the discrete singular integrals on the lattice, see Appendix \ref{sec of appendix}. 

Specifically, we consider $A=\mu^3P_1$ as a model case of the operator class $O(1)$. In this case, an analogous argument as above yields that \eqref{KA} can be written as a linear combination of these functions:
\begin{align*}
K^{\pm,1}_{P_1}(n,m)&=\int_{-\pi}^{0}e^{-i\theta_+(|n|\pm|m|)}h_1(\theta_+){\chi}_1(\mu(\theta_+))L^{\pm,1}_{P_1}(\theta_+,n,m)d\theta_+,\\
K^{\pm,2}_{P_1}(n,m)&=\int_{0}^{2}e^{b(\mu)|n|\pm i\theta_+|m|}h_2(\mu){\chi}_1(\mu)L^{\pm,2}_{P_1}(\mu,n,m)d\mu,
\end{align*}
where $h_j$ and $L^{\pm,j}_{P_1}$ satisfy the similar property as in \eqref{property in mcaK1}. Applying integration by parts twice to $K^{\pm,j}_{P_1}(n,m)$, we find that \eqref{KA} equals
$$\frac{i-1}{8}\Big(k^{+}_1(n,m)+k^{-}_1(n,m)+k^{+}_2(n,m)+k^{-}_2(n,m)\Big)+O(\left<|n|\pm|m|\right>^{-2}),$$
where
$$k^{\pm}_1(n,m)=\frac{\phi(||n|\pm|m||^2)}{|n|\pm |m|},\quad k^{\pm}_{2}(n,m)=\frac{\phi(||n|-|m||^2)}{|n|\pm i|m|}$$
with $\phi(s)$ being smooth cut-off functions supported in $\{s:|s|\geq1\}$.
In view that the integral operator $T_{k^{\pm}_{\ell}}$  associated with the kernel $k^{\pm}_{\ell}(x,y)$ in the continuous setting is not a Calder\'{o}n-Zygmund operator, we cannot directly apply Theorem \ref{Tdis lemma}. To overcome this, we utilize the following relation:
\begin{equation*}
({k}^{\pm}_1f)(n)=\big[(\chi_+\widetilde{k}_1\chi_{\mp}-\chi_-\widetilde{k}_1\chi_{\pm})(1+\tau)f\big](n),\
({k}^{\pm}_2f)(n)=\big[(\chi_+\widetilde{k}^{\pm}_2\chi_{+}-\chi_-\widetilde{k}^{\pm}_2\chi_{-})(1+\tau)f\big](n),
\end{equation*}
where $\chi_{\pm}=\chi_{\Z^{\pm}}$ is the characteristic function on $\Z^{\pm}:=\{m\in\Z:\pm m>0\}$, $(\tau f)(n)=f(-n)$ and
$$\widetilde{k}_1(n,m)=\frac{\phi(|n-m|^2)}{n-m},\quad\widetilde{k}^{\pm}_2(n,m)=\frac{\phi(|n-m|^2)}{n\pm im}. $$
This relation allows us to reduce the $\ell^p$ boundedness of $k^{\pm}_{1}$ to $\widetilde{k}_1$, and $k^{\pm}_2$ to $\widetilde{k}^{\pm}_2$, both of which are $\ell^p$ bounded for $1<p<\infty$,
since their continuous analogues $T_{\widetilde{k}_1}$ and $T_{\widetilde{k}^{\pm}_2}$ are Calder\'{o}n-Zygmund operators~(for further details, we refer to \cite[Lemma 3.3]{MWY24}).

\begin{itemize}
\item \underline{\bf{For the high energy part} ${\bm{\mcaK_3}}.$} In this case,
the first distinction from $\mcaK_1$ is that it is not
\end{itemize}
straightforward to utilize the orthogonality
$\widetilde{Q}\tilde{v}=0=\big<\widetilde{Q}f,\tilde{v}\big>$
 of the projection operator $\widetilde{Q}$ in the expansion \eqref{asy expan regular 2}
\begin{align*}
M^{-1}(\mu)=\widetilde{Q}B_0\widetilde{Q}+
(2-\mu)^{\frac{1}{2}}(\widetilde{Q}B^{0}_{11}+B^{0}_{12}\widetilde{Q})+(2-\mu)^{\frac{1}{2}}\widetilde{P}_1+(2-\mu) B^{0}_{21}+\Gamma^{0}_{\frac{3}{2}}(2-\mu)
\end{align*}
 to eliminate the singularity at $\mu=2$, where $\tilde{v}(n)=(Jv)(n):=(-1)^nv(n)$. To this end, we will make use of the unitary transform $J$ above, which satisfies
 $$JR^{\pm}_{-\Delta}(\mu^2)J=-R^{\mp}_{-\Delta}(4-\mu^2),\quad \mu\in(0,2).$$
 This formula together with $J^2=I$ allows us to rewrite \begin{equation}\label{R0vBvR0 }
R^{+}_{0}(\mu^4)v Bv(R^{+}_{0}-R^{-}_0)(\mu^4)=\frac{1}{4\mu^4}J\big(R^{-}_{-\Delta}(4-\mu^2)+JR_{-\Delta}(-\mu^2)J\big)\tilde{v}B\tilde{v}(R^{-}_{-\Delta}-R^{+}_{-\Delta})(4-\mu^2)J.
\end{equation}
Such form motivates us to combine the orthogonality of $\widetilde{Q}$ above and the Taylor expansion of
\begin{align*}
R^{\mp}_{-\Delta}(4-\mu^2,n,m)=\frac{\pm ie^{\pm i\tilde{\theta}_{+}|n-m|}}{2{\rm sin}\tilde{\theta}_+}=\frac{\pm i}{2{\rm sin}\tilde{\theta}_+}\Big(e^{\pm i\tilde{\theta}_{+}|n|}\mp i\tilde{\theta}_{+}m\int_{0}^{1}({\rm sign}(n-\rho m))e^{\pm i\tilde{\theta}_{+}|n-\rho m|}d\rho\Big)
\end{align*}
deriving
$$R^{\mp}_{-\Delta}(4-\mu^2)\tilde{v}\widetilde{Q}=O(1),\quad \widetilde{Q}\tilde{v}R^{\mp}_{-\Delta}(4-\mu^2)=O(1),$$
where $\tilde{\theta}_{+}:=\tilde{\theta}_{+}(\mu)$ satisfies ${\rm cos}\tilde{\theta}_{+}=\frac{\mu^2}{2}-1$ with $ \tilde{\theta}_{+}\in(-\pi,0)$.
Similarly, we can express $\mcaK_3$ as
\begin{align*}
\mcaK_3=\sum\limits_{B\in O(1)}K_B+\sum\limits_{B\in O((2-\mu)^{-\frac{1}{2}})}K_B.
\end{align*}
Here
$$O(1):\widetilde{Q}B_0\widetilde{Q},(2-\mu)^{\frac{1}{2}}\widetilde{Q}B^{0}_{11},(2-\mu)^{\frac{1}{2}}B^{0}_{12}\widetilde{Q},(2-\mu) B^{0}_{21},\Gamma^{0}_{\frac{3}{2}}(2-\mu),\quad O((2-\mu)^{-\frac{1}{2}}):(2-\mu)^{\frac{1}{2}}\widetilde{P}_1$$
and
\begin{equation}\label{KB}
K_B(n,m)=\int_{0}^{2}\mu^3\chi_3(\mu)\big[R^{+}_{0}(\mu^4)vBv(R^{+}_{0}-R^{-}_0)(\mu^4)\big](n,m)d\mu.
\end{equation}
We will prove that $K_B$ is $\ell^p$ bounded for all $1\leq p\leq\infty$ for the operators in the class $O(1)$ and $\ell^p$ bounded for $1< p<\infty$ for the operators in the class $O((2-\mu)^{-\frac{1}{2}})$.

For further explanation, we consider $B=\widetilde{Q}B_0\widetilde{Q}$ as a model case of the class $O(1)$. In this case, by virtue of \eqref{R0vBvR0 } and the following variable substitution:
\begin{align}\label{varible substi2 introduction}
{\rm cos}\tilde{\theta}_{+}=\frac{\mu^2}{2}-1 \Longrightarrow\  \frac{d\mu}{d\tilde{\theta}_+}=\sqrt{1-\frac{\mu^2}{4}},\quad \tilde{\theta}_+\rightarrow-\pi\ {\rm as}\ \mu\rightarrow0\ {\rm and}\ \tilde{\theta}_+\rightarrow0\ {\rm as}\  \mu\rightarrow2,
\end{align}
we can express \eqref{KB} as a linear combination of the following functions:
\begin{align*}
\widetilde{K}^{\pm,1}_{0}(n,m)&=(-1)^{n+m}\int_{-\pi}^{0}e^{i\tilde{\theta}_+(|n|\pm |m|)}\tilde{g}_1(\tilde{\theta}_+){\chi}_3(\mu(\tilde{\theta}_+))\widetilde{L}^{\pm,1}_0(\tilde{\theta}_+,n,m)d\tilde{\theta}_{+},\\
\widetilde{K}^{\pm,2}_{0}(n,m)&=(-1)^m\int_{-\pi}^{0}e^{\pm i\tilde{\theta}_+(|m|\pm i|n|)}\tilde{g}_2(\tilde{\theta}_+){\chi}_3(\mu(\tilde{\theta}_+))\widetilde{L}^{\pm,2}_0(\mu(\tilde{\theta}_+),n,m)d\tilde{\theta}_+,
\end{align*}
where $\tilde{g}_j(\tilde{\theta}_+)$ satisfies the similar property \eqref{property in mcaK1} and
\begin{equation*}
\sup\limits_{\tilde{\theta}_{+}\in(-\pi,0)}\big|(\partial^k_{\tilde{\theta}_+}\widetilde{L}^{\pm,1}_{0})(\tilde{\theta}_+,n,m)\big|+\sup\limits_{\tilde{\theta}_{+}\in[\gamma_1,0)}\big|(\partial^k_{\tilde{\theta}_+}\widetilde{L}^{\pm,2}_{0})(\mu(\tilde{\theta}_+),n,m)\big|\lesssim \|\left<\cdot\right>^{2k+2}V(\cdot)\|_{\ell^1},\ k=0,1,2
\end{equation*}
uniformly in $n,m\in\Z$, where $\gamma_1\in(-\pi,0)$ satisfies ${\rm cos}\gamma_1=\frac{(2-\mu_0)^2}{2}-1$. Thanks to the substitution \eqref{varible substi2 introduction} above, which contributes a factor of $(2-\mu)^{\frac{1}{2}}$, this allows that the limits of $\tilde{g}_j(\tilde{\theta}_+)$ vanish as $\tilde{\theta}_+\rightarrow0$. This constitutes another difference from $\mcaK_1$, where the variable substitution \eqref{varible substi 0} does not alter the singularity near $\mu=0$. Using the argument analogous to $K^{\pm,j}_0$ above, we can also derive
\begin{equation*}
|\widetilde{K}^{\pm,j}_{0}(n,m)|\lesssim \left<|n|\pm|m|\right>^{-2},\quad j=1,2,
\end{equation*}
and so the desired result is obtained. For the operator $(2-\mu)^{\frac{1}{2}}\widetilde{P}_1$, combining the arguments for $K_B$ with ${\widetilde{Q}B_0\widetilde{Q}}$ and $K_A$ with $A=\mu^3P_1$ above, we can demonstrate that \eqref{KB} equals
\begin{align*}
\frac{(-1)^{n+m}}{4}\big(k^{+}_1(n,m)+k^{-}_1(n,m)\big)+O(\left<|n|\pm|m|\right>^{-2}),
\end{align*}
which is $\ell^p$ bounded for $1<p<\infty$.

In summary, we establish $W_+\in\B(\ell^p(\Z))$ for all $1<p<\infty$.
\subsubsection{\bf{Counterexamples of $\ell^1$ and $\ell^{\infty}$ boundedness}}
As demonstrated above, the operators in $O(\mu)$ for $\mcaK_1$ and those in $O(1)$ for $\mcaK_3$ are always $\ell^p$ bounded for all $1\leq p\leq\infty$. Consequently, the boundedness of $W_+$ at the endpoints $p=1,\infty$ reduces
 to analyzing the operators $K_1$, $K_{P_1}$ and $K_{\widetilde{P}_1}$, where $$K_1=K_A~(A=\mu QA_1Q),\quad K_{P_1}=K_A~(A=\mu^3P_1), \quad   K_{\widetilde{P}_1}=K_B~(B=(2-\mu)^{\frac{1}{2}}\widetilde{P}_1).$$
These three operators may fail to be bounded on both $\ell^1(\Z)$ and $\ell^{\infty}(\Z)$. While this observation is not sufficient to disprove such boundedness of $W_+$, we can examine the behavior by taking the characteristic function $f_{N}(n):=\chi_{[-N,N]}(n)$ on the interval $[-N,N]$ with $N\in\N^{+}$ as test function.  Specifically, we can show
\begin{itemize}
 \item$|(K_{P_1}+K_{\widetilde{P}_1})f_{N}(N+2)|\rightarrow+\infty,N\rightarrow+\infty$ and $(K_{P_1}+K_{\widetilde{P}_1})f_{1}\notin\ell^{1}(\Z)$,
 \vskip0.2cm
\item$\sup\limits_{N\in\N^+}\|K_1f_{N}\|_{\ell^{\infty}}<\infty$ and $K_1f_{1}\in\ell^1(\Z)$.
 \end{itemize}
This gives that $W_{+}\notin\B(\ell^{1}(\Z))\cup\B(\ell^{\infty}(\Z))$.

\subsection{Organizations of the paper}
The remainder of this paper is organized as follows. Section \ref{sec of Preliminaries} presents preliminary materials, including the limiting absorption principle and asymptotic expansions of $M^{-1}(\mu)$. Sections \ref{sect of mcaK1}$\sim$\ref{sec of mcaK3} are devoted to the proof of Theorem \ref{main theorem}, while Section \ref{sec of counterexample} contains the proof of Theorem \ref{theorem at endpoints}. In Section \ref{sec of application}, we will apply the previously established $\ell^p$-boundedness of wave operators to derive decay estimates for solutions of the discrete beam equation \eqref{discrete Beam equation}. Finally, Appendix \ref{sec of appendix} provides a review of  discrete Calder\'{o}n-Zygmund operators on the lattice $\mathbb{Z}^d$.

\section{Preliminaries}\label{sec of Preliminaries}
This section is devoted to establishing the limiting absorption principle for the operator $H$ and investigating the asymptotic behaviors of $R^{+}_V(\mu^4)$ near $\mu=0$ and $\mu=2$.
\subsection{Limiting absorption principle}\label{subsec of LAP} We begin by recalling some basics of the resolvents. From the definition of Laplacian $\Delta$ on $\Z$ in \eqref{defin of laplacian}, the bi-Laplacian $\Delta^2$ on $\Z$ is given by
\begin{equation*}
(\Delta^2\phi)(n)=(\Delta(\Delta\phi))(n)=\phi(n+2)-4\phi(n+1)+6\phi(n)-4\phi(n-1)+\phi(n-2).
\end{equation*}
Consider the Fourier transform $\mcaF$: $\ell^2(\Z)\rightarrow L^2(\T), \T=\R/2\pi\Z=[-\pi,\pi]$, defined by
\begin{equation}\label{fourier transform}
(\mcaF\phi)(x):=(2\pi)^{-{\frac{1}{2}}}\sum_{n\in\Z}^{}e^{-inx}\phi(n), \quad\forall\ \phi\in\ell^2(\Z).
\end{equation}
Under this transform, we have
\begin{equation}\label{unitary equivalent}
(\mcaF\Delta^2\phi)(x)=(2-2{\rm cos}x)^2(\mcaF\phi)(x):=\mcaM(x)(\mcaF\phi)(x),\quad  x\in\T=[-\pi,\pi],
\end{equation}
which implies that the spectrum of $\Delta^2$ is purely absolutely continuous and equals $[0,16]$. 
Let
$$\quad R_0(z):=(\Delta^2-z)^{-1},\quad R_V(z):=(H-z)^{-1},\quad z\in\rho(H)~{\rm(the\ resolvent\ set\ of}\ H{\rm)}$$
  be the resolvents of $\Delta^2$ and $H$, respectively and define their boundary values on $(0,16)$ via
  \begin{align*}
  R^{\pm}_{0}(\lambda):=\lim\limits_{\varepsilon\downarrow0}R_{0}(\lambda\pm i\varepsilon ),\quad
  R^{\pm}_{V}(\lambda):=\lim\limits_{\varepsilon\downarrow0}R_{V}(\lambda\pm i\varepsilon ),\quad\lambda\in(0,16).
  \end{align*}
  The existence of $R^{\pm}_0(\lambda)$ in $\B(\ell^{2,s}(\Z),\ell^{2,-s}(\Z))$ for $s>\frac{1}{2}$ follows from the resolvent decomposition
\begin{equation*}
 R_0(z)=\frac{1}{2\sqrt z}\left(R_{-\Delta}(\sqrt z)-R_{-\Delta}(-\sqrt z)\right),\quad\sqrt{z}=\sqrt{|z|}e^{i\frac{arg z}{2}},\  0<argz<2\pi,
 \end{equation*}
   and the known limiting absorption principle for $-\Delta$~(cf.\cite{KKK06})
  $$R^{\pm}_{-\Delta}(\mu):=\lim\limits_{\varepsilon\downarrow0}R_{-\Delta}(\mu\pm i\varepsilon ),\quad \mu\in(0,4),$$
which exists in the operator norm of $\B(\ell^{2,s}(\Z),\ell^{2,-s}(\Z))$ for $s>\frac{1}{2}$, where $R_{-\Delta}(\omega)=(-\Delta-\omega)^{-1}$ is the resolvent of $-\Delta$. 

 Moreover, using the kernel of $R_{-\Delta}(\omega)$~(cf. \cite{KKK06}):
 \begin{equation}\label{kernel of lapl resolvent}
R_{-\Delta}(\omega,n,m)=\frac{-ie^{-i\theta(\omega)|n-m|}}{2{\rm sin}\theta(\omega)},\quad n,m\in\Z,
\end{equation}
where $\theta(\omega)$ is the solution to the equation $2-2{\rm cos}\theta=\omega$
in the domain $$\mcaD:=\left\{\theta(\omega)=a+ib:-\pi\leq a\leq\pi,b<0\right\},$$
we can derive explicit expression for the integral kernel of the free resolvent $R^{\pm}_0(\mu^4)$ as follows. 
\begin{lemma}\label{lemma of kernel R0 boundary}{\rm(\cite[Lemma 2.3]{HY25} )}
{ For $\mu\in(0,2)$, the kernel of $R^{\pm}_0(\mu^4)$ is given by
\begin{align}\label{kernel of R0 boundary}
R^{\pm}_{0}(\mu^4,n,m)
=\frac{1}{4\mu^3}\Big(\pm ia_1(\mu)e^{\mp i\theta_{+}|n-m|}+a_2(\mu)e^{b(\mu)|n-m|}\Big),
\end{align}
where $\theta_+:=\theta_{+}(\mu)$ satisfies $2-2{\rm cos}\theta_{+}=\mu^2$ with $\theta_{+}\in(-\pi,0)$ and
\begin{equation}\label{exp of a1 a2 bu}
a_1(\mu)=\frac{1}{\sqrt{1-\frac{\mu^2}{4}}},\quad a_2(\mu)=\frac{-1}{\sqrt{1+\frac{\mu^2}{4}}},\quad b(\mu)={\rm ln} \big(1+\frac{\mu^2}{2}-\mu(1+\frac{\mu^2}{4})^{\frac{1}{2}}\big).
\end{equation}
}
\end{lemma}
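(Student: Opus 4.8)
The plan is to derive the explicit kernel formula \eqref{kernel of R0 boundary} from the resolvent decomposition
$$
R_0(z)=\frac{1}{2\sqrt z}\bigl(R_{-\Delta}(\sqrt z)-R_{-\Delta}(-\sqrt z)\bigr)
$$
by setting $z=\mu^4\pm i\varepsilon$ and passing to the limit $\varepsilon\downarrow0$, then substituting the known kernel \eqref{kernel of lapl resolvent} of $R_{-\Delta}(\omega)$. First I would fix $\mu\in(0,2)$ and compute the two relevant square roots: with the branch $\sqrt z=\sqrt{|z|}\,e^{i\,\arg z/2}$, $0<\arg z<2\pi$, one has $\sqrt{\mu^4+i0}=\mu^2$ and $\sqrt{\mu^4-i0}$ approaching $\mu^2$ from the lower half, so that the two arguments fed into $R_{-\Delta}$ are $\omega_+=\mu^2$ (landing on the spectral interval $(0,4)$, hence requiring the boundary value $R^{\pm}_{-\Delta}$) and $\omega_-=-\mu^2<0$ (landing in the resolvent set, where $R_{-\Delta}(-\mu^2)$ is an honest bounded operator). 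Thus
$$
R^{\pm}_0(\mu^4)=\frac{1}{2\mu^2}\bigl(R^{\pm}_{-\Delta}(\mu^2)-R_{-\Delta}(-\mu^2)\bigr),
$$
and it remains to insert the kernels.

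For the first term, $\theta(\mu^2)$ is the solution of $2-2\cos\theta=\mu^2$ in the domain $\mcaD$; for the boundary value $R^{+}_{-\Delta}$ this $\theta$ approaches the real axis from below, i.e. $\theta_+=\theta_+(\mu)\in(-\pi,0)$ with $2-2\cos\theta_+=\mu^2$, and for $R^{-}_{-\Delta}$ we get its complex conjugate $-\theta_+$; by \eqref{kernel of lapl resolvent} this contributes $\frac{\mp i\,e^{\pm i\theta_+|n-m|}}{2\sin\theta_+}$. Using $2\sin\theta_+=-2\sqrt{1-\cos^2\theta_+}\cdot(\text{sign})$ and $\cos\theta_+=1-\tfrac{\mu^2}{2}$, one computes $\sin\theta_+=-\mu\sqrt{1-\mu^2/4}$ (negative since $\theta_+\in(-\pi,0)$ and, for small $\mu$, $\theta_+$ near $0^-$), which after dividing by $2\mu^2$ produces exactly the coefficient $\pm i\,a_1(\mu)$ with $a_1(\mu)=(1-\mu^2/4)^{-1/2}$ as in \eqref{exp of a1 a2 bu}. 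For the second term, $R_{-\Delta}(-\mu^2)$ corresponds to solving $2-2\cos\theta=-\mu^2$ in $\mcaD$; writing $\theta=ib(\mu)$ purely imaginary with $b(\mu)<0$ gives $2-2\cosh b(\mu)=-\mu^2$, and solving this quadratic in $e^{b(\mu)}$ yields $e^{b(\mu)}=1+\tfrac{\mu^2}{2}-\mu(1+\tfrac{\mu^2}{4})^{1/2}$, i.e. $b(\mu)=\ln\bigl(1+\tfrac{\mu^2}{2}-\mu(1+\tfrac{\mu^2}{4})^{1/2}\bigr)$ (the root with $b<0$), while $\sin\theta=\sin(ib(\mu))=i\sinh b(\mu)$ leads, after the $1/(2\mu^2)$ prefactor, to the coefficient $a_2(\mu)=-(1+\mu^2/4)^{-1/2}$. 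Collecting the two pieces and noting the overall sign conventions gives \eqref{kernel of R0 boundary}.

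The bookkeeping obstacle — and the point that most needs care — is tracking the branch of the square root and the resulting placement of $\theta_+$ in the lower half-plane consistently for the $+$ and $-$ boundary values, so that the signs of $\sin\theta_+$ and the $\pm i$ factors come out correctly and match \eqref{exp of a1 a2 bu}; a sign slip here would flip $R^+$ and $R^-$. Since this is quoted as \cite[Lemma 2.3]{HY25}, I would present the derivation compactly, emphasizing the two identities $\sqrt{\mu^4\pm i0}=\mu^2$, $\sqrt{\mu^4\mp i0}\rightsquigarrow -\mu^2$ in the relevant sense, and the explicit evaluation of $\theta$ at $\omega=\mu^2$ and $\omega=-\mu^2$ in the domain $\mcaD$, and leave the purely algebraic simplifications to the reader.
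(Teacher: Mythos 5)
Your proposal is correct and follows the same route the paper indicates for this quoted lemma: combine the splitting $R_0(z)=\frac{1}{2\sqrt z}\bigl(R_{-\Delta}(\sqrt z)-R_{-\Delta}(-\sqrt z)\bigr)$ with the explicit kernel \eqref{kernel of lapl resolvent}, locate $\theta$ in $\mcaD$ for $\omega=\mu^2\pm i0$ (real $\theta_+\in(-\pi,0)$, resp.\ $-\theta_+$) and for $\omega=-\mu^2$ (purely imaginary $\theta=ib(\mu)$ with $b(\mu)<0$), and simplify using $\sin\theta_+=-\mu\sqrt{1-\mu^2/4}$ and $\sinh b(\mu)=-\mu\sqrt{1+\mu^2/4}$. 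The only slip is cosmetic: with the branch $0<\arg z<2\pi$ one has $\sqrt{\mu^4-i0}=-\mu^2$ (it is $-\sqrt{\mu^4-i0}$ that tends to $\mu^2$ from the lower half-plane), but this swap of the two partial fractions is cancelled by the sign of the prefactor $1/(2\sqrt z)$, so your working identity $R^{\pm}_0(\mu^4)=\frac{1}{2\mu^2}\bigl(R^{\pm}_{-\Delta}(\mu^2)-R_{-\Delta}(-\mu^2)\bigr)$ and all subsequent algebra are correct.
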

As shown above, the resolvent $R^\pm_0(\mu^4)$ exhibits singular behavior with order $O(\mu^{-3})$ near $\mu=0$ and $O((2-\mu)^{-1/2})$ near $\mu=2$. Indeed, more precise asymptotic expansions of $R^{\pm}_0(\mu^4)$ near these critical points in suitable weighted space $\B(\ell^{2,s}(\mathbb{Z}),\ell^{2,-s}(\mathbb{Z}))$ can be found in \cite[Lemma 5.4]{HY25}.

For the perturbed operator $H = \Delta^2 + V$, we can derive the following LAP.
\begin{lemma}\label{LAP-lemma}{\rm(\cite[Theorem 2.4]{HY25} )}
Let $H=\Delta^2+V$ with $|V(n)|\lesssim \left<n\right>^{-\beta}$ for $\beta>1$ and $\mcaI=(0,16)$. Denote by $[\beta]$ the biggest integer no more than $\beta$. Then the following statements hold.
\begin{itemize}
\item [\rm(i)] The point spectrum $\sigma_p(H)\cap\mcaI$ is discrete, with each eigenvalue has a finite multiplicity, and the singular continuous spectrum $\sigma_{sc}(H)=\varnothing$.
\vskip0.2cm
\item [\rm(ii)] Let $j\in\left\{0,\cdots,[\beta]-1\right\}$ and $j+\frac{1}{2}<s\leq[\beta]$, then 
the following norm limits
    \begin{equation*}
   \frac{d^{j}}{d\lambda^j}(R^{\pm}_V(\lambda))=\lim\limits_{\varepsilon\downarrow0}R^{(j)}_V(\lambda\pm i\varepsilon) \quad {\rm in} \quad \B(\ell^{2,s}(\Z),\ell^{2,-s}(\Z))
    \end{equation*}
are norm continuous from $\mcaI\setminus\sigma_p(H)$ to $\B(\ell^{2,s}(\Z),\ell^{2,-s}(\Z))$, where $R^{(j)}_{V}(z)$ denotes the jth derivative of $R_{V}(z)$.
\end{itemize}
\end{lemma}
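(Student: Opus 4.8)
The plan is to derive both parts of the lemma from the conjugate operator (Mourre) method, working in the Fourier representation \eqref{unitary equivalent} in which $\Delta^2$ acts on $L^2(\T)$ as multiplication by the symbol $\mcaM(x)=(2-2\cos x)^2$. The first task is to identify the right conjugate operator. Since $\mcaM'(x)=4(2-2\cos x)\sin x$ vanishes exactly at $x=0$ (the degenerate threshold $\lambda=0$) and at $x=\pm\pi$ (the non-degenerate threshold $\lambda=16$), and is nonzero on the level set $\mcaM^{-1}(\lambda)$ for every $\lambda\in(0,16)$, I would take the symmetric first-order pseudodifferential operator on $\T$
\[
A=\tfrac12\big(a(x)D_x+D_x\,a(x)\big),\qquad D_x=-i\partial_x,
\]
with $a$ a smooth $2\pi$-periodic weight agreeing with $\mcaM'$ (up to a cutoff) on the energy window of interest. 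On the symbol level $i[\mcaM,A]=a(x)\mcaM'(x)$, which for the choice $a=\mcaM'$ equals $(\mcaM'(x))^2$ and is therefore strictly positive on $\mcaM^{-1}(J)$ for every compact $J\Subset(0,16)$, with a constant that degenerates as $J$ approaches the endpoints $0$ and $16$; this is precisely why the conclusion is stated on the open interval $\mcaI=(0,16)$. Since $\T$ is compact, $A$ is essentially self-adjoint on $\ell^2(\Z)$ and $\Delta^2\in C^\infty(A)$, and for $V\equiv0$ part (ii) can already be read off from the explicit kernel in Lemma \ref{lemma of kernel R0 boundary} by a change of variables and repeated integration by parts (van der Corput estimates on $\T$).

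Next I would add the potential. The key point is that, in the Fourier picture, $A$ behaves modulo bounded factors like multiplication by $n$ on $\ell^2(\Z)$, so the iterated commutators $\mathrm{ad}_A^{k}(V)$ are controlled by $\langle n\rangle^{k}|V(n)|\lesssim\langle n\rangle^{k-\beta}$ and hence are bounded between the relevant weighted spaces whenever $k\le[\beta]$. Consequently $H=\Delta^2+V\in C^{[\beta]}(A)$ (and in particular $C^{1,1}(A)$, which already suffices for the basic LAP), while the contribution of $V$ to $i[H,A]$ is $H$-relatively compact, e.g.\ because $\langle n\rangle^{-\varepsilon}(\Delta^2+1)^{-1}$ is compact. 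Therefore the free strictly positive commutator estimate survives the perturbation: for every compact $J\Subset(0,16)$ there are $c_J>0$ and a compact $K_J$ with
\[
E_J(H)\,i[H,A]\,E_J(H)\ \ge\ c_J\,E_J(H)+K_J.
\]

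With this Mourre estimate in hand, the conclusions follow from the abstract theory (Mourre \cite{Mou81,Mou83}, Jensen--Mourre--Perry \cite{JMP84}, together with its higher-order $C^{k}(A)$ refinements). The virial argument gives the absence of singular continuous spectrum on $\mcaI$ and the local finiteness and finite multiplicity of $\sigma_p(H)\cap\mcaI$, with eigenvalues possibly accumulating only at $0$ and $16$, which is part (i); the limiting absorption principle provides the boundary values $R^{\pm}_V(\lambda)$ in $\B(\ell^{2,s}(\Z),\ell^{2,-s}(\Z))$ for $s>\tfrac12$, locally uniformly and Hölder continuously on $\mcaI\setminus\sigma_p(H)$. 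To reach the $j$-th derivatives in part (ii), one differentiates the resolvent identity $\partial_z R_V(z)=R_V(z)^2$, so that $\partial_z^{j}R_V(z)=j!\,R_V(z)^{j+1}$, and iterates the Mourre differential inequality, trading one power of the weight for each derivative; the $C^{[\beta]}(A)$ regularity from the previous step is what permits $j$ up to $[\beta]-1$ and weights $s$ up to $[\beta]$, the constraint $s>j+\tfrac12$ coming from dominating $\langle A\rangle^{-s}$ by $\langle n\rangle^{-s}$ in that range. (Alternatively, once the free LAP and the $\ell^2$-compactness of the Birman--Schwinger-type kernel $v R^{\pm}_0(\lambda)v$, $v=|V|^{1/2}$, are in place, one may produce and differentiate the boundary values $R^{\pm}_V(\lambda)$ off $\sigma_p(H)$ via the symmetrized resolvent identity together with analytic Fredholm theory applied to $U+vR^{\pm}_0(\lambda)v$, $U=\mathrm{sign}(V)$.)

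The main obstacle I anticipate lies in the construction and verification of the conjugate operator: it must be regular enough with respect to $H$ given only polynomially decaying $V$ --- one cannot use a generator of dilations on the lattice, so $A$ has to be a tailor-made multiplier on $\T$ --- and simultaneously be compatible with both thresholds and, in particular, with the degeneracy of $\mcaM$ at $x=0$, so that the Mourre constant stays strictly positive on every compact subinterval of $(0,16)$ while the weighted estimates close exactly in the $\beta$-dependent ranges stated. Getting this balance right, rather than any single estimate, is where the work is concentrated.
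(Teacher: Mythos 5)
Your proposal follows essentially the same route as the paper: Lemma \ref{LAP-lemma} is quoted from \cite[Theorem 2.4]{HY25}, whose proof is exactly a Mourre/conjugate-operator argument with a multiplier-type conjugate operator adapted to the symbol $\mcaM(x)=(2-2\cos x)^2$ on $\T$, combined with the multiple-commutator machinery of \cite{JMP84} to obtain the higher derivatives $R^{(j)}_V(\lambda\pm i0)$ in the stated weighted ranges. The outline is sound and correctly locates the real work in the construction of the conjugate operator and the $\beta$-dependent bookkeeping of the iterated commutators $\mathrm{ad}_A^k(V)$.
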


We remark that the derivation of this LAP relies on the commutator estimates and Mourre theory. The upper bound of $s$ is closely related to the regularity of $H$. For more details, see \cite[Section 2 and Appendix A]{HY25}.

Throughout the paper, we always assume that $H$ has no positive eigenvalues in $\mcaI$. As a consequence of this lemma, $R^{\pm}_V(\mu^4)$ exists in $\B(\ell^{2,s}(\Z),\ell^{2,-s}(\Z))$ for $\frac{1}{2}<s\leq[\beta]$ and all $\mu\in(0,2)$.

\subsection{Asymptotic expansions of $R^{+}_V(\mu^4)$ } \label{subsec:asymptotics}
In this subsection, we further investigate the asymptotic behaviors of $R^+_V(\mu^4)$ near $\mu=0$ and $\mu=2$. For this purpose, we introduce
\begin{equation*}
M(\mu)=U+vR^{+}_{0}(\mu^4)v,\quad\mu\in(0,2),\quad U={\rm sign}\left(V(n)\right),\quad v(n)=\sqrt{|V(n)|},
\end{equation*}
and denote the inverse by $M^{-1}(\mu)$ as long as it exists. Indeed, building upon Lemma \ref{LAP-lemma}, we can establish such invertibility. Moreover, it allows us to reduce the asymptotic analysis of $R_V^+(\mu^4)$ to studying the behavior of $M^{-1}(\mu)$ near $\mu=0$ and $\mu=2$. Specifically, we have:
\begin{lemma}\label{existence of inverse of Mmu}{\rm (\cite[Corollary 2.5]{HY25})}
Let $H,V$ be as in Lemma \ref{LAP-lemma}. For any $\mu\in(0,2)$, $M(\mu)$ is invertible on $\ell^2(\Z)$ and satisfies the relation below in $\B(\ell^{2,s}(\Z),\ell^{2,-s}(\Z))$ for $\frac{1}{2}<s\leq[\beta]$:
\begin{equation}\label{inverti relation}
R^{+}_V(\mu^4)V=R^{+}_0(\mu^4)vM^{-1}(\mu)v.
\end{equation}
\end{lemma}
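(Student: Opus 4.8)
The plan is to deduce the invertibility of $M(\mu)$ on $\ell^2(\Z)$ and the identity \eqref{inverti relation} from the limiting absorption principle in Lemma \ref{LAP-lemma}, following the standard symmetric resolvent (Jensen--Kato) formalism. First I would record that, since $|V(n)|\lesssim\langle n\rangle^{-\beta}$ with $\beta>1$, both $v$ and $U$ define bounded operators, and in fact $v$ maps $\ell^{2,-s}(\Z)$ into $\ell^2(\Z)$ boundedly for every $s<\beta-\tfrac12$ (choose such an $s$ with $s>\tfrac12$, which is possible as $\beta>1$). Consequently, by Lemma \ref{LAP-lemma}(ii) the sandwiched operator $vR^{+}_0(\mu^4)v$ is a bounded operator on $\ell^2(\Z)$ for every $\mu\in(0,2)$, so $M(\mu)=U+vR^{+}_0(\mu^4)v$ is a well-defined bounded operator on $\ell^2(\Z)$. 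Moreover, writing $R^{+}_0(\mu^4)$ as a norm limit of $R_0(\mu^4+i\varepsilon)$ and using that $vR_0(z)v$ is compact on $\ell^2(\Z)$ for $\mathrm{Im}\,z\neq0$ (its kernel is rapidly decaying, hence Hilbert--Schmidt), one gets that $vR^{+}_0(\mu^4)v$ is a norm limit of compact operators, hence compact; this sets up a Fredholm alternative for $M(\mu)=U(I+UvR^{+}_0(\mu^4)v)$.

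The heart of the argument is then to show $\ker M(\mu)=\{0\}$ on $\ell^2(\Z)$ for each fixed $\mu\in(0,2)$; invertibility then follows from the Fredholm alternative together with boundedness of the inverse. Suppose $\psi\in\ell^2(\Z)$ satisfies $M(\mu)\psi=0$, i.e. $U\psi=-vR^{+}_0(\mu^4)v\psi$. I would set $\phi:=R^{+}_0(\mu^4)v\psi$, which lies in $\ell^{2,-s}(\Z)$ by the LAP, and note that $U\psi=-v\phi$ gives $\psi=-Uv\phi$ (since $U^2=I$ on $\mathrm{supp}\,V$ and $\psi$ is supported there), hence $v\psi=-|V|\phi=-V\phi$. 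Therefore $\phi=R^{+}_0(\mu^4)v\psi=-R^{+}_0(\mu^4)V\phi$, so $(\Delta^2-\mu^4)\phi=-V\phi$, i.e. $H\phi=\mu^4\phi$ with $\phi\in\ell^{2,-s}(\Z)$. Since $\mu^4\in(0,16)$ is in the continuous spectrum and, by the standing hypothesis, $H$ has no positive eigenvalues in $(0,16)$, a Kato-type argument (absence of embedded eigenvalues / unique continuation at non-threshold energies, as already used in the proof of Lemma \ref{LAP-lemma}) forces $\phi=0$, whence $\psi=-Uv\phi=0$. This is the step I expect to be the main obstacle, because it requires knowing that the generalized eigenfunction $\phi$, a priori only in a weighted $\ell^2$ space, is actually trivial — this is exactly where the no-embedded-eigenvalue assumption and the fine structure of the free resolvent \eqref{kernel of R0 boundary} on $(0,2)$ enter.

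Finally, granting invertibility, the identity \eqref{inverti relation} is a formal resolvent computation justified in the weighted spaces. Starting from the second resolvent identity $R_V(z)=R_0(z)-R_0(z)VR_V(z)$ for $\mathrm{Im}\,z\neq0$ and factoring $V=vUv$, one obtains $R_V(z)V=R_0(z)v\,(U+vR_0(z)v)^{-1}\,v$ by the standard algebraic manipulation (multiply the resolvent identity by $v$ on the left after sandwiching, collect terms, and invert $U+vR_0(z)v$ on $\ell^2(\Z)$). Passing to the boundary value $z=\mu^4+i0$ using Lemma \ref{LAP-lemma} for $R^{+}_V$ and $R^{+}_0$, the continuity of $\mu\mapsto M(\mu)$ and of $M\mapsto M^{-1}$ on the set where the inverse exists, yields \eqref{inverti relation} as an identity in $\B(\ell^{2,s}(\Z),\ell^{2,-s}(\Z))$ for $\tfrac12<s\leq[\beta]$. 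I would close by remarking that uniformity of the bound on $M^{-1}(\mu)$ over compact subsets of $(0,2)$ follows from norm-continuity, which is what is needed for the later expansions near the thresholds $\mu=0$ and $\mu=2$.
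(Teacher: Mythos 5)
The paper does not actually prove this lemma: it is quoted verbatim from \cite[Corollary 2.5]{HY25}, so there is no in-paper argument to compare with line by line. Judged on its own terms, your outline has the right general shape (symmetric resolvent identity, compactness of $vR_0^+(\mu^4)v$, Fredholm alternative), but it contains one genuine gap and bypasses the shortest route.

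The gap is in the kernel-triviality step. From $M(\mu)\psi=0$ you correctly produce $\phi=R_0^+(\mu^4)v\psi\in\ell^{2,-s}(\Z)$ with $H\phi=\mu^4\phi$, and then conclude $\phi=0$ by invoking the absence of eigenvalues in $(0,16)$. But that hypothesis only excludes $\ell^2$ eigenfunctions, whereas your $\phi$ is a priori only in a weighted space and satisfies the outgoing equation $\phi=-R_0^+(\mu^4)V\phi$. To rule it out you must first upgrade $\phi$ to $\ell^2$: take the imaginary part of $0=\langle M(\mu)\psi,\psi\rangle$, use the positivity of $\tfrac{1}{2i}\bigl(R_0^+-R_0^-\bigr)(\mu^4)$ to conclude that $\widehat{v\psi}$ vanishes at the frequencies $\pm\theta_+(\mu)$, and only then does the oscillatory part of the kernel \eqref{kernel of R0 boundary} drop out at infinity and leave a decaying solution (an Agmon-type bootstrap is still needed to get genuine $\ell^2$ membership when $\beta$ is close to $1$). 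This absence-of-interior-resonances argument is the real content of the invertibility claim and is of essentially the same depth as the LAP itself; it cannot be replaced by the bare no-embedded-eigenvalue hypothesis, which is exactly why you flagged this step as the obstacle. (A smaller slip: multiplication by $v$ maps $\ell^{2,-s}(\Z)$ into $\ell^2(\Z)$ only for $s\le\beta/2$, not for all $s<\beta-\tfrac12$; this does not affect your choice of $s$ since $\beta>1$.)

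There is also a shorter proof, consistent with the statement being a corollary of Lemma \ref{LAP-lemma}: since the LAP already furnishes $R_V^+(\mu^4)\in\B(\ell^{2,s},\ell^{2,-s})$ for $\mu^4\notin\sigma_p(H)$, one can write down the candidate inverse $M^{-1}(\mu)=U-UvR_V^+(\mu^4)vU$ (bounded on $\ell^2$ because $v$ intertwines $\ell^2$ with the weighted spaces) and verify $M(\mu)M^{-1}(\mu)=M^{-1}(\mu)M(\mu)=I$ together with \eqref{inverti relation} purely algebraically from the second resolvent identity and the relations $V=vUv$, $U^2=I$. This bypasses the Fredholm alternative and the delicate resonance argument entirely, at the price of using the perturbed LAP as input --- which is precisely what Lemma \ref{LAP-lemma} supplies.
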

Prior to presenting the asymptotic expansions of $M^{-1}(\mu)$, we first recall another characterization of zero and sixteen resonances of $H$ established in \cite[Remark 5.3]{HY25}, which provides a direct approach to compute $M^{-1}(\mu)$ via Von-Neumann series expansion.
Let
$$\left<f,g\right>:=\sum\limits_{m\in\Z}^{}f(m)\overline{g(m)},\quad f,g\in\ell^2(\Z).$$
For simplicity, we denote the kernel of the operator $\mcaT$ restricted to the space $X$ by ${\rm Ker}\mcaT\big|_{X}=\{f\in X:\mcaT f=0\}$.
\begin{definition}\label{definition at zero}\textbf{{\rm(}Zero resonances{\rm)}} Let $H=\Delta^2+V$ with $|V(n)|\lesssim \left<n\right>^{-\beta}$ for some $\beta>0$ and  $G_j$~{\rm(}$j=-1,0,1,3${\rm)} be the integral operator with kernel $G_{j}(n,m)$:
\begin{align*}
G_{-1}(n,m)&=\frac{1}{8}-\frac{1}{2}|n-m|^2,\quad G_{0}(n,m)=\frac{1}{12}\left(|n-m|^3-|n-m|\right),\\
G_{1}(n,m)&=\frac{1}{3}|n-m|^4-\frac{5}{6}|n-m|^2+\frac{3}{16},\\
G_{3}(n,m)&=|n-m|^6-\frac{35}{4}|n-m|^4+\frac{259}{16}|n-m|^2-\frac{225}{64}.
\end{align*}
Let $I$ be the identity operator and define
\begin{equation*}  P:=\left\|V\right\|^{-1}_{\ell^1}\left<\cdot,v\right>v,\quad Q:=I-P,\quad T:=U+vG_0v.
\end{equation*}
Let $S_0$ be the orthogonal projection onto the kernel space $ {\rm Ker}QvG_{-1}vQ\big|_{Q\ell^2(\Z)}$
and denote by $D_0$ the inverse of $QvG_{-1}vQ+S_0$ on $Q\ell^2(\Z)$. We say that
\begin{itemize}
\item[{\rm(i)}] ${\bm{0}}$ is \textbf{a regular point of} ${\bm{H}}$ if $T_0:=S_0TS_0:S_0\ell^2(\Z)\rightarrow S_0\ell^2(\Z)$ is invertible on $S_0\ell^2(\Z)$.
\vskip0.15cm
\item[{\rm(ii)}] Assume that $T_0$ is not invertible on $S_0\ell^2(\Z)$. Let $S_1$ be the orthogonal projection onto the kernel space ${\rm Ker}T_0\big|_{S_0\ell^2(\Z)}$.
We say that ${\bm{0}}$ is \textbf{a first kind resonance of} ${\bm{H}}$ if
\begin{align*}
T_1=S_1vG_1vS_1+\frac{8}{\|V\|_{\ell^1}}S_1vG_{-1}vPvG_{-1}vS_1+64S_1TD_0TS_1
\end{align*}
is invertible on $S_1\ell^2(\Z)$.
\vskip0.15cm
\item[{\rm(iii)}] Assume that $T_1$ is not invertible on $S_1\ell^2(\Z)$. Let $S_2$ be the orthogonal projection onto the kernel space ${\rm Ker}T_1\big|_{S_1\ell^2(\Z)}$ and
denote by $D_2$ the inverse of $T_1+S_2$ on $S_1\ell^2(\Z)$. Then we say that ${\bm{0}}$ is \textbf{a second kind resonance of} ${\bm{H}}$ if
\begin{align*}
T_2&=\frac{1}{6!}\Big(S_2vG_3vS_2-\frac{8\cdot6!}{\|V\|_{\ell^1}}S_2{T}^2S_2-\frac{6!}{64}S_2vG_1vD_0vG_1vS_2\Big)\notag\\
&\quad+\frac{64}{\|V\|^2_{\ell^1}}\big(S_2TvG_{-1}vD_0-\frac{\|V\|_{\ell^1}}{8}S_2vG_1vD_0TD_0\big)D_2\\
&\quad\times \big(D_0vG_{-1}vTS_2-\frac{\|V\|_{\ell^1}}{8}D_0TD_0vG_1vS_2\big)\notag
\end{align*}
is invertible on $S_2\ell^2(\Z)$.
\end{itemize}
\end{definition}

\begin{definition}\label{defini of sixteen}\textbf{{\rm(}Sixteen resonances{\rm)}}
Let $H=\Delta^2+V$ with $|V(n)|\lesssim \left<n\right>^{-\beta}$ for some $\beta>0$ and  $\widetilde{G}_j~(j=0,1,2)$ be the integral operator with kernel $\widetilde{G}_j(n,m)$:
\begin{align*}
    \widetilde{G}_{0}(n,m)&=\frac{1}{32\sqrt2}\big(2\sqrt2|n-m|-{(2\sqrt2-3)}^{|n-m|}\big),\quad \widetilde{G}_{1}(n,m)=2|n-m|^2-\frac{13}{8},\\
  \widetilde{G}_{2}(n,m)&=-\frac{1}{24}|n-m|^3+\frac{5}{48}|n-m|-\big(2\sqrt2-3\big)^{|n-m|}\Big(\frac{\sqrt2}{2}|n-m|-\frac{1}{8}+\frac{15}{256\sqrt2}\Big).
\end{align*}
Define
\begin{equation}\label{J}
\widetilde{P}:=\left\|V\right\|^{-1}_{\ell^1}\left<\cdot,\tilde{v}\right>\tilde{v},\quad \widetilde{Q}:=I-\widetilde{P},\quad \widetilde{T}:=U+\tilde{v}\widetilde{G}_0\tilde{v},\quad \tilde{v}(n)=(Jv)(n):=(-1)^nv(n).
\end{equation}
\begin{itemize}
    \item [{\rm(i)}] ${\bm{16}}$ is \textbf{a regular point of} ${\bm{H}}$ if $\widetilde{T}_0:=\widetilde{Q}\widetilde{T}\widetilde{Q}$ is invertible on $\widetilde{Q}\ell^2(\Z)$.
    \vskip0.15cm
 \item [{\rm(ii)}] Assume that $\widetilde{T}_0$ is not invertible on $\widetilde{Q}\ell^2(\Z)$. Let $\widetilde{S}_0$ be the orthogonal projection onto the kernel space ${\rm Ker}\widetilde{T}_0\big|_{\widetilde{Q}\ell^2(\Z)}$. We say that
 ${\bm{16}}$ is \textbf{a resonance of} ${\bm{H}}$ if
 \begin{equation*}
\widetilde{T}_1=\widetilde{S}_0\tilde{v}\widetilde{G}_1\tilde{v}\widetilde{S}_0+\frac{32}{\|V\|_{\ell^1}}\widetilde{S}_0\widetilde{T}^2\widetilde{S}_0\ \ is\ \  invertible\  on \ \ \widetilde{S}_0\ell^2(\Z).
\end{equation*}
\vskip0.15cm
\item[{\rm(iii)}] Assume that $\widetilde{T}_1$ is not invertible on $\widetilde{S}_0\ell^2(\Z)$. Let $\widetilde{S}_1$ be the orthogonal projection onto the kernel space $ {\rm Ker}\widetilde{T}_1\big|_{\widetilde{S}_0\ell^2(\Z)}$. We say that
${\bm{16}}$ is \textbf{an eigenvalue of} ${\bm{H}}$ if
$\widetilde{T}_2=\widetilde{S}_1\tilde{v}\widetilde{G}_2\tilde{v}\widetilde{S}_1$ is invertible on $\widetilde{S}_1\ell^2(\Z)$.
\end{itemize}
\end{definition}
\begin{remark}
{\rm (1) We point out that $G_0,J\widetilde{G}_0J$ actually are the fundamental solutions of $\Delta^2$ and $\Delta^2-16$, respectively, i.e., $\Delta^2G_0=\delta$ and $(\Delta^2-16)J\widetilde{G}_0J=\delta$.
\vskip0.1cm
(2)~When $\beta>9$, we can obtain concise expressions of the kernel subspaces, i.e., orthogonal projection spaces $S_j\ell^2(\Z)$ and $\widetilde{S}_j\ell^2(\Z)$ above.
\begin{itemize}
    \item \underline{Orthogonal projection spaces $S_j\ell^2(\Z)$.}
\end{itemize}
\begin{align*}
 S_0\ell^2(\Z)&={\rm Ker}QvG_{-1}vQ\big|_{Q\ell^2(\Z)}=\big\{f\in\ell^2(\Z):\left<f,v_{k}\right>=0,\ k=0,1\big\},\quad v_k(n)=n^kv(n),\\
S_1\ell^2(\Z)&={\rm Ker}T_0\big|_{S_0\ell^2(\Z)}=\big\{f\in\ell^2(\Z):\left<f,v_{k}\right>=0,\ k=0,1,\ S_0Tf=0\big\},\\
S_2\ell^2(\Z)&={\rm Ker}T_1\big|_{S_1\ell^2(\Z)}=\big\{f\in\ell^2(\Z):\left<f,v_{k}\right>=0,\ k=0,1,2,\ QTf=0\big\},\\
S_3\ell^2(\Z)&:={\rm Ker}T_2\big|_{S_2\ell^2(\Z)}=\big\{f\in\ell^2(\Z):\left<f,v_{k}\right>=0,\ k=0,1,2,3,\ Tf=0\big\}.
\end{align*}
\begin{itemize}
    \item \underline{Orthogonal projection spaces $\widetilde{S}_j\ell^2(\Z)$.}
\end{itemize}
\begin{align*}
\widetilde{S}_0\ell^2(\Z)&={\rm Ker}\widetilde{T}_0\big|_{\widetilde{Q}\ell^2(\Z)}=\big\{f\in\ell^2(\Z):\left<f,\tilde{v}\right>=0,\ \widetilde{Q}\widetilde{T}f=0\big\},\quad \tilde{v}_k=Jv_k,\\
\widetilde{S}_1\ell^2(\Z)&={\rm Ker}\widetilde{T}_1\big|_{\widetilde{S}_0\ell^2(\Z)}=\big\{f\in\ell^2(\Z):\left<f,\tilde{v}_k\right>=0,\ k=0,1,\ \widetilde{T}_0f=0\big\},\\
\widetilde{S}_2\ell^2(\Z)&:={\rm Ker}\widetilde{T}_2\big|_{\widetilde{S}_1\ell^2(\Z)}=\big\{f\in\ell^2(\Z):\left<f,\tilde{v}_k\right>=0,\ k=0,1,\ \widetilde{T}_2f=0\big\}.
\end{align*}
We note that these spaces have the following inclusion relations:
\begin{align*}
&S_3\ell^2(\Z)\subseteq S_2\ell^2(\Z)\subseteq S_1\ell^2(\Z)\subseteq S_0\ell^2(\Z)\subseteq Q\ell^2(\Z),\\
&\widetilde{S}_2\ell^2(\Z)\subseteq\widetilde{S}_1\ell^2(\Z)\subseteq \widetilde{S}_0\ell^2(\Z) \subseteq \widetilde{Q}\ell^2(\Z).
\end{align*}
Particularly, we can show that $S_3\ell^2(\Z)=\{0\}$ and $\widetilde{S}_2\ell^2(\Z)=\{0\}$, which completes the entire inversion process of $M^{-1}(\mu)$. For a detailed proof of these facts, we refer to \cite[Lemma 5.2]{HY25}.

 In other word, Definitions \ref{definition at zero}$\sim$\ref{defini of sixteen} can be simply expressed as:
\begin{itemize}
\item[{\rm(i)}] $0$ is a regular point of $H$ if and only if $S_1\ell^2(\Z)=\{0\}$.
\item[{\rm(ii)}] $0$ is a first kind resonance of $H$ if and only if $S_1\ell^2(\Z)\neq\{0\}$ and $S_2\ell^2(\Z)=\{0\}$.
\item[{\rm(iii)}] $0$ is a second kind resonance of $H$ if and only if $S_2\ell^2(\Z)\neq\{0\}$.
\item[{\rm(iv)}] $16$ is a regular point of $H$ if and only if $\widetilde{S}_0\ell^2(\Z)=\{0\}$.
\item[{\rm(v)}] $16$ is a resonance of $H$ if and only if $\widetilde{S}_0\ell^2(\Z)\neq\{0\}$ and $\widetilde{S}_{1}\ell^2(\Z)=\{0\}$.
\item[{\rm(vi)}] $16$ is an eigenvalue of $H$ if and only if $\widetilde{S}_{1}\ell^2(\Z)\neq\{0\}$.
\end{itemize}

(3) Moreover, we remark that these orthogonal projection spaces $S_j\ell^2(\Z)$ (resp. $\widetilde{S}_j\ell^2(\Z)$) are intimately linked to the solutions of difference equation $H\phi=0$~(resp. $H\phi=16\phi$) in suitable weighted space $W_{\sigma}(\Z)$. More precisely, we have
\begin{lemma}{\rm(\cite[Lemma 6.1]{HY25})}\label{charac of solution and space}
Let $H=\Delta^2+V$ on $\Z$ and $|V(n)|\lesssim \left<n\right>^{-\beta}$ with $\beta>9$, then
\begin{itemize}
\item [{\rm(i)}]$f\in S_1\ell^2(\Z)\Longleftrightarrow \exists\ \phi\in W_{3/2}(\Z)$ such that $H\phi=0$. Moreover, $f=Uv\phi$ and $\phi(n)=-(G_0vf)(n)+c_1n+c_2$,
    where
    \begin{equation*}
    c_1=\frac{\left<Tf,v'\right>}{\|v'\|^2_{\ell^2}},\quad c_2=\frac{\left<Tf,v\right>}{\|V\|_{\ell^1}}-\frac{\left<v_1,v\right>}{\|V\|_{\ell^1}}c_1,\quad v'=Q(v_1)=v_1-\frac{\left<v_1,v\right>}{\|V\|_{\ell^1}}v.
    \end{equation*}
\item [{\rm(ii)}]$f\in S_2\ell^2(\Z)\Longleftrightarrow \exists\ \phi\in W_{1/2}(\Z)$ such that $H\phi=0$. Moreover, $f=Uv\phi$ and
    $$\phi=-G_0vf+\frac{\left<Tf,v\right>}{\|V\|_{\ell^1}}.$$
\item [{\rm(iii)}] $f\in S_3\ell^2(\Z)\Longleftrightarrow \exists\ \phi\in \ell^2(\Z)$ such that $H\phi=0$. Moreover, $f=Uv\phi$ and $\phi=-G_0vf$.
\item [{\rm(iv)}] $f\in \widetilde{S}_0\ell^2(\Z)\Longleftrightarrow \exists\ \phi\in W_{1/2}(\Z)$ such that $H\phi=16\phi$. Moreover, $f=Uv\phi$ and
    \begin{equation*}
    \phi=-J\widetilde{G}_{0}\tilde{v}f+J\frac{\big<\widetilde{T}f,\tilde{v}\big>}{\|V\|_{\ell^1}}.
    \end{equation*}
\item [{\rm(v)}] $f\in \widetilde{S}_1\ell^2(\Z)\Longleftrightarrow \exists\ \phi\in \ell^2(\Z)$ such that $H\phi=16\phi$. Moreover, $f=Uv\phi$ and $\phi=-J\widetilde{G}_{0}\tilde{v}f$.
\end{itemize}
\end{lemma}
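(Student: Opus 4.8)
The statement to prove is Lemma~\ref{charac of solution and space}, which establishes a correspondence between the orthogonal projection spaces $S_j\ell^2(\Z)$ (resp. $\widetilde{S}_j\ell^2(\Z)$) and solutions of the difference equations $H\phi=0$ (resp. $H\phi=16\phi$) in the weighted spaces $W_\sigma(\Z)$. I will focus on item (i), as the others follow the same scheme with the appropriate weight and fundamental-solution replacements.

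\medskip

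The plan is to run a Birman--Schwinger type argument adapted to the fourth-order setting. First I would show the forward direction: if $f\in S_1\ell^2(\Z)$, define $\phi$ by the stated formula $\phi(n) = -(G_0 v f)(n) + c_1 n + c_2$ with $c_1,c_2$ as given, and verify directly that $H\phi = 0$ and $\phi\in W_{3/2}(\Z)$. The identity $\Delta^2 G_0 = \delta$ (Remark, part (1)) gives $\Delta^2\phi = -vf$ and $\Delta^2(c_1 n + c_2)=0$, so one must check $vf = V\phi$, i.e. $Uvf = v\phi$ (using $v^2 = |V|$, $Uv = \mathrm{sign}(V)\sqrt{|V|}$). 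This reduces to a computation with the membership conditions defining $S_1\ell^2(\Z)$, namely $\langle f, v_k\rangle = 0$ for $k=0,1$ and $S_0 T f = 0$; the conditions $\langle f,v\rangle=\langle f,v_1\rangle=0$ are exactly what makes the low-order growth of $G_0 v f$ (which grows like $|n-m|^3$) collapse so that $G_0 vf = O(\langle n\rangle^{1})$ plus a term absorbed by the linear correction, placing $\phi$ in $W_{3/2}(\Z)$ but generically not $W_{1/2}(\Z)$. The precise values of $c_1,c_2$ are forced by requiring the $O(|n|^1)$ and $O(1)$ parts of $-G_0 vf$ to cancel against $c_1 n + c_2$ in the equation $v\phi = Uvf$, tested against $v$ and $v' = Q(v_1)$.

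\medskip

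For the converse, suppose $\phi\in W_{3/2}(\Z)$ solves $H\phi = 0$, and set $f := Uv\phi$. Then $f\in\ell^2(\Z)$ since $v$ decays rapidly, and from $\Delta^2\phi = -V\phi = -vf$ one recovers $\phi = -G_0 vf + (\text{polynomial of degree} \le 3)$ by the uniqueness of solutions of $\Delta^2 u = g$ modulo the kernel of $\Delta^2$ (spanned by $1,n,n^2,n^3$); the constraint $\phi\in W_{3/2}(\Z)$ forces the $n^3$ and $n^2$ coefficients to vanish, and then forces the remaining two conditions that identify $f$ as lying in $S_1\ell^2(\Z)$ — that is, $\langle f,v\rangle=\langle f,v_1\rangle = 0$ come from matching the asymptotics of $-G_0vf$ at $n\to\pm\infty$, and $S_0 Tf = 0$ comes from re-expressing $f = Uv\phi = Uv(-G_0vf + c_1 n + c_2) = -vG_0 vf + c_1 v_1 + c_2 v$, i.e. $(U + vG_0 v)f = Tf = c_1 v_1 + c_2 v \in \mathrm{span}\{v,v_1\}$, and then applying $S_0$ (which annihilates $v$ and $v_1$). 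This last computation also pins down $c_1,c_2$ by pairing $Tf$ with $v$ and with $v'$, reproducing the stated formulas.

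\medskip

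The main obstacle I anticipate is the bookkeeping of the polynomial corrections and the weighted-space membership: one must track exactly which moments of $f$ against $1, n, n^2, n^3$ vanish under each hypothesis, and show that $\phi\in W_{3/2}(\Z)$ (rather than merely $W_{5/2}$ or better) corresponds precisely to the two vanishing moments $\langle f,v\rangle = \langle f, v_1\rangle = 0$ together with the algebraic condition $S_0 Tf = 0$ — neither more nor less. This requires the explicit large-$|n-m|$ expansion of the kernel $G_0(n,m) = \tfrac{1}{12}(|n-m|^3 - |n-m|)$, expanding $|n-m|^3 = |n|^3 - 3|n|^2\,\mathrm{sign}(n) m + \cdots$ and using the rapid decay of $v$ to justify termwise summation; the subtle point is that the decay rate $\beta>9$ is exactly what is needed to make $v f = Uv\cdot v\phi$ with $\phi$ of polynomial growth $\langle n\rangle^{3/2-\varepsilon}$ lie in enough weighted $\ell^1$ spaces for all these pairings to converge absolutely. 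Items (ii)--(v) are then handled in parallel: (ii) uses the stronger vanishing (an extra moment) to land in $W_{1/2}$, (iii) uses all four moments plus $Tf = 0$ to land in $\ell^2$, and (iv)--(v) are the analogues at energy $16$ with $G_0$ replaced by $J\widetilde G_0 J$, $v$ by $\tilde v$, and $T$ by $\widetilde T$, using $(\Delta^2 - 16)J\widetilde G_0 J = \delta$ and the fact that the kernel of $\Delta^2 - 16$ on sequences of subexponential growth is spanned by $(-1)^n$ and $(-1)^n n$.
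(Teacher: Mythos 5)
Your proposal is correct and follows the standard Birman--Schwinger correspondence ($f=Uv\phi$, $\phi=-G_0vf+\text{polynomial}$, with $Tf=c_1v_1+c_2v$ forced by $S_0Tf=0$ and the moment conditions controlling the polynomial growth), which is exactly the argument underlying the cited source: note the paper does not prove this lemma here but quotes it from \cite[Lemma 6.1]{HY25}, and the stated formulas for $c_1,c_2$ are precisely what your pairing of $Tf$ against $v$ and $v'=Qv_1$ produces. The only implicit dependency worth flagging is that you start from the simplified descriptions of $S_j\ell^2(\Z)$ and $\widetilde S_j\ell^2(\Z)$ given in the Remark (themselves imported from \cite[Lemma 5.2]{HY25}) rather than from the raw invertibility definitions, but that is consistent with how the paper presents the material.
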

This lemma indicates that
\begin{align*}
S_1\ell^2(\Z)&=\{0\}\Leftrightarrow H\phi=0\ {\rm has\  only\  zero\ solution\ in\ }W_{3/2}(\Z),\\
S_2\ell^2(\Z)&=\{0\}\Leftrightarrow H\phi=0\ {\rm has\  only\  zero\ solution\ in\ }W_{1/2}(\Z),\\
S_3\ell^2(\Z)&=\{0\}\Leftrightarrow H\phi=0\ {\rm has\  only\  zero\ solution\ in\ }\ell^2(\Z),\\
\widetilde{S}_0\ell^2(\Z)&=\{0\}\Leftrightarrow H\phi=16\phi\ {\rm has\  only\  zero\ solution\ in\ }W_{1/2}(\Z),\\
\widetilde{S}_1\ell^2(\Z)&=\{0\}\Leftrightarrow H\phi=16\phi\ {\rm has\  only\  zero\ solution\ in\ }\ell^2(\Z).
\end{align*}
}
\end{remark}

We now give the asymptotic expansions of $M^{-1}(\mu)$ as follows. Let
\begin{equation*}
P_1:=-\frac{2(1+i)}{\left\|V\right\|_{\ell^1}}P,\quad \widetilde{P}_1:=-\frac{32i}{\left\|V\right\|_{\ell^1}}\widetilde{P}.
\end{equation*}
We say that an integral operator $K\in\B(\ell^2(\Z))$ is absolutely bounded if its associated absolute value integral operator $|K|$, defined by the kernel $|K(n,m)|$, is also bounded on $\ell^2(\Z)$.

\begin{lemma}\label{Asymptotic expansion theorem}{\rm(\cite[Theorem 1.8]{HY25} )}
 Let $H=\Delta^2+V$ with $|V(n)|\lesssim \left<n\right>^{-\beta}$ for some $\beta>0$. 
Then we have the following asymptotic expansions on $\ell^2(\Z)$ for $0<\mu<\mu_0$:
\begin{itemize}
\item[{\rm(i)}] if $0$ is a regular point of $H$ and $\beta>15$, then
\begin{align}\label{asy expan of regular 0}
M^{-1}(\mu)&=S_0A_{0}S_0+\mu QA_{1}Q+\mu^2(QA^{0}_{21}Q+S_0A^{0}_{22}+A^{0}_{23}S_0)\notag\\
&\quad+\mu^3(QA^{0}_{31}+A^{0}_{32}Q)+\mu^3P_1+\Gamma^{0}_{4}(\mu),
\end{align}
\item [{\rm(ii)}] if $0$ is a first kind resonance of $H$ and $\beta>19$, then
\begin{align}\label{asy expan of 1st 0}
\begin{split}
M^{-1}(\mu)&=\mu^{-1}S_{1}A_{-1}S_{1}+\big(S_0A^{1}_{01}Q+QA^{1}_{02}S_0\big)+\mu\big(S_0A^{1}_{11}+A^{1}_{12}S_0+QA^{1}_{13}Q\big)\\
&\quad+\mu^2\big(QA^{1}_{21}+A^{1}_{22}Q\big)+\mu^3(QA^{1}_{31}+A^{1}_{32}Q)+\mu^3P_1+\mu^3A^{1}_{33}+\Gamma^{1}_{4}(\mu),
\end{split}
\end{align}
\vskip0.2cm
\item [{\rm(iii)}] if $0$ is a second kind resonance of $H$ and $\beta>27$, then
\begin{align}\label{asy expan 2nd 0}
M^{-1}(\mu)&=\frac{S_{2}A_{-3}S_{2}}{\mu^3}+\frac{S_2A_{-2,1}S_0+S_0A_{-2,2}S_2}{\mu^2}
+\frac{S_2A_{-1,1}Q+QA_{-1,2}S_2+S_0A_{-1,3}S_0}{\mu}\notag\\
 &\quad+\big(S_2A^{2}_{01}+A^{2}_{02}S_2+QA^{2}_{03}S_0+S_0A^{2}_{04}Q\big)+\mu\big(S_0A^{2}_{11}+A^{2}_{12}S_0+QA^{2}_{13}Q\big)\notag\\
&\quad+\mu^2\big(QA^{2}_{21}+A^{2}_{22}Q\big)+\mu^3(QA^{2}_{31}+A^{2}_{32}Q)+\mu^3P_1+\mu^3A^{2}_{33}+\Gamma^{2}_{4}(\mu),
\end{align}
\item[{\rm(iv)}] if $16$ is a regular point of $H$ and $\beta>9$, then
\begin{align}\label{asy expan regular 2}
M^{-1}(2-\mu)=\widetilde{Q}B_{0}\widetilde{Q}+
\mu^{\frac{1}{2}}(\widetilde{Q}B^{0}_{11}+B^{0}_{12}\widetilde{Q})+\mu^{\frac{1}{2}}\widetilde{P}_1+\mu B^{0}_{21}+\Gamma^{0}_{\frac{3}{2}}(\mu),
\end{align}
\item [{\rm(v)}] if $16$ is a resonance of $H$ and $\beta>13$, then
\begin{align}\label{asy expan resonance 2}
M^{-1}(2-\mu)&=\mu^{-\frac{1}{2}}\widetilde{S}_0B_{-1}\widetilde{S}_0+\big(\widetilde{S}_0B^{1}_{01}+B^{1}_{02}\widetilde{S}_0+\widetilde{Q}B^{1}_{03}\widetilde{Q}\big)+
\mu^{\frac{1}{2}}(\widetilde{Q}B^{1}_{11}+B^{1}_{12}\widetilde{Q})\notag\\
&\quad+\mu^{\frac{1}{2}}\widetilde{P}_1+\mu B^{1}_{21}+\Gamma^{1}_{\frac{3}{2}}(\mu),
\end{align}
\item [{\rm(vi)}] if $16$ is an eigenvalue of $H$ and $\beta>17$, then
\begin{align}\label{asy expan eigenvalue 2}
M^{-1}(2-\mu)&=\mu^{-1}\widetilde{S}_1B_{-2}\widetilde{S}_1+\mu^{-\frac{1}{2}}\big(\widetilde{S}_0B_{-1,1}\widetilde{Q}+\widetilde{Q}B_{-1,2}\widetilde{S}_0\big)+\big(\widetilde{Q}B^{2}_{01}+B^{2}_{02}\widetilde{Q}\big)\notag\\
&\quad +\mu^{\frac{1}{2}}(\widetilde{Q}B^{2}_{11}+B^{2}_{12}\widetilde{Q})+\mu^{\frac{1}{2}}\widetilde{P}_1 +\mu B^{2}_{21}+\Gamma^{2}_{\frac{3}{2}}(\mu),
\end{align}
\end{itemize}
where $A_0,A_1,A_{-1},A_{-3},A^{i}_{jk},A_{j,k},B_0,B_{-1},B_{-2},B^{i}_{jk},B_{j,k}$ are $\mu$-independent bounded operators on $\ell^2(\Z)$ and $\Gamma^{i}_\ell(\mu)$ are $\mu$-dependent bounded operators on $\ell^2(\Z)$ such that all the operators appeared in the right hand sides of \eqref{asy expan of regular 0}$\sim$\eqref{asy expan eigenvalue 2} are absolutely bounded. Moreover, $\Gamma^{i}_\ell(\mu)$ satisfies the following estimates:
   \begin{equation}\label{estimate of Gamma}
\|\Gamma^{i}_{\ell}(\mu)\|_{\ell^2\rightarrow\ell^2}+\mu\big\|\partial_{\mu}(\Gamma^{i}_{\ell}(\mu))\big\|_{\ell^2\rightarrow\ell^2}\lesssim\mu^{\ell}.
\end{equation}
\end{lemma}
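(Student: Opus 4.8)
The plan is to obtain all six expansions from a single mechanism: first an asymptotic expansion of the free resolvent kernel at each threshold, then an iterated symmetric--resolvent (Feshbach--Schur, i.e.\ Jensen--Nenciu) inversion of $M(\mu)$. Since the statement is \cite[Theorem 1.8]{HY25}, I only sketch the strategy. \emph{Expansion of the free resolvent.} Starting from the explicit kernel in Lemma \ref{lemma of kernel R0 boundary}, one Taylor expands the scalar functions $a_1(\mu),a_2(\mu)$ and the phases $\theta_+(\mu)=-\mu-\tfrac{\mu^3}{24}+O(\mu^5)$, $b(\mu)=-\mu+\tfrac{\mu^3}{24}+O(\mu^5)$ and collects powers of $\mu$ inside the bracket of \eqref{kernel of R0 boundary}. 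A leading cancellation — the $O(\mu)$ coefficient vanishes identically — gives, for $n,m\in\Z$,
\[
R_0^\pm(\mu^4,n,m)=\frac{\pm i-1}{4\mu^3}\,\mathbf 1(n,m)+\frac{\pm i+1}{4\mu}\,G_{-1}(n,m)+G_0(n,m)+\mu\,c_1G_1(n,m)+\mu^3 c_3G_3(n,m)+\cdots,
\]
where $\mathbf 1(n,m)\equiv1$, the polynomial operators $G_{-1},G_0,G_1,G_3$ of Definition \ref{definition at zero} are the successive coefficients, and the number of retained terms determines in which weighted space $\B(\ell^{2,s}(\Z),\ell^{2,-s}(\Z))$ the remainder is controlled — hence how large $\beta$ must be. Sandwiching by $v$ and adding $U$ yields the expansion of $M(\mu)$, whose most singular term is the \emph{rank--one} operator $\tfrac{\pm i-1}{4}\|V\|_{\ell^1}\mu^{-3}P$ (because $v\mathbf 1 v=\|V\|_{\ell^1}P$) and whose $\mu^0$--part is $T=U+vG_0v$.

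\emph{Iterated inversion near $\mu=0$.} I would apply the Jensen--Nenciu lemma successively, peeling off one projection per stage. Since $Q=I-P$ kills the $\mu^{-3}$ term, the first Schur complement of $M(\mu)$ relative to $\ell^2=P\ell^2\oplus Q\ell^2$ leaves on $Q\ell^2$ an operator with leading part $\propto\mu^{-1}QvG_{-1}vQ$; as $QvG_{-1}vQ+S_0$ is invertible on $Q\ell^2$ (this defines $D_0$), a second Schur complement onto $S_0\ell^2$ removes the $\mu^{-1}$ singularity and leaves an operator whose $\mu^0$--part is $T_0=S_0TS_0$. If $0$ is a regular point, $T_0$ is invertible on $S_0\ell^2$, the iteration stops, and reassembling the blocks yields \eqref{asy expan of regular 0}. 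Otherwise one continues onto $S_1\ell^2=\ker T_0|_{S_0\ell^2}$ (first kind) and, if needed, onto $S_2\ell^2=\ker T_1|_{S_1\ell^2}$ (second kind): because $S_1T_0=0$ and $S_2T_1=0$, the relevant diagonal blocks start at higher order while the off--diagonal blocks, inverted against the only conditionally invertible blocks $T_0+S_1$ and $T_1+S_2$, feed inverse powers of $\mu$ back into the $S_1\ell^2$-- and $S_2\ell^2$--components; the algebra produces precisely the operators $T_1,T_2$ of Definition \ref{definition at zero}, the $\mu^{-1}$ leading term of \eqref{asy expan of 1st 0} and the $\mu^{-3}$ leading term of \eqref{asy expan 2nd 0}. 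The facts $S_3\ell^2=\{0\}$ and $\widetilde S_2\ell^2=\{0\}$ (the Remark after Definition \ref{defini of sixteen}) ensure the process terminates. Throughout, the universal term $\mu^3P_1=-\tfrac{2(1+i)}{\|V\|_{\ell^1}}\mu^3P$ is exactly the leading behaviour of the diagonal block $(PM(\mu)P)^{-1}$, and is therefore common to all three low--energy cases; similarly $\mu^{1/2}\widetilde P_1$ below is $(\widetilde PM(2-\mu)\widetilde P)^{-1}$ at leading order.

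\emph{Threshold $16$.} Using $R_0(z)=\tfrac{1}{2\sqrt z}\big(R_{-\Delta}(\sqrt z)-R_{-\Delta}(-\sqrt z)\big)$ together with $JR_{-\Delta}^\pm(\mu^2)J=-R_{-\Delta}^\mp(4-\mu^2)$ and $J^2=I$, I would rewrite $M(2-\mu)$ in terms of $R_{-\Delta}^\mp$ near the \emph{non--degenerate} threshold of $-\Delta$, whose kernel $\tfrac{\pm ie^{\pm i\tilde\theta_+|n-m|}}{2\sin\tilde\theta_+}$ expands in half--integer powers of $\mu$ with coefficients $\widetilde G_0,\widetilde G_1,\widetilde G_2$. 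Conjugation by $J$ turns $P,Q,T,v$ into $\widetilde P,\widetilde Q,\widetilde T,\tilde v$, and the same Feshbach--Schur peeling — now at most three Schur complements, onto $\widetilde Q\ell^2$, then $\widetilde S_0\ell^2=\ker\widetilde T_0|_{\widetilde Q\ell^2}$, then $\widetilde S_1\ell^2=\ker\widetilde T_1|_{\widetilde S_0\ell^2}$, with $\widetilde S_2\ell^2=\{0\}$ — produces \eqref{asy expan regular 2}, \eqref{asy expan resonance 2} and \eqref{asy expan eigenvalue 2}.

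\emph{Absolute boundedness, remainders, and the main obstacle.} Finally I would check that every operator on the right--hand sides is absolutely bounded and that the $\Gamma$--remainders obey \eqref{estimate of Gamma}. The operators $vG_jv$ and $\tilde v\widetilde G_j\tilde v$ are finite rank (a polynomial in $|n-m|$ times $v(n)v(m)$), hence absolutely bounded once $\langle\cdot\rangle^kV$ is summable for the relevant $k$; the inverses $D_0,D_2$ and the coefficient operators $A^i_{jk},B^i_{jk}$ are finite--rank perturbations of identities on the pertinent subspaces, so absolute boundedness survives the algebraic recombination; and the remainder bounds follow by differentiating the resolvent expansion once and invoking the quantitative LAP of Lemma \ref{LAP-lemma}. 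The hard part is the \emph{degeneracy at $\mu=0$}: the multi--stage iteration (up to four Schur complements), the precise bookkeeping of which weight $|n-m|^k$ — equivalently which power of $\langle n\rangle$ in the decay of $V$ — enters at each stage (leading to $\beta>15,19,27$), and above all the verification that the singular intermediate terms reassemble into genuinely bounded, indeed absolutely bounded, operators, is exactly the content of \cite[Theorem 1.8]{HY25}, to which we refer for the full details.
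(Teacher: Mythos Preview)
Your proposal is correct and aligns with the approach indicated in the paper. Note that the paper does not give its own proof of this lemma: it is stated as a citation of \cite[Theorem~1.8]{HY25}, with the accompanying remark that the extra $\mu^3$ terms (near $\mu=0$) and the additional terms near $\mu=2$ ``can be extracted from the proof given in \cite[Section~5]{HY25}'' by following the analogous arguments. Your sketch---Taylor expansion of $R_0^\pm(\mu^4)$ into the polynomial kernels $G_j$ (resp.\ $\widetilde G_j$), followed by an iterated Jensen--Nenciu/Feshbach--Schur inversion peeling off $P,Q,S_0,S_1,S_2$ (resp.\ $\widetilde P,\widetilde Q,\widetilde S_0,\widetilde S_1$)---is precisely the mechanism referred to there and in the paper's introductory discussion (``the basic idea behind the expansions of $M^{-1}(\mu)$ is the Neumann expansion, which in turn depends on the expansion of $R_0^+(\mu^4)$'').
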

\begin{remark}
{\rm We note that in \cite[Theorem 1.8]{HY25}, the precise information of the $\mu^3$ term in $M^{-1}(\mu)$ near $\mu = 0$ is not required. However, for our analysis of $\ell^p$ boundedness, this detailed information becomes essential and can be extracted from the proof given in \cite[Section 5]{HY25}. Furthermore, we require more terms in the expansion of $M^{-1}(\mu)$ around $\mu=2$, which can also be obtained by following the analogous arguments in \cite[Section 5]{HY25}.
}
\end{remark}

\section{The  low energy part $\mcaK_1$}\label{sect of mcaK1}
This section aims to establish the $\ell^p$ boundedness of the low energy part $\mcaK_1$. Namely, 
\begin{theorem}\label{theorem of W1}
Let $H=\Delta^2+V$ with $|V(n)|\lesssim \left<n\right>^{-\beta}$ for some $\beta>0$. Suppose that $H$ has no positive eigenvalues in the interval $\rm{(}0,16\rm{)}$. If
\begin{align*}
 \beta>\left\{\begin{aligned}&15,\ 0\ is\ a\ regular\ point\ of\ H,\\
&19,\ 0\ is\ a\ first\ kind\ resonance\ of\ H,\\
&27,\ 0\ is\ a\ second\ kind\ resonance\ of\ H,\end{aligned}\right.
\end{align*}
then $\mcaK_1\in\B(\ell^p(\Z))$ for all $1<p<\infty$.
\end{theorem}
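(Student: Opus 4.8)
The plan is to reduce the $\ell^p$-boundedness of $\mcaK_1$ to that of a finite list of explicit integral operators with kernels $K_A(n,m)$ as in \eqref{KA}, obtained by substituting the asymptotic expansion of $M^{-1}(\mu)$ near $\mu=0$ (Lemma \ref{Asymptotic expansion theorem}, parts (i)--(iii) according to the resonance type) into the representation \eqref{mcaKj} for $j=1$. First I would record the behavior of the building blocks of the free resolvent kernel \eqref{kernel of R0 boundary} near $\mu=0$: $\theta_+(\mu)=-\mu+o(\mu)$ and $b(\mu)=-\mu+o(\mu)$, together with the smoothness of $a_1(\mu),a_2(\mu)$ on $[0,\mu_0]$, so that $R^\pm_0(\mu^4)$ really is a small perturbation of its continuous analogue but with a genuinely different (two-term, one oscillatory and one exponentially decaying) structure. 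The point is that on the support of $\chi_1$ the only singularity is the $\mu^{-3}$ factor, and this must be absorbed using the orthogonality relations $Qv=0=S_0v_k$ (and, in the resonant cases, $S_1,S_2$ orthogonal to the $v_k$) built into the projections appearing in \eqref{new asy expan regular 0} and its resonant analogues \eqref{asy expan of 1st 0}, \eqref{asy expan 2nd 0}.

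The central technical device is a \emph{cancellation lemma} (Lemma \ref{cancelation lemma}): using the Taylor expansions of $e^{\mp i\theta_+|n-m|}$ and $e^{b(\mu)|n-m|}$ in the small parameter and the moment conditions defining $Q,S_0,S_1,S_2$, one gains powers of $\mu$ when $R^\pm_0(\mu^4)v$ hits these projections, namely $R^\pm_0(\mu^4)vQ=O(\mu^{-2})$, $R^\pm_0(\mu^4)vS_0=O(\mu^{-1})$, and similarly on the left; in the resonant cases each further projection $S_1,S_2$ buys an additional $\mu$. Armed with this, I would classify every operator $A$ occurring in the expansion of $M^{-1}(\mu)$ into those for which $K_A$ has kernel of order $O(\mu)$ in the relevant integrand (``good'' terms: $S_0A_0S_0$, $\mu^2 QA^0_{21}Q$, the $\mu^2$ and $\mu^3$ mixed $S_0/Q$ terms, the remainder $\Gamma^0_4(\mu)$, and their resonant counterparts including the genuinely singular prefactors $\mu^{-1}S_1A_{-1}S_1$, $\mu^{-3}S_2A_{-3}S_2$ etc., which after applying the cancellation lemma on both sides become harmless) versus those of order $O(1)$ (the ``bad'' terms $\mu QA_1Q$ and $\mu^3P_1$, plus in the resonant cases $\mu^3 A^1_{33}$, $\mu^3 A^2_{33}$ and any diagonal terms not killed by enough projections). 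For the good terms, the strategy is: change variables via \eqref{varible substi 0} (${\rm cos}\theta_+=1-\mu^2/2$), split $R^\pm_0 v A v(R^+_0-R^-_0)$ into a finite linear combination of the three model oscillatory integrals $K^{\pm,j}_0(n,m)$, $j=1,2,3$, whose amplitudes $g_j$ vanish at the origin and whose $L^{\pm,j}_0$ have bounded $\theta_+$-derivatives up to order $2$ (controlled by $\|\langle\cdot\rangle^{2k+4}V\|_{\ell^1}$, which is where the decay thresholds $\beta>15,19,27$ enter), and then integrate by parts twice using the non-stationarity estimate $|b'(\mu)|n|\pm i\theta'_+(\mu)|m||^{-2}\lesssim(|n|+|m|)^{-2}$ to obtain $|K_A(n,m)|\lesssim\langle|n|\pm|m|\rangle^{-2}$; by the Schur test (Lemma \ref{shur test}) such kernels give operators bounded on $\ell^p(\Z)$ for all $1\le p\le\infty$.

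The genuinely delicate point is the ``bad'' class $O(1)$, where the amplitudes do not vanish at $\mu=0$ and two integrations by parts only yield a kernel of the form $c\,(k_1^+(n,m)+k_1^-(n,m)+k_2^+(n,m)+k_2^-(n,m))+O(\langle|n|\pm|m|\rangle^{-2})$ with $k_1^\pm(n,m)=\phi(||n|\pm|m||^2)/(|n|\pm|m|)$ and $k_2^\pm(n,m)=\phi(||n|-|m||^2)/(|n|\pm i|m|)$. These are the analogues of (pieces of) the Hilbert transform but deformed by absolute values and a complex denominator; the corresponding continuous operators are \emph{not} Calder\'on--Zygmund, so Theorem \ref{Tdis lemma} does not apply directly. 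The resolution I would carry out follows \cite{MWY24}: write $k_1^\pm$ and $k_2^\pm$ in terms of the genuine convolution-type kernels $\widetilde k_1(n,m)=\phi(|n-m|^2)/(n-m)$ and $\widetilde k_2^\pm(n,m)=\phi(|n-m|^2)/(n\pm im)$ conjugated by the characteristic functions $\chi_\pm$ of $\Z^\pm$ and by the reflection $\tau$ (via the identities displayed in the outline), reducing matters to the $\ell^p$-boundedness, $1<p<\infty$, of $\widetilde k_1$ and $\widetilde k_2^\pm$, whose continuous counterparts \emph{are} Calder\'on--Zygmund so that the discrete transference theorem of the appendix applies; the reflection and the cut-offs $\chi_\pm$ are bounded on every $\ell^p$, so the composition is too. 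The only real obstacle, and where I expect to spend the most care, is bookkeeping: verifying that after the double integration by parts the error terms in \emph{every} summand of the (rather long) resonant expansions \eqref{asy expan of 1st 0}--\eqref{asy expan 2nd 0} genuinely fall into the $O(\langle|n|\pm|m|\rangle^{-2})$ class, i.e. that the cancellation lemma is applied enough times and with enough moments, and tracking precisely how many weighted-$\ell^1$ moments of $V$ this costs so as to land on the stated $\beta$-thresholds; the $1<p<\infty$ (rather than $1\le p\le\infty$) restriction is then forced exactly by the unavoidable presence of the singular integral terms $K_1$, $K_{P_1}$ in the bad class.
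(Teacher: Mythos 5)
Your overall strategy coincides with the paper's: substitute the expansion of $M^{-1}(\mu)$ into \eqref{mcaKj}, use the cancellation lemma to sort the resulting operators into an $O(\mu)$ class (handled by the change of variables \eqref{varible substi 0}, double integration by parts, and the Schur test, giving boundedness on all of $\ell^p$, $1\le p\le\infty$) and an $O(1)$ class whose kernels produce the singular pieces $k_1^{\pm},k_2^{\pm}$, which are then transferred to the genuine convolution kernels $\widetilde k_1,\widetilde k_2^{\pm}$ via the $\chi_\pm$/reflection identities and the discrete Calder\'on--Zygmund theorem of the appendix. This is exactly the architecture of Propositions \ref{proposition good1 regular}--\ref{proposition 2nd}.

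One concrete misstep: you place the leading resonant terms $\mu^{-1}S_1A_{-1}S_1$ and $\mu^{-3}S_2A_{-3}S_2$ in the ``good'' $O(\mu)$ class, asserting that the cancellation lemma renders them harmless. It does not: the lemma gains only one power of $\mu$ per factor of $S_0$ or $S_1$ (and brings $R_0^{\pm}(\mu^4)vS_2$ from $O(\mu^{-3})$ down to $O(1)$), so for $A=\mu^{-1}S_1A_{-1}S_1$ one gets $\mu^3\cdot\mu^{-1}\cdot O(\mu^{-1})\cdot O(\mu^{-1})=O(1)$, and likewise $O(1)$ for $\mu^{-3}S_2A_{-3}S_2$. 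These terms therefore land in your ``bad'' class: after integration by parts they leave genuine singular-integral contributions (the pieces $K^{(1)}_{-1}$ and $K^{\pm,j}_{-3}$ in the paper, built from $k_1^{\pm},k_2^{\pm}$), which is precisely why $\mcaK_1$ in the resonant cases is only bounded for $1<p<\infty$. The repair is routine --- route them through your own Calder\'on--Zygmund reduction --- but as written the claimed $\langle |n|\pm|m|\rangle^{-2}$ bound for these kernels would fail. A second, smaller point you should anticipate in the second-kind case: the term $\mu^{-3}S_2A_{-3}S_2$ also produces a piece carrying an unbounded weight $|n-m_1|$ against $k_2^{\mp}(n,m)$; this is not absorbed by moment counting alone but requires the extra orthogonality $\left<S_2f,v\right>=0$ to replace $|n-m_1|$ by $|n-m_1|-|n|=O(\left<m_1\right>)$ before summing in $m_1$ via Minkowski.
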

Before proceeding the proof, we present a crucial lemma, which plays a key role in eliminating the singularity of $\mcaK_1$ near $\mu=0$, as detailed below.
\begin{lemma}\label{cancelation lemma}{\rm (\cite[Lemma 4.2]{HY25})}
Let $Q,S_j~(j=0,1,2)$ be the operators defined in Definition \ref{definition at zero}. Then for any $f\in\ell^2(\Z)$, the following statements hold.
\vskip0.2cm
\noindent{\rm(1)}~$(R^{\pm}_0(\mu^4)vQf)(n)=\frac{1}{4\mu^3}\sum\limits_{m\in\Z}\int_{0}^{1}({\rm sign}(n-\rho m))\big({\bm {b_1(\mu)}}e^{\mp i\theta_{+}|n-\rho m|}+{\bm {b_2(\mu)}}e^{b(\mu)|n-\rho m|}\big)d\rho$
\vskip0.2cm
    \qquad \qquad \qquad \qquad\quad $\times v_1(m)(Qf)(m)$,
    \vskip0.2cm
    \qquad \qquad \qquad \quad\ \ $:=\frac{1}{4\mu^3}\sum\limits_{m\in\Z}\mcaB^{\pm}(\mu,n,m)(Qf)(m)$,
    \vskip0.25cm
\noindent{\rm(2)}~$(R^{\pm}_0(\mu^4)vS_jf)(n)=\frac{1}{4\mu^3}\sum\limits_{m\in\Z}\Big[\int_{0}^{1}(1-\rho)\big({\bm{c^{\pm}_1(\mu)}}e^{\mp i\theta_{+}|n-\rho m|}+{\bm{c_2(\mu)}}e^{b(\mu)|n-\rho m|}\big)d\rho\cdot v_2(m)$
\vskip0.2cm
    \qquad \qquad \qquad \qquad\quad $+{\bm{c_3(\mu)}}|n-m|v(m)\Big](S_jf)(m),$
    \vskip0.2cm
    \qquad \qquad \qquad \quad\ \ $:=\frac{1}{4\mu^3}\sum\limits_{m\in\Z}\mcaC^{\pm}(\mu,n,m)(S_jf)(m),$
    \vskip0.25cm
\noindent{\rm(3)}~$(R^{\pm}_0(\mu^4)vS_2f)(n)=\frac{1}{8\mu^3}\sum\limits_{m\in\Z}\Big[\int_{0}^{1}(1-\rho)^2({\rm sign}(n-\rho m))^3\big({\bm{d_1(\mu)}}e^{\mp i\theta_{+}|n-\rho m|}+{\bm{d_2(\mu)}}e^{b(\mu)|n-\rho m|}\big)d\rho$
\vskip0.2cm
\qquad \qquad \qquad \qquad\quad$\times v_3(m)+{\bm{d_3(\mu)}}|n-m|v(m)\Big](S_2f)(m)$,
\vskip0.2cm
    \qquad \qquad \qquad \quad\ \ $:=\frac{1}{8\mu^3}\sum\limits_{m\in\Z}\mcaD^{\pm}(\mu,n,m)(S_2f)(m)$,
    \vskip0.25cm
\noindent{\rm(4)}~$Q(vR^{\pm}_0(\mu^4)f\big)=Qf^{\pm},\quad S_j\big(vR^{\pm}_0(\mu^4)f\big)=S_jf^{\pm}_j,$ where $j=0,1,2$ and
\vskip0.25cm
\begin{itemize}
\item $a_1(\mu)=\frac{1}{\sqrt{1-\frac{\mu^2}{4}}}$,\quad$a_2(\mu)=\frac{-1}{\sqrt{1+\frac{\mu^2}{4}}}$,\quad$b(\mu)={\rm ln} \big(1+\frac{\mu^2}{2}-\mu(1+\frac{\mu^2}{4})^{\frac{1}{2}}\big)$,
    \vskip0.2cm
\item $b_1(\mu)=-\theta_{+}a_1(\mu), \quad b_2(\mu)=-b(\mu)a_2(\mu)$,
\vskip0.2cm
\item $c^{\pm}_1(\mu)=\mp i\theta^2_{+}a_1(\mu), \quad c_2(\mu)=(b(\mu))^2a_2(\mu),\quad c_3(\mu)=\theta_{+}a_1(\mu)+b(\mu)a_2(\mu)$,
    \vskip0.2cm
\item $d_1(\mu)=\theta^3_{+}a_1(\mu), \quad d_2(\mu)=-(b(\mu))^3a_2(\mu),\quad d_3(\mu)=2c_3(\mu),$
\vskip0.2cm
\item $f^{\pm}(n)=\frac{1}{4\mu^3}\sum\limits_{m\in\Z}\mcaB^{\pm}(\mu,m,n)f(m),\quad f^{\pm}_j(n)=\frac{1}{4\mu^3}\sum\limits_{m\in\Z}\mcaC^{\pm}(\mu,m,n)f(m),\ j=0,1,$
\vskip0.2cm
\item $f^{\pm}_2(n)=\frac{1}{8\mu^3}\sum\limits_{m\in\Z}\mcaD^{\pm}(\mu,m,n)f(m)$.
\end{itemize}
\end{lemma}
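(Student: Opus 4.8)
The plan is to read off all four identities directly from the closed form of the free resolvent kernel in Lemma~\ref{lemma of kernel R0 boundary},
\[
R^{\pm}_{0}(\mu^4,n,m)=\tfrac{1}{4\mu^3}\big(\pm i a_1(\mu)e^{\mp i\theta_{+}|n-m|}+a_2(\mu)e^{b(\mu)|n-m|}\big),
\]
combined with the vanishing-moment conditions built into the projections $Q,S_0,S_1,S_2$ of Definition~\ref{definition at zero}: namely $Qv=0$, so that $\langle Qf,v\rangle=\sum_m v(m)(Qf)(m)=0$; the inclusions $S_0\ell^2(\Z),S_1\ell^2(\Z)\subseteq Q\ell^2(\Z)$ together with the extra orthogonality $\langle S_jf,v_1\rangle=0$; and $\langle S_2f,v_k\rangle=0$ for $k=0,1,2$, where $v_k(n)=n^kv(n)$. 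The central device is to interpolate between lattice sites with a parameter $\rho\in[0,1]$, replacing $|n-m|$ by $|n-\rho m|$, and to Taylor-expand $\rho\mapsto e^{\mp i\theta_{+}|n-\rho m|}$ and $\rho\mapsto e^{b(\mu)|n-\rho m|}$ about $\rho=0$ to the order licensed by the available moments, retaining the integral remainder.

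For part (1), since $\sum_m v(m)(Qf)(m)=0$ I would first subtract the $m$-independent values $e^{\mp i\theta_{+}|n|}$ and $e^{b(\mu)|n|}$ from the two exponentials without altering the sum, then apply the fundamental theorem of calculus, $e^{\mp i\theta_{+}|n-m|}-e^{\mp i\theta_{+}|n|}=\int_0^1\partial_\rho e^{\mp i\theta_{+}|n-\rho m|}\,d\rho$, using $\partial_\rho|n-\rho m|=-m\,\mathrm{sign}(n-\rho m)$. Pairing the prefactors $\pm i a_1$ and $a_2$ against these derivatives reproduces precisely the coefficients $b_1(\mu)=-\theta_{+}a_1(\mu)$, $b_2(\mu)=-b(\mu)a_2(\mu)$ and the weight $m v(m)=v_1(m)$ inside $\mcaB^{\pm}$. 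Parts (2) and (3) run on the same scheme but with the second- and third-order expansions, whose remainders are $\int_0^1(1-\rho)\,\partial_\rho^2(\cdots)\,d\rho$ and $\int_0^1\tfrac{(1-\rho)^2}{2}\,\partial_\rho^3(\cdots)\,d\rho$; here the two moments $\langle S_jf,v_k\rangle=0$ ($k=0,1$) and the three moments $\langle S_2f,v_k\rangle=0$ ($k=0,1,2$) annihilate the subtracted Taylor polynomials, the factor $\tfrac12$ in the cubic remainder explains the passage from $\tfrac{1}{4\mu^3}$ to $\tfrac{1}{8\mu^3}$, and the repeated differentiation generates the weights $v_2=m^2v$, $v_3=m^3v$, the factors $(\mathrm{sign}(n-\rho m))^{3}$, and the coefficients $c_1^{\pm},c_2,d_1,d_2$.

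The hard part will be the careful treatment of the kink of $x\mapsto e^{\mp i\theta_{+}|x|}$ and $x\mapsto e^{b(\mu)|x|}$ at $x=0$: the maps $\rho\mapsto e^{\,\cdots|n-\rho m|}$ are only piecewise smooth, with $\mathrm{sign}(n-\rho m)$ jumping as $\rho$ crosses $n/m$, so the second (resp.\ third) $\rho$-derivative carries a boundary/Dirac contribution concentrated at $n-\rho m=0$. I expect these localized contributions, weighted by $(1-\rho)$ (resp.\ $\tfrac{(1-\rho)^2}{2}$), to collapse the oscillatory and the exponentially decaying parts into the single local terms $c_3(\mu)|n-m|v(m)$ and $d_3(\mu)|n-m|v(m)$ with $c_3=\theta_{+}a_1+b\,a_2$ and $d_3=2c_3$; pinning down the numerical constants and the $\pm/\mp$ sign bookkeeping consistently across both exponentials is the delicate point, which I would settle by computing the jump sizes of the first (resp.\ second) $\rho$-derivatives and treating the cases $n/m\in(0,1)$ and $n/m\notin[0,1]$ separately.

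Finally, part (4) follows by transposition. The kernel $R^{\pm}_0(\mu^4,n,m)$ depends only on $|n-m|$ and is therefore symmetric in $(n,m)$, while $Q$ and $S_j$ are symmetric projections; hence $Q\,vR^{\pm}_0(\mu^4)$ is the matrix transpose of $R^{\pm}_0(\mu^4)\,vQ$, and likewise with $S_j$ in place of $Q$. Transposing the identities established in (1)--(3)—which simply interchanges $n$ and $m$ in the kernels $\mcaB^{\pm}$, $\mcaC^{\pm}$, $\mcaD^{\pm}$—then yields at once $Q(vR^{\pm}_0(\mu^4)f)=Qf^{\pm}$ and $S_j(vR^{\pm}_0(\mu^4)f)=S_jf^{\pm}_j$ with $f^{\pm}$, $f^{\pm}_j$ and $f^{\pm}_2$ given by the stated transposed kernels $\tfrac{1}{4\mu^3}\mcaB^{\pm}(\mu,m,n)$, $\tfrac{1}{4\mu^3}\mcaC^{\pm}(\mu,m,n)$ and $\tfrac{1}{8\mu^3}\mcaD^{\pm}(\mu,m,n)$.
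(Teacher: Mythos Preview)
Your plan is correct and is essentially the argument of \cite[Lemma~4.2]{HY25}, which the present paper cites without reproving; there is no in-paper proof to compare against line by line.

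One streamlining for parts (2) and (3): rather than tracking Dirac masses at the kink of $|n-\rho m|$ and splitting into the cases $n/m\in(0,1)$ and $n/m\notin[0,1]$, observe that
\[
\widetilde G(x):=\pm i\,a_1(\mu)\,e^{\mp i\theta_+|x|}+a_2(\mu)\,e^{b(\mu)|x|}-c_3(\mu)\,|x|
\]
is genuinely $C^2$ on all of $\R$. Indeed both one-sided limits of $\widetilde G'$ at $0$ vanish (the jump $2c_3$ of $(F^{\pm})'$ is exactly cancelled by that of $c_3\,\mathrm{sign}$), and $\widetilde G''(x)=c_1^{\pm}(\mu)e^{\mp i\theta_+|x|}+c_2(\mu)e^{b(\mu)|x|}$ everywhere. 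The ordinary second-order Taylor formula for $\rho\mapsto\widetilde G(n-\rho m)$ then gives (2) at once: the subtracted $c_3|n-m|$ becomes the extra term in $\mcaC^{\pm}$, and the degree-$\le 1$ Taylor polynomial in $m$ is killed by $\langle S_jf,v_k\rangle=0$ for $k=0,1$. For (3), $\widetilde G''$ is Lipschitz, hence absolutely continuous, so the third-order remainder formula with the a.e.\ derivative $\widetilde G'''(x)=-\mathrm{sign}(x)\big(d_1e^{\mp i\theta_+|x|}+d_2e^{b(\mu)|x|}\big)$ applies directly; the factor $\tfrac12$ in the remainder produces the $\tfrac{1}{8\mu^3}$, and $c_3=\tfrac12 d_3$ accounts for the coefficient. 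This bypasses the case analysis altogether.

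If you do carry out the Dirac route, be warned that your expectation is slightly off: the jump contribution is $2c_3|n-m|$ when $n/m\in(0,1)$ and $0$ otherwise, so it equals $c_3|n-m|$ only modulo the linear-in-$m$ term $c_3\big(m\,\mathrm{sign}(n)-|n|\big)$, which the moments then remove. Your treatment of (1) and (4) is fine as written.
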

\begin{remark}\label{remark of cancelation lemma}
{\rm(1) Noting that $\theta_{+}$, $b(\mu)$ and $c_3(\mu)$ exhibit the following behaviors, respectively:
    $$\theta_{+}=-\mu+o(\mu),\quad b(\mu)=-\mu+o(\mu),\quad c_3(\mu)=-\frac{1}{3}\mu^3-\frac{1}{8}\mu^4+O(\mu^5),\quad \mu\rightarrow0^+.$$
This indicates that, compared to the free resolvent $R^{\pm}_0(\mu^4)=O(\mu^{-3})$~(here $O(\mu^{-3})$ refers to the order of the kernel $R^{\pm}_{0}(\mu^4,n,m)$ with respect to $\mu$ and the same convention applies to the following operators unless otherwise specified), the operators considered in this lemma can decrease the singularity near $\mu=0$. Precisely, we have
\begin{align}\label{order of operators in cancel lemma 0}
\begin{split}
R^{\pm}_0(\mu^4)vQ&=O(\mu^{-2}),\quad R^{\pm}_0(\mu^4)vS_j=O(\mu^{-1})~(j=0,1),\quad  R^{\pm}_0(\mu^4)vS_2=O(1);\\
QvR^{\pm}_0(\mu^4)&=O(\mu^{-2}),\quad S_jvR^{\pm}_0(\mu^4)=O(\mu^{-1})~(j=0,1),\quad  S_2vR^{\pm}_0(\mu^4)=O(1).
\end{split}
\end{align}
\vskip0.15cm
\noindent(2)~However, the form in our discrete setting is far more intricate than its continuous counterpart \cite[Lemma 2.5]{MWY24}. Specifically, compared \eqref{kernel of R0 boundary} with the kernel on the line:
$$R^{\pm}_0(\mu^4,x,y)=\frac{1}{4\mu^3}\big(\pm ie^{\pm i\mu|x-y|}-e^{-\mu|x-y|}\big),\quad x,y\in \R,$$
  we observe that the continuous analogue of $(\theta_+,b(\mu),a_1(\mu),a_2(\mu))$ is $(-\mu,-\mu,1,-1)$. This means that the corresponding ${b}_j(\mu)$, ${c}^{\pm}_1(\mu)$, ${c}_j(\mu)$, ${d}_j(\mu)$ in the continuous case are the polynomials of $\mu$. In particular, $c_3(\mu)$ vanishes identically. We remark that such discrepancy will introduce some additional technical challenges in establishing the $\ell^p$ boundedness of $W_+$ in our discrete setting.}
\end{remark}
\subsection{$0$ is a regular point of $H$}\label{subsec of W1 regular}
In this subsection, we prove the $\ell^p$ boundedness for $\mcaK_1$ when $0$ is a regular point of $H$. First recall from \eqref{mcaKj} that
\begin{equation}\label{kernel of mcaK1}
\mcaK_1=\int_{0}^{2}\mu^3\chi_1(\mu)\big[R^{+}_{0}(\mu^4)v M^{-1}(\mu)v(R^{+}_{0}-R^{-}_0)(\mu^4)\big]d\mu,
\end{equation}
and the expansion \eqref{asy expan of regular 0} of $M^{-1}(\mu)$:
\begin{align*}
\begin{split}
M^{-1}(\mu)=S_0A_0S_0+\mu QA_1Q+\mu^2(QA^{0}_{21}Q+S_0A^{0}_{22}+A^{0}_{23}S_0)+\mu^3(QA^{0}_{31}+A^{0}_{32}Q)+\mu^3P_1+\Gamma^{0}_{4}(\mu),
\end{split}
\end{align*}
then $\mcaK_1$ can be written as a finite sum of the following integral operators:
\begin{equation}\label{decom1 of W1 regular}
\mcaK_1=\sum\limits_{A\in\mcaA_0}^{}K_A+K_{1}+K_{P_1}+K^0_4,
\end{equation}
where $\mcaA_0=\{S_0A_0S_0,\mu^2QA^{0}_{21}Q,\mu^2S_0A^{0}_{22},\mu^2A^{0}_{23}S_0,\mu^3QA^{0}_{31},\mu^3A^{0}_{32}Q\}$ and
\begin{align}
K_A(n,m)&=\int_{0}^{2}\mu^3\chi_1(\mu)\big[R^{+}_{0}(\mu^4)vAv(R^{+}_{0}-R^{-}_0)(\mu^4)\big](n,m)d\mu,\quad A\in\mcaA_0,\notag\\
K_1(n,m)& =\int_{0}^{2}\mu^4\chi_1(\mu)\big[R^{+}_{0}(\mu^4)vQA_1Qv(R^{+}_{0}-R^{-}_0)(\mu^4)\big](n,m)d\mu,\label{kernel of K110}\\
K_{P_1}(n,m)&=\int_{0}^{2}\mu^6\chi_1(\mu)\big[R^{+}_{0}(\mu^4)v P_1v(R^{+}_{0}-R^{-}_0)(\mu^4)\big](n,m)d\mu,\label{kernel of KP1}\\
K^{0}_4(n,m)&=\int_{0}^{2}\mu^3\chi_1(\mu)\big[R^{+}_{0}(\mu^4)v \Gamma^0_4(\mu)v(R^{+}_{0}-R^{-}_0)(\mu^4)\big](n,m)d\mu.\label{kernel of K40}
\end{align}
 Based on \eqref{order of operators in cancel lemma 0}, we can classify the operators in \eqref{decom1 of W1 regular} into the following two groups according to the order of their kernels with respect to $\mu$ as $\mu\rightarrow0^{+}$:
\begin{align*}
O(1):K~(K\in\{K_1,K_{P_1}\}),\quad O(\mu):K~(K\in\{K^0_{4}\}\cup\{K_{A}:A\in\mcaA_0\}).
\end{align*}
  The $\ell^p$ boundedness of $\mcaK_1$ consequently reduces to proving the boundedness of these two operator classes. We will establish this through three propositions.
\vskip0.2cm
To begin with, we deal with the operators in the class $O(\mu)$. 
Prior to this, we give the following Schur test lemma, which will often be used to establish the $\ell^p$-boundedness of integral operators.
\begin{lemma}\label{shur test}
If the kernel $K(n,m)$ satisfies $$\sup_{n\in\Z}\sum\limits_{m\in\Z}|K(n,m)|+\sup_{m\in\Z}\sum\limits_{n\in\Z}|K(n,m)|<\infty,$$ then ${K}\in \B(\ell^{p}(\Z))$ for all $1\leq p\leq \infty.$
\end{lemma}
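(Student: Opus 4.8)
The statement to prove is the Schur test lemma: if the kernel $K(n,m)$ has uniformly bounded row sums and column sums, then $K\in\B(\ell^p(\Z))$ for all $1\le p\le\infty$. This is a classical and standard result, so the proof will be short.

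\medskip

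The plan is to treat the endpoint cases $p=1$ and $p=\infty$ directly, and then obtain the full range $1<p<\infty$ by interpolation (or, alternatively, by a self-contained H\"older argument). Write $C_r=\sup_{n}\sum_{m}|K(n,m)|$ and $C_c=\sup_{m}\sum_{n}|K(n,m)|$ for the two finite quantities in the hypothesis.

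\medskip

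First I would handle $p=\infty$: for $f\in\ell^\infty(\Z)$,
\[
|(Kf)(n)|\le\sum_{m\in\Z}|K(n,m)|\,|f(m)|\le C_r\,\|f\|_{\ell^\infty},
\]
uniformly in $n$, so $\|Kf\|_{\ell^\infty}\le C_r\|f\|_{\ell^\infty}$. Next, for $p=1$ and $f\in\ell^1(\Z)$, Tonelli's theorem gives
\[
\|Kf\|_{\ell^1}\le\sum_{n\in\Z}\sum_{m\in\Z}|K(n,m)|\,|f(m)|=\sum_{m\in\Z}|f(m)|\sum_{n\in\Z}|K(n,m)|\le C_c\,\|f\|_{\ell^1}.
\]
Then the Riesz--Thorin interpolation theorem applied to the two bounds $\B(\ell^1)$ and $\B(\ell^\infty)$ yields $K\in\B(\ell^p(\Z))$ for all $1\le p\le\infty$, with $\|K\|_{\ell^p\to\ell^p}\le C_c^{1/p}C_r^{1/p'}$. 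As a fully elementary alternative avoiding interpolation, for $1<p<\infty$ with conjugate exponent $p'$ one applies H\"older's inequality inside the sum: writing $|K(n,m)|=|K(n,m)|^{1/p'}|K(n,m)|^{1/p}$,
\[
|(Kf)(n)|\le\Big(\sum_{m}|K(n,m)|\Big)^{1/p'}\Big(\sum_{m}|K(n,m)|\,|f(m)|^p\Big)^{1/p}\le C_r^{1/p'}\Big(\sum_{m}|K(n,m)|\,|f(m)|^p\Big)^{1/p},
\]
and then raising to the $p$-th power, summing in $n$, and using Tonelli together with the column-sum bound gives $\|Kf\|_{\ell^p}^p\le C_r^{p/p'}C_c\,\|f\|_{\ell^p}^p$. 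This is entirely routine; there is no genuine obstacle, the only point to be slightly careful about is the use of Tonelli's theorem to interchange the order of the (absolutely convergent) double sum, which is justified since all terms are nonnegative.
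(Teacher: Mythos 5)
Your proof is correct and is the standard argument for the Schur test; the paper states this lemma without proof, treating it as classical. Both routes you offer (Riesz--Thorin interpolation between the $\ell^1$ and $\ell^\infty$ endpoints, or the direct H\"older splitting $|K(n,m)|=|K(n,m)|^{1/p'}|K(n,m)|^{1/p}$) are valid and complete.
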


In particular, as a sufficient condition of this lemma $|K(n,m)|\lesssim \left<|n|-|m|\right>^{-\gamma}$ with $\gamma>1$ will be used often in the proof.
\begin{proposition}\label{proposition good1 regular}
Let $H=\Delta^2+V$ with $|V(n)|\lesssim \left<n\right>^{-\beta}$ for $\beta>15$. Suppose that $H$ has no positive eigenvalues in the interval $\rm{(}0,16\rm{)}$ and $0$ is a regular point of $H$. Let $\mcaA_0$ be defined in \eqref{decom1 of W1 regular}, then for any $A\in\mcaA_0$, $K_A\in\B(\ell^p(\Z))$ for all $1\leq p\leq\infty$.
\end{proposition}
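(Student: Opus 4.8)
The plan is to reduce each kernel $K_A$, $A\in\mcaA_0$, to a finite linear combination of one-dimensional oscillatory integrals and then to estimate those integrals pointwise so that the Schur test (Lemma \ref{shur test}) applies. It suffices to treat $A=S_0A_0S_0$ in detail; the other five members of $\mcaA_0$ carry a compensating power $\mu^2$ or $\mu^3$ and at least as many orthogonal projections $Q$ or $S_0$, so they are strictly more regular at $\mu=0$ and are handled by the same scheme, using whichever orthogonality ($Qv=0$, or $S_0v=S_0v_1=0$) is available. Note that the hypothesis $\beta>15$ is exactly what makes the expansion \eqref{asy expan of regular 0} available (Lemma \ref{Asymptotic expansion theorem}), so that $A$ is in particular an absolutely bounded operator on $\ell^2(\Z)$, and it also ensures finiteness of all the weighted norms $\|\langle\cdot\rangle^{N}V\|_{\ell^1}$ that will occur below.

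First I would substitute the explicit kernel \eqref{kernel of R0 boundary} of $R^{\pm}_0(\mu^4)$, noting that the purely decaying term $a_2(\mu)e^{b(\mu)|\cdot|}$ cancels in $(R^{+}_0-R^{-}_0)(\mu^4,\cdot,\cdot)=\frac{i}{2\mu^3}a_1(\mu)\cos(\theta_+|\cdot|)$. To the two factors adjacent to the projections, $R^{+}_0(\mu^4)vS_0$ on the left and $S_0v(R^{+}_0-R^{-}_0)(\mu^4)$ on the right, I would apply the cancellation Lemma \ref{cancelation lemma}(2) and (4): this rewrites each factor with the exponents $|n-m|$ replaced by $|n-\rho m|$ averaged against $\int_0^1(1-\rho)(\cdot)\,d\rho$, plus a residual term carrying the factor $c_3(\mu)$, and it extracts the amplitude coefficients $c^{\pm}_1(\mu)=\mp i\theta_+^2 a_1(\mu)$, $c_2(\mu)=b(\mu)^2a_2(\mu)$ and $c_3(\mu)=\theta_+a_1(\mu)+b(\mu)a_2(\mu)=-\frac13\mu^3+O(\mu^4)$, each of which vanishes at least to order $\mu^2$ as $\mu\to0^+$. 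Thus each resolvent's $\mu^{-3}$ singularity is reduced to $\mu^{-1}$, and, together with the prefactor $\mu^3\chi_1(\mu)$, the resulting amplitude is $O(\mu)$ near $\mu=0$; any growth in $n$ or $m$ coming from the residual term $c_3(\mu)|n-m|v(m)$ is in turn killed by $S_0v=S_0v_1=0$, and the summations over the intermediate lattice variables are controlled by the absolute boundedness of $A$ together with the fast decay of $v$.

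Next I would apply the change of variables $\cos\theta_+=1-\mu^2/2$, $d\mu=-\sqrt{1-\mu^2/4}\,d\theta_+$ from \eqref{varible substi 0} to those pieces whose phase is of the form $e^{\pm i\theta_+(\cdots)}$, keeping the pieces carrying the factor $e^{b(\mu)|n|}$ in the variable $\mu$. After expanding all products, $K_A(n,m)$ becomes a finite linear combination of oscillatory integrals of the three model shapes from the outline: with phases $e^{-i\theta_+(|n|\pm|m|)}$, $e^{b(\mu)|n|\pm i\theta_+|m|}$, $e^{\pm i\theta_+(|n|+|m|)}$ and amplitudes $g_j\,\chi_1\,L^{\pm,j}_0$, where $g_j$ is smooth up to the threshold endpoint with $g_j(\theta_+)\to0$ as $\theta_+\to0$ (respectively $g_2(\mu)\to0$ as $\mu\to0$) — this vanishing being inherited from the $O(\mu)$ gain above — and $L^{\pm,j}_0$ obeys the uniform bounds $|\partial^k_{\theta_+}L^{\pm,j}_0(\cdot,n,m)|\lesssim\|\langle\cdot\rangle^{2k+4}V\|_{\ell^1}$ for $k=0,1,2$ (and the corresponding $e^{b(\mu)|n|}$-weighted bound for $L^{\pm,2}_0$); the weighted norms of $V$ enter here because the Bernstein-type remainders of Lemma \ref{cancelation lemma} bring in extra powers $|n-\rho m|\lesssim\langle n\rangle\langle m\rangle$. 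Then I would integrate by parts twice in the phase variable: the phase derivative in $\theta_+$ equals $|n|\pm|m|$ for the first and third models, and for the mixed model it equals $b'(\mu)|n|\pm i\theta'_+(\mu)|m|$, whose modulus is $\gtrsim|n|+|m|$ uniformly in $\mu$; the boundary contributions vanish since $g_j$ vanishes at the threshold endpoint and $\chi_1$ is supported away from the other endpoint. This yields $|K^{\pm,j}_0(n,m)|\lesssim\langle|n|\pm|m|\rangle^{-2}$, hence $|K_A(n,m)|\lesssim\langle|n|-|m|\rangle^{-2}+\langle|n|+|m|\rangle^{-2}$; since $\sup_n\sum_m(\langle|n|-|m|\rangle^{-2}+\langle|n|+|m|\rangle^{-2})<\infty$ and likewise with $n$ and $m$ interchanged, Lemma \ref{shur test} gives $K_A\in\B(\ell^p(\Z))$ for all $1\le p\le\infty$.

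The main obstacle is the bookkeeping forced by the fact that, in contrast to the continuous setting, the discrete resolvent kernel \eqref{kernel of R0 boundary} is not built from polynomials in $\mu$: the quantities $\theta_+$, $b(\mu)$, $a_1(\mu)$, $a_2(\mu)$, $c_3(\mu)$ are genuine transcendental functions of $\mu$. One must therefore verify both that the cancellations at $\mu=0$ actually take place — this is exactly the content of Lemma \ref{cancelation lemma} and of the asymptotics $\theta_+=-\mu+o(\mu)$, $c_3(\mu)=-\frac13\mu^3+O(\mu^4)$ — and that, after the change of variables and the recombination of terms, the amplitudes $g_jL^{\pm,j}_0$ still carry two bounded derivatives with the endpoint vanishing preserved. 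This check is most delicate for the mixed piece $K^{\pm,2}_0$, where one exponential oscillates while the other decays, so that the naive phase derivative $|n|\pm|m|$ is unavailable and one must instead invoke the uniform lower bound $\big|b'(\mu)|n|\pm i\theta'_+(\mu)|m|\big|\gtrsim|n|+|m|$.
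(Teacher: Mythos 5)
Your proposal follows essentially the same route as the paper's proof: Lemma \ref{cancelation lemma} to trade two powers of $\mu$ on each side of $A$ for averaged phases and the coefficients $c_1^{\pm},c_2,c_3$, the substitution $\cos\theta_+=1-\mu^2/2$, reduction to the three model oscillatory integrals with phases $e^{-i\theta_+(|n|\pm|m|)}$, $e^{b(\mu)|n|\pm i\theta_+|m|}$ and $e^{\pm i\theta_+(|n|+|m|)}$, two integrations by parts, and the Schur test — and you correctly isolate the two delicate points, namely the orthogonality against $v$ and $v_1$ needed to tame the $c_3(\mu)|n-m_1|$ residual and its $\theta_+$-derivatives, and the uniform lower bound $|b'(\mu)|n|\pm i\theta_+'(\mu)|m||\gtrsim|n|+|m|$ for the mixed phase. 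One minor imprecision: the remaining five operators in $\mcaA_0$ are not \emph{strictly} more regular than $S_0A_0S_0$ (e.g.\ $\mu^2S_0A^0_{22}$ and $\mu^3QA^0_{31}$ trade projections for explicit powers of $\mu$, so all six kernels come out at the same order $O(\mu)$), but this does not affect the argument since each still reduces to the same three model integrals.
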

\begin{proof}
\underline{{\bf(1)}}~For $A=S_0A_0S_0$, denote
\begin{align*}
K_{0}(n,m)=\int_{0}^{2}\mu^3\chi_1(\mu)\big[R^{+}_{0}(\mu^4)vS_0A_0S_0v(R^{+}_{0}-R^{-}_0)(\mu^4)\big](n,m)d\mu.
\end{align*}
By virtue of Lemma \ref{cancelation lemma}, it can be further expressed as
\begin{align}\label{kernel of S0A01S0}
K_{0}(n,m)=\frac{1}{16}\sum\limits_{j=1}^{3}(K^{+,j}_{0}+K^{-,j}_{0})(n,m),
\end{align}
where $N_1=n-\rho_1m_1$, $M_2=m-\rho_2m_2$ and
\begin{align*}
K^{\pm,1}_{0}(n,m)&=\int_{0}^{2}\mu^{-3}(c^{+}_1(\mu))^2\chi_1(\mu)\sum\limits_{m_1,m_2\in\Z}\int_{[0,1]^2}^{}(1-\rho_1)(1-\rho_2)e^{-i\theta_+(|N_1|\pm|M_2|)}d\rho_1d\rho_2\notag\\
&\quad\times(v_2S_0A_0S_0v_2)(m_1,m_2)d\mu,\\
K^{\pm,2}_{0}(n,m)&=\int_{0}^{2}\mu^{-3}c^{+}_1(\mu)c_2(\mu)\chi_1(\mu)\sum\limits_{m_1,m_2\in\Z}\int_{[0,1]^2}^{}(1-\rho_1)(1-\rho_2)e^{b(\mu)|N_1|\pm i\theta_+|M_2|}d\rho_1d\rho_2\notag\\
&\quad\times(v_2S_0A_0S_0v_2)(m_1,m_2)d\mu,\\
K^{\pm,3}_{0}(n,m)&=\int_{0}^{2}\mu^{-3}c^{+}_1(\mu)c_3(\mu)\chi_1(\mu)\sum\limits_{m_1,m_2\in\Z}^{}\int_{0}^{1}(1-\rho_2)e^{\pm i\theta_+|M_2|}d\rho_2\cdot|n-m_1|\\
&\quad\times(vS_0A_0S_0v_2)(m_1,m_2)d\mu.
\end{align*}
Next we establish the following estimates:
\begin{equation}\label{estimate of Kj S0A01S0}
|K^{\pm,j}_{0}(n,m)|\lesssim \left<|n|\pm|m|\right>^{-2},\quad j=1,2,3,
\end{equation}
which combined with Lemma \ref{shur test} and \eqref{kernel of S0A01S0}, yield that $K_{0}\in\B(\ell^p(\Z))$ for any $1\leq p\leq\infty$.
\vskip0.3cm
{\underline{$\bm {Case\ j=1.}$}} Decomposing
\begin{equation}\label{decompostion of eitheta}
e^{-i\theta_+(|N_1|\pm|M_2|)}=e^{-i\theta_+(|n|\pm|m|)}e^{-i\theta_+\big(|N_1|-|n|\pm(|M_2|-|m|)\big)}
\end{equation}
and employing the following variable substitution:
\begin{equation}\label{varible substi1}
 {\rm cos}\theta_{+}=1-\frac{\mu^2}{2} \Longrightarrow\  \frac{d\mu}{d\theta_+}=\frac{{\rm sin}\theta_+}{\mu},\quad \theta_+\rightarrow0\ {\rm as}\ \mu\rightarrow0\ {\rm and}\ \theta_+\rightarrow-\pi\ {\rm as}\ \mu\rightarrow2,
\end{equation}
we can rewrite $K^{\pm,1}_{0}(n,m)$ as
\begin{align}\label{kernel of K01pm1}
K^{\pm,1}_{0}(n,m)&=\int_{0}^{2}e^{-i\theta_+(|n|\pm|m|)}\mu^{-3}\theta^4_+\chi_{11}(\mu)L^{\pm,1}_{0}(\theta_+,n,m)d\mu\notag\\
&=\int_{-\pi}^{0}e^{-i\theta_+(|n|\pm|m|)}g(\theta_+)\chi_{11}(\mu(\theta_+))L^{\pm,1}_{0}(\theta_+,n,m)d\theta_+\\
&:=\int_{-\pi}^{0}e^{-i\theta_+(|n|\pm|m|)}G^{\pm,1}_{0}(\theta_+,n,m)d\theta_+\notag,
\end{align}
where $\chi_{11}(\mu)=-\chi_1(\mu)(1-\frac{\mu^2}{4})^{-1}$, $g(\theta_+)=-\Big(\frac{\theta^2_+}{2(1-{\rm cos}\theta_+)}\Big)^2{\rm sin}\theta_+$ and
\begin{align*}
L^{\pm,1}_{0}(\theta_+,n,m)&=\sum\limits_{m_1,m_2\in\Z}\int_{[0,1]^2}^{}(1-\rho_1)(1-\rho_2)e^{-i\theta_+\big(|N_1|-|n|\pm(|M_2|-|m|)\big)}d\rho_1d\rho_2\\
&\quad\times(v_2S_0A_0S_0v_2)(m_1,m_2).
\end{align*}
First, for each $k=0,1,2$, we have the following estimate:
\begin{equation}\label{estimate of Lpm11}
\sup\limits_{\theta_{+}\in(-\pi,0)}\big|(\partial^k_{\theta_+}L^{\pm,1}_{0})(\theta_+,n,m)\big|\lesssim \|\left<\cdot\right>^{2k+4}V(\cdot)\|_{\ell^1},\quad {\rm uniformly\ in}\ n,m\in\Z. 
\end{equation}
This estimate combined with the facts that $supp\chi_1(\mu)\subseteq[0,\mu_0]$, $\lim\limits_{\theta_+\rightarrow0}g(\theta_+)=0$ immediately yields
\begin{equation}\label{uniform boundedness of Kpm11}
|K^{\pm,1}_{0}(n,m)|\lesssim 1,\quad {\rm uniformly\ in}\ n,m\in\Z.
\end{equation}
Moreover, applying integration by parts twice to $K^{\pm,1}_{0}(n,m)$ with $||n|\pm|m||\geq1$, we obtain
\begin{align*}
&K^{\pm,1}_{0}(n,m)=\Big(\frac{e^{-i\theta_+(|n|\pm|m|)}}{-i(|n|\pm|m|)}G^{\pm,1}_{0}(\theta_+,n,m)\Big)\Big|^0_{\theta_+=-\pi}-\int_{-\pi}^{0}\frac{e^{-i\theta_+(|n|\pm|m|)}}{-i(|n|\pm|m|)}(\partial_{\theta_+}G^{\pm,1}_{0})(\theta_+,n,m)d\theta_+\\
&=\frac{1}{i(|n|\pm|m|)}\int_{-\pi}^{0}e^{-i\theta_+(|n|\pm|m|)}(\partial_{\theta_+}G^{\pm,1}_{0})(\theta_+,n,m)d\theta_+\\
&=\frac{1}{(|n|\pm|m|)^2}\Big(\lim\limits_{\theta_+\rightarrow0}e^{-i\theta_+(|n|\pm|m|)}(\partial_{\theta_+}G^{\pm,1}_{0})(\theta_+,n,m)-\int_{-\pi}^{0}e^{-i\theta_+(|n|\pm|m|)}(\partial^2_{\theta_+}G^{\pm,1}_{0})(\theta_+,n,m)d\theta_+\Big)\\
&=O((|n|\pm|m|)^{-2}),
\end{align*}
where the second equality follows from the support condition of $\chi_1(\mu)$ and $\lim\limits_{\theta_+\rightarrow0}g(\theta_+)=0$. The fourth equality is obtained by combining the support of $\chi_1(\mu)$, the estimate \eqref{estimate of Lpm11} and the existence of limits $\lim\limits_{\theta_+\rightarrow0}g^{(k)}(\theta_+)$ for $k=1,2$. Therefore, for any $n,m\in\Z$, one has
\begin{align*}
|K^{\pm,1}_{0}(n,m)|\lesssim \left<|n|\pm|m|\right>^{-2}.
\end{align*}
\vskip0.3cm
{\underline{$\bm {Case\ j=2.}$}} We consider the decomposition
\begin{equation}\label{decompostion of ebmu}
e^{b(\mu)|N_1|\pm i\theta_+|M_2|}=e^{b(\mu)|n|\pm i\theta_+|m|}e^{b(\mu)(|N_1|-|n|)\pm i\theta_+(|M_2|-|m|)},
\end{equation}
then $K^{\pm,2}_{0}(n,m)$ can be expressed as
\begin{align}\label{kernel of K01pm2}
K^{\pm,2}_{0}(n,m)&=\int_{0}^{2}e^{b(\mu)|n|\pm i\theta_+|m|}\mu^{-3}(b(\mu))^2\theta^2_+\chi_{12}(\mu)L^{\pm,2}_{0}(\mu,n,m)d\mu\notag\\
&:=\int_{0}^{2}e^{b(\mu)|n|\pm i\theta_+|m|}G^{\pm,2}_{0}(\mu,n,m)d\mu,
\end{align}
where $\chi_{12}(\mu)=i\chi_1(\mu)(1-\frac{\mu^4}{16})^{-\frac{1}{2}}$ and
\begin{align*}
L^{\pm,2}_{0}(\mu,n,m)&=\sum\limits_{m_1,m_2\in\Z}\int_{[0,1]^2}^{}(1-\rho_1)(1-\rho_2)e^{b(\mu)(|N_1|-|n|)\pm i\theta_+(|M_2|-|m|)}d\rho_1d\rho_2\notag\\
&\quad\times(v_2S_0A_0S_0v_2)(m_1,m_2).
\end{align*}
Noting that $b(\mu)<0$, $\theta'_{+}(\mu)=-(1-\frac{\mu^2}{4})^{-\frac{1}{2}}$ and
$$b'(\mu)=-(2+\mu^2)^{-1}\big((4+\mu^2)^{\frac{1}{2}}+\mu^2(4+\mu^2)^{-\frac{1}{2}}\big)<0,\quad\mu\in(0,2),$$
we can verify that for any $k=0,1,2$,
\begin{align}\label{estimate of Lpm12}
\sup\limits_{\mu\in(0,\mu_0]}\big|e^{b(\mu)|n|}(\partial^k_\mu L^{\pm,2}_{0})(\mu,n,m)\big|\lesssim \|\left<\cdot\right>^{2k+4}V(\cdot)\|_{\ell^1},\quad {\rm uniformly\ in}\ n,m\in\Z.
\end{align}
This immediately yields that $K^{\pm,2}_{0}(n,m)$ is uniformly bounded on $\Z^2$ by combining $supp\chi_1(\mu)\subseteq[0,\mu_0]$ and the existence of the limits
 $\lim\limits_{\mu\rightarrow0^+}\frac{b(\mu)}{\mu}$ and $ \lim\limits_{\mu\rightarrow0^+}\frac{\theta_+}{\mu}.$ On the other hand, assuming that $||n|\pm|m||\geq1$, we apply integration by parts twice to $K^{\pm,2}_{0}(n,m)$ obtaining that
\begin{align*}
&K^{\pm,2}_{0}(n,m)=\Big(e^{b(\mu)|n|\pm i\theta_+|m|}\frac{G^{\pm,2}_{0}(\mu,n,m)}{\alpha^{\pm}(\mu,n,m)}\Big)\Big|^2_{\mu=0}-\int_{0}^{2}e^{b(\mu)|n|\pm i\theta_+|m|}\Big(\frac{G^{\pm,2}_{0}}{\alpha^{\pm}}\Big)'(\mu,n,m)d\mu\\
&\xlongequal[]{\widetilde{G}^{\pm,2}_{0}:=\Big(\frac{G^{\pm,2}_{0}}{\alpha^{\pm}}\Big)'}-\int_{0}^{2}e^{b(\mu)|n|\pm i\theta_+|m|}\widetilde{G}^{\pm,2}_{0}(\mu,n,m)d\mu\\
&=\lim\limits_{\mu\rightarrow0^+}e^{b(\mu)|n|\pm i\theta_+|m|}\Big(\frac{\widetilde{G}^{\pm,2}_{0}}{\alpha^{\pm}}\Big)(\mu,n,m)+\int_{0}^{2}e^{b(\mu)|n|\pm i\theta_+|m|}\Big(\frac{\widetilde{G}^{\pm,2}_{0}}{\alpha^{\pm}}\Big)'(\mu,n,m)d\mu\\
&=O((|n|\pm|m|)^{-2}),
 \end{align*}
where
\begin{equation}\label{defi of alphapm}
\alpha^{\pm}(\mu,n,m):=b'(\mu)|n|\pm i\theta'_+(\mu)|m|,
 \end{equation}
 and in the second equality we used the facts that $supp\chi_1(\mu)\subseteq[0,\mu_0]$, $\lim\limits_{\mu\rightarrow0^+}\frac{(b(\mu))^2\theta^2_+}{\mu^3}=0$ and \eqref{estimate of Lpm12}. To verify the fourth equality, first, we compute
 \begin{align*}
\Big(\frac{\widetilde{G}^{\pm,2}_{0}}{\alpha^{\pm}}\Big)(\mu,n,m)&=\frac{1}{(\alpha^{\pm})^2}\Big(\frac{-(\alpha^{\pm})'}{\alpha^{\pm}}G^{\pm,2}_{0}+(G^{\pm,2}_{0})'\Big)(\mu,n,m),\\
\Big(\frac{\widetilde{G}^{\pm,2}_{0}}{\alpha^{\pm}}\Big)'(\mu,n,m)&=\frac{1}{(\alpha^{\pm})^2}\Big[\Big(-\frac{(\alpha^{\pm})^{(2)}}{\alpha^{\pm}}+3\Big(\frac{(\alpha^{\pm})'}{\alpha^{\pm}}\Big)^{2}\Big)G^{\pm,2}_{0}-3\frac{(\alpha^{\pm})'}{\alpha^{\pm}}(G^{\pm,2}_{0})'+(G^{\pm,2}_{0})^{(2)}\Big](\mu,n,m).
 \end{align*}
 Notice that \begin{align*}
 \frac{1}{|\alpha^{\pm}(\mu,n,m)|^2}\lesssim (|n|+|m|)^{-2},\quad {\rm uniformly\ in}\ (n,m)\neq(0,0)\ {\rm and}\ \mu\in(0,2),
 \end{align*}
 and for any $k=1,2$,
$$\lim\limits_{\mu\rightarrow0^+}\Big(\frac{b(\mu)}{\mu}\Big)^{(k)}\ {\rm and}\  \lim\limits_{\mu\rightarrow0^+}\Big(\frac{\theta_+}{\mu}\Big)^{(k)}\ {\rm exist} ,$$
$$\Big|\frac{(\partial^{k}_\mu\alpha^{\pm})(\mu,n,m)}{\alpha^{\pm}(\mu,n,m)}\Big|\lesssim 1,\quad {\rm uniformly\ in}\ (n,m)\neq(0,0)\ {\rm and}\ \mu\in(0,\mu_0].$$
These facts together with \eqref{estimate of Lpm12} establish the fourth equality, which combined with the uniform boundedness of $K^{\pm,2}_{0}(n,m)$ gives
\begin{align*}
|K^{\pm,2}_{0}(n,m)|\lesssim \left<|n|\pm|m|\right>^{-2},\quad \forall\ n,m\in\Z.
\end{align*}
\vskip0.3cm
{\underline{$\bm {Case\ j=3.}$}} Considering
$$e^{\pm i\theta_+|M_2|}=e^{\pm i\theta_+(|n|+|m|)}e^{\pm i\theta_+(|M_2|-(|n|+|m|))},$$
which allows us to rewrite $K^{\pm,3}_{0}(n,m)$ as
\begin{align}\label{kernel of K01pm3}
K^{\pm,3}_{0}(n,m)&=\int_{0}^{2}e^{\pm i\theta_+(|n|+|m|)}\mu^{-3}c^{+}_1(\mu)c_3(\mu){\chi}_1(\mu)L^{\pm,3}_{0}(\theta_+,n,m)d\mu\notag\\
&\xlongequal[]{{\rm by}\ \eqref{varible substi1}}\int_{-\pi}^{0}e^{\pm i\theta_+(|n|+|m|)}{\chi}_{13}(\mu(\theta_+))\frac{\theta^2_{+}}{2(1-{\rm cos}\theta_+)}L^{\pm,3}_{0}(\theta_+,n,m)d\theta_+\notag\\
&:=\int_{-\pi}^{0}e^{\pm i\theta_+(|n|+|m|)}G^{\pm,3}_{0}(\theta_+,n,m)d\theta_+,
\end{align}
where ${\chi}_{13}(\mu)=-i\chi_1(\mu)\frac{c_3(\mu)}{\mu}$ and
\begin{align*}
L^{\pm,3}_{0}(\theta_+,n,m)=\sum\limits_{m_1,m_2\in\Z}^{}\int_{0}^{1}(1-\rho_2)e^{\pm i\theta_+(|M_2|-|m|-|n|)}d\rho_2\cdot|n-m_1|\big(vS_0A_0S_0v_2)(m_1,m_2).
\end{align*}
In view that
$$c_3(\mu)=-\frac{1}{3}\mu^3-\frac{1}{8}\mu^4+O(\mu^5),\quad\mu\rightarrow0^+,$$
and for any $k=0,1,2$,
$$\lim\limits_{\mu\rightarrow0^+}\Big(\frac{c_3(\mu)}{\mu}\Big)^{(k)}\quad {\rm and}\ \lim\limits_{\theta_+\rightarrow0}\Big(\frac{\theta^2_{+}}{2(1-{\rm cos}\theta_+)}\Big)^{(k)}\ {\rm exist}.$$
According to the argument for $K^{\pm,1}_{0}$, it suffices to establish the following estimate for any $k=0,1,2$:
\begin{equation}\label{estimate of L2}
\sup\limits_{\theta_{+}\in(-\pi,0)}\big|(\partial^k_{\theta_+}L^{\pm,3}_{0})(\theta_+,n,m)\big|\lesssim 1,\quad {\rm uniformly\ in}\ n,m\in\Z. 
\end{equation}
To see this, {\bf{for ${\bm{k=0}}$}}, using the orthogonality $\left<S_0f,v\right>=0$, we have
$$L^{\pm,3}_{0}(\theta_+,n,m)=\sum\limits_{m_1\in\Z}^{}e^{\mp i\theta_+|n|}(|n-m_1|-|n|)v(m_1)(S_0A_0S_0(h^{\pm}(\theta_+,m,\cdot)))(m_1)$$
with
$$h^{\pm}(\theta_+,m,m_2)=v_2(m_2)\int_{0}^{1}(1-\rho_2)e^{\pm i\theta_+(|M_2|-|m|)}d\rho_2.$$
By the triangle inequality and H\"{o}lder's inequality, we obtain
\begin{equation}
\sup\limits_{\theta_{+}\in(-\pi,0)}|L^{\pm,3}_{0}(\theta_+,n,m)|\lesssim \|\left<\cdot\right>^4V(\cdot)\|_{\ell^1},\quad {\rm uniformly\ in}\ n,m\in\Z.
\end{equation}
{\bf{For ${\bm{k=1}}$}}, it is crucial to show that
$$\tilde{L}^{\pm,3}_{0}(\theta_+,n,m):=\sum\limits_{m_1\in\Z}e^{\mp i\theta_+|n|}|n|\cdot|n-m_1|v(m_1)(S_0A_0S_0(h^{\pm}(\theta_+,m,\cdot)))(m_1)$$ is uniformly bounded in $n,m,\theta_+$. Using $\left<S_0f,v\right>=\left<S_0f,v_1\right>=0$, we rewrite it as
\begin{align*}
\tilde{L}^{\pm,3}_{0}(\theta_+,n,m)=\sum\limits_{m_1\in\Z}e^{\mp i\theta_+|n|}\underbrace{(|n|\cdot|n-m_1|-n^2+nm_1)}_{J_1(n,m_1)}v(m_1)(S_0A_0S_0(h^{\pm}(\theta_+,m,\cdot)))(m_1),
\end{align*}
which together with the following fact:
\begin{align*}
|J_1(n,m_1)|=\big||n|(|n-m_1|-|n|)+nm_1\big|=\Big|\frac{|n|m^2_1+nm_1(|n-m_1|-|n|)}{|n-m_1|+|n|}\Big|\lesssim \left<m_1\right>^2.
\end{align*}
gives the desired uniform boundedness.  {\bf{For ${\bm{k=2}}$}}, it is key to verify the uniform boundedness of
\begin{align*}
\tilde{\tilde{L}}^{\pm,3}_{0}(\theta_+,n,m)&:=\sum\limits_{m_1\in\Z}e^{\mp i\theta_+|n|}|n|^2\cdot|n-m_1|v(m_1)(S_0A_0S_0(h^{\pm}(\theta_+,m,\cdot)))(m_1)\\
&=\sum\limits_{m_1\in\Z}e^{\mp i\theta_+|n|}\underbrace{(|n|^2\cdot|n-m_1|-n^2|n|+|n|nm_1)}_{J_2(n,m_1)}v(m_1)(S_0A_0S_0(h^{\pm}(\theta_+,m,\cdot)))(m_1).
\end{align*}
This can be obtained by the fact that
$$|J_2(n,m_1)|\lesssim \left<m_1\right>^3,$$
which can be verified through the following computation:
\begin{align*}
J_2(n,m_1)&=n^2(|n-m_1|-|n|)+|n|nm_1=\frac{n^2m^2_1+n|n|m_1(|n-m_1|-|n|)}{|n-m_1|+|n|}\\
&=\Big(\frac{n^2m^2_1}{|n-m_1|+|n|}-\frac{1}{2}|n|m^2_1\Big)+\Big(\frac{n|n|m_1(|n-m_1|-|n|)}{|n-m_1|+|n|}+\frac{1}{2}|n|m^2_1\Big)\\
&=\frac{|n|m^2_1(|n|-|n-m_1|)}{2(|n-m_1|+|n|)}+\frac{n^2m^2_1(|n-m_1|-|n|)+\frac{1}{2}|n|m^4_1}{(|n-m_1|+|n|)^2}.
\end{align*}
To sum up, the desired estimate \eqref{estimate of Kj S0A01S0} is obtained.
\vskip0.3cm
{\bf(2)}~For any $A\in\mcaA_0\setminus\{S_0A_{0}S_0\}$, denote
\begin{equation}\label{mcaKA munm}
\mcaK_A(\mu,n,m)=16\mu^{3}\big[R^{+}_{0}(\mu^4)vAv(R^{+}_{0}-R^{-}_0)(\mu^4)\big](n,m),
\end{equation}
then it follows from Lemma \ref{cancelation lemma} that
\begin{align}\label{mcaKA regular 0}
\mcaK_A(\mu,n,m)=
\left\{\begin{aligned}&\sum\limits_{m_1,m_2}\int_{[0,1]^2}\mcaM^0_{21}(N_1,M_2,m_1,m_2)(f_{21}^{+}-f_{21}^{-})(\mu,N_1,M_2)d\rho_1d\rho_2,\quad A=\mu^2 QA^{0}_{21}Q,\\
&\sum\limits_{m_1,m_2}\Big[\int_{0}^{1}\mcaM^0_{22}(\rho_1,m_1,m_2)\big(f^{+,1}_{22}+f^{-,1}_{22}\big)(\mu,N_1,\widetilde{M}_2)d\rho_1+\\
&\qquad\qquad\big(f^{+,2}_{22}+f^{-,2}_{22}\big)(\mu,\widetilde{M}_2,n,m_1,m_2)\Big],\qquad\qquad\quad A= \mu^2S_0A^{0}_{22},\\
&\sum\limits_{m_1,m_2}\int_{0}^{1}\mcaM^0_{23}(\rho_2,m_1,m_2)(f^+_{23}+f^-_{23})(\mu,\widetilde{N}_1,M_2)d\rho_2,\quad A=\mu^2 A^{0}_{23}S_0,\\
&\sum\limits_{m_1,m_2}\int_{0}^{1}\mcaM^0_{31}(N_1,m_1,m_2)(f^+_{31}+f^-_{31})(\mu,{N}_1,\widetilde{M}_2)d\rho_1,\quad A= \mu^3QA^{0}_{31},\\
&\sum\limits_{m_1,m_2}\int_{0}^{1}\mcaM^0_{32}(M_2,m_1,m_2)(f^+_{32}-f^-_{32})(\mu,\widetilde{{N}}_1,{M}_2)d\rho_2,\quad A= \mu^3A^{0}_{32}Q,\end{aligned}\right.
\end{align}
where $N_1=n-\rho_1m_1$, $\widetilde{N}_1=n-m_1$, $M_2=m-\rho_2m_2$, $\widetilde{M}_2=m-m_2$,
\begin{align}
\Phi^{\pm}_1(\mu,X,Y)&=e^{-i\theta_+(|X|\pm|Y|)},\quad \Phi^{\pm}_2(\mu,X,Y)=e^{b(\mu)|X|\pm i\theta_+|Y|},\notag\\
f^{\pm}_{21}(\mu,N_1,M_2)&=\mu^{-1}\theta^2_+a_{11}(\mu)\Phi^{\pm}_1(\mu,N_1,M_2)-\mu^{-1}\theta_+b(\mu)a_{12}(\mu)\Phi^{\pm}_2(\mu,N_1,M_2),\notag\\
f^{\pm,1}_{22}(\mu,N_1,\widetilde{M}_2)&=\mu^{-1}\theta^2_+a_{11}(\mu)\Phi^{\pm}_1(\mu,N_1,\widetilde{M}_2)+i\mu^{-1}(b(\mu))^2a_{12}(\mu)\Phi^{\pm}_2(\mu,N_1,\widetilde{M}_2),\notag\\
f^{\pm,2}_{22}(\mu,\widetilde{M}_2,n,m_1,m_2)&=i\mu^{-1}a_1(\mu)c_3(\mu)e^{\pm i\theta_+|\widetilde{M}_2|}|n-m_1|v(m_1)(S_0A^0_{22}v)(m_1,m_2),\notag\\
f^{\pm}_{23}(\mu,\widetilde{N}_1,M_2)&=\mu^{-1}\theta^2_+a_{11}(\mu)\Phi^{\pm}_1(\mu,\widetilde{N}_1,M_2)-i\mu^{-1}\theta^2_+a_{12}(\mu)\Phi^{\pm}_2(\mu,\widetilde{N}_1,M_2),\notag\\
f^{\pm}_{31}(\mu,N_1,\widetilde{M}_2)&=-i\theta_+a_{11}(\mu)\Phi^{\pm}_1(\mu,{N}_1,\widetilde{M}_2)-ib(\mu)a_{12}(\mu)\Phi^{\pm}_2(\mu,{N}_1,\widetilde{M}_2),\notag\\
f^{\pm}_{32}(\mu,\widetilde{N}_1,M_2)&=-i{\theta_+}a_{11}(\mu)\Phi^{\pm}_1(\mu,\widetilde{{N}}_1,{M}_2)+{\theta_+}a_{12}(\mu)\Phi^{\pm}_2(\mu,\widetilde{{N}}_1,{M}_2),\label{expre of fijpm}
\end{align}
with $a_{11}(\mu)=(a_1(\mu))^2$,
$a_{12}(\mu)=a_1(\mu)a_2(\mu)$, and
\begin{itemize}
\item $\mcaM^0_{21}(N_1,M_2,m_1,m_2)=({\rm sign}(N_1))({\rm sign}(M_2))(v_1QA^0_{21}Qv_1)(m_1,m_2)$, \vskip0.2cm
    \item $\mcaM^0_{22}(\rho_1,m_1,m_2)=(1-\rho_1)(v_2S_0A^0_{22}v)(m_1,m_2)$,
\vskip0.2cm
    \item $\mcaM^0_{23}(\rho_2,m_1,m_2)=(1-\rho_2)(vA^0_{23}S_0v_2)(m_1,m_2)$,
    \vskip0.2cm
    \item
$\mcaM^0_{31}(N_1,m_1,m_2)=({\rm sign}(N_1))(v_1QA^0_{31}v)(m_1,m_2)$,
\vskip0.2cm
    \item $\mcaM^0_{32}(M_2,m_1,m_2)=({\rm sign}(M_2))(vA^0_{32}Qv_1)(m_1,m_2)$.
\end{itemize}
From \eqref{expre of fijpm}, we note that for any operator $A\in\mcaA_0\setminus\{S_0A_0S_0\}$, the estimates of $K_A(n,m)$ can be reduced to the three fundamental cases presented in \eqref{kernel of K01pm1}, \eqref{kernel of K01pm2} and \eqref{kernel of K01pm3}. Using analogous arguments to those employed previously, we can obtain
$$|K_A(n,m)|\lesssim \left<|n|\pm|m|\right>^{-2},\quad A\in\mcaA_0\setminus\{S_0A_0S_0\},$$
which gives $K_A\in\B(\ell^p(\Z))$ for all $1\leq p\leq\infty$. This completes the whole proof.
\end{proof}
\begin{proposition}\label{proposition good2 regular}
Under the assumptions in Proposition \ref{proposition good1 regular}, let $K^0_4$ be the operator with kernel defined in \eqref{kernel of K40}, then $K^0_4\in\B(\ell^p(\Z))$ for any $1\leq p\leq\infty$.
\end{proposition}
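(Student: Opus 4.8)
The plan is to run the argument of Proposition~\ref{proposition good1 regular} with the $\mu$-dependent operator $\Gamma^0_4(\mu)$ playing the role of the fixed operators $A\in\mcaA_0$ there, paying attention to the extra $\mu$-dependence. Two inputs on $\Gamma^0_4(\mu)$ beyond \eqref{estimate of Gamma} will be used, both of which can be extracted from the construction of \eqref{asy expan of regular 0} in \cite[Section 5]{HY25} (cf.\ the remark after Lemma~\ref{Asymptotic expansion theorem}): (a) $\Gamma^0_4(\mu)$ carries the projection $Q$ on the side(s) adjacent to $R^{+}_0(\mu^4)v$ and $v(R^{+}_0-R^{-}_0)(\mu^4)$ — the $P$-directions having already been removed into the $\mu^3P_1$ and $\mu^3(QA^0_{31}+A^0_{32}Q)$ terms — so Lemma~\ref{cancelation lemma} applies to those outer factors; (b) besides $\|\Gamma^0_4(\mu)\|_{\ell^2\to\ell^2}+\mu\|\partial_\mu\Gamma^0_4(\mu)\|_{\ell^2\to\ell^2}\lesssim\mu^4$ one also has $\|\partial^2_\mu\Gamma^0_4(\mu)\|_{\ell^2\to\ell^2}\lesssim\mu^2$.

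First I would use Lemma~\ref{cancelation lemma}(1),(4) to replace $R^{+}_0(\mu^4)vQ$ and $Qv(R^{+}_0-R^{-}_0)(\mu^4)$ by their $\int_0^1(\cdots)d\rho$ representations, and insert the kernel \eqref{kernel of R0 boundary}; this writes $\mu^3\chi_1(\mu)\big[R^{+}_0(\mu^4)v\Gamma^0_4(\mu)v(R^{+}_0-R^{-}_0)(\mu^4)\big](n,m)$ as a finite sum in which every term couples two exponentials from the list $e^{\mp i\theta_+|n-\rho_1m_1|},\ e^{b(\mu)|n-\rho_1m_1|},\ e^{\pm i\theta_+|m-\rho_2m_2|}$ to the weights $v_1(m_1),v(m_2)$, the $\Gamma^0_4(\mu)$-sandwiched inner product, and coefficients $b_j(\mu)=O(\mu)$. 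For the purely oscillatory terms I would split $|n-\rho_1m_1|=|n|+(|n-\rho_1m_1|-|n|)$ (bracket bounded by $|m_1|$) and likewise in $m$, factor out $e^{\mp i\theta_+|n|}$ and $e^{\pm i\theta_+|m|}$, and sum over $m_1,m_2$, arriving at integrals $\int_0^{\mu_0}e^{-i\theta_+(|n|\pm|m|)}G^{\pm}(\mu,n,m)\,d\mu$; because of the two $O(\mu)$ coefficients and $\|\Gamma^0_4(\mu)\|\lesssim\mu^4$, one has $G^{\pm}=O(\mu^2)$, $\partial_\mu G^{\pm}=O(\mu)$, while $\|\partial^k_\mu\Gamma^0_4(\mu)\|\lesssim\mu^{4-k}$ ($k\le2$), the boundedness of $\theta^{(k)}_+(\mu),b^{(k)}(\mu)$ near $0$, and $\langle\cdot\rangle^k v\in\ell^2$ ($k\le3$, ensured by $\beta>15$) give $\sum_{k=0}^{2}\sup_{n,m}\int_0^{\mu_0}|\partial^k_\mu G^{\pm}(\mu,n,m)|\,d\mu\lesssim1$.

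Then I would proceed exactly as for $K^{\pm,1}_0$ and $K^{\pm,2}_0$ in Proposition~\ref{proposition good1 regular}. For the purely oscillatory integrals, apply the substitution \eqref{varible substi1} and integrate by parts twice in $\theta_+$: the $\theta_+=0$ boundary terms of both integrations vanish since $G^{\pm}$ and $\partial_\mu G^{\pm}$ are $O(\mu)$, and the $\theta_+=-\pi$ boundary is absent because $\chi_1$ is supported near $\mu=0$; this yields $O(\langle|n|\pm|m|\rangle^{-2})$. For the terms carrying a decaying exponential $e^{b(\mu)|n-\rho_1m_1|}$ (coming from the $b_2(\mu)$-piece of $R^{+}_0(\mu^4)vQ$), keep that exponential and, after factoring out $e^{\pm i\theta_+|m|}$ from the oscillatory factor, integrate by parts twice in $\mu$ against the phase $b(\mu)|n-\rho_1m_1|\pm i\theta_+|m|$ — whose $\mu$-derivative has modulus $\gtrsim|n-\rho_1m_1|+|m|$ since $|b'(\mu)|,|\theta'_+(\mu)|\gtrsim1$ near $\mu=0$ — again with vanishing boundary at $\mu=0$; using $(|n-\rho_1m_1|+|m|)^{-2}\lesssim\langle m_1\rangle^2\langle|n|+|m|\rangle^{-2}$ (the $\langle m_1\rangle^2$ absorbed by $v_1$), this yields $O(\langle|n|+|m|\rangle^{-2})$. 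Summing the finitely many terms, $|K^0_4(n,m)|\lesssim\langle|n|\pm|m|\rangle^{-2}$, so $\sup_n\sum_m|K^0_4(n,m)|+\sup_m\sum_n|K^0_4(n,m)|<\infty$ and Schur's Lemma~\ref{shur test} gives $K^0_4\in\B(\ell^p(\Z))$ for all $1\le p\le\infty$.

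The main obstacle is that, unlike in Proposition~\ref{proposition good1 regular}, the amplitudes depend on $\mu$ through $\Gamma^0_4(\mu)$, so one of the two derivatives generated by the integration by parts necessarily falls on $\Gamma^0_4(\mu)$; the residual integral is controlled only because of the second-derivative bound $\|\partial^2_\mu\Gamma^0_4(\mu)\|_{\ell^2\to\ell^2}\lesssim\mu^2$, which is not part of \eqref{estimate of Gamma} and must be obtained by differentiating the Neumann series of \cite[Section 5]{HY25} once more. Relatedly, one cannot here exploit an orthogonality relation $S_0v=0$ to force the amplitude to vanish at $\theta_+=0$: the two vanishing orders at $\mu=0$ that make the double integration by parts work are supplied entirely by Lemma~\ref{cancelation lemma} applied to the outer factors together with the $O(\mu^4)$ size of $\Gamma^0_4(\mu)$. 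Everything else is routine bookkeeping of the $v$-weights, comfortably within $\beta>15$.
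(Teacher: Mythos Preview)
Your approach differs from the paper's and rests on an assumption that is not justified. Input~(a) --- that $\Gamma^0_4(\mu)$ carries $Q$ on both outer sides --- is not stated in the paper and does not follow from the form of \eqref{asy expan of regular 0}. The $\mu^3P_1$ and $\mu^3(QA^0_{31}+A^0_{32}Q)$ terms only extract the \emph{order-$\mu^3$} pieces of $PM^{-1}(\mu)P$, $PM^{-1}(\mu)Q$, $QM^{-1}(\mu)P$; the $O(\mu^4)$ tails of those blocks all sit inside $\Gamma^0_4(\mu)$ and carry a $P$ on at least one side. Without~(a) your amplitude is only $G^\pm=O(\mu)$, so $\partial_\mu G^\pm=O(1)$ and, using $\|\partial^k_\mu\Gamma^0_4\|\lesssim\mu^{4-k}$, one gets $\partial^2_\mu G^\pm=O(\mu^{-1})$, which is \emph{not} integrable near $\mu=0$; the second integration by parts then produces a divergent bulk term and the argument collapses.

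The paper sidesteps this by a dyadic decomposition rather than by claiming any projection structure on $\Gamma^0_4$. Writing $\widetilde\Gamma^0_4(\mu)=\mu^{-4}\Gamma^0_4(\mu)$ and inserting a Littlewood--Paley partition $\widetilde\chi_1=\sum_{N\le N_0}\tilde\phi_N$ supported on $\mu\sim 2^N$, each piece $K^{\pm,N}_{4j}$ is estimated in two ways: the trivial bound $|K^{\pm,N}_{4j}|\lesssim 2^{2N}$, and a two-fold integration by parts (now with all boundary terms vanishing thanks to the compact support of $\tilde\phi_N$) giving $|K^{\pm,N}_{4j}|\lesssim\langle|n|\pm|m|\rangle^{-2}$ uniformly in $N$. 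Interpolating and summing over $N$ yields $|K^0_4(n,m)|\lesssim\langle|n|\pm|m|\rangle^{-3/2}$, which is still enough for Schur. The point of the dyadic cut is precisely that on each piece $\mu^{-1}\sim 2^{-N}$ is harmless, and the loss is repaid by the $2^{2N}$ trivial bound; no $Q$-cancellation is needed.

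You are right that a second-derivative bound $\|\partial^2_\mu\Gamma^0_4(\mu)\|_{\ell^2\to\ell^2}\lesssim\mu^2$ is implicitly required --- the paper uses it as well, through the $k=2$ case of the estimate $\mu^k|\partial^k_\mu L^{\pm}_{4j}|\lesssim 1$ --- and this indeed has to come from differentiating the Neumann series in \cite{HY25} once more than \eqref{estimate of Gamma} records.
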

\begin{proof} It follows from \eqref{kernel of K40} and \eqref{kernel of R0 boundary} that
\begin{align*}
K^0_4(n,m)&=\int_{0}^{2}\mu^3\chi_1(\mu)\big[R^{+}_{0}(\mu^4)v \Gamma^0_4(\mu)v(R^{+}_{0}-R^{-}_0)(\mu^4)\big](n,m)d\mu=\frac{1}{16}\sum\limits_{j=1}^{2}(K^{+}_{4j}+K^{-}_{4j})(n,m),
\end{align*}
where $N_1=n-m_1$, $M_2=m-m_2$, $\widetilde{\Gamma}^0_4(\mu)=\frac{\Gamma^0_4(\mu)}{\mu^4}$ and
\begin{align}
K^{\pm}_{41}(n,m)&=-\int_{0}^{2}e^{-i\theta_+(|n|\pm|m|)}\mu\underbrace{\chi_1(\mu)a_{11}(\mu)}_{\widetilde{\chi}_1(\mu)}\sum\limits_{m_1,m_2\in\Z}e^{-i\theta_+\big(|N_1|-|n|\pm(|M_2|-|m|)\big)}\notag\\
&\quad\times(v\widetilde{\Gamma}^0_4(\mu)v)(m_1,m_2)d\mu:=\int_{0}^{2}e^{-i\theta_+(|n|\pm|m|)}\mu\widetilde{\chi}_1(\mu)L^{\pm}_{41}(\mu,n,m)d\mu,\label{kernel of Kpm1 4}\\
K^{\pm}_{42}(n,m)&=i\int_{0}^{2}e^{b(\mu)|n|\pm i\theta_+|m|}\mu\underbrace{\chi_1(\mu)a_{12}(\mu)}_{\widetilde{\widetilde{\chi}}_1(\mu)}\sum\limits_{m_1,m_2\in\Z}e^{b(\mu)(|N_1|-|n|)\pm i\theta_+(|M_2|-|m|)}\notag\\
&\quad\times(v\widetilde{\Gamma}^0_4(\mu)v)(m_1,m_2)d\mu:=\int_{0}^{2}e^{b(\mu)|n|\pm i\theta_+|m|}\mu\widetilde{\widetilde{\chi}}_1(\mu)L^{\pm}_{42}(\mu,n,m)d\mu.\label{kernel of Kpm2 4}
\end{align}
We consider the following homogeneous dyadic partition of unity $\{{\varphi_{N}}\}_{N\in \Z}$ on $(0,\infty)$: $\varphi\in C^{\infty}_{0}(\R^{+})$, $0\leq\varphi\leq1$, $supp~\varphi\subset[\frac{1}{4},1],\ \varphi_{N}(\mu)=\varphi(2^{-N}\mu)$,
	$supp~\varphi_{N}\subset\left[2^{N-2},2^{N}\right]$,$$ \sum_{N\in\Z}{}\varphi_{N}(\mu)=1, \quad\mu>0.$$
Let $N_0=[log_2(4\mu_0)]$, then $\widetilde{\chi}_1(\mu)$ and $\widetilde{\widetilde{\chi}}_1(\mu)$ can be decomposed as follows:
\begin{align}
\widetilde{\chi}_1(\mu)=\sum\limits_{N\in\Z}\widetilde{\chi}_1(\mu)\varphi_{N}(\mu)=\sum\limits_{N=-\infty}^{N_0}\widetilde{\chi}_1(\mu)\varphi_{N}(\mu):=\sum\limits_{N=-\infty}^{N_0}\tilde{\phi}_N(\mu),\label{decomp of tutachi1}\\
\widetilde{\widetilde{\chi}}_1(\mu)=\sum\limits_{N\in\Z}\widetilde{\widetilde{\chi}}_1(\mu)\varphi_{N}(\mu)=\sum\limits_{N=-\infty}^{N_0}\widetilde{\widetilde{\chi}}_1(\mu)\varphi_{N}(\mu):=\sum\limits_{N=-\infty}^{N_0}\tilde{\tilde{\phi}}_N(\mu).\label{decomp of 2tutachi1}
\end{align}
It immediately concludes that for any $s\in\N$,
\begin{align}\label{estimate of phiN}
\big|\big(\tilde{\phi}_N\big)^{(s)}(\mu)\big|+\Big|\Big(\tilde{\tilde{\phi}}_N\Big)^{(s)}(\mu)\Big|\leq c(s)2^{-Ns},
\end{align}
where $c(s)$ is a constant depending on $s$. Taking \eqref{decomp of tutachi1} into \eqref{kernel of Kpm1 4} and \eqref{decomp of 2tutachi1} into \eqref{kernel of Kpm2 4}, we have
\begin{align}\label{kernel of K4jN}
\begin{split}
K^{\pm}_{41}(n,m)&=\sum\limits_{N=-\infty}^{N_0}\int_{0}^{2}e^{-i\theta_+(|n|\pm|m|)}\mu\tilde{\phi}_N(\mu)L^{\pm}_{41}(\mu,n,m)d\mu:=\sum\limits_{N=-\infty}^{N_0}K^{\pm,N}_{41}(n,m),\\
K^{\pm}_{42}(n,m)&=\sum\limits_{N=-\infty}^{N_0}\int_{0}^{2}e^{b(\mu)|n|\pm i\theta_+|m|}\mu\tilde{\tilde{\phi}}_N(\mu)L^{\pm}_{42}(\mu,n,m)d\mu:=\sum\limits_{N=-\infty}^{N_0}K^{\pm,N}_{42}(n,m).
\end{split}
\end{align}
Next we show that for any $N\leq N_0$, the following estimates hold:
\begin{align}\label{estimate of K4jN}
|K^{\pm,N}_{4j}(n,m)|\lesssim \min\{2^{2N},\left<|n|\pm|m|\right>^{-2}\},\quad\forall\ n,m\in\Z,\quad j=1,2,
\end{align}
from which
\begin{align*}
|K^{\pm,N}_{4j}(n,m)|\lesssim 2^{2N(1-t)}\left<|n|\pm|m|\right>^{-2t},\quad t\in[0,1],\quad\forall\ n,m\in\Z,\quad j=1,2.
\end{align*}
By choosing $t=\frac{3}{4}$, we obtain
\begin{align*}
|K^{\pm}_{4j}(n,m)|\leq\sum\limits_{N=-\infty}^{N_0}|K^{\pm,N}_{4j}(n,m)|\lesssim \left<|n|\pm|m|\right>^{-\frac{3}{2}}\sum\limits_{N=-\infty}^{N_0}2^{\frac{N}{2}}\lesssim \left<|n|\pm|m|\right>^{-\frac{3}{2}}, \quad j=1,2,
\end{align*}
which together with Lemma \ref{shur test} gives the desired result. To derive \eqref{estimate of K4jN}, we first note that for any $k=0,1,2$,
\begin{align}\label{estimate of Lpm4j}
\sup\limits_{\mu\in(0,\mu_0]}\mu^k\Big(\big|(\partial^k_\mu L^{\pm}_{41})(\mu,n,m)\big|+\big|e^{b(\mu)|n|}(\partial^k_\mu L^{\pm}_{42})(\mu,n,m)\big|\Big)\lesssim \|\left<\cdot\right>^{2k}V(\cdot)\|_{\ell^1},
\end{align}
uniformly in $n,m\in\Z$.

On one hand, combining \eqref{kernel of K4jN}, the support of $\varphi_N$ and \eqref{estimate of Lpm4j}, one has
\begin{align}\label{uniform bound of KpmN4j}
|K^{\pm,N}_{4j}(n,m)|\lesssim \int_{supp\varphi_{N}}\mu d\mu=\int_{2^{N-2}}^{2^N}\mu d\mu\lesssim 2^{2N}\lesssim2^{2N_0}\lesssim1,\quad j=1,2.
\end{align}

On the other hand, for any $N\leq N_0$, denote
$$G^{\pm,N}_{41}(\mu,n,m)=\mu\tilde{\phi}_N(\mu)L^{\pm}_{41}(\mu,n,m).$$
Assuming that $||n|\pm|m||\ge1$ and applying integration by parts twice to $K^{\pm,N}_{41}(n,m)$, we obtain
\begin{align*}
&K^{\pm,N}_{41}(n,m)=\Big(\frac{e^{-i\theta_+(|n|\pm|m|)}}{-i\theta'_+(\mu)(|n|\pm|m|)}G^{\pm,N}_{41}(\mu,n,m)\Big)\Big|^2_{0}-\int_{0}^{2}\frac{e^{-i\theta_+(|n|\pm|m|)}}{-i(|n|\pm|m|)}\big(\widetilde{G}^{\pm,N}_{41}\big)'(\mu,n,m)d\mu\\
&=\int_{0}^{2}\frac{e^{-i\theta_+(|n|\pm|m|)}}{i(|n|\pm|m|)}\big(\widetilde{G}^{\pm,N}_{41}\big)'(\mu,n,m)d\mu\\
&=-\frac{1}{(|n|\pm|m|)^2}\int_{0}^{2}e^{-i\theta_+(|n|\pm|m|)}\Big(g(\mu)\big(\widetilde{G}^{\pm,N}_{41}\big)'(\mu,n,m)\Big)'d\mu=O((|n|\pm|m|)^{-2}),
\end{align*}
where $$\theta'_+(\mu)=-(1-\frac{\mu^2}{4})^{\frac{-1}{2}},\quad \widetilde{G}^{\pm,N}_{41}(\mu,n,m)=(\theta'_+(\mu))^{-1}G^{\pm,N}_{41}(\mu,n,m):=g(\mu)G^{\pm,N}_{41}(\mu,n,m),$$ and in both second and third equalities we used the facts that $supp\chi_1\subseteq[0,\mu_0]$ and $\varphi(0)=0$. For the fourth equality, first we can compute that
\begin{align*}
\Big(g(\mu)\big(\widetilde{G}^{\pm,N}_{41}\big)'(\mu,n,m)\Big)'&=\Big[\Big(g(\mu)g^{(2)}(\mu)+(g'(\mu))^2\Big)G^{\pm,N}_{41}(\mu,n,m)+3g'(\mu)g(\mu)\big(G^{\pm,N}_{41}\big)'(\mu,n,m)\\
&\quad\quad+(g(\mu))^2\big(G^{\pm,N}_{41}\big)^{(2)}(\mu,n,m)\Big].
\end{align*}
For any $\mu\in(0,\mu_0]\cap[2^{N-2},2^N]$, it follows from \eqref{estimate of Lpm4j} and \eqref{estimate of phiN} that
\begin{align*}
\big|(\partial^{k}_{\mu}G^{\pm,N}_{41})(\mu,n,m)\big|\lesssim 2^{N(1-k)},\quad k=0,1,2,\quad{\rm uniformly\ in}\ n,m\in\Z,
\end{align*}
which together with the smoothness of $g(\mu)$ on $(0,\mu_0]$ gives the desired fourth equality. Through an analogous argument and using the properties of $\alpha^{\pm}(\mu,n,m)$ defined in \eqref{defi of alphapm}, we can verify that the same bound also holds for $K^{\pm,N}_{42}(n,m)$. Therefore, the desired \eqref{estimate of K4jN} is obtained.
\end{proof}
Next, we turn to establish the boundedness of operators in the class $O(1)$. For such integral operators, we shall need the following key lemma.

\begin{lemma}\label{C-Z lemma}
Let $\phi \in C^{\infty}(\R,\R)$ be such that $\phi(s)=0$ for $0\leq s\leq1$ and $\phi(s)=1$ for $s\geq2$. Define $k^{\pm}_{j}$ be the integral operator with the following kernel $k^{\pm}_j(n,m)$:
\begin{equation*}
k^{\pm}_1(n,m)=\frac{\phi(||n|\pm|m||^2)}{|n|\pm |m|},\quad k^{\pm}_{2}(n,m)=\frac{\phi(||n|-|m||^2)}{|n|\pm i|m|}
\end{equation*}
 then $k^{\pm}_1,k^{\pm}_2\in\B(\ell^p(\Z))$ for $1< p<\infty$.
\end{lemma}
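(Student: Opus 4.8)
The difficulty is that the continuous analogues of $k^{\pm}_1,k^{\pm}_2$ are \emph{not} Calderón--Zygmund operators on $\R$ — for instance the kernel of $k^{+}_1$ is positive and of size $(|x|+|y|)^{-1}$, which is not a standard kernel — so the transference result of Appendix~\ref{sec of appendix} cannot be applied to them directly. The plan is first to \emph{unfold} the sign structure of the denominators, rewriting each $k^{\pm}_j$ in terms of the two genuine discrete singular integrals
$$\widetilde{k}_1(n,m)=\frac{\phi(|n-m|^2)}{n-m},\qquad \widetilde{k}^{\pm}_2(n,m)=\frac{\phi(|n-m|^2)}{n\pm im},$$
composed with the reflection $\tau f=f(-\cdot)$ and the half-line projections $\chi_\pm$ (multiplication by the indicator of $\Z^\pm$). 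Concretely, I would split each summation index into the blocks $\Z^{+}$, $\{0\}$, $\Z^{-}$, use $|n|=\mathrm{sign}(n)\,n$ on each block, and observe that after the change of variable $m\mapsto-m$ supplied by $1+\tau$ the argument $|n-m|$ and the denominator of $k^{\pm}_j$ agree, up to sign, with those of $\widetilde{k}_1$ or $\widetilde{k}^{\pm}_2$; this gives
$$k^{\pm}_1=\big(\chi_+\widetilde{k}_1\chi_{\mp}-\chi_-\widetilde{k}_1\chi_{\pm}\big)(1+\tau)+E^{\pm}_1,\qquad
k^{\pm}_2=\big(\chi_+\widetilde{k}^{\pm}_2\chi_{+}-\chi_-\widetilde{k}^{\pm}_2\chi_{-}\big)(1+\tau)+E^{\pm}_2.$$
Here $E^{\pm}_j$ is the correction from the row $n=0$ and the column $m=0$, which $\chi_\pm$ omits; each $E^{\pm}_j$ is a sum of two rank-one operators built from the sequence $\big(\phi(n^2)/w(n)\big)_n$ with $|w(\cdot)|\gtrsim|\cdot|$, hence lying in $\ell^q(\Z)$ for all $q>1$, so $E^{\pm}_j\in\B(\ell^p(\Z))$ for every $1<p<\infty$ — already consistent with the failure of the statement at $p=1,\infty$.

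Since $\tau$ is an isometry of every $\ell^p(\Z)$ and the $\chi_\pm$ are contractions, it then suffices to prove $\widetilde{k}_1,\widetilde{k}^{\pm}_2\in\B(\ell^p(\Z))$ for $1<p<\infty$. For this I would extend the kernels to $(x,y)\in\R^2$, $x\neq y$, and check that $\widetilde{k}_1(x,y)=\phi(|x-y|^2)/(x-y)$ and $\widetilde{k}^{\pm}_2(x,y)=\phi(|x-y|^2)/(x\pm iy)$ are standard Calderón--Zygmund kernels in the sense of Appendix~\ref{sec of appendix}: the factor $\phi(|x-y|^2)$ removes a neighbourhood of the diagonal, while for $|x-y|\ge1$ one has $|\widetilde{k}_1|,|\widetilde{k}^{\pm}_2|\lesssim|x-y|^{-1}$ (using $|x\pm iy|=(x^2+y^2)^{1/2}\ge|x-y|/\sqrt2$) and $|\nabla_{x,y}\widetilde{k}_1|,|\nabla_{x,y}\widetilde{k}^{\pm}_2|\lesssim|x-y|^{-2}$. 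It remains to establish $L^2(\R)$-boundedness of the continuous operators: $T_{\widetilde{k}_1}$ is convolution with the bounded odd function $\phi(t^2)/t$, whose Fourier transform differs from $-i\pi\,\mathrm{sgn}(\xi)$ by a bounded term and is hence bounded; for $T_{\widetilde{k}^{\pm}_2}$ — which is \emph{not} of convolution type — a Schur test with the homogeneous weight $|y|^{-1/2}$ works, because $|\widetilde{k}^{\pm}_2(x,y)|$ is symmetric in $x,y$ and $\int_{|x-y|\ge1}(x^2+y^2)^{-1/2}|y|^{-1/2}\,dy\lesssim|x|^{-1/2}$ by scaling (this is essentially \cite[Lemma~3.3]{MWY24}). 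Theorem~\ref{Tdis lemma} of the Appendix then transfers these facts to $\ell^p(\Z)$-boundedness of $\widetilde{k}_1$ and $\widetilde{k}^{\pm}_2$ for all $1<p<\infty$, and combining with the first step finishes the proof.

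I expect the main point of substance to be the sign-unfolding step together with the non-convolution operator $T_{\widetilde{k}^{\pm}_2}$: one must recognise that the $k^{\pm}_j$ are not themselves Calderón--Zygmund operators, carry out the reduction to $\widetilde{k}_1,\widetilde{k}^{\pm}_2$ correctly (in particular restoring the boundary corrections $E^{\pm}_j$ by hand, since $\chi_\pm$ excludes the index $0$), and then handle $T_{\widetilde{k}^{\pm}_2}$ via a weighted Schur estimate rather than a Fourier multiplier. By contrast, verifying that $\widetilde{k}_1$ is a bona fide Calderón--Zygmund kernel and invoking Theorem~\ref{Tdis lemma} are routine.
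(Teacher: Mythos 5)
Your proposal is correct and follows essentially the same route as the paper: the identical sign-unfolding identities reducing $k^{\pm}_1,k^{\pm}_2$ to $(\chi_+\widetilde{k}_1\chi_{\mp}-\chi_-\widetilde{k}_1\chi_{\pm})(1+\tau)$ and $(\chi_+\widetilde{k}^{\pm}_2\chi_{+}-\chi_-\widetilde{k}^{\pm}_2\chi_{-})(1+\tau)$, followed by the fact that $T_{\widetilde{k}_1},T_{\widetilde{k}^{\pm}_2}$ are Calder\'on--Zygmund operators (which the paper simply cites from \cite[Lemma 3.3]{MWY24}, whereas you verify it directly) and an appeal to Theorem \ref{Tdis lemma}. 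Your explicit bookkeeping of the $n=0$/$m=0$ corrections $E^{\pm}_j$, which the paper's identity silently drops, is a harmless refinement and does not alter the argument.
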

The proof of this lemma will be postponed at the end of this section.
\begin{proposition}\label{proposition K110KP1 regular}
Under the assumptions in Proposition \ref{proposition good1 regular}, let $K_{1}$ and $K_{P_1}$ be the operators with kernels defined in \eqref{kernel of K110} and \eqref{kernel of KP1}, respectively. Then $K_1,K_{P_1}\in\B(\ell^p(\Z))$ for all $1< p<\infty$. 
\end{proposition}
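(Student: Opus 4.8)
The strategy is to treat $K_1$ and $K_{P_1}$ in parallel, reducing each to a manageable normal form via the cancellation Lemma \ref{cancelation lemma} and the substitution \eqref{varible substi1}, then isolating the genuinely singular ($\ell^1$/$\ell^\infty$-critical) contribution, which will be a combination of the kernels $k^{\pm}_1, k^{\pm}_2$ from Lemma \ref{C-Z lemma}. First I would handle $K_1$. Since $K_1=K_A$ with $A=\mu QA_1Q$, I would insert the orthogonality $Qv=0$ twice, using parts (1) and (4) of Lemma \ref{cancelation lemma}, writing $R^\pm_0(\mu^4)vQ$ and $QvR^\pm_0(\mu^4)$ in the integrated-by-parts form with the factors $b_1(\mu)=-\theta_+a_1(\mu)$, $b_2(\mu)=-b(\mu)a_2(\mu)$. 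After the substitution \eqref{varible substi1} and splitting the phases as in \eqref{decompostion of eitheta}–\eqref{decompostion of ebmu}, the kernel $K_1(n,m)$ becomes a finite linear combination of integrals of the shape $K^{\pm,1}(n,m)=\int_{-\pi}^0 e^{-i\theta_+(|n|\pm|m|)} h_1(\theta_+)\chi_1(\mu(\theta_+)) L^{\pm,1}(\theta_+,n,m)\,d\theta_+$ and $K^{\pm,2}(n,m)=\int_0^2 e^{b(\mu)|n|\pm i\theta_+|m|} h_2(\mu)\chi_1(\mu) L^{\pm,2}(\mu,n,m)\,d\mu$, where $h_j$ has finite limits together with two derivatives as $\theta_+\to0$ (resp.\ $\mu\to0$), and $L^{\pm,j}$ satisfies the uniform estimates \eqref{property in mcaK1}-type bounds controlled by $\|\langle\cdot\rangle^{2k+c}V\|_{\ell^1}$ (this is where $\beta>15$ enters, since with $A=\mu QA_1Q$ the net power of $\mu$ is $O(1)$ and up to four weights are spent). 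The key new feature compared with Proposition \ref{proposition good1 regular} is that here $\lim_{\theta_+\to0}h_1(\theta_+)$ and $\lim_{\mu\to0}h_2(\mu)$ need not vanish, so one integration by parts produces a boundary term at $\theta_+=0$ (resp.\ $\mu=0$) of size $O(\langle|n|\pm|m|\rangle^{-1})$, while a second integration by parts on the remaining integral gives a genuine $O(\langle|n|\pm|m|\rangle^{-2})$ term. I would show that after identifying the exact limiting constants, the boundary term is exactly of the form $c\,k^{\pm}_1(n,m)+c'\,k^{\pm}_2(n,m)$ up to $O(\langle|n|\pm|m|\rangle^{-2})$ errors, so that Lemma \ref{C-Z lemma} gives $\ell^p$-boundedness for $1<p<\infty$ and Lemma \ref{shur test} absorbs the error.

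For $K_{P_1}$, $A=\mu^3 P_1 = \mu^3\cdot\big(-\tfrac{2(1+i)}{\|V\|_{\ell^1}}\big)\|V\|_{\ell^1}^{-1}\langle\cdot,v\rangle v$, so there is no projection orthogonality available; instead the factor $P_1$ has rank one and its kernel factors through $v$. I would use Lemma \ref{cancelation lemma} part (4) only where needed and directly compute: $R^+_0(\mu^4)vP_1 v(R^+_0-R^-_0)(\mu^4)$ has kernel $\tfrac{1}{\|V\|_{\ell^1}^2}(-2(1+i))\,\mu^3\,[R^+_0(\mu^4)v](n)\cdot[\langle v,\cdot\rangle]\cdot[v(R^+_0-R^-_0)(\mu^4)](\cdot,m)$ contracted against $v$. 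Here the $\mu^6\chi_1(\mu)$ in \eqref{kernel of KP1} cancels the $\mu^{-6}$ from the two free resolvents, so the integrand is $O(1)$ near $\mu=0$; after the substitution \eqref{varible substi1} and the phase splitting, $K_{P_1}(n,m)$ reduces to a combination of $K^{\pm,1}_{P_1}$ and $K^{\pm,2}_{P_1}$ exactly as described in the outline (Subsection \ref{subsec outline}), with $h_j$ again having nonzero limits. Performing the two integrations by parts and carefully tracking the constant $\tfrac{i-1}{8}$ (which comes from $-2(1+i)$, the $\tfrac{1}{4\mu^3}$ factors in the free resolvent kernels \eqref{kernel of R0 boundary}, and the $a_1(0)=1$, $b(0)=0$ values at $\mu=0$), I expect $K_{P_1}(n,m)=\tfrac{i-1}{8}\big(k^+_1(n,m)+k^-_1(n,m)+k^+_2(n,m)+k^-_2(n,m)\big)+O(\langle|n|\pm|m|\rangle^{-2})$. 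Then Lemma \ref{C-Z lemma} plus Lemma \ref{shur test} conclude $K_{P_1}\in\B(\ell^p(\Z))$ for $1<p<\infty$.

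\textbf{Main obstacle.} The principal difficulty is not conceptual but the bookkeeping required to show that the non-decaying boundary terms from the single integration by parts collapse \emph{exactly} onto linear combinations of $k^\pm_1$ and $k^\pm_2$ modulo $O(\langle|n|\pm|m|\rangle^{-2})$, rather than leaving an irreducible $O(\langle|n|\pm|m|\rangle^{-1})$ remainder with an $(n,m)$-dependent coefficient that is not of Calderón–Zygmund type. This requires computing the precise values at the threshold: $\lim_{\mu\to0}\theta_+/\mu = -1$, $\lim_{\mu\to0}b(\mu)/\mu=-1$, $a_1(0)=1$, $a_2(0)=-1$, and checking that the quadratic terms in the Taylor expansions of $\theta_+(\mu)$ and $b(\mu)$ (which differ — $\theta_+ = -\mu + O(\mu^3)$ while $b(\mu)=-\mu+\tfrac{\mu^2}{2}+O(\mu^3)$) contribute only to the $O(\langle|n|\pm|m|\rangle^{-2})$ error and not to the leading singular term. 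A secondary technical point is justifying the exchange of the sums over $m_1,m_2$ (and the $\rho$-integrations in the cancellation lemma) with the $\mu$- or $\theta_+$-integration and the two integrations by parts; this is where the decay hypothesis $\beta>15$ is used to guarantee that $\|\langle\cdot\rangle^{2k+c}V\|_{\ell^1}<\infty$ for $k\le2$ and hence that all the $L^{\pm,j}$ and their derivatives up to order $2$ are uniformly bounded in $(n,m)$.
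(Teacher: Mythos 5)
Your strategy is the same as the paper's: for $K_{P_1}$ you write out the two free-resolvent kernels directly (the $\mu^6$ cancelling the $\mu^{-3}$ factors), split the phases as in \eqref{decompostion of eitheta}--\eqref{decompostion of ebmu}, integrate by parts twice, and identify the non-decaying boundary term with $\frac{i-1}{8}\big(k^+_1+k^-_1+k^+_2+k^-_2\big)$ modulo an $O(\left<|n|\pm|m|\right>^{-2})$ error; this matches \eqref{expres of Kp1} exactly, including the constant, and the conclusion then follows from Lemmas \ref{shur test} and \ref{C-Z lemma} just as in the paper.

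The one step in your plan that would fail as literally written is the claim that for $K_1$ the boundary term is ``exactly of the form $c\,k^{\pm}_1(n,m)+c'\,k^{\pm}_2(n,m)$'' with constants $c,c'$. Unlike $P_1$, the operator $QA_1Q$ is not rank one, and after applying Lemma \ref{cancelation lemma} on both sides the kernel carries the factors $({\rm sign}(n-\rho_1 m_1))$ and $({\rm sign}(m-\rho_2 m_2))$; the boundary term at $\theta_+=0$ is therefore $k^{\pm}_j(n,m)$ multiplied on the left by a function of $n$ and on the right by the $m$-dependent coefficient $C_1(m_1,m)=\sum_{m_2}(v_1QA_1Qv_1)(m_1,m_2)\int_0^1{\rm sign}(m-\rho_2m_2)\,d\rho_2$. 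If you first sum over $m_1$ and integrate in $\rho_1$ at the kernel level, the leading term becomes $k^\pm_j(n,m)$ times a merely bounded function of $(n,m)$, which is \emph{not} in general an $\ell^p$-bounded operator (a Calder\'on--Zygmund kernel times a bounded function need not be Calder\'on--Zygmund). The paper's fix, which you should adopt, is to freeze $(\rho_1,m_1)$, observe that for each fixed value the operator is a composition of bounded multiplications with $k^\pm_j$ and hence has $\ell^p$ operator norm $\lesssim \mcaM_1(m_1)=\left<m_1\right>^3|v(m_1)|\big(|QA_1Q|(\left<\cdot\right>^3|v(\cdot)|)\big)(m_1)$, and then sum the operator norms over $m_1$ and $\rho_1$ via Minkowski's inequality, using $\beta>15$ to make $\sum_{m_1}\mcaM_1(m_1)<\infty$. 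With that repair your argument goes through; everything else (the threshold values $\theta_+=-\mu+O(\mu^3)$, $b(\mu)=-\mu+O(\mu^2)$, the role of the cutoff $\phi$ near $|n|=\mp|m|$, and the uniform derivative bounds on the $L^{\pm,j}$) is handled exactly as you describe.
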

\begin{proof}
\underline{{\bf{(1) For ${\bm{K_{P_1}}}$}}}, it follows from \eqref{kernel of KP1} and \eqref{kernel of R0 boundary} that
\begin{align}\label{expre of KP1}
K_{P_1}(n,m)&=\frac{1}{16}\sum\limits_{j=1}^{2}\int_{0}^{2}ia_1(\mu)\chi_1(\mu)\sum\limits_{m_1,m_2\in\Z}({I}^+_j+{I}^-_j) (\mu,N_1,M_2)(vP_1v)(m_1,m_2)d\mu,\notag\\
&:=\frac{1}{16}\sum\limits_{j=1}^{2}(K^{+,j}_{P_1}+K^{-,j}_{P_1})(n,m),
\end{align}
where $N_1=n-m_1$, $M_2=m-m_2$ and
$${I}^{\pm}_1(\mu,N_1,M_2)=ia_1(\mu)e^{-i\theta_+(|N_1|\pm|M_2|)},\quad{I}^{\pm}_2(\mu,N_1,M_2)=a_2(\mu)e^{b(\mu)|N_1|\pm i\theta_+|M_2|}.$$
By virtue of \eqref{decompostion of eitheta}, \eqref{varible substi1} and \eqref{decompostion of ebmu}, $K^{\pm,j}_{P_1}(n,m)$ can be written as follows, respectively:
\begin{align*}
K^{\pm,1}_{P_1}(n,m)&=\int_{-\pi}^{0}e^{-i\theta_+(|n|\pm|m|)}\tilde{\chi}_1(\mu(\theta_+))L^{\pm,1}_{P_1}(\theta_+,n,m)d\theta_+:=\int_{-\pi}^{0}e^{-i\theta_+(|n|\pm|m|)}G^{\pm,1}_{P_1}(\theta_+,n,m)d\theta_+,\\
K^{\pm,2}_{P_1}(n,m)&=\int_{0}^{2}e^{b(\mu)|n|\pm i\theta_+|m|}\tilde{\tilde{\chi}}_1(\mu)L^{\pm,2}_{P_1}(\mu,n,m)d\mu\notag:=\int_{0}^{2}e^{b(\mu)|n|\pm i\theta_+|m|}G^{\pm,2}_{P_1}(\mu,n,m)d\mu,
\end{align*}
where $\tilde{\chi}_1(\mu)=-(1-\frac{\mu^2}{4})^{-\frac{1}{2}}\chi_1(\mu)$, $\tilde{\tilde{\chi}}_1(\mu)=-i(1-\frac{\mu^4}{16})^{-\frac{1}{2}}\chi_1(\mu)$ and
\begin{align*}
L^{\pm,1}_{P_1}(\theta_+,n,m)&=\sum\limits_{m_1,m_2\in\Z}e^{-i\theta_+\big(|N_1|-|n|\pm(|M_2|-|m|)\big)}(vP_1v)(m_1,m_2),\\
L^{\pm,2}_{P_1}(\mu,n,m)&=\sum\limits_{m_1,m_2\in\Z}e^{b(\mu)(|N_1|-|n|)\pm i\theta_+(|M_2|-|m|)}(vP_1v)(m_1,m_2).
\end{align*}
Similarly, for any $k=0,1,2$, we can establish the following estimates:
\begin{align*}
\sup\limits_{\theta_{+}\in(-\pi,0)}\big|(\partial^k_{\theta_+}L^{\pm,1}_{P_1})(\theta_+,n,m)\big|+\sup_{\mu\in(0,\mu_0]}\big|e^{b(\mu)|n|}(\partial^k_\mu L^{\pm,2}_{P_1})(\mu,n,m)\big|\lesssim \|\left<\cdot\right>^{2k}V(\cdot)\|_{\ell^1},
\end{align*}
uniformly in $n,m\in\Z$. This immediately yields the uniform boundedness of $K^{\pm,j}_{P_1}(n,m)$ on $\Z^2$ for $j=1,2$. 
We consider decomposing $K^{\pm,j}_{P_1}(n,m)$ as follows:
\begin{equation}\label{decom of KpmP1}
K^{\pm,j}_{P_1}(n,m)=\left\{
\begin{aligned}
&\phi_{\pm}K^{\pm,1}_{P_1}(n,m)+(1-\phi_{\pm})K^{\pm,1}_{P_1}(n,m), &if\ j=1,\\
&\phi_{-}K^{\pm,2}_{P_1}(n,m)+(1-\phi_{-})K^{\pm,2}_{P_1}(n,m),&if\  j=2,
\end{aligned}
\right.
\end{equation}
where $\phi_{\pm}:=\phi\left(||n|\pm|m||^2\right)$ with $\phi$ as defined in Lemma \ref{C-Z lemma}. For the second terms in \eqref{decom of KpmP1}, the boundedness of $K^{\pm,j}_{P_1}(n,m)$ combined with the support of $\phi_{\pm}$ implies
\begin{align}\label{1-phipmKpmP1}
(1-\phi_{\pm})K^{\pm,1}_{P_1}(n,m)=O(\left<|n|\pm|m|\right>^{-2}),\quad (1-\phi_{-})K^{\pm,2}_{P_1}(n,m)=O(\left<|n|\pm|m|\right>^{-2}).
\end{align}
For the first terms, using the method for $K^{\pm,1}_{0}$ and $K^{\pm,2}_{0}$ in Proposition \ref{proposition good1 regular}, respectively, we obtain
\begin{align}
\phi_{\pm}K^{\pm,1}_{P_1}(n,m)&=\phi_{\pm}\lim\limits_{\theta_+\rightarrow0}\frac{ie^{-i\theta_+(|n|\pm|m|)}}{|n|\pm|m|}G^{\pm,1}_{P_1}(\theta_+,n,m)+\int_{-\pi}^{0}\frac{\phi_{\pm}e^{-i\theta_+(|n|\pm|m|)}}{i(|n|\pm|m|)}\frac{\partial{G^{\pm,1}_{P_1}}}{\partial\theta_+}(\theta_+,n,m)d\theta_+\notag\\
&=2(i-1)k^{\pm}_1(n,m)+O(\left<|n|\pm|m|\right>^{-2}),\label{kernel of phiKpm1P1 after integ}\\
\phi_-K^{\pm,2}_{P_1}(n,m)&=-\lim\limits_{\mu\rightarrow0}\frac{\phi_-e^{b(\mu)|n|\pm i\theta_+|m|}}{\alpha^{\pm}(\mu,n,m)}G^{\pm,2}_{P_1}(\mu,n,m)-\int_{0}^{2}\phi_-e^{b(\mu)|n|\pm i\theta_+|m|}\Big(\frac{G^{\pm,2}_{P_1}}{\alpha^{\pm}}\Big)'(\mu,n,m)d\mu,\notag\\
&=2(i-1)k^{\pm}_2(n,m)+O(\left<|n|\pm|m|\right>^{-2}),\label{kernel of phiKpm2P1 after integ}
\end{align}
where $\alpha^{\pm}(\mu,n,m)$ and $k^{\pm}_{\ell}(n,m)$ are defined in \eqref{defi of alphapm} and Lemma \ref{C-Z lemma}, respectively, and we used
$$\sum\limits_{m_1,m_2}(vP_1v)(m_1,m_2)=-2(i+1).$$
Therefore, combining \eqref{decom of KpmP1}$\sim$\eqref{kernel of phiKpm2P1 after integ} and \eqref{expre of KP1}, we derive
\begin{align}\label{expres of Kp1}
K_{P_1}(n,m)=\frac{i-1}{8}\Big(k^{+}_1(n,m)+k^{-}_1(n,m)+k^{+}_2(n,m)+k^{-}_2(n,m)\Big)+O(\left<|n|\pm|m|\right>^{-2}),
\end{align}
which together with Lemmas \ref{shur test} and \ref{C-Z lemma} gives that $K_{P_1}\in\B(\ell^p(\Z))$ for all $1<p<\infty$.
\vskip0.3cm
\underline{{\bf{(2) For ${\bm{K_1}}$}}}, by \eqref{kernel of K110} and Lemma \ref{cancelation lemma}, we have
\begin{align}\label{expre of K110}
K_1(n,m)=\frac{1}{16}\sum\limits_{j=1}^{2}(K^{+,j}_{1}-K^{-,j}_{1})(n,m),
\end{align}
where $N_1=n-\rho_1m_1$, $M_2=m-\rho_2m_2$ and 
\begin{align}
 K^{\pm,1}_{1}(n,m)&=\sum\limits_{m_1\in\Z}\int_{0}^{1}({\rm sign}(N_1))\int_{0}^{2}\chi_1(\mu)\left(\frac{\theta_+}{{\rm sin}\theta_+}\right)^2\sum\limits_{m_2\in\Z}\int_{0}^{1}({\rm sign}(M_2))e^{-i\theta_+(|N_1|\pm|M_2|)}d\rho_2\notag\\
  &\quad\times (v_1QA_{1}Qv_1)(m_1,m_2)d\mu d\rho_1:=\sum\limits_{m_1\in\Z}\int_{0}^{1}({\rm sign}(N_1))K^{\pm,1}_{\rho_1,m_1}(n,m)d\rho_1,\label{kernel of Kpmjrho1m1}\\
 K^{\pm,2}_{1}(n,m)&=\sum\limits_{m_1\in\Z}\int_{0}^{1}({\rm sign}(N_1))\int_{0}^{2}{\chi}_1(\mu)\frac{\theta_+}{{\rm sin}\theta_+}\frac{b(\mu)}{\mu}a_2(\mu)\sum\limits_{m_2\in\Z}\int_{0}^{1}({\rm sign}(M_2))e^{b(\mu)|N_1|\pm i\theta_+|M_2|}d\rho_2\notag\\
 &\quad\times (v_1QA_{1}Qv_1)(m_1,m_2)d\mu d\rho_1:=\sum\limits_{m_1\in\Z}\int_{0}^{1}({\rm sign}(N_1))K^{\pm,2}_{\rho_1,m_1}(n,m)d\rho_1.\notag
 \end{align}
For any fixed parameters $(\rho_1,m_1)\in[0,1]\times\Z$, we shall establish the following estimates:
\begin{align}\label{estimate for kpmjrho1m1}
\begin{split}
K^{\pm,1}_{\rho_1,m_1}(n,m)&=ik^{\pm}_1(n,m)C_{1}(m_1,m)+O\Big(\mcaM_{1}(m_1)\left<|n|\pm|m|\right>^{-2}\Big),\\
K^{\pm,2}_{\rho_1,m_1}(n,m)&=k^{\pm}_2(n,m)C_{1}(m_1,m)+O\Big(\mcaM_{1}(m_1)\left<|n|\pm|m|\right>^{-2}\Big),
\end{split}
\end{align}
where $\mcaM_{1}(m_1)=\left<m_1\right>^3|v(m_1)|\big(|QA_1Q|(\left<\cdot\right>^3|v(\cdot)|)\big)(m_1)$ and
\begin{align*}
C_{1}(m_1,m)=\sum\limits_{m_2\in\Z}(v_1QA_{1}Qv_1)(m_1,m_2)\int_{0}^{1}({\rm sign}(M_2))d\rho_2.
\end{align*}
Once this is established, noting that $|C_{1}(m_1,m)|\leq \mcaM_{1}(m_1)$ uniformly in $m\in\Z$, which combined with Lemmas \ref{shur test} and \ref{C-Z lemma} and triangle inequality, yields that $K^{\pm,j}_{\rho_1,m_1}$ is $\ell^p$ bounded for any $1<p<\infty$ and satisfies
\begin{align}\label{estimate for Kpmjrho1m1}
\|K^{\pm,j}_{\rho_1,m_1}\|_{\ell^p\rightarrow\ell^p}\lesssim \mcaM_{1}(m_1),\quad j=1,2.
\end{align}
Then for any $j=1,2$ and $1<p<\infty$, we have
\begin{align*}
\|K^{\pm,j}_{1}f\|^p_{\ell^p}&=\sum\limits_{n\in\Z}\Big|\sum\limits_{m\in\Z}\sum\limits_{m_1\in\Z}\int_{0}^{1}({\rm sign}(N_1))K^{\pm,j}_{\rho_1,m_1}(n,m)d\rho_1f(m)\Big|^p\\
&\leq\sum\limits_{n\in\Z}\Big(\sum\limits_{m_1\in\Z}\int_{0}^{1}|(K^{\pm,j}_{\rho_1,m_1}f)(n)|d\rho_1 \Big)^p,
\end{align*}
which together with the Minkowski's inequality and \eqref{estimate for Kpmjrho1m1} concludes that
\begin{align*}
\|K^{\pm,j}_{1}f\|_{\ell^p}&\leq\sum\limits_{m_1\in\Z}\int_{0}^{1}\|K^{\pm,j}_{\rho_1,m_1}f\|_{\ell^p}d\rho_1\leq\|f\|_{\ell^p}\sum\limits_{m_1\in\Z}\int_{0}^{1}\|K^{\pm,j}_{\rho_1,m_1}\|_{\ell^p\rightarrow\ell^p}d\rho_1\\
&\lesssim \|f\|_{\ell^p}\sum\limits_{m_1\in\Z}\left<m_1\right>^3|v(m_1)|\big(|QA_{1}Q|(\left<\cdot\right>^3|v(\cdot)|)\big)(m_1)\lesssim\|f\|_{\ell^p}.
\end{align*}
This result combined with \eqref{expre of K110} establishes that $K_{1}\in\B(\ell^p(\Z))$ for $1<p<\infty$.
\vskip0.15cm
To obtain \eqref{estimate for kpmjrho1m1}, it follows from \eqref{kernel of Kpmjrho1m1} that for any given $(\rho_1,m_1)\in[0,1]\times\Z$,
\begin{align}
K^{\pm,1}_{\rho_1,m_1}(n,m)&=\int_{0}^{2}e^{-i\theta_+(|n|\pm|m|)}\chi_1(\mu)\left(\frac{\theta_+}{{\rm sin}\theta_+}\right)^2L^{\pm,1}_{\rho_1,m_1}(\theta_+,n,m)d\mu\notag\\
&\xlongequal[]{{\rm by}\ \eqref{varible substi1}}\int_{-\pi}^{0}e^{-i\theta_+(|n|\pm|m|)}\bar{\chi}_1(\mu(\theta_+))\frac{\theta^2_{+}}{2(1-{\rm cos}\theta_+)}L^{\pm,1}_{\rho_1,m_1}(\theta_+,n,m)d\theta_+\notag\\
&:=\int_{-\pi}^{0}e^{-i\theta_+(|n|\pm|m|)}G^{\pm,1}_{\rho_1,m_1}(\theta_+,n,m)d\theta_+,
\end{align}
and
\begin{align}
K^{\pm,2}_{\rho_1,m_1}(n,m)&=\int_{0}^{2}e^{b(\mu)|n|\pm i\theta_+|m|}\bar{\bar{\chi}}_{1}(\mu)\frac{\theta_+}{{\rm sin}\theta_+}\frac{b(\mu)}{\mu}L^{\pm,2}_{\rho_1,m_1}(\mu,n,m)d\mu\notag\\
&:=\int_{0}^{2}e^{b(\mu)|n|\pm i\theta_+|m|}G^{\pm,2}_{\rho_1,m_1}(\mu,n,m)d\mu,
\end{align}
where $\bar{\chi}_1(\mu)=\chi_1(\mu)(1-\frac{\mu^2}{4})^{-\frac{1}{2}}$, $\bar{\bar{\chi}}_{1}(\mu)=-\chi_1(\mu)(1+\frac{\mu^2}{4})^{-\frac{1}{2}}$ and
\begin{align*}
L^{\pm,1}_{\rho_1,m_1}(\theta_+,n,m)&=\sum\limits_{m_2\in\Z}\int_{0}^{1}({\rm sign}(M_2))e^{-i\theta_+\big(|N_1|-|n|\pm(|M_2|-|m|)\big)}d\rho_2(v_1QA_{1}Qv_1)(m_1,m_2),\\
L^{\pm,2}_{\rho_1,m_1}(\mu,n,m)&=\sum\limits_{m_2\in\Z}\int_{0}^{1}({\rm sign}(M_2))e^{b(\mu)(|N_1|-|n|)\pm i\theta_+(|M_2|-|m|)}d\rho_2(v_1QA_{1}Qv_1)(m_1,m_2).
\end{align*}
Then we have the following estimates for any $k=0,1,2$:
\begin{align}
\sup\limits_{\theta_{+}\in(-\pi,0)}\big|(\partial^k_{\theta_+}L^{\pm,1}_{\rho_1,m_1})(\theta_+,n,m)\big|+\sup_{\mu\in(0,\mu_0]}\big|e^{b(\mu)|n|}(\partial^k_\mu L^{\pm,2}_{\rho_1,m_1})(\mu,n,m)\big|\lesssim \mcaM_{1}(m_1),
\end{align}
uniformly in $n,m\in\Z$ and $\rho_1\in[0,1]$. Moreover, noting that
$$\lim\limits_{\theta_+\rightarrow0}\Big(\frac{\theta^2_{+}}{2(1-{\rm cos}\theta_+)}\Big)^{(k)},\ \lim\limits_{\mu\rightarrow0^+}\Big(\frac{b(\mu)}{\mu}\Big)^{(k)}\ {\rm and}\ \lim\limits_{\mu\rightarrow0^+}\Big(\frac{\theta_+}{{\rm sin}\theta_+}\Big)^{(k)}\ {\rm exist\ for}\ k=0,1,2.$$
Following an analogous argument to that used for the operators $K^{\pm,j}_{P_1}$, we then obtain the claimed estimates in \eqref{estimate for kpmjrho1m1}. This completes the entire proof.
\end{proof}
Hence, combining Propositions \ref{proposition good1 regular}, \ref{proposition good2 regular} and \ref{proposition K110KP1 regular} and \eqref{decom1 of W1 regular}, Theorem \ref{theorem of W1} holds for the regular case.
\subsection{$0$ is a first kind resonance of $H$}\label{subsec of K1 1st}In this subsection, we consider the case where $0$ is a first kind resonance of $H$. 
As before, taking the expansion \eqref{asy expan of 1st 0} 
\begin{align*}
\begin{split}
M^{-1}(\mu)&=\mu^{-1}S_{1}A_{-1}S_{1}+\big(S_0A^{1}_{01}Q+QA^{1}_{02}S_0\big)+\mu\big(S_0A^{1}_{11}+A^{1}_{12}S_0+QA^{1}_{13}Q\big)\\
&\quad+\mu^2\big(QA^{1}_{21}+A^{1}_{22}Q\big)+\mu^3(QA^{1}_{31}+A^{1}_{32}Q)+\mu^3P_1+\mu^3A^{1}_{33}+\Gamma^{1}_{4}(\mu)
\end{split}
\end{align*}
into \eqref{kernel of mcaK1}, we obtain
\begin{equation}\label{decom1 of mcaK1 1st}
\mcaK_1=\sum\limits_{A\in\mcaA_{11}\cup\mcaA_{12}}^{}K_A+K_{P_1}+K^1_4,
\end{equation}
where $\mcaA_{11}=\{\mu^{-1}S_{1}A_{-1}S_{1},S_0A^{1}_{01}Q,QA^{1}_{02}S_0,\mu S_0A^{1}_{11},\mu A^{1}_{12}S_0,\mu QA^{1}_{13}Q,\mu^2 QA^{1}_{21},\mu^2A^{1}_{22}Q,\mu^3A^{1}_{33}\},$
\vskip0.1cm
\noindent$\mcaA_{12}=\{\mu^3QA^{1}_{31},\mu^3A^{1}_{32}Q\}$, $K_{A},K_{P_1}$ are defined in \eqref{kernel of KP1} and
$$K^{1}_4(n,m)=\int_{0}^{2}\mu^3\chi_1(\mu)\big[R^{+}_{0}(\mu^4)v \Gamma^1_4(\mu)v(R^{+}_{0}-R^{-}_0)(\mu^4)\big](n,m)d\mu.$$
Similarly, based on \eqref{order of operators in cancel lemma 0}, we can classify these integral operators into two groups:
\begin{align*}
O(1):K~(K\in\{K_A:A\in\mcaA_{11}\}\cup\{K_{P_1}\}),\quad O(\mu):K~(K\in\{K^1_{4}\}\cup\{K_{A}:A\in\mcaA_{12}\}).
\end{align*}
Recalling the results in Propositions \ref{proposition good1 regular}, \ref{proposition good2 regular} and \ref{proposition K110KP1 regular}, we have derived the following $\ell^p$ boundedness:
\begin{itemize}
\item $K\in\B(\ell^p(\Z))\  {\rm for\  all\ } 1\leq p\leq\infty$ with $K\in\{K^1_{4}\}\cup\{K_{A}:A\in\mcaA_{12}\}$,
\vskip0.15cm
\item $K_{A},K_{P_1}\in\B(\ell^p(\Z))\  {\rm for\  all\ } 1< p<\infty$ with $A=\mu QA^1_{13}Q$.
\end{itemize}
Therefore, in this case, it suffices to establish the boundedness for $\{K_A:A\in\mcaA_{11}\setminus\{\mu QA^1_{13}Q\}\}$.
\begin{proposition}\label{proposition 1st}
Let $H=\Delta^2+V$ with $|V(n)|\lesssim \left<n\right>^{-\beta}$ for $\beta>19$. Suppose that $H$ has no positive eigenvalues in the interval $\rm{(}0,16\rm{)}$ and $0$ is a first kind resonance of $H$. Let $\mcaA_{11}$ be defined in \eqref{decom1 of mcaK1 1st}. Then for any $A\in\mcaA_{11}\setminus\{\mu QA^1_{13}Q\}$, $K_A\in\B(\ell^p(\Z))$ for all $1< p<\infty$ and therefore $\mcaK_1\in\B(\ell^p(\Z))$ for all $1< p<\infty$.
\end{proposition}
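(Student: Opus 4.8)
The plan is to handle each $A\in\mcaA_{11}\setminus\{\mu QA^1_{13}Q\}$ by the template already developed for $K_{P_1}$ and $K_1$ in Proposition \ref{proposition K110KP1 regular}. The first step is an order count. Using the gains recorded in \eqref{order of operators in cancel lemma 0} together with $S_1\ell^2(\Z)\subseteq S_0\ell^2(\Z)$ (so that $R^{\pm}_0(\mu^4)vS_1=O(\mu^{-1})$ and $S_1vR^{\pm}_0(\mu^4)=O(\mu^{-1})$), one checks that for every such $A$ the kernel $\mcaK_A(\mu,n,m)$ of \eqref{mcaKA munm} is $O(1)$ as $\mu\to 0^+$: the prefactors $\mu^{-1},\mu^0,\mu^1,\mu^2,\mu^3$ attached to $A$ in \eqref{asy expan of 1st 0} are exactly balanced by the cancellation gains on the projected sides and by $R^+_0(\mu^4)v=O(\mu^{-3})$, $v(R^+_0-R^-_0)(\mu^4)=O(\mu^{-3})$ on the unprojected sides. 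Hence these operators sit in the same class $O(1)$ as $K_{P_1}$ and $K_1$, which is why only $1<p<\infty$ boundedness is expected and the stronger conclusions of Propositions \ref{proposition good1 regular}--\ref{proposition good2 regular} fail here.

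For each $A$ I would then apply Lemma \ref{cancelation lemma} to the factor $R^+_0(\mu^4)v(\cdot)$ or $(\cdot)v(R^+_0-R^-_0)(\mu^4)$ on whichever side a projection $Q$, $S_0$ or $S_1$ sits, using the explicit kernel \eqref{kernel of R0 boundary} on the other side, and perform the substitution \eqref{varible substi1}. This writes $K_A(n,m)$ as a finite linear combination of the two model oscillatory integrals of \eqref{kernel of K01pm1}--\eqref{kernel of K01pm2},
\[
\int_{-\pi}^{0}e^{-i\theta_+(|n|\pm|m|)}g(\theta_+)\chi_1(\mu(\theta_+))L^{\pm}(\theta_+,n,m)\,d\theta_+,\qquad
\int_{0}^{2}e^{b(\mu)|n|\pm i\theta_+|m|}h(\mu)\chi_1(\mu)L^{\pm}(\mu,n,m)\,d\mu,
\]
with, in the two-sided cases $\mu^{-1}S_1A_{-1}S_1$, $S_0A^1_{01}Q$ and $QA^1_{02}S_0$, an outer sum over a spatial variable $m_1$ and an integral over a Taylor parameter $\rho_1$, exactly as in \eqref{kernel of Kpmjrho1m1}. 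The amplitudes $L^{\pm}$ and their first two derivatives are to be bounded uniformly in $n,m$ by a fixed weighted $\ell^1$-norm of $V$ (respectively, in the two-sided case, by a summable weight $\mcaM(m_1)$); the polynomially growing factors $|n-\rho m_1|$, $|n|$, $|n|^2$, $|n|^3$ generated by the $c_3(\mu)|n-m|v(m)$-type terms and by the cancellation identities are absorbed using the orthogonality $\left<S_1f,v_k\right>=\left<S_0f,v_k\right>=0$ for $k=0,1$ and $Qv=0$, rewriting e.g. $|n|\,|n-m_1|$ as $n^2-nm_1+O(\left<m_1\right>^2)$ exactly as in Case $j=3$ of the proof of Proposition \ref{proposition good1 regular}.

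Finally I would extract the singular main term. One integration by parts in $\theta_+$ (resp.\ $\mu$) produces a boundary contribution at $\theta_+\to 0$ (resp.\ $\mu\to 0$), and since $g,h$ do \emph{not} vanish there — this is what distinguishes class $O(1)$ from class $O(\mu)$ — the boundary term is a multiple of the kernels $k^{\pm}_1(n,m)$ and $k^{\pm}_2(n,m)$ of Lemma \ref{C-Z lemma}, each carrying a coefficient bounded by $\mcaM(m_1)$ (coming from $\sum_{m_2}(v_\bullet A v_\bullet)(m_1,m_2)(\cdots)$); a second integration by parts, using $\bigl|b'(\mu)|n|\pm i\theta'_+(\mu)|m|\bigr|^{-2}\lesssim(|n|+|m|)^{-2}$, shows the remaining integral is $O\bigl(\mcaM(m_1)\left<|n|\pm|m|\right>^{-2}\bigr)$. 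Summing over $m_1$ and integrating over $\rho_1$ by Minkowski's inequality as in the treatment of $K_1$, then invoking Lemma \ref{shur test} for the $\left<|n|\pm|m|\right>^{-2}$ remainder and Lemma \ref{C-Z lemma} for the $k^{\pm}_1,k^{\pm}_2$ part, gives $K_A\in\B(\ell^p(\Z))$ for all $1<p<\infty$. Combined with \eqref{decom1 of mcaK1 1st} and the already-known bounds for $K_{P_1}$ and $K_A$ with $A=\mu QA^1_{13}Q$ (Proposition \ref{proposition K110KP1 regular}), and for $K^1_4$ and $K_A$ with $A\in\mcaA_{12}$ (the proofs of Propositions \ref{proposition good2 regular} and \ref{proposition good1 regular}), this yields $\mcaK_1\in\B(\ell^p(\Z))$ for all $1<p<\infty$.

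The main obstacle is the bookkeeping of the growing spatial factors. Because Lemma \ref{cancelation lemma} is structurally far more complicated on the lattice than in $\R$ (the coefficients $b_j,c^{\pm}_1,c_j,c_3,d_j$ are genuinely nonpolynomial in $\mu$, and $c_3\not\equiv0$), each reduction of the $\mu$-singularity is paid for by an extra factor $|n-\rho m|$ or $|n|$, and for $\mu^{-1}S_1A_{-1}S_1$ these accumulate on both sides simultaneously; keeping the weighted $\ell^1(V)$ control of the amplitudes through these manipulations — which is precisely what forces the hypothesis $\beta>19$ in the first-kind-resonance case — and checking that every growing term is annihilated by the available orthogonality is where essentially all the work lies, the oscillatory-integral and discrete Calder\'on--Zygmund steps being then routine repetitions of Propositions \ref{proposition good1 regular}--\ref{proposition K110KP1 regular}.
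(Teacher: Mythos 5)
Your proposal follows essentially the same route as the paper's proof: classify each $K_A$ with $A\in\mcaA_{11}\setminus\{\mu QA^1_{13}Q\}$ as class $O(1)$ via the cancellation gains of Lemma \ref{cancelation lemma}, reduce to the model oscillatory integrals of Propositions \ref{proposition good1 regular} and \ref{proposition K110KP1 regular}, integrate by parts twice so that the non-vanishing boundary contributions at $\theta_+\to 0$ (resp.\ $\mu\to 0$) produce the singular kernels $k^{\pm}_1,k^{\pm}_2$ with coefficients that are constants or uniformly bounded functions of $n$ or $m$ (summed via Minkowski when a parameter $m_1$ remains), absorb the growing spatial factors by the orthogonality of $Q,S_0,S_1$, and conclude with Lemmas \ref{shur test} and \ref{C-Z lemma}. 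One minor correction to your closing remark: the hypothesis $\beta>19$ is what Lemma \ref{Asymptotic expansion theorem}(ii) requires for the expansion \eqref{asy expan of 1st 0} itself, not the weighted $\ell^1$ control of the amplitudes, which only needs $\|\left<\cdot\right>^{2k+4}V\|_{\ell^1}<\infty$ for $k\le 2$.
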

\begin{proof}
\underline{{\bf(1)}}~For $A=\mu^{-1}S_{1}A_{-1}S_{1}$, denote
\begin{align*}
\bm{K_{-1}(n,m)}=\int_{0}^{2}\mu^2\chi_1(\mu)\big[R^{+}_{0}(\mu^4)vS_1A_{-1}S_1v(R^{+}_{0}-R^{-}_0)(\mu^4)\big](n,m)d\mu.
\end{align*}
Since $S_1$ has the same cancelation as $S_0$, it follows directly from \eqref{kernel of S0A01S0}, \eqref{kernel of K01pm1}, \eqref{kernel of K01pm2}, \eqref{kernel of K01pm3} that
\begin{align}\label{kernel of S1A-1S1}
K_{-1}(n,m)=\frac{1}{16}\big(K^{(1)}_{-1}+K^{(2)}_{-1}\big)(n,m),
\end{align}
where 
$\widetilde{\chi}_{13}(\mu)=-i\chi_1(\mu)\frac{c_3(\mu)}{\mu^2}$ and 
\begin{align*}
K^{(1)}_{-1}(n,m)&=\frac{-i}{4}C_{-1}(k^+_1+k^-_1-k^+_2-k^-_2)(n,m)+O(\left<|n|\pm|m|\right>^{-2}),\\
K^{(2)}_{-1}(n,m)&=\int_{-\pi}^{0}e^{\pm i\theta_+(|n|+|m|)}\widetilde{\chi}_{13}(\mu(\theta_+))\frac{\theta^2_+}{2(1-{\rm cos}\theta_+)}L^{\pm}_{-1}(\theta_+,n,m)d\theta_+,
\end{align*}
with
\begin{equation}\label{C-1}
C_{-1}=\sum\limits_{m_1,m_2\in\Z}(v_2S_1A_{-1}S_1v_2)(m_1,m_2),
\end{equation}
\begin{align*}
L^{\pm}_{-1}(\theta_+,n,m)=\sum\limits_{m_1,m_2\in\Z}^{}\int_{0}^{1}(1-\rho_2)e^{\pm i\theta_+(|M_2|-|m|-|n|)}d\rho_2\cdot|n-m_1|\big(vS_1A_{-1}S_1v_2)(m_1,m_2).
\end{align*}
From Lemmas \ref{shur test} and \ref{C-Z lemma}, it is clear that $K^{(1)}_{-1}\in\B(\ell^p(\Z))$ for all $1<p<\infty$. As for $K^{(2)}_{-1}$, since
$$c_3(\mu)=-\frac{1}{3}\mu^3-\frac{1}{8}\mu^4+O(\mu^5),\quad \mu\rightarrow0^+,$$
we can apply the similar method for $K^{\pm,3}_{0}(n,m)$ in Proposition \ref{proposition good1 regular} to obtain

$$|K^{(2)}_{-1}(n,m)|\lesssim\left<|n|+|m|\right>^{-2},\quad {\rm for\ any\ }n,m\in\Z.$$
Thus, $K^{(2)}_{-1}\in\B(\ell^p(\Z))$ for any $1\leq p\leq\infty$ and we derive that $K_{-1}\in\B(\ell^p(\Z))$ for all $1<p<\infty$.
\vskip0.3cm
\underline{{\bf(2)}}~Let $A\in\mcaA_{11}\setminus\{\mu^{-1}S_{1}A_{-1}S_{1},\mu QA^1_{13}Q\}$. We first compute the expression of $\mcaK_A(\mu,n,m)$ in \eqref{mcaKA munm}. Combining the definition of $\mcaA_{11}$ and \eqref{mcaKA regular 0}, it remains to consider such expression for $A=S_0A^1_{01}Q,QA^1_{02}S_0,\mu^3A^1_{33}$. From Lemma \ref{cancelation lemma}, we obtain
\begin{align*}
\mcaK_A(\mu,n,m)=
\left\{\begin{aligned}&\sum\limits_{m_1,m_2\in\Z}\Big[\int_{[0,1]^2}^{}(1-\rho_1)({\rm sign}(M_2))\big(f^{+,1}_{01}-f^{-,1}_{01}\big)(\mu,N_1,{M}_2)d\rho_1d\rho_2\mcaM^1_{01}(m_1,m_2)+\\
&\int_{0}^{1}({\rm sign}(M_2))\big(f^{+,2}_{01}-f^{-,2}_{01}\big)(\mu,\widetilde{N}_1,{M}_2)d\rho_2(vS_0A^1_{01}Qv_1)(m_1,m_2)\Big],\ \  A=S_0A^1_{01}Q,\\
&\sum\limits_{m_1,m_2\in\Z}\int_{[0,1]^2}(1-\rho_2)({\rm sign}(N_1))(f_{02}^{+}+f_{02}^{-})(\mu,N_1,M_2)d\rho_1d\rho_2\\
&\quad\times\mcaM^1_{02}(m_1,m_2),\qquad\qquad\qquad\qquad\qquad\qquad A= QA^1_{02}S_0,\\
&\sum\limits_{m_1,m_2\in\Z}(vA^1_{33}v)(m_1,m_2)(f^+_{33}+f^-_{33})(\mu,\widetilde{N}_1,\widetilde{M}_2)d\rho_2,\quad A=\mu^3A^1_{33},\end{aligned}\right.
\end{align*}
where $N_1=n-\rho_1m_1$, $\widetilde{N}_1=n-m_1$, $M_2=m-\rho_2m_2$, $\widetilde{M}_2=m-m_2$,
\begin{align*}
\mcaM^1_{01}(m_1,m_2)&=(v_2S_0A^1_{01}Qv_1)(m_1,m_2),\quad \mcaM^1_{02}(m_1,m_2)=(v_1QA^1_{02}S_0v_2)(m_1,m_2),\\
f^{\pm,1}_{01}(\mu,N_1,M_2)&=i\mu^{-3}\theta^3_+a_{11}(\mu)\Phi^{\pm}_1(\mu,N_1,M_2)+\mu^{-3}\theta_+(b(\mu))^2a_{12}(\mu)\Phi^{\pm}_2(\mu,N_1,M_2),\\
f^{\pm,2}_{01}(\mu,\widetilde{N}_1,M_2)&=\mu^{-3}\theta_+c_3(\mu)|\widetilde{N}_1|e^{\pm i\theta_+|M_2|},\\
f^{\pm}_{02}(\mu,N_1,M_2)&=i\mu^{-3}\theta^3_+a_{11}(\mu)\Phi^{\pm}_1(\mu,N_1,M_2)+i\mu^{-3}\theta^2_+b(\mu)a_{12}(\mu)\Phi^{\pm}_2(\mu,N_1,M_2),\\
f^{\pm}_{33}(\mu,\widetilde{N}_1,\widetilde{M}_2)&=-a_{11}(\mu)\Phi^{\pm}_1(\mu,\widetilde{N}_1,\widetilde{M}_2)+ia_{12}(\mu)\Phi^{\pm}_2(\mu,\widetilde{N}_1,\widetilde{M}_2),
\end{align*}
with $a_{11}(\mu)$, $a_{12}(\mu)$, $\Phi^{\pm}_{j}(\mu,X,Y)$ defined in \eqref{expre of fijpm}. We notice that all these expressions together with \eqref{mcaKA regular 0} allow $K_A(n,m)$ to reduce to the operators types in Proposition \ref{proposition K110KP1 regular} and $K^{(2)}_{-1}$ in Proposition \ref{proposition 1st}. Consequently, using an analogous argument, we can derive
\begin{align}\label{kernel of KA 1st}
K_A(n,m)=
\left\{\begin{aligned}
&\frac{1}{32}h_{1,1}(n,m)C_{01}(m)+r(n,m):=\bm{K_{01}(n,m)},\quad A=S_0A^1_{01}Q,\\
&\frac{C_{02}(n)}{32}g_{1,i}(n,m)+r(n,m):=\bm{K_{02}(n,m)},\quad   A=QA^1_{02}S_0,\\
&\frac{C_{11}}{32}g_{i,-1}(n,m)+r(n,m):=\bm {K_{11}(n,m)},\quad  A=\mu S_0A^1_{11},\\
&\frac{C_{12}}{32}g_{i,i}(n,m)+r(n,m):=\bm{K_{12}(n,m)},\quad   A=\mu A^1_{12}S_0,\\
&\frac{-C_{21}(n)}{16}g_{1,i}(n,m)+r(n,m):=\bm{K_{21}(n,m)},\quad   A=\mu^2 QA^1_{21},\\
&\frac{1}{16}h_{-1,1}(n,m)C_{22}(m)+r(n,m):=\bm{K_{22}(n,m)},\quad  A=\mu^2 A^1_{22}Q,\\
&\frac{-C_{33}}{16}g_{i,i}(n,m)+r(n,m):=\bm{K_{33}(n,m)},\quad  A=\mu^3A^1_{33},\end{aligned}\right.
\end{align}
where $r(n,m)=O(\left<|n|\pm|m|\right>^{-2})$,
\begin{equation}\label{gab hab}
g_{a,b}(n,m)=\big(a(k^+_1+k^{-}_1)+b(k^+_2+k^{-}_2)\big)(n,m),\  h_{a,b}(n,m)=\big(a(k^+_1-k^{-}_1)+b(k^+_2-k^{-}_2)\big)(n,m),
\end{equation}
and
\begin{align}
C_{01}(m)&=\sum\limits_{m_1,m_2\in\Z}\int_{0}^{1}({\rm sign}(M_2))d\rho_2\cdot\mcaM^1_{01}(m_1,m_2),\quad C_{11}=\sum\limits_{m_1,m_2\in\Z}(v_2S_0A^1_{11}v)(m_1,m_2),\notag\\
C_{02}(n)&=\sum\limits_{m_1,m_2\in\Z}\int_{0}^{1}({\rm sign}(N_1))d\rho_1\cdot\mcaM^1_{02}(m_1,m_2),\quad C_{12}=\sum\limits_{m_1,m_2\in\Z}(vA^1_{12}S_0v_2)(m_1,m_2),\notag\\
C_{21}(n)&=\sum\limits_{m_1,m_2\in\Z}\int_{0}^{1}({\rm sign}(N_1))d\rho_1\cdot(v_1QA^1_{21}v)(m_1,m_2),\label{kernel of 1st}\\
C_{22}(m)&=\sum\limits_{m_1,m_2\in\Z}\int_{0}^{1}({\rm sign}(M_2))d\rho_2(vA^1_{22}Qv_1)(m_1,m_2),\quad
C_{33}=\sum\limits_{m_1,m_2\in\Z}(vA^1_{33}v)(m_1,m_2).\notag
\end{align}
Therefore, combining the uniform boundedness of $C_{01}(m),C_{02}(n),C_{21}(n)$ and $C_{22}(m)$ and Lemmas \ref{shur test} and \ref{C-Z lemma}, we obtain $K_{A}\in\B(\ell^p(\Z))$ for all $1<p<\infty$ and any $A\in\mcaA_{11}\setminus\{\mu^{-1}S_{1}A_{-1}S_{1},\mu QA^1_{13}Q\}$. This completes the whole proof together with {\bf(1)}.
\end{proof}

\subsection{$0$ is a second kind resonance of $H$}\label{subsec of K1 2nd}
In this subsection, we handle the case where $0$ is a second kind resonance of $H$. Compared to the previous two cases, this scenario exhibits some subtle behavior in boundedness analysis. First, as before, taking the expansion \eqref{asy expan 2nd 0}
\begin{align*}
M^{-1}(\mu)&=\frac{S_{2}A_{-3}S_{2}}{\mu^3}+\frac{S_2A_{-2,1}S_0+S_0A_{-2,2}S_2}{\mu^2}
+\frac{S_2A_{-1,1}Q+QA_{-1,2}S_2+S_0A_{-1,3}S_0}{\mu}\notag\\
 &\quad+\big(S_2A^{2}_{01}+A^{2}_{02}S_2+QA^{2}_{03}S_0+S_0A^{2}_{04}Q\big)+\mu\big(S_0A^{2}_{11}+A^{2}_{12}S_0+QA^{2}_{13}Q\big)\notag\\
&\quad+\mu^2\big(QA^{2}_{21}+A^{2}_{22}Q\big)+\mu^3(QA^{2}_{31}+A^{2}_{32}Q)+\mu^3P_1+\mu^3A^{2}_{33}+\Gamma^{2}_{4}(\mu)
\end{align*}
into \eqref{kernel of mcaK1}, we obtain
\begin{equation}\label{decom1 of mcaK1 2nd}
\mcaK_1=\sum\limits_{A\in\mcaA_{21}\cup\mcaA_{22}}^{}K_A+K_{P_1}+K^2_4,
\end{equation}
where $\mcaA_{22}=\{\mu^3QA^{2}_{31},\mu^3A^{2}_{32}Q\}$, $\mcaA_{21}=\mcaA^{(1)}_{21}\cup\mcaA^{(2)}_{21}$ with
\begin{align*}
\mcaA^{(1)}_{21}&=\{\mu^{-3}S_{2}A_{-3}S_{2},\mu^{-2}S_2A_{-2,1}S_0,\mu^{-2}S_0A_{-2,2}S_2,\mu^{-1}S_2A_{-1,1}Q,\mu^{-1}QA_{-1,2}S_2,S_2A^2_{01},A^2_{02}S_2\}\\
\mcaA^{(2)}_{21}&=\{\mu^{-1}S_{0}A_{-1,3}S_{0},QA^{2}_{03}S_0,S_0A^{2}_{04}Q,\mu S_0A^{2}_{11},\mu A^{2}_{12}S_0,\mu QA^{2}_{13}Q,\mu^2 QA^{2}_{21},\mu^2A^{2}_{22}Q,\mu^3A^{2}_{33}\}
\end{align*}
and $K_{A},K_{P_1}$ are defined in \eqref{decom1 of W1 regular} and
$$K^{2}_4(n,m)=\int_{0}^{2}\mu^3\chi_1(\mu)\big[R^{+}_{0}(\mu^4)v \Gamma^2_4(\mu)v(R^{+}_{0}-R^{-}_0)(\mu^4)\big](n,m)d\mu.$$
Similarly, based on \eqref{order of operators in cancel lemma 0}, we can classify these integral operators into two groups:
\begin{align*}
O(1):K~(K\in\{K_A:A\in\mcaA_{21}\}\cup\{K_{P_1}\}),\quad O(\mu):K~(K\in\{K^2_{4}\}\cup\{K_{A}:A\in\mcaA_{22}\}).
\end{align*}
Recalling the established results in Proposition \ref{proposition 1st}, we have derived the following $\ell^p$ boundedness:
\begin{itemize}
\item $K\in\B(\ell^p(\Z))\  {\rm for\  all\ } 1\leq p\leq\infty$ with $K\in\{K^2_{4}\}\cup\{K_{A}:A\in\mcaA_{22}\}$,
\vskip0.15cm
\item $K_{A},K_{P_1}\in\B(\ell^p(\Z))\  {\rm for\  all\ } 1< p<\infty$ and all  $A\in\mcaA^{(2)}_{21}$.
\end{itemize}
Therefore, it suffices to deal with the $\ell^p$ boundedness of the operators $\{K_A:A\in\mcaA^{(1)}_{21}\}$.
\begin{proposition}\label{proposition 2nd}
Let $H=\Delta^2+V$ with $|V(n)|\lesssim \left<n\right>^{-\beta}$ for $\beta>27$. Suppose that $H$ has no positive eigenvalues in the interval $\rm{(}0,16\rm{)}$ and $0$ is a second kind resonance of $H$. Let $\mcaA^{(1)}_{21}$ be defined in \eqref{decom1 of mcaK1 2nd}. Then for any $A\in\mcaA^{(1)}_{21}$, $K_A\in\B(\ell^p(\Z))$ for all $1< p<\infty$ and therefore $\mcaK_1\in\B(\ell^p(\Z))$ for all $1< p<\infty$.
\end{proposition}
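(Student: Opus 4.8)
The plan is to run the argument along the lines already established in Propositions \ref{proposition good1 regular}, \ref{proposition K110KP1 regular} and \ref{proposition 1st}, adapted to the extra cancellation carried by the projection $S_2$. By the reductions recorded after \eqref{decom1 of mcaK1 2nd}, the operators $K_A$ with $A\in\mcaA_{22}$ and the remainder $K^2_4$ are already $\ell^p$-bounded for $1\le p\le\infty$, while $K_A$ and $K_{P_1}$ with $A\in\mcaA^{(2)}_{21}$ are $\ell^p$-bounded for $1<p<\infty$ by Proposition \ref{proposition 1st}; hence it only remains to treat the operators $K_A$ with $A\in\mcaA^{(1)}_{21}$. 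The first step is to check, using \eqref{order of operators in cancel lemma 0} together with $vR^{\pm}_0(\mu^4)=O(\mu^{-3})$ from \eqref{kernel of R0 boundary}, that in $\mcaK_A(\mu,n,m)$ of \eqref{mcaKA munm} the prefactor $\mu^3$ precisely absorbs the singular powers $\mu^{-3},\mu^{-2},\mu^{-1},\mu^{0}$ attached to the elements of $\mcaA^{(1)}_{21}$ against the orders of the flanking resolvent--projection pairs, so that every such $K_A$ belongs to the class $O(1)$ near $\mu=0$.

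For the model case $A=\mu^{-3}S_2A_{-3}S_2$ I would apply part (3) of Lemma \ref{cancelation lemma} to $R^{+}_0(\mu^4)vS_2$ on the left and, via the transpose identity $S_2\big(vR^{\pm}_0(\mu^4)f\big)=S_2f^{\pm}_2$, to $S_2v(R^{+}_0-R^{-}_0)(\mu^4)$ on the right. Since $d_1(\mu)=\theta^3_{+}a_1(\mu)$, $d_2(\mu)=-(b(\mu))^3a_2(\mu)$ and $d_3(\mu)=2c_3(\mu)$ are all $O(\mu^3)$ as $\mu\to0^{+}$, the products $d_i(\mu)d_j(\mu)$ are $O(\mu^6)$, which together with $\mu^3\cdot\mu^{-3}$ cancels the $\mu^{-6}$ coming from the two resolvents and leaves a smooth integrand that is $O(1)$ at $\mu=0$. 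After the substitution \eqref{varible substi1}, the splittings \eqref{decompostion of eitheta} and \eqref{decompostion of ebmu}, and two integrations by parts carried out exactly as in the cases $j=1,2$ of Proposition \ref{proposition good1 regular} and in Proposition \ref{proposition K110KP1 regular}, I expect to obtain
\begin{equation*}
K_A(n,m)=C_A\,g_{a,b}(n,m)+O\big(\langle|n|\pm|m|\rangle^{-2}\big),
\end{equation*}
with $g_{a,b}$ (or $h_{a,b}$) the linear combination of $k^{\pm}_1,k^{\pm}_2$ introduced in \eqref{gab hab} and $C_A$ a bounded constant formed from sums such as $\sum_{m_1,m_2}(v_3S_2A_{-3}S_2v_3)(m_1,m_2)$.

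The remaining members of $\mcaA^{(1)}_{21}$, namely $\mu^{-2}S_2A_{-2,1}S_0$, $\mu^{-2}S_0A_{-2,2}S_2$, $\mu^{-1}S_2A_{-1,1}Q$, $\mu^{-1}QA_{-1,2}S_2$, $S_2A^{2}_{01}$ and $A^{2}_{02}S_2$, are handled in the same way, now invoking part (2) of Lemma \ref{cancelation lemma} wherever an $S_0$ appears, part (1) wherever a $Q$ appears, part (3) for the $S_2$ factor, and the bare kernel \eqref{kernel of R0 boundary} whenever a side carries no projection; the power counting again yields an $O(1)$ integrand, and the same substitutions and double integration by parts reduce $K_A(n,m)$ to a coefficient bounded uniformly in $n,m$ times $g_{a,b}(n,m)$ (or $h_{a,b}(n,m)$), modulo $O(\langle|n|\pm|m|\rangle^{-2})$. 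For the two operators containing a $Q$, I would first freeze the inner integration variable $(\rho_1,m_1)$ (or $(\rho_2,m_2)$), bound the resulting $\ell^p$ operator norm by a weight $\mcaM(m_1)$ summable in $m_1$, and reassemble by Minkowski's inequality, precisely as for $K_1$ in Proposition \ref{proposition K110KP1 regular}. At the end, the error terms $O(\langle|n|\pm|m|\rangle^{-2})$ are absorbed by the Schur test (Lemma \ref{shur test}), which gives boundedness on $\ell^p$ for all $1\le p\le\infty$, and the main terms by Lemma \ref{C-Z lemma}, which gives boundedness on $\ell^p$ for $1<p<\infty$; consequently $K_A\in\B(\ell^p(\Z))$ for all $1<p<\infty$ and every $A\in\mcaA^{(1)}_{21}$, and together with the contributions already bounded this yields $\mcaK_1\in\B(\ell^p(\Z))$ for all $1<p<\infty$.

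The main obstacle will be controlling the term $d_3(\mu)|n-m|v(m)$ appearing in the $S_2$-cancellation of Lemma \ref{cancelation lemma}(3): unlike the formulas for $Q$ and $S_0$, it carries linear growth in $|n-m|$, so when two such factors are sandwiched (as in $\mu^{-3}S_2A_{-3}S_2$) the kernel acquires growth of order $|n-m_1|\,|m-m_2|$, and each of the two differentiations needed for the integration by parts can raise the total order by one more. I would tame these growing weights by invoking the full orthogonality $\langle S_2f,v_k\rangle=0$ for $k=0,1,2$ to rewrite the $|n-m_i|^{\ell}$-type factors as bounded quantities times polynomials in $m_i$ --- the same algebraic trick used for $J_1(n,m_1)$ and $J_2(n,m_1)$ in the case $j=3$ of Proposition \ref{proposition good1 regular}, now iterated one step further --- at the cost of requiring finiteness of the weighted norms $\|\langle\cdot\rangle^{s}V(\cdot)\|_{\ell^1}$ for $s$ as large as the hypothesis $\beta>27$ allows. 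Keeping this bookkeeping uniform in all the auxiliary integration variables, together with the remainder estimate \eqref{estimate of Gamma} used for the term $K^2_4$, is where the bulk of the technical effort lies.
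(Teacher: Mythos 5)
Your proposal follows essentially the same route as the paper: apply Lemma \ref{cancelation lemma} on both sides, power-count to see that each $K_A$ with $A\in\mcaA^{(1)}_{21}$ is $O(1)$ near $\mu=0$, integrate by parts twice after the substitution \eqref{varible substi1} to reduce to the kernels $k^{\pm}_1,k^{\pm}_2$ modulo $O(\langle|n|\pm|m|\rangle^{-2})$, and tame the linear growth from the $d_3(\mu)|n-m|v(m)$ term via the orthogonality $\langle S_2f,v_k\rangle=0$ followed by a Minkowski reassembly --- which is exactly how the paper handles the new singular term $K^{\pm}(n,m)$ and the extra bracketed terms in \eqref{kernel of KA 2nd}. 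The only imprecision is that for the $S_2$-flanked operators the leading coefficients are not constants $C_A$ but uniformly bounded $n$- (and $m$-) dependent quantities of the form $\langle(S_2\cdots)(\cdot),(|n-\cdot|-|n|)v(\cdot)\rangle$; your "main obstacle" paragraph already identifies and correctly resolves this, so the argument goes through.
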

\begin{proof}
 (1) For $A=\mu^{-3}S_{2}A_{-3}S_{2}$, denote
 \begin{align*}
    \bm{K_{-3}(n,m)}:=\int_{0}^{2}\chi_1(\mu)\big[R^{+}_{0}(\mu^4)vS_2A_{-3}S_2v(R^{+}_{0}-R^{-}_0)(\mu^4)\big](n,m)d\mu.
 \end{align*}
By Lemma \ref{cancelation lemma}, it can  further be expressed as
\begin{equation*}
K_{-3}(n,m)=\frac{1}{64}\sum\limits_{j=1}^{3}(K^{+,j}_{-3}-K^{-,j}_{-3})(n,m),
\end{equation*}
where $N_1=n-\rho_1m_1$, $M_2=m-\rho_2m_2$ , $a_{1j}(\mu),\Phi^{\pm}_{j}(\mu,N_1,M_2)$ are defined in \eqref{expre of fijpm}
 and
\begin{align*}
K^{\pm,1}_{-3}(n,m)&=\int_{0}^2\chi_1(\mu)\mu^{-6}\theta^6_+a_{11}(\mu)\sum\limits_{m_1,m_2\in\Z}\int_{[0,1]^2}(1-\rho_1)^2(1-\rho_2)^2({\rm sign}(N_1))({\rm sign}(M_2))\\
&\quad\times\Phi^{\pm}_{1}(\mu,N_1,M_2)d\rho_1d\rho_2(v_3S_2A_{-3}S_2v_3)(m_1,m_2)d\mu,\\ K^{\pm,2}_{-3}(n,m)&=\int_{0}^2\chi_1(\mu)\mu^{-6}\theta^3_+(b(\mu))^3a_{12}(\mu)\sum\limits_{m_1,m_2\in\Z}\int_{[0,1]^2}(1-\rho_1)^2(1-\rho_2)^2({\rm sign}(N_1))({\rm sign}(M_2))\\
&\quad\times\Phi^{\pm}_{2}(\mu,N_1,M_2)d\rho_1d\rho_2(v_3S_2A_{-3}S_2v_3)(m_1,m_2)d\mu,\\   K^{\pm,3}_{-3}(n,m)&=-2\int_{0}^2\chi_1(\mu)\mu^{-6}\theta^3_+c_3(\mu)a_1(\mu)\sum\limits_{m_1,m_2\in\Z}\int_{0}^1(1-\rho_2)^2({\rm sign}(M_2))e^{\pm i\theta_+|M_2|}d\rho_2\\
&\quad\times|n-m_1|(vS_2A_{-3}S_2v_3)(m_1,m_2)d\mu.
\end{align*}
For the first two terms, 
using the method for $K_1$ in Proposition \ref{proposition K110KP1 regular}, we can obtain
\begin{align*}
    K^{\pm,j}_{-3}(n,m)=\sum\limits_{m_1\in\Z}\int_{0}^1(1-\rho_1)^2({\rm sign}(N_1))\widetilde{K}^{\pm,j}_{-3}(m_1,n,m)d\rho_1, \quad j=1,2,
\end{align*}
where
\begin{align*}
 \widetilde{K}^{\pm,j}_{-3}(m_1,n,m)=i^jk^{\pm}_j(n,m)C_{-3}(m_1,m)+O(\mcaM_{-3}(m_1)\left<|n|\pm|m|\right>^{-2})
\end{align*}
with $C_{-3}(m_1,m)=\sum\limits_{m_2\in\Z} \int_{0}^1(1-\rho_2)^2({\rm sign}(M_2))d\rho_2(v_3S_2A_{-3}S_2v_3)(m_1,m_2)$ and
\begin{align*}
  \mcaM_{-3}(m_1)=\left<m_1\right>^5|v(m_1)|\big|S_2A_{-3}S_2\big|(\left<\cdot\right>^5|v(\cdot)|)(m_1).
\end{align*}
This establishes that $K^{\pm,j}_{-3}$ is $\ell^p$ bounded for all $1<p<\infty$ for $j=1,2$.
As for the third term, considering the decomposition
\begin{equation*}
    e^{\pm i\theta_+|M_2|}=  e^{\pm i\theta_+(|m|\pm i|n|)}e^{\pm i\theta_+(|M_2|-|m|)}e^{\theta_+|n|},
\end{equation*}
we further have
\begin{align*}
    K^{\pm,3}_{-3}(n,m)&=-2\int_{0}^2e^{\pm i\theta_+(|m|\pm i|n|)}\chi_1(\mu)\mu^{-6}\theta^3_+c_3(\mu)a_1(\mu)L^{\pm}_{-3}(\theta_+,n,m)d\mu,
\end{align*}
where
$$L^{\pm}_{-3}(\theta_+,n,m)=\sum\limits_{m_1,m_2}e^{\theta_+|n|}|n-m_1|\int_{0}^1(1-\rho_2)^2({\rm sign}(M_2))e^{\pm i\theta_+(|M_2|-|m|)}d\rho_2(vS_2A_{-3}S_2v_3)(m_1,m_2).$$
By an analogous argument as $L^{\pm,3}_0(\theta_+,n.m)$ in \eqref{kernel of K01pm3}, the following estimates also hold for any $k=0,1,2$:
\begin{equation*}
\sup\limits_{\theta_{+}\in(-\pi,0)}\big|(\partial^k_{\theta_+}L^{\pm}_{-3})(\theta_+,n,m)\big|\lesssim 1,\quad {\rm uniformly\ in}\ n,m\in\Z. 
\end{equation*}
Then applying the method for $K_{P_1}$, we derive
\begin{align*}
   K^{\pm,3}_{-3}(n,m)&=\frac{2}{3}\sum\limits_{m_1\in\Z}|n-m_1|v(m_1)k^{\mp}_2(n,m)(S_2A_{-3}S_2\varphi_m)(m_1)+O(\left<|n|\pm|m|\right>^{-2}) \\
   &:=K^{\pm}(n,m)+O(\left<|n|\pm|m|\right>^{-2}),
\end{align*}
where $$\varphi_m(m_2)=v_3(m_2)\int_{0}^{1}(1-\rho_2)^2({\rm sign}(M_2))d\rho_2.$$
Notice that a distinction from the previous two cases lies in the occurrence of the singular term $K^{\pm}(n,m)$. 
To deal with such term, we first use the orthogonality $\big<S_2f,v\big>=0$ to rewrite
\begin{equation}\label{phinm1}
 K^{\pm}(n,m)=\frac{2}{3}\sum\limits_{m_1\in\Z}\underbrace{(|n-m_1|-|n|)v(m_1)}_{:=\phi(n,m_1)}k^{\mp}_2(n,m)(S_2A_{-3}S_2\varphi_m)(m_1).
\end{equation}
For any $1<p<\infty$ and $f\in\ell^p(\Z)$, by Minkowski's inequality and Lemma \ref{C-Z lemma}, we have
\begin{align*}
 \|K^{\pm}f\|_{\ell^p}&\lesssim \|f\|_{\ell^p}\sum\limits_{m_1\in\Z}\|\phi(\cdot,m_1)\|_{\ell^{\infty}}\|k^{\mp}_2\|_{\ell^p-\ell^p}\Big(\big|S_2A_{-3}S_2\big|(|v_3|)\Big)(m_1)\\
 &\lesssim \|f\|_{\ell^p}\sum\limits_{m_1\in\Z}|v_1(m_1)|\Big(\big|S_2A_{-3}S_2\big|(|v_3|)\Big)(m_1)\\
 &\lesssim \|f\|_{\ell^p},
\end{align*}
where in the last inequality we used the absolute boundedness of $S_2A_{-3}S_2$ and H\"{o}lder's inequality.
Thus, $K^{\pm,3}_{-3}$ is $\ell^p$ bounded for all $1<p<\infty$ and this proves that
$K_{-3}\in\B(\ell^p(\Z))$ for $1<p<\infty$.
\vskip0.3cm
(2) For any $A\in\mcaA^{(1)}_{21}\setminus\{\mu^{-3}S_{2}A_{-3}S_{2}\}$, let $\phi(n,m_1)$ be as in \eqref{phinm1}, by a similar analysis to $K_{-3}$, we can derive
\begin{align}\label{kernel of KA 2nd}
K_A(n,m)=
\left\{\begin{aligned}
&\frac{C_{-2,1}(n)}{32}g_{-1,i}(n,m)+{\bm{\frac{1}{96}\big<(S_2A_{-2,1}S_0v_2)(\cdot),\phi(n,\cdot)\big>g_{0,-i}(n,m)}}+r(n,m)\\
&:=\bm{K_{-2,1}(n,m)},\qquad\qquad\qquad\qquad\qquad\qquad\qquad \quad\  A=\mu^{-2}S_2A_{-2,1}S_0,\\
&\frac{-1}{32}h_{1,1}(n,m)C_{-2,2}(m)+r(n,m):=\bm{K_{-2,2}(n,m)},\quad   A=\mu^{-2}S_0A_{-2,2}S_2,\\
&\frac{1}{32}\sum\limits_{m_1\in\Z}\int_{0}^1(1-\rho_1)^2({\rm sign}(N_1))h_{-i,1}(n,m)C_{-1,1}(m_1,m)d\rho_1+{\bm{\frac{1}{48} h_{0,1}(n,m)}}\\
&{\bm{\times\big<(S_2A_{-1,1}Q\widetilde{\varphi}_m)(\cdot),\phi(n,\cdot)\big>}}+r(n,m):=\bm{K_{-1,1}(n,m)},\quad  A=\mu^{-1} S_2A_{-1,1}Q,\\
&\frac{-1}{32}\sum\limits_{m_1\in\Z}\int_{0}^1({\rm sign}(N_1))h_{i,1}(n,m)C_{-1,2}(m_1,m)d\rho_1+r(n,m):=\bm{K_{-1,2}(n,m)},\\
&\qquad\qquad\qquad\qquad\qquad\qquad\qquad\qquad\qquad\qquad\qquad\qquad \quad\   A=\mu^{-1} QA_{-1,2}S_2,\\
&\frac{C^{(2)}_{01}(n)}{32}g_{1,-i}(n,m)+{\bm{\frac{1}{48}\big<(S_2A^2_{01}v)(\cdot),\phi(n,\cdot)\big>g_{0,i}(n,m)}}+r(n,m):=\bm{K^{(2)}_{01}(n,m)},\\
&\qquad\qquad\qquad\qquad\qquad\qquad\qquad\qquad\qquad\qquad\qquad\qquad \quad\ A=S_2 A^2_{01},\\
&\frac{1}{32}h_{1,-1}(n,m)C^2_{02}(m)+r(n,m):=\bm{K^{(2)}_{02}(n,m)},\quad  A=A^2_{02}S_2,\end{aligned}\right.
\end{align}
where $r(n,m)=O(\left<|n|\pm|m|\right>^{-2})$, $\widetilde{\varphi}_{m}(m_2)=v_1(m_2)\int_{0}^1({\rm sign}(M_2))d\rho_2$ and
\begin{align*}
 C_{-2,1}(n)&=\frac{1}{2}\sum\limits_{m_1,m_2\in\Z}\int_{0}^{1}(1-\rho_1)^2({\rm sign}(N_1))d\rho_1 (v_3S_2A_{-2,1}S_0v_2)(m_1,m_2),\\
 C_{-2,2}(m)&=\frac{1}{2}\sum\limits_{m_1,m_2\in\Z}\int_{0}^{1}(1-\rho_2)^2({\rm sign}(M_2))d\rho_2 (v_2S_0A_{-2,2}S_2v_3)(m_1,m_2),\\
 C_{-1,1}(m_1,m)&=\sum\limits_{m_2\in\Z}\int_{0}^1({\rm sign}(M_2))d\rho_2(v_3S_2A_{-1,1}Qv_1)(m_1,m_2),\\
C_{-1,2}(m_1,m)&=\sum\limits_{m_2\in\Z}\int_{0}^1(1-\rho_2)^2({\rm sign}(M_2))d\rho_2(v_1QA_{-1,2}S_2v_3)(m_1,m_2),\\
    C^{(2)}_{01}(n)&=\sum\limits_{m_1,m_2\in\Z}\int_{0}^{1}(1-\rho_1)^2({\rm sign}(N_1))d\rho_1 (v_3S_2A^2_{01}v)(m_1,m_2),\\
    C^2_{02}(m)&=\sum\limits_{m_1,m_2\in\Z}\int_{0}^{1}(1-\rho_2)^2({\rm sign}(M_2))d\rho_2 (vA^2_{02}S_2v_3)(m_1,m_2).
\end{align*}
Therefore, we prove that $K_A\in\B(\ell^p(\Z))$ for all $1<p<\infty$ and any $A\in\mcaA^{(1)}_{21}$ and complete the whole proof.
\end{proof}
Therefore, combining Subsections \ref{subsec of W1 regular}, \ref{subsec of K1 1st} and \ref{subsec of K1 2nd}, Theorem \ref{theorem of W1} is derived. Finally, we end this section with the proof of Lemma \ref{C-Z lemma}.
\begin{proof}[Proof of Lemma \ref{C-Z lemma}]
 Let $\chi_{\pm}=\chi_{\Z^{\pm}}$ be the characteristic function on $\Z^{\pm}:=\{m\in\Z:\pm m>0\}$ and define $(\tau f)(n)=f(-n)$. We introduce the kernel functions:
\begin{align*}
\widetilde{k}_1(n,m)=\frac{\phi(|n-m|^2)}{n-m},\quad \widetilde{k}^{\pm}_2(n,m)=\frac{\phi(|n-m|^2)}{n\pm im}.
\end{align*}
The $\ell^p$ boundedness of the operators $k^{\pm}_{1}$ and $k^{\pm}_{2}$ can be reduced to that of $\widetilde{k}_1$ and $\widetilde{k}^{\pm}_2$ through the following relations:
\begin{equation}\label{relation kpm1 and ktuta1}
({k}^{\pm}_1f)(n)=\big[(\chi_+\widetilde{k}_1\chi_{\mp}-\chi_-\widetilde{k}_1\chi_{\pm})(1+\tau)f\big](n),
\end{equation}
\begin{align}\label{relation kpm2 and kpmtuta2}
({k}^{\pm}_2f)(n)=\big[(\chi_+\widetilde{k}^{\pm}_2\chi_{+}-\chi_-\widetilde{k}^{\pm}_2\chi_{-})(1+\tau)f\big](n).
\end{align}
Indeed, noting that $k^{\pm}_{1}(n,m)=\widetilde{k}_1(|n|,\mp|m|)$ and $\widetilde{k}_1(-n,-m)=-\widetilde{k}_1(n,m)$, we have
\begin{align*}
k^{\pm}_{1}(n,m)&=(\chi_{+}(n)+\chi_{-}(n))k^{\pm}_{1}(n,m)(\chi_{+}(m)+\chi_{-}(m))\\
&=\chi_{+}(n)\widetilde{k}_1(n,\mp m)\chi_{+}(m)+\chi_{+}(n)\widetilde{k}_1(n,\pm m)\chi_{-}(m)\\
&\quad-\chi_{-}(n)\widetilde{k}_1(n,\pm m)\chi_{+}(m)-\chi_{-}(n)\widetilde{k}_1(n,\mp m)\chi_{-}(m).
\end{align*}
 Then equation \eqref{relation kpm1 and ktuta1} follows by making the change of variable $m\mapsto-m$ in the first and fourth terms for the $``+"$ case, and the second and third terms for the $``-"$ case, respectively. Similarly, we can obtain
\begin{align*}
k^{\pm}_2(n,m)&=\chi_{+}(n)\widetilde{k}^{\pm}_2(n,m)\chi_{+}(m)+\chi_{+}(n)\widetilde{k}^{\pm}_2(n,-m)\chi_{-}(m)\\
&\quad-\chi_{-}(n)\widetilde{k}^{\pm}_2(n,-m)\chi_{+}(m)-\chi_{-}(n)\widetilde{k}^{\pm}_2(n,m)\chi_{-}(m).
\end{align*}
Applying the variable substitution $m\mapsto-m$ in the second and third terms yields \eqref{relation kpm2 and kpmtuta2}. Since \cite[Lemma 3.3]{MWY24} has established that $T_{\widetilde{k}_1}$ and $T_{\widetilde{k}^{\pm}_2}$ are Calder\'{o}n-Zygmund operators, thus by Theorem \ref{Tdis lemma}, it follows that $\widetilde{k}_1$ and $\widetilde{k}^{\pm}_2$ are $\ell^p$ bounded for $1<p<\infty$. We then get the desired result.
\end{proof}
\section{The intermediate energy part $\mcaK_2$}\label{sec of W2}
\begin{theorem}\label{theorem for middle part}
Let $H=\Delta^2+V$ with $|V(n)|\lesssim \left<n\right>^{-\beta}$ for $\beta>3$. Suppose that $H$ has no positive eigenvalues in the interval $\rm{(}0,16\rm{)}$, then $\mcaK_{2}\in\B(\ell^p(\Z))$ for $1\leq p\leq\infty$.
\end{theorem}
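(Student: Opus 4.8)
The plan is to take advantage of the fact that $\chi_2$ is supported in the compact set $[\mu_0,2-\mu_0]\subset(0,2)$, where the threshold singularities at $\mu=0$ and $\mu=2$ are absent. On this set the kernels of $R^{\pm}_0(\mu^4)$ in Lemma \ref{lemma of kernel R0 boundary} are smooth and bounded in $\mu$, and by Lemmas \ref{LAP-lemma} and \ref{existence of inverse of Mmu} together with the hypothesis that $H$ has no eigenvalue in $(0,16)$, the operator $M^{-1}(\mu)$ exists, is continuous in $\mu$, and is therefore uniformly bounded on $\ell^2(\Z)$ over $\mathrm{supp}\,\chi_2$. Using \eqref{mcaKj} and \eqref{inverti relation} I would write the kernel
$$
\mcaK_2(n,m)=\int_0^2\mu^3\chi_2(\mu)\sum_{m_1,m_2\in\Z}R^{+}_0(\mu^4,n,m_1)\,v(m_1)\,M^{-1}(\mu)(m_1,m_2)\,v(m_2)\,(R^{+}_0-R^{-}_0)(\mu^4,m_2,m)\,d\mu,
$$
and reduce the statement, via the Schur test of Lemma \ref{shur test}, to showing that this kernel is summable in each of its two variables uniformly in the other.

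By Lemma \ref{lemma of kernel R0 boundary} one has $(R^{+}_0-R^{-}_0)(\mu^4,m_2,m)=\tfrac{ia_1(\mu)}{2\mu^3}\cos\!\big(\theta_{+}(\mu)|m_2-m|\big)$, which is purely oscillatory, while $R^{+}_0(\mu^4,n,m_1)$ splits into the oscillatory term $\tfrac{ia_1(\mu)}{4\mu^3}e^{-i\theta_{+}(\mu)|n-m_1|}$ and the exponentially decaying term $\tfrac{a_2(\mu)}{4\mu^3}e^{b(\mu)|n-m_1|}$; accordingly I split $\mcaK_2=\mcaK_2^{\mathrm{exp}}+\mcaK_2^{\mathrm{osc}}$. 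In $\mcaK_2^{\mathrm{exp}}$ one has $b(\mu)\le-c_0<0$ on $\mathrm{supp}\,\chi_2$, so $e^{b(\mu)|n-m_1|}$ and all of its $\mu$-derivatives are dominated by $e^{-c_0|n-m_1|/2}$; integrating by parts twice in $\mu$ against the remaining oscillation $\cos(\theta_{+}|m_2-m|)$ (the boundary terms vanish since $\chi_2$ is compactly supported in the interior) produces a factor $\left<m_2-m\right>^{-2}$, and summing over $m_1,m_2$ using the rapid decay of $V$ gives $|\mcaK_2^{\mathrm{exp}}(n,m)|\lesssim e^{-c_0|n|/4}\left<m\right>^{-\gamma}$ for some $\gamma>1$, which is summable in each variable uniformly in the other.

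For $\mcaK_2^{\mathrm{osc}}$ the two resulting phases are $e^{-i\theta_{+}(|n-m_1|+|m_2-m|)}$ and $e^{-i\theta_{+}(|n-m_1|-|m_2-m|)}$. After the change of variable $\cos\theta_{+}=1-\mu^2/2$ from \eqref{varible substi1}, a smooth diffeomorphism carrying $\mathrm{supp}\,\chi_2$ onto a compact subinterval of $(-\pi,0)$, I factor out $e^{-i\theta_{+}(|n|\pm|m|)}$ and absorb the bounded remainder $(|n-m_1|-|n|)\pm(|m_2-m|-|m|)=O(|m_1|+|m_2|)$ into the amplitude; integrating by parts twice in $\theta_{+}$ (again with vanishing boundary terms, and treating the at most $O(1)$-per-row pairs with $|n|\pm|m|<1$ by the trivial bound $|\mcaK_2(n,m)|\lesssim 1$) yields $|\mcaK_2^{\mathrm{osc}}(n,m)|\lesssim\left<|n|+|m|\right>^{-2}+\left<|n|-|m|\right>^{-2}\lesssim\left<|n|-|m|\right>^{-2}$, provided the amplitude and its first two $\theta_{+}$-derivatives, multiplied by $v(m_1)v(m_2)(1+|m_1|+|m_2|)^2$, are summable over $m_1,m_2$ uniformly in $\theta_{+}$. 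This last point is the heart of the matter, and it needs the entrywise bounds, uniform over $\mathrm{supp}\,\chi_2$,
$$
|M^{-1}(\mu)(m_1,m_2)|\lesssim\delta_{m_1m_2}+v(m_1)v(m_2),\qquad
|\partial_\mu^k M^{-1}(\mu)(m_1,m_2)|\lesssim\left<m_1\right>^k\left<m_2\right>^k v(m_1)v(m_2)\quad(k=1,2).
$$
The $k=0$ bound follows from the identity $(I+K)^{-1}=I-K+K(I+K)^{-1}K$ applied to $M(\mu)=U\big(I+K(\mu)\big)$ with $K(\mu)=UvR^{+}_0(\mu^4)v$, whose kernel is dominated by a constant times $v\otimes v$ on $\mathrm{supp}\,\chi_2$ (hence $K(\mu)$ is Hilbert--Schmidt), together with $\sup_{\mathrm{supp}\,\chi_2}\|M(\mu)^{-1}\|_{\ell^2\to\ell^2}<\infty$. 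The $k=1,2$ bounds follow from $\partial_\mu M^{-1}=-M^{-1}(\partial_\mu M)M^{-1}$ and $\partial_\mu^2 M^{-1}=2M^{-1}(\partial_\mu M)M^{-1}(\partial_\mu M)M^{-1}-M^{-1}(\partial_\mu^2 M)M^{-1}$, the bounds $|\partial_\mu^k M(\mu)(m_1,m_2)|\lesssim\left<m_1\right>^k\left<m_2\right>^k v(m_1)v(m_2)$ obtained by differentiating the exponentials in Lemma \ref{lemma of kernel R0 boundary} $k$ times, and the absorption $\sum_{l}|M^{-1}(\mu)(m_1,l)|\left<l\right>^k v(l)\lesssim\left<m_1\right>^k v(m_1)$, which is valid because $\left<\cdot\right>^k V\in\ell^1(\Z)$ for $k\le 2$ precisely when $\beta>3$; the same condition $\beta>3$ furnishes, via Lemma \ref{LAP-lemma}, the $\mu$-regularity needed to justify the two integrations by parts. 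Combining the bounds for $\mcaK_2^{\mathrm{exp}}$ and $\mcaK_2^{\mathrm{osc}}$ with Lemma \ref{shur test} then gives $\mcaK_2\in\B(\ell^p(\Z))$ for all $1\le p\le\infty$.

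The main obstacle is that this cannot be obtained as a composition estimate: none of $R^{+}_0(\mu^4)v$, $M^{-1}(\mu)$, $v(R^{+}_0-R^{-}_0)(\mu^4)$ is bounded on $\ell^1(\Z)$ or $\ell^\infty(\Z)$ individually (the free resolvent is not summable along rows), so the required cancellation has to be extracted from the $\mu$-integral by oscillatory-integral arguments. Moreover, because $\mcaK_2$ is not of convolution type, the ``diagonal frequency'' regime $|n|\approx|m|$, where the naive phase coefficient $|n-m_1|-|m_2-m|$ degenerates, must be isolated, which is why one carries both $|n|+|m|$ and $|n|-|m|$ in the decay. The genuinely delicate point is the uniform control of the matrix entries of $M^{-1}(\mu)$ and its first two $\mu$-derivatives --- rather than merely their operator norms --- and this is where the no-eigenvalue hypothesis and the decay rate $\beta>3$ are actually used.
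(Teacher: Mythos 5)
Your argument is correct and reaches the stated conclusion, but it takes a genuinely different route from the paper's at the key technical point. The paper does not use the symmetric resolvent identity for $\mcaK_2$ at all: it writes $R^+_V(\mu^4)=R^{+}_{0}(\mu^4)-R^{+}_{0}(\mu^4)VR^{+}_{V}(\mu^4)$, splits $\mcaK_2=\mcaK_{21}-\mcaK_{22}$, and controls the amplitude $\sum_{m_1,m_2}e^{\cdots}\,(VR^+_V(\mu^4)V)(m_1,m_2)$ together with its first two $\mu$-derivatives directly from the limiting absorption principle (Lemma \ref{LAP-lemma}), inserting weights $\left<m_1\right>^{\varepsilon_{k_1}},\left<m_2\right>^{\varepsilon_{k_1}}$ that are absorbed by $V$ precisely when $\beta>3$; it then integrates by parts twice and invokes the Schur test, exactly as you do. Your substitute for this step --- entrywise bounds on $M^{-1}(\mu)(m_1,m_2)$ and its two $\mu$-derivatives via $(I+K)^{-1}=I-K+K(I+K)^{-1}K$ and $\partial_\mu M^{-1}=-M^{-1}(\partial_\mu M)M^{-1}$ --- is sound and also closes at $\beta>3$, but only for two reasons you should make explicit: the entry of $K(I+K)^{-1}K$ must be estimated by Cauchy--Schwarz as $\|K(m_1,\cdot)\|_{\ell^2}\,\|(I+K)^{-1}\|_{\ell^2\to\ell^2}\,\|K(\cdot,m_2)\|_{\ell^2}\lesssim v(m_1)v(m_2)$ rather than by naive absolute summation, and on the diagonal ($\delta_{m_1m_2}$, i.e.\ the underived $U$-part of $M^{-1}$) the weight produced by two phase derivatives is $(|m_1|+|m_2|)^2\lesssim\left<m_1\right>^2+\left<m_2\right>^2$, not the product $\left<m_1\right>^2\left<m_2\right>^2$, which would otherwise force $\beta>5$. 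Two minor slips: the bound $|\mcaK_2^{\mathrm{exp}}(n,m)|\lesssim e^{-c_0|n|/4}\left<m\right>^{-\gamma}$ overstates the decay in $n$, since convolving $e^{b(\mu)|n-m_1|}$ against the merely polynomially decaying $v(m_1)$ yields $\left<n\right>^{-\beta}$-type decay (harmless for the Schur test); and the $\mu$-regularity of $M^{-1}$ in your scheme comes from your explicit differentiation identities, not from Lemma \ref{LAP-lemma}, which enters only through the invertibility of $M(\mu)$ in Lemma \ref{existence of inverse of Mmu}. The paper's route is shorter because the LAP already packages the derivative bounds on $R^+_V$ in weighted $\ell^2$ norms; yours is more self-contained, needing only the uniform operator norm of $M^{-1}(\mu)$ on ${\rm supp}\,\chi_2$ plus explicit kernel bounds on the free resolvent.
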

\begin{proof}
Recall from \eqref{expre of W-} and by virtue of the identity
\begin{equation*}
R^+_V(\mu^4)=R^{+}_{0}(\mu^4)-R^{+}_{0}(\mu^4)VR^{+}_{V}(\mu^4),
\end{equation*}
the kernel of $\mcaK_2$ is given by
\begin{align*}
\mcaK_2(n,m)=(\mcaK_{21}-\mcaK_{22})(n,m),
\end{align*}
where
\begin{align}\label{kernel of W2j}
\begin{split}
\mcaK_{21}(n,m)&=\int_{0}^{2}\mu^3\chi_2(\mu)\big[R^{+}_{0}(\mu^4)V(R^{+}_{0}-R^{-}_0)(\mu^4)\big](n,m)d\mu,\\
\mcaK_{22}(n,m)&=\int_{0}^{2}\mu^3\chi_2(\mu)\big[R^{+}_{0}(\mu^4)VR^{+}_{V}(\mu^4)V(R^{+}_{0}-R^{-}_0)(\mu^4)\big](n,m)d\mu.
\end{split}
\end{align}
Next, we claim that both kernels $\mcaK_{2j}(j=1,2)$ satisfy the estimate
\begin{align}\label{estimate for mcaK2j}
|\mcaK_{2j}(n,m)|\lesssim \left<|n|\pm|m|\right>^{-2}.
\end{align}
Combining this with Lemma \ref{shur test}, we conclude that $\mcaK_2\in\B(\ell^p(\Z))$ for all $1\leq p\leq\infty$.
\vskip0.3cm
\underline{{\bf{(1) For ${\bm{j=1}}$}}}, it follows from \eqref{kernel of W2j} and \eqref{kernel of R0 boundary} that
\begin{align}\label{expr of W21}
\mcaK_{21}(n,m)=\frac{1}{16}\sum\limits_{j=1}^{2}(\mcaK^{+,j}_{21}+\mcaK^{-,j}_{21})(n,m),
\end{align}
where $N_1=n-m_1$, $M_1=m-m_1$ and
\begin{align*}
\mcaK^{\pm,1}_{21}(n,m)&=-\int_{0}^{2}\mu^{-3}(a_1(\mu))^2\chi_2(\mu)\sum\limits_{m_1\in\Z}e^{-i\theta_+(|N_1|\pm|M_1|)}V(m_1)d\mu,\\
\mcaK^{\pm,2}_{21}(n,m)&=i\int_{0}^{2}\mu^{-3}a_1(\mu)a_2(\mu)\chi_2(\mu)\sum\limits_{m_1\in\Z}e^{b(\mu)|N_1|\pm i\theta_+|M_1|}V(m_1)d\mu.
\end{align*}
By applying the argument for $K^{\pm,j}_0$ in Proposition \ref{proposition good1 regular} to $\mcaK^{\pm,j}_{21}$, while noting that $supp\chi_2(\mu)\subseteq[\mu_0,2-\mu_0]$, it is not difficult to obtain
\begin{align*}
|\mcaK^{\pm,j}_{21}(n,m)|\lesssim \left<|n|\pm|m|\right>^{-2},\quad j=1,2,
\end{align*}
which establishes \eqref{estimate for mcaK2j} for $\mcaK_{21}$.

\vskip0.3cm
{\bf{\underline{{(2) For ${\bm{j=2}}$}}}}, by \eqref{kernel of W2j} and \eqref{kernel of R0 boundary}, we have
\begin{align*}
\mcaK_{22}(n,m)=\frac{1}{16}\sum\limits_{j=1}^{2}(\mcaK^{+,j}_{22}+\mcaK^{-,j}_{22})(n,m),
\end{align*}
where $N_1=n-m_1$, $M_2=m-m_2$ and
\begin{align*}
\mcaK^{\pm,1}_{22}(n,m)&=-\int_{0}^{2}e^{-i\theta_+(|n|\pm|m|)}\mu^{-3}(a_1(\mu))^2\chi_2(\mu)L^{\pm,1}_{22}(\mu,n,m)d\mu,\\
\mcaK^{\pm,2}_{22}(n,m)&=i\int_{0}^{2}e^{b(\mu)|n|\pm i\theta_+|m|}\mu^{-3}a_1(\mu)a_2(\mu)\chi_2(\mu)L^{\pm,2}_{22}(\mu,n,m)d\mu,
\end{align*}
with
\begin{align*}
L^{\pm,1}_{22}(\mu,n,m)&=\sum\limits_{m_1,m_2\in\Z}e^{-i\theta_+(|N_1|-|n|\pm(|M_2|-|m|))}(VR^+_{V}(\mu^4)V)(m_1,m_2),\\
L^{\pm,2}_{22}(\mu,n,m)&=\sum\limits_{m_1,m_2\in\Z}e^{b(\mu)(|N_1|-|n|)\pm i\theta_+(|M_2|-|m|)}
(VR^+_{V}(\mu^4)V)(m_1,m_2).
\end{align*}
We shall show that for any $k=0,1,2$, the following estimates hold:
\begin{align}\label{estimate of Lpm12 22}
\sup\limits_{\mu\in[\mu_0,2-\mu_0]}\Big|\big(\partial^k_\mu L^{\pm,1}_{22}\big)(\mu,n,m)\Big|+\sup\limits_{\mu\in[\mu_0,2-\mu_0]}\Big|e^{b(\mu)|n|}\big(\partial^k_\mu L^{\pm,2}_{22}\big)(\mu,n,m)\Big|\lesssim1,
\end{align}
uniformly in $n,m\in\Z$. With this established, using $supp\chi_2(\mu)\subseteq[\mu_0,2-\mu_0]$ and applying the method used for $K^{\pm,j}_{0}$ to $\mcaK^{\pm,j}_{22}$, we can derive
 \begin{align*}
|\mcaK^{\pm,j}_{22}(n,m)|\lesssim \left<|n|\pm|m|\right>^{-2},\quad j=1,2,
\end{align*}
which gives \eqref{estimate for mcaK2j}. To obtain \eqref{estimate of Lpm12 22}, we focus on $L^{\pm,2}_{22}$~(the case for $L^{\pm,1}_{22}$ being similar). For $k=0,1,2$,
\begin{align}\label{derivate of Lpm2 22}
\big(\partial^k_\mu L^{\pm,2}_{22}\big)(\mu,n,m)&=\sum\limits_{k_1=0}^{k}\sum\limits_{m_1,m_2\in\Z}C^{k_1}_{k}\partial^{k-k_1}_{\mu}\big(e^{b(\mu)(|N_1|-|n|)\pm i\theta_+(|M_2|-|m|)}\big)(V\partial^{k_1}_\mu(R^+_{V}(\mu^4))V)(m_1,m_2)\notag\\
&=\sum\limits_{k_1=0}^{k}\sum\limits_{m_1,m_2\in\Z}C^{k_1}_{k}\partial^{k-k_1}_{\mu}\big(e^{b(\mu)(|N_1|-|n|)\pm i\theta_+(|M_2|-|m|)}\big)V(m_1)\left<m_1\right>^{\varepsilon_{k_1}}\\
&\quad\times\big(\left<\cdot\right>^{-\varepsilon_{k_1}}\partial^{k_1}_\mu(R^+_{V}(\mu^4))\left<\cdot\right>^{-\varepsilon_{k_1}}\big)(m_1,m_2)\left<m_2\right>^{\varepsilon_{k_1}}V(m_2),\notag
\end{align}
where $\varepsilon_{k_1}$ is a positive constant depending on $k_1$ specified later.
Noting that both $b'(\mu)$ and $\theta'_+(\mu)$ are smooth on $[\mu_0,2-\mu_0]$, and $b(\mu),b'(\mu)<0$, we have 
$$\sup\limits_{N_1,M_2,m}\sup\limits_{\mu\in[\mu_0,2-\mu_0]}\big|e^{b(\mu)|N_1|\pm i\theta_+(|M_2|-|m|)}\big|\lesssim 1.$$
These facts together with Lemma \ref{LAP-lemma} (taking $k_1+\frac{1}{2}<\varepsilon_{k_1}<\beta-\frac{1}{2}+k_1-k$ in \eqref{derivate of Lpm2 22}) yields 
\begin{align*}
\sup\limits_{n,m\in\Z}\sup\limits_{\mu\in[\mu_0,2-\mu_0]}\Big|e^{b(\mu)|n|}\big(\partial^k_\mu L^{\pm,2}_{22}\big)(\mu,n,m)\Big|\lesssim1,\quad k=0,1,2.
\end{align*}
Therefore, \eqref{estimate of Lpm12 22} is obtained and this completes the whole proof.
\end{proof}
\section{The high energy part $\mcaK_3$}\label{sec of mcaK3}
\begin{theorem}\label{theorem for W3}
Let $H=\Delta^2+V$ with $|V(n)|\lesssim \left<n\right>^{-\beta}$ for some $\beta>0$. Suppose that $H$ has no positive eigenvalues in the interval $\rm{(}0,16\rm{)}$. If
 \begin{align*}
 \beta>\left\{\begin{aligned}&9,\ 16\ is\ a\ regular\ point\ of\ H,\\
&13,\ 16\ is\ a\ resonance\ of\ H,\\
&17,\ 16\ is\ an\ eigenvalue\ of\ H,\end{aligned}\right.
\end{align*}
then $\mcaK_3\in\B(\ell^p(\Z))$ for all $1<p<\infty$.
\end{theorem}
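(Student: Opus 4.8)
The plan is to mirror the treatment of the low-energy part $\mcaK_1$ (Section~\ref{sect of mcaK1}), but now near the threshold $\mu=2$ and relying on the transformation properties of the Laplacian resolvent under the unitary map $J$. First I would substitute the relevant expansion of $M^{-1}(2-\mu)$ from Lemma~\ref{Asymptotic expansion theorem}~(iv)--(vi) into the representation \eqref{mcaKj} for $j=3$, writing $\mcaK_3$ as a finite sum of integral operators $K_B$ as in \eqref{KB}, one for each operator $B$ appearing in the expansion (including the error term $\Gamma^{i}_{3/2}(2-\mu)$ and the singular operator $(2-\mu)^{1/2}\widetilde P_1$, resp. the analogous singular pieces $\widetilde S_0B_{-1}\widetilde S_0$, $\widetilde S_1B_{-2}\widetilde S_1$, etc.\ in the resonance/eigenvalue cases). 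The key structural device, as indicated in the outline (Subsection~\ref{subsec outline}), is the identity $JR^{\pm}_{-\Delta}(\mu^2)J=-R^{\mp}_{-\Delta}(4-\mu^2)$ together with $J^2=I$, which recasts $R^{+}_0(\mu^4)vBv(R^{+}_0-R^{-}_0)(\mu^4)$ via \eqref{R0vBvR0 } into an expression governed by the Laplacian resolvent $R^{\mp}_{-\Delta}(4-\mu^2)$ near its own threshold; this is what lets us exploit the orthogonality $\widetilde Q\tilde v=0$ (and $\widetilde S_j\tilde v_k=0$) through the Taylor expansion of $R^{\mp}_{-\Delta}(4-\mu^2,n,m)=\pm i e^{\pm i\tilde\theta_+|n-m|}/(2\sin\tilde\theta_+)$, producing the bounds $R^{\mp}_{-\Delta}(4-\mu^2)\tilde v\widetilde Q=O(1)$ and similar.

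Next I would classify the operators $K_B$ into the two groups $O(1)$ and $O((2-\mu)^{-1/2})$ according to the $\mu$-order of their kernels near $\mu=2$, exactly as displayed in the outline. For the $O(1)$ class (which includes $\widetilde Q B_0\widetilde Q$, the cross terms $(2-\mu)^{1/2}\widetilde Q B^0_{11}$ etc., $(2-\mu)B^0_{21}$, and $\Gamma^0_{3/2}(2-\mu)$ in the regular case, with the obvious analogues in the other two cases), I would carry out the change of variables $\cos\tilde\theta_+=\tfrac{\mu^2}{2}-1$ from \eqref{varible substi2 introduction}; crucially this substitution supplies an extra factor $(2-\mu)^{1/2}$ (since $d\mu/d\tilde\theta_+=\sqrt{1-\mu^2/4}\sim\sqrt{2-\mu}$), which—unlike the low-energy substitution \eqref{varible substi 0}—makes the resulting amplitudes $\tilde g_j$ vanish as $\tilde\theta_+\to 0$. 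One then splits each kernel into the two phase types $e^{i\tilde\theta_+(|n|\pm|m|)}$ and $e^{\pm i\tilde\theta_+(|m|\pm i|n|)}$, verifies the uniform derivative bounds on the amplitudes $\widetilde L^{\pm,j}_0$ (in terms of weighted $\ell^1$ norms of $V$, hence the stated decay requirement $\beta>9$, $>13$, $>17$), and integrates by parts twice in $\tilde\theta_+$ to obtain $|K_B(n,m)|\lesssim\langle|n|\pm|m|\rangle^{-2}$; the Schur test Lemma~\ref{shur test} then gives $\ell^p$-boundedness for all $1\le p\le\infty$. For the error operators $\Gamma^i_{3/2}$ I would additionally use a dyadic decomposition near $\mu=2$, exactly as in Proposition~\ref{proposition good2 regular}, to convert the limited regularity encoded in \eqref{estimate of Gamma} into a summable kernel bound.

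For the remaining group $O((2-\mu)^{-1/2})$—the terms $(2-\mu)^{1/2}\widetilde P_1$, and in the resonance/eigenvalue cases the genuinely singular pieces $\mu^{-1/2}\widetilde S_0 B_{-1}\widetilde S_0$, $\mu^{-1}\widetilde S_1 B_{-2}\widetilde S_1$ and the associated cross terms—the amplitudes no longer vanish at $\tilde\theta_+=0$, so integration by parts leaves a leading boundary contribution of the form $\langle|n|\pm|m|\rangle^{-1}$. Here I would reproduce the mechanism used for $K_{P_1}$ in Proposition~\ref{proposition K110KP1 regular}: split the kernel via the cut-off $\phi(\cdot)$ of Lemma~\ref{C-Z lemma} into a smooth remainder that is $O(\langle|n|\pm|m|\rangle^{-2})$ and a genuinely singular part that, after the boundary evaluation, is an explicit constant multiple of $(-1)^{n+m}\big(k^+_1(n,m)+k^-_1(n,m)\big)$ (and, in the more singular cases, of the $k^{\pm}_2$-type kernels and of the ``$|n-m_1|v(m_1)$-weighted'' kernels $K^{\pm}(n,m)$ handled at the end of Proposition~\ref{proposition 2nd}). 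Lemma~\ref{C-Z lemma}—whose proof reduces $k^{\pm}_1,k^{\pm}_2$ to Calder\'on--Zygmund operators on $\Z$ via the reflection $\tau$ and the restriction operators $\chi_{\pm}$—then yields $\ell^p$-boundedness for $1<p<\infty$, and Minkowski's inequality handles the sum over the inner lattice variable $m_1$ with weight $\mcaM(m_1)$, as in the proof of \eqref{estimate for Kpmjrho1m1}. Combining all cases, $\mcaK_3\in\B(\ell^p(\Z))$ for $1<p<\infty$.

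I expect the main obstacle to be the bookkeeping for the singular expansions at the resonance and eigenvalue cases: there the factorization \eqref{R0vBvR0 } must be combined with the correct number of Taylor terms of $R^{\mp}_{-\Delta}(4-\mu^2)$ so that the projections $\widetilde S_0,\widetilde S_1$ kill enough powers of $\tilde\theta_+$ to reduce the apparent $\mu^{-1}$ singularity down to an integrable (indeed bounded) kernel, and one must check that every residual singular term lands precisely in the span of the $k^{\pm}_1$, $k^{\pm}_2$, and $|n-m_1|v(m_1)$-weighted kernels already known to be $\ell^p$-bounded. Verifying the uniform derivative estimates on the amplitudes in each such case (and thereby pinning down the sharp decay thresholds $\beta>9,13,17$) is the place where the discrete structure—the non-polynomial dependence of $\tilde\theta_+$, $b(\mu)$, $a_j(\mu)$ on $\mu$—creates genuine extra work compared with the continuous analogue in \cite{MWY24}.
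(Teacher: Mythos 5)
Your proposal follows essentially the same route as the paper's Section~\ref{sec of mcaK3}: the $J$-conjugation identity \eqref{R0vBvR0}, the cancelation properties of $\widetilde Q,\widetilde S_0,\widetilde S_1$ against $R^{\mp}_{-\Delta}(4-\mu^2)$, the classification into $O(1)$ and $O((2-\mu)^{-1/2})$ classes, the substitution $\cos\tilde\theta_+=\tfrac{\mu^2}{2}-1$ with its extra $(2-\mu)^{1/2}$ gain, double integration by parts plus the Schur test, and reduction of the residual singular kernels to $k^{\pm}_1,k^{\pm}_2$ via Lemma~\ref{C-Z lemma} and Minkowski's inequality. The only cosmetic deviation is that you invoke a dyadic decomposition for $\Gamma^{i}_{3/2}$, whereas the paper dispenses with it here because the change of variables already upgrades \eqref{estimate of Gamma} to $\bigl\|\partial^k_{\tilde\theta_+}\Gamma\bigr\|_{\ell^2\to\ell^2}\lesssim(2-\mu)^{(2-k)/2}$, which suffices directly.
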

Prior to the proof, we first recall that
\begin{align}\label{kernel of W3}
\mcaK_3&=\int_{0}^{2}\mu^3\chi_3(\mu)\big[R^{+}_{0}(\mu^4)v M^{-1}(\mu)v(R^{+}_{0}-R^{-}_0)(\mu^4)\big]d\mu.
\end{align}

We remark that an important difference from Section \ref{sect of mcaK1} is that it is not straightforward to utilize the cancelation properties of the projection operators $\widetilde{Q}$, $\widetilde{S}_0$, $\widetilde{S}_1$ in the expansions of $M^{-1}(\mu)$ to eliminate the singularity at $\mu=2$. To overcome this difficulty, we resort to the unitary operator $J$~(defined in \eqref{J}), which can transfer the operator $R^{+}_{0}(\mu^4)v Bv(R^{+}_{0}-R^{-}_0)(\mu^4)$ to the form
\begin{equation}\label{R0vBvR0}
R^{+}_{0}(\mu^4)v Bv(R^{+}_{0}-R^{-}_0)(\mu^4)=\frac{1}{4\mu^4}J\big(R^{-}_{-\Delta}(4-\mu^2)+JR_{-\Delta}(-\mu^2)J\big)\tilde{v}B\tilde{v}(R^{-}_{-\Delta}-R^{+}_{-\Delta})(4-\mu^2)J
\end{equation}
via the relation $J^2=I$ and the formulas
$$ JR^{\pm}_{-\Delta}(\mu^2)J=-R^{\mp}_{-\Delta}(4-\mu^2),\quad R^{\pm}_{0}(\mu^4)=\frac{1}{2\mu^2}\left(R^{\pm}_{-\Delta}(\mu^2)-R_{-\Delta}(-\mu^2)\right),\quad\mu\in(0,2).$$
This form indicates that one can turn to establish the following lemma to eliminate the singularity.
\begin{lemma}\label{cancelation lemma16}{\rm(\cite[Lemma 4.9 and Lemma 4.14]{HY25})}
{ Let $\widetilde{Q}$, $\widetilde{S}_0$, $\widetilde{S}_1$ be as in Definition \ref{defini of sixteen}. For any $f\in\ell^{2}(\Z)$, then we have
\vskip0.15cm
\noindent{\rm(1)} $\big(R^{\mp}_{-\Delta}(4-\mu^2)\tilde{v}Wf\big)(n)=(2{\rm sin}\tilde{\theta}_+)^{-1}\sum\limits_{m\in\Z}\int_{0}^{1}{\bm {\tilde{\theta}_+}}({\rm sign}(n-\rho m))e^{\pm i\tilde{\theta}_{+}|n-\rho m|}d\rho\cdot \tilde{v}_1(m)(Wf)(m)$,
     \vskip0.15cm
    \qquad \qquad \qquad \quad\quad\qquad $:=(2{\rm sin}\tilde{\theta}_+)^{-1}\sum\limits_{m\in\Z}\widetilde{\mcaB}^{\pm}(\tilde{\theta}_+,n,m)(Wf)(m)$,\quad $W=\widetilde{Q}$, $\widetilde{S}_0$, $\widetilde{S}_1$,
    \vskip0.2cm
\noindent{\rm(2)} $\big[\big((R^{-}_{-\Delta}-R^{+}_{-\Delta})(4-\mu^2)\big)\tilde{v}\widetilde{S}_1f\big](n)=(2{\rm sin}\tilde{\theta}_+)^{-1}\sum\limits_{m\in\Z}\int_{0}^{1}i{\bm{\tilde{\theta}^2_+}}(\rho-1)(e^{i\tilde{\theta}_+|n-\rho m|}+e^{-i\tilde{\theta}_+|n-\rho m|})d\rho$
\vskip0.15cm
    \qquad \qquad \qquad\quad\qquad\qquad \qquad\qquad\qquad \qquad\qquad$\times\tilde{v}_2(m)(\widetilde{S}_1f)(m)$,
\vskip0.15cm
    \qquad \qquad \qquad\quad\qquad \qquad\qquad\qquad $:=(2{\rm sin}\tilde{\theta}_+)^{-1}\sum\limits_{m\in\Z}\widetilde{\mcaC}(\tilde{\theta}_+,n,m)(\widetilde{S}_1f)(m)$,
    \vskip0.2cm
\noindent{\rm(3)} $W\big(\tilde{v}R^{\mp}_{-\Delta}(4-\mu^2)f\big)=W\widetilde{f}^{\pm}$, \quad $\widetilde{S}_1\big(\tilde{v}\big((R^{-}_{-\Delta}-R^{+}_{-\Delta})(4-\mu^2)\big)f\big)=\widetilde{S}_1\widetilde{f}_1$,

\noindent where $\tilde{\theta}_{+}:=\tilde{\theta}_{+}(\mu)$ satisfies ${\rm cos}\tilde{\theta}_{+}=\frac{\mu^2}{2}-1$ with $ \tilde{\theta}_{+}\in(-\pi,0)$ and
\begin{equation*}
\quad \widetilde{f}^{\pm}(n)=(2{\rm sin}\tilde{\theta}_+)^{-1}\sum\limits_{m\in\Z}\widetilde{\mcaB}^{\pm}(\tilde{\theta}_+,m,n)f(m),\quad \widetilde{f}_1(n)=(2{\rm sin}\tilde{\theta}_+)^{-1}\sum\limits_{m\in\Z}\widetilde{\mcaC}(\tilde{\theta}_+,m,n)f(m).
\end{equation*}
}
\end{lemma}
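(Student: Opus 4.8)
The plan is to establish the three identities of Lemma \ref{cancelation lemma16} by direct kernel computations, feeding the explicit free-Laplacian resolvent kernel $R^{\mp}_{-\Delta}(4-\mu^2,n,m)=\frac{\pm i\,e^{\pm i\tilde\theta_+|n-m|}}{2{\rm sin}\tilde\theta_+}$ from \eqref{kernel of lapl resolvent} into each expression and then absorbing the non-summable boundary contributions using the orthogonality built into the projections. Two cancelation facts, recorded in Definition \ref{defini of sixteen} and the classification of the spaces $\widetilde{S}_j\ell^2(\Z)$, will be used repeatedly: since $\widetilde{Q}\tilde v=0$ and $\widetilde{S}_0,\widetilde{S}_1$ project onto subspaces of $\widetilde{Q}\ell^2(\Z)$, every $W\in\{\widetilde{Q},\widetilde{S}_0,\widetilde{S}_1\}$ satisfies $W\tilde v=0$ and $\langle Wf,\tilde v\rangle=0$; in addition $\widetilde{S}_1\ell^2(\Z)\perp\tilde v_1$, so $\widetilde{S}_1\tilde v_1=0$ and $\langle\widetilde{S}_1 f,\tilde v_1\rangle=0$. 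Here $\tilde v_k(n)=n^k\tilde v(n)$.

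For part (1) I would write $(R^{\mp}_{-\Delta}(4-\mu^2)\tilde v Wf)(n)=\frac{\pm i}{2{\rm sin}\tilde\theta_+}\sum_{m}e^{\pm i\tilde\theta_+|n-m|}\tilde v(m)(Wf)(m)$ and, invoking $\langle Wf,\tilde v\rangle=0$, freely subtract the $m$-independent quantity $e^{\pm i\tilde\theta_+|n|}$. The decisive step is the first-order identity
\[
e^{\pm i\tilde\theta_+|n-m|}-e^{\pm i\tilde\theta_+|n|}=\mp i\tilde\theta_+ m\int_0^1({\rm sign}(n-\rho m))\,e^{\pm i\tilde\theta_+|n-\rho m|}\,d\rho,
\]
which is the fundamental theorem of calculus applied to $\rho\mapsto e^{\pm i\tilde\theta_+|n-\rho m|}$, whose almost-everywhere derivative equals $\mp i\tilde\theta_+ m({\rm sign}(n-\rho m))e^{\pm i\tilde\theta_+|n-\rho m|}$. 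Since $(\pm i)(\mp i)=1$ and $m\tilde v(m)=\tilde v_1(m)$, this reproduces exactly the kernel $\widetilde{\mcaB}^{\pm}$; the computation is insensitive to the choice of $W$ because only the first-order cancelation against $\tilde v$ enters.

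For part (2) I would apply the second-order Taylor formula with integral remainder, $G(1)=G(0)+G'(0)+\int_0^1(1-\rho)G''(\rho)\,d\rho$, to $G(\rho)=e^{i\tilde\theta_+|n-\rho m|}+e^{-i\tilde\theta_+|n-\rho m|}$, which is precisely the factor produced by $(R^{-}_{-\Delta}-R^{+}_{-\Delta})(4-\mu^2)$ up to the prefactor $\frac{i}{2{\rm sin}\tilde\theta_+}$. The boundary term $G(0)$ is independent of $m$ and $G'(0)$ is proportional to $m$, so summing against $\tilde v(m)(\widetilde{S}_1 f)(m)$ annihilates both via $\langle\widetilde{S}_1 f,\tilde v\rangle=\langle\widetilde{S}_1 f,\tilde v_1\rangle=0$. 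Differentiating twice and using $({\rm sign}(n-\rho m))^2=1$ almost everywhere yields $G''(\rho)=-\tilde\theta_+^2 m^2(e^{i\tilde\theta_+|n-\rho m|}+e^{-i\tilde\theta_+|n-\rho m|})$; combining the remainder with $\frac{i}{2{\rm sin}\tilde\theta_+}$ and $m^2\tilde v(m)=\tilde v_2(m)$, and rewriting $-i\tilde\theta_+^2(1-\rho)=i\tilde\theta_+^2(\rho-1)$, recovers $\widetilde{\mcaC}$.

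For part (3) the only new idea is that the cancelation must now act on the inner summation variable carried by the multiplication operator $\tilde v$, rather than on the variable paired with $Wf$. Using self-adjointness of the projections together with $W\tilde v=0$, for each fixed $m$ one subtracts from $\tilde v(\cdot)R^{\mp}_{-\Delta}(4-\mu^2,\cdot,m)$ the scalar multiple $\tilde v(\cdot)\,\frac{\pm i}{2{\rm sin}\tilde\theta_+}e^{\pm i\tilde\theta_+|m|}$ of $\tilde v$, which lies in $\ker W$; the same first-order expansion, now carried out in the variable on which $W$ acts, then converts the difference into $\tilde v_1(\cdot)$ times $\widetilde{\mcaB}^{\pm}(\tilde\theta_+,m,\cdot)$, giving $W(\tilde v R^{\mp}_{-\Delta}(4-\mu^2)f)=W\widetilde f^{\pm}$, and the $\widetilde{S}_1$ statement is the corresponding transpose of (2) yielding $\widetilde f_1$. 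I expect the main obstacle to be purely technical bookkeeping rather than conceptual: one must justify the fundamental theorem of calculus despite the non-differentiability of $\rho\mapsto|n-\rho m|$ at the isolated point $\rho=n/m$ (legitimate since the map is Lipschitz, hence absolutely continuous), and justify interchanging the $m$-summation with the $\rho$-integration and the term-by-term removal of $\ker W$ elements, which is guaranteed by the weighted absolute summability $\sum_m\langle m\rangle^k|v(m)|<\infty$ following from the decay hypothesis on $V$.
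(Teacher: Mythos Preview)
Your proposal is correct and follows precisely the approach the paper indicates: although the paper does not reprove this lemma here (it is imported from \cite[Lemmas 4.9 and 4.14]{HY25}), the outline in Subsection~\ref{subsec outline} explicitly displays the first-order Taylor identity
\[
R^{\mp}_{-\Delta}(4-\mu^2,n,m)=\frac{\pm i}{2{\rm sin}\tilde{\theta}_+}\Big(e^{\pm i\tilde{\theta}_{+}|n|}\mp i\tilde{\theta}_{+}m\int_{0}^{1}({\rm sign}(n-\rho m))e^{\pm i\tilde{\theta}_{+}|n-\rho m|}d\rho\Big)
\]
combined with the orthogonality $\widetilde{Q}\tilde v=0=\langle\widetilde{Q}f,\tilde v\rangle$, which is exactly your mechanism for (1) and (3); your use of the second-order Taylor remainder together with $\langle\widetilde{S}_1 f,\tilde v\rangle=\langle\widetilde{S}_1 f,\tilde v_1\rangle=0$ for (2) is the analogous step carried to one more order, matching the structure of Lemma~\ref{cancelation lemma} for $S_0$ in the zero-energy case.
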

\begin{remark}
{\rm Noting that $\tilde{\theta}_{+}=O((2-\mu)^{\frac{1}{2}})$ as $\mu\rightarrow2$, compared with $R^{\mp}_{-\Delta}(4-\mu^2)=O((2-\mu)^{-\frac{1}{2}})$, this lemma indicates that these operators can eliminate the singularity of $R^{\mp}_{-\Delta}(4-\mu^2)$. Precisely,
\begin{align}\label{order in cancel lemma 16}
\begin{split}
R^{\mp}_{-\Delta}(4-\mu^2)\tilde{v}W&=O(1), \quad\big((R^{-}_{-\Delta}-R^{+}_{-\Delta})(4-\mu^2)\big)\tilde{v}\widetilde{S}_1=O((2-\mu)^{\frac{1}{2}}),\quad W=\widetilde{Q},\widetilde{S}_0,\widetilde{S}_1,\\
W\tilde{v}R^{\mp}_{-\Delta}(4-\mu^2)&=O(1),\quad\widetilde{S}_1\big(\tilde{v}\big((R^{-}_{-\Delta}-R^{+}_{-\Delta})(4-\mu^2)\big)\big)=O((2-\mu)^{\frac{1}{2}}).
\end{split}
\end{align}
}
\end{remark}
To prove Theorem \ref{theorem for W3}, we will address each case individually in the following three subsections.
\subsection{$16$ is a regular point of $H$}\label{subsec of W3 regular}In this subsection, we prove the $\ell^p$ boundedness for $\mcaK_3$ when $16$ is a regular point of $H$. Recall the expansion \eqref{asy expan regular 2} of $M^{-1}(\mu)$ as $\mu\rightarrow2$:~
\begin{align*}
M^{-1}(\mu)=\widetilde{Q}B_0\widetilde{Q}+
(2-\mu)^{\frac{1}{2}}(\widetilde{Q}B^{0}_{11}+B^{0}_{12}\widetilde{Q})+(2-\mu)^{\frac{1}{2}}\widetilde{P}_1+(2-\mu) B^{0}_{21}+\Gamma^{0}_{\frac{3}{2}}(2-\mu)
\end{align*}
and substitute it into \eqref{kernel of W3}, then $\mcaK_3$ can be expressed as the sum of six integral operators:
\begin{equation}\label{decom1 of W3 regular}
\mcaK_3=\sum\limits_{B\in\mcaB_0}^{}K_B+K_{\widetilde{P}_1}+K^0_{r},
\end{equation}
where $\mcaB_0=\{\widetilde{Q}B_0\widetilde{Q},(2-\mu)^{\frac{1}{2}}\widetilde{Q}B^{0}_{11},(2-\mu)^{\frac{1}{2}}B^{0}_{12}\widetilde{Q},(2-\mu) B^{0}_{21}\}$ and
\begin{align}
K_B(n,m)&=\int_{0}^{2}\mu^3\chi_3(\mu)\big[R^{+}_{0}(\mu^4)vBv(R^{+}_{0}-R^{-}_0)(\mu^4)\big](n,m)d\mu,\quad B\in\mcaB_0\notag\\
K_{\widetilde{P}_1}(n,m)&=\int_{0}^{2}(2-\mu)^{\frac{1}{2}}\mu^3\chi_3(\mu)\big[R^{+}_{0}(\mu^4)v \widetilde{P}_1v(R^{+}_{0}-R^{-}_0)(\mu^4)\big](n,m)d\mu,\label{kernel of KPtuta1}\\
K^{0}_r(n,m)&=\int_{0}^{2}\mu^3\chi_3(\mu)\big[R^{+}_{0}(\mu^4)v \Gamma^0_{\frac{3}{2}}(2-\mu)v(R^{+}_{0}-R^{-}_0)(\mu^4)\big](n,m)d\mu.\label{kernel of Kr0}
\end{align}
 Based on \eqref{R0vBvR0} and \eqref{order in cancel lemma 16}, we can further classify these operators as the following three types according to their order in $(2-\mu)$ as $\mu\rightarrow2$:
\begin{align*}
O(1):K_B(B\in\mcaB_0),\quad O((2-\mu)^{-\frac{1}{2}}):K_{\widetilde{P}_1} ,\quad O((2-\mu)^{\frac{1}{2}}):K^0_r.
\end{align*}

\begin{proposition}\label{proposition good1 regular 2}
Let $H=\Delta^2+V$ with $|V(n)|\lesssim \left<n\right>^{-\beta}$ for $\beta>9$. Suppose that $H$ has no positive eigenvalues in the interval $\rm{(}0,16\rm{)}$ and $16$ is a regular point of $H$. Let $\mcaB_0$ be defined in \eqref{decom1 of W3 regular}. Then
\vskip0.15cm
{\rm(1)}~for any $K\in\{K^0_{r}\}\cup\{K_B:B\in\mcaB_0\}$, $K\in\B(\ell^p(\Z))$ for any $1\leq p\leq\infty$,
\vskip0.15cm
{\rm (2)}~$K_{\widetilde{P}_1}\in\B(\ell^p(\Z))$ for all $1< p<\infty$.
\vskip0.15cm
\noindent Therefore, $\mcaK_3\in\B(\ell^p(\Z))$ for any $1< p<\infty$.
\end{proposition}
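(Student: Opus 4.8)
The plan is to prove Proposition \ref{proposition good1 regular 2} by reducing the kernel of $\mcaK_3$, via the unitary conjugation \eqref{R0vBvR0} and Lemma \ref{cancelation lemma16}, to oscillatory integrals in the variable $\tilde\theta_+$ over $(-\pi,0)$, and then applying either the Schur test Lemma \ref{shur test} (for $\ell^p$, $1\le p\le\infty$) or Lemma \ref{C-Z lemma} (for $\ell^p$, $1<p<\infty$), exactly paralleling the treatment of $\mcaK_1$ in Section \ref{sect of mcaK1}. First I would treat the operators $K_B$ with $B\in\mcaB_0$ (together with the remainder $K^0_r$), which belong to the class $O(1)$. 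For $B=\widetilde QB_0\widetilde Q$ as the model case, I would insert \eqref{R0vBvR0}, apply Lemma \ref{cancelation lemma16} to both the left factor $R^{\mp}_{-\Delta}(4-\mu^2)\tilde v\widetilde Q$ and the right factor $\widetilde Q\tilde v(R^-_{-\Delta}-R^+_{-\Delta})(4-\mu^2)$, and perform the change of variable \eqref{varible substi2 introduction}, $\cos\tilde\theta_+=\tfrac{\mu^2}{2}-1$. The key point is that this substitution produces a Jacobian factor $\sqrt{1-\mu^2/4}=O((2-\mu)^{1/2})$, so after combining with the $O((2-\mu)^{1/2})$ gains from the two applications of Lemma \ref{cancelation lemma16} (and the factor $\tfrac{1}{4\mu^4}$, harmless near $\mu=2$), one is left with amplitudes $\tilde g_j(\tilde\theta_+)$ whose limits vanish as $\tilde\theta_+\to0$; integrating by parts twice in $\tilde\theta_+$ (using that $\chi_3$ localizes near $\tilde\theta_+=0$ and that $|b'(\mu)|n|\pm i\theta'_+(\mu)|m||^{-2}\lesssim(|n|+|m|)^{-2}$, or the analogous bound for the pure-$\tilde\theta_+$ phases) gives $|K_B(n,m)|\lesssim\langle|n|\pm|m|\rangle^{-2}$, whence the Schur bound. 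The same scheme handles the remaining $B\in\mcaB_0$ and the remainder term $K^0_r$ (which has an extra $O((2-\mu)^{1/2})$ from $\Gamma^0_{3/2}$, decomposed dyadically as in Proposition \ref{proposition good2 regular} if needed), since all of them only add further nonnegative powers of $(2-\mu)$.

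For part (2), the operator $K_{\widetilde P_1}$ lies in the class $O((2-\mu)^{-1/2})$: it carries one factor $(2-\mu)^{1/2}$ from $\widetilde P_1$ but $\widetilde P$ has a rank-one structure $\widetilde P=\|V\|_{\ell^1}^{-1}\langle\cdot,\tilde v\rangle\tilde v$ with no cancelation, so only the Jacobian $(2-\mu)^{1/2}$ and the $(2-\mu)^{1/2}$ from $\widetilde P_1$ are available against one $(2-\mu)^{-1/2}$ singularity per resolvent factor — net effect, the amplitude $\tilde h_j(\tilde\theta_+)$ does \emph{not} vanish at $\tilde\theta_+=0$. I would follow the $K_{P_1}$ argument of Proposition \ref{proposition K110KP1 regular}: split the kernel as $\phi_\pm K^{\pm,j}_{\widetilde P_1}+(1-\phi_\pm)K^{\pm,j}_{\widetilde P_1}$ with $\phi_\pm=\phi(||n|\pm|m||^2)$, bound the compactly supported piece by $O(\langle|n|\pm|m|\rangle^{-2})$ using uniform boundedness of the amplitudes, and on the remaining piece integrate by parts once, picking up the boundary term at $\tilde\theta_+=0$, which produces precisely the Calder\'on–Zygmund-type kernels $k^{\pm}_1(n,m)=\phi(||n|\pm|m||^2)/(|n|\pm|m|)$ times a constant (the value $\sum(\tilde v\widetilde P_1\tilde v)$), plus an $O(\langle|n|\pm|m|\rangle^{-2})$ error. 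Note that, as indicated after \eqref{KB}, only $k^{\pm}_1$ (not $k^{\pm}_2$) appears here, because the pure-$\tilde\theta_+$ phase $e^{i\tilde\theta_+(|n|\pm|m|)}$ arising from \eqref{R0vBvR0} after transferring through $J$ contributes only the real combination. Lemma \ref{C-Z lemma} then gives $\ell^p$ boundedness for $1<p<\infty$, and combining with part (1) and the decomposition \eqref{decom1 of W3 regular} yields $\mcaK_3\in\B(\ell^p(\Z))$ for all $1<p<\infty$.

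The main obstacle I anticipate is bookkeeping the effect of the transfer formula \eqref{R0vBvR0}: unlike the low-energy case, here the "free resolvent" pieces come in two flavors — the genuinely oscillatory $R^{\mp}_{-\Delta}(4-\mu^2,n,m)$ with phase $\pm i\tilde\theta_+|n-m|$, and the exponentially decaying $JR_{-\Delta}(-\mu^2)J$ piece — and one must verify that Lemma \ref{cancelation lemma16} applies cleanly to the combination $R^-_{-\Delta}(4-\mu^2)+JR_{-\Delta}(-\mu^2)J$ appearing on the left, producing amplitudes $\widetilde L^{\pm,j}_0(\tilde\theta_+,n,m)$ with the right uniform derivative bounds $\sup|\partial^k_{\tilde\theta_+}\widetilde L^{\pm,j}_0|\lesssim\|\langle\cdot\rangle^{2k+2}V\|_{\ell^1}$. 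This requires the full-strength decay $\beta>9$ to absorb the weights generated by the two Taylor expansions (one on each side) plus the two integrations by parts. A secondary technical nuisance is that the phase functions mixing $e^{b(\mu)|n|}$ with $e^{\pm i\tilde\theta_+|m|}$ (coming from the $JR_{-\Delta}(-\mu^2)J$ factor paired with an oscillatory factor) require the same $|\alpha^\pm(\mu,n,m)|^{-2}\lesssim(|n|+|m|)^{-2}$ stationary-phase-denominator estimate as in Proposition \ref{proposition good1 regular}, now near $\mu=2$ rather than $\mu=0$; one checks $b'$ and $\tilde\theta'_+$ remain smooth and signed there, so the argument transports verbatim. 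Once these uniform amplitude estimates are in hand, the rest is a routine repetition of the integration-by-parts arguments already carried out in Section \ref{sect of mcaK1}.
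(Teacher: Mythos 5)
Your overall architecture matches the paper's: transfer the high-energy kernel through $J$ via \eqref{R0vBvR0}, apply the cancelation Lemma \ref{cancelation lemma16} on both sides, change variables to $\tilde\theta_+$ so that the Jacobian $\mu'(\tilde\theta_+)=O((2-\mu)^{1/2})$ makes the amplitudes of the $O(1)$-class operators vanish at $\tilde\theta_+=0$, integrate by parts twice, and conclude by the Schur test; for $K_{\widetilde P_1}$, split off $\phi_\pm$, integrate by parts once to extract the boundary term $k^{\pm}_1$, and invoke Lemma \ref{C-Z lemma}. Your accounting of why only $k^\pm_1$ (and not $k^\pm_2$) survives is stated for the wrong reason — the actual reason is that the mixed kernel $K^{\pm,2}_{\widetilde P_1}$ is nonsingular ($O(1)$ as $\mu\to2$) and hence contributes only $O(\langle|n|\pm|m|\rangle^{-2})$ — but the conclusion is correct.

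There is, however, one concrete step that fails as you describe it: your treatment of the mixed kernels carrying the phase $e^{b(\mu)|\cdot|}e^{\pm i\tilde\theta_+|\cdot|}$ (those coming from the $JR_{-\Delta}(-\mu^2)J$ factor paired with an oscillatory factor). You assert that the stationary-phase argument of Proposition \ref{proposition good1 regular}, with denominator $\alpha^\pm(\mu,n,m)=b'(\mu)|n|\pm i\tilde\theta_+'(\mu)|m|$ and integration by parts in $\mu$, ``transports verbatim'' because $b'$ and $\tilde\theta'_+$ ``remain smooth'' near $\mu=2$. This is false: from $\cos\tilde\theta_+=\tfrac{\mu^2}{2}-1$ one gets $\tilde\theta'_+(\mu)=-\mu/\sin\tilde\theta_+\sim(2-\mu)^{-1/2}$, which blows up at $\mu=2$ (the paper flags exactly this obstruction in the remark following Proposition \ref{proposition good1 regular 2}). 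Consequently the $\mu$-derivatives of the amplitude (which pick up factors $\tilde\theta'_+(\mu)(|M_2|-|m|)$) and the ratios $(\alpha^\pm)'/\alpha^\pm\sim\tilde\theta''_+/\tilde\theta'_+\sim(2-\mu)^{-1}$ are not uniformly bounded, so the second integration by parts in $\mu$ does not close. Integrating by parts in $\tilde\theta_+$ against the naive phase derivative $b'(\mu)\mu'(\tilde\theta_+)|n|\pm i|m|$ fares no better, since $\mu'(\tilde\theta_+)\to0$ kills the $|n|$-part of the denominator. The repair — which is what the paper does and is in fact easier than the low-energy case — is to observe that on $\mathrm{supp}\,\chi_3$ one has $b(\mu)\le b(2-\mu_0)<0$ uniformly, so $e^{b(\mu)|n|}$ already supplies exponential decay in $|n|$ together with all needed weights (estimate \eqref{estimate for NebN}); one then factors the kernel as $e^{\pm i\tilde\theta_+(|m|\pm i|n|)}$ times a remainder $\widetilde L^{\pm,2}$ whose $\tilde\theta_+$-derivatives are uniformly bounded on $[\gamma_1,0)$, and integrates by parts twice in $\tilde\theta_+$ with the nondegenerate denominator $|m|\pm i|n|$. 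With that substitution your proof goes through; without it, the mixed-phase estimates in both part (1) and part (2) are not justified.
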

\begin{proof}
\underline{{\bf(1)}}~\underline{{\bf Step 1:}}~For any $B\in\mcaB_0$, denote
$$\mcaK_B(\mu,n,m)=16\mu^3\big(R^{+}_{0}(\mu^4)v Bv(R^{+}_{0}-R^{-}_0)(\mu^4)\big)(n,m).$$
It follows from \eqref{R0vBvR0} and Lemma \ref{cancelation lemma16} that
\begin{align}\label{mcaKB regular 2}
\mcaK_B(\mu,n,m)=
\left\{\begin{aligned}&\sum\limits_{m_1,m_2\in\Z}\Big[\int_{[0,1]^2}(-1)^{n+m}\widetilde{\mcaM}^{(1)}_{0}(N_1,M_2,m_1,m_2)(f_{0}^{+,1}-f_{0}^{-,1})(\mu,N_1,M_2)d\rho_1d\rho_2\\
&\quad+ \int_{0}^{1}(-1)^{m}\widetilde{\mcaM}^{(2)}_{0}(M_2,m_1,m_2)(f_{0}^{+,2}-f_{0}^{-,2})(\mu,\widetilde{N}_1,M_2)d\rho_2\Big],\ \ B=\widetilde{Q}B_0\widetilde{Q},\\
&\sum\limits_{m_1,m_2\in\Z}\Big[\int_{0}^{1}(-1)^{n+m}\widetilde{\mcaM}^{(1)}_{11}(N_1,m_1,m_2)\big(f^{+,1}_{11}+f^{-,1}_{11}\big)(\mu,N_1,\widetilde{M}_2)d\rho_1+\\
&\quad+(-1)^{m}\widetilde{\mcaM}^{(2)}_{11}(m_1,m_2)\big(f^{+,2}_{11}+f^{-,2}_{11}\big)(\mu,\widetilde{N}_1,\widetilde{M}_2)\Big],\quad B=(2-\mu)^{\frac{1}{2}}\widetilde{Q}B^0_{11},\\
&\sum\limits_{m_1,m_2\in\Z}\int_{0}^{1}\Big[(-1)^{n+m}\widetilde{\mcaM}^{(1)}_{12}(M_2,m_1,m_2)(f^{+,1}_{12}-f^{-,1}_{12})(\mu,\widetilde{N}_1,M_2)+(-1)^{m}\times\\
&\qquad\widetilde{\mcaM}^{(2)}_{12}(M_2,m_1,m_2)(f^{+,2}_{12}-f^{-,2}_{12})(\mu,\widetilde{N}_1,M_2)\Big]d\rho_2,\quad B=(2-\mu)^{\frac{1}{2}}B^0_{12}\widetilde{Q},\\
&\sum\limits_{m_1,m_2\in\Z}\Big[(-1)^{n+m}\widetilde{\mcaM}^{(1)}_{21}(m_1,m_2)((f^{+,1}_{21}+f^{-,1}_{21})(\mu,\widetilde{N}_1,\widetilde{M}_2))\\
&\quad\qquad+(-1)^m\widetilde{\mcaM}^{(2)}_{21}(m_1,m_2)((f^{+,2}_{21}+f^{-,2}_{21})(\mu,\widetilde{N}_1,\widetilde{M}_2))\Big],\quad B=(2-\mu)B^0_{21},\end{aligned}\right.
\end{align}
where $N_1=n-\rho_1m_1$, $\widetilde{N}_1=n-m_1$, $M_2=m-\rho_2m_2$, $\widetilde{M}_2=m-m_2$,
\begin{align*}
\widetilde{\Phi}^{\pm}_1(\mu,X,Y)&=e^{i\tilde{\theta}_+(|X|\pm|Y|)},\quad \widetilde{\Phi}^{\pm}_2(\mu,X,Y)=e^{b(\mu)|X|\pm i\tilde{\theta}_+|Y|},\\
f^{\pm,1}_0(\mu,N_1,M_2)&=\frac{\tilde{\theta}^2_+}{\mu({\rm sin}\tilde{\theta}_+)^2}\widetilde{\Phi}^{\pm}_1(\mu,N_1,M_2),\quad
f^{\pm,2}_0(\mu,\widetilde{N}_1,M_2)=\frac{-a_2(\mu)\tilde{\theta}_+}{\mu^2{\rm sin}\tilde{\theta}_+}\widetilde{\Phi}^{\pm}_2(\mu,\widetilde{N}_1,M_2),\\
f^{\pm,1}_{11}(\mu,N_1,\widetilde{M}_2)&=\frac{i(2-\mu)^{\frac{1}{2}}\tilde{\theta}_+}{\mu({\rm sin}\tilde{\theta}_+)^2}\widetilde{\Phi}^{\pm}_1(\mu,N_1,\widetilde{M}_2),\quad f^{\pm,1}_{12}(\mu,\widetilde{N}_1,{M}_2)=f^{\pm,1}_{11}(\mu,\widetilde{N}_1,{M}_2),\\
f^{\pm,2}_{11}(\mu,\widetilde{N}_1,\widetilde{M}_2)&=\frac{-ia_2(\mu)(2-\mu)^{\frac{1}{2}}}{\mu^2{\rm sin}\tilde{\theta}_+}\widetilde{\Phi}^{\pm}_2(\mu,\widetilde{N}_1,\widetilde{M}_2),\quad f^{\pm,2}_{12}(\mu,\widetilde{N}_1,{M}_2)=-i\tilde{\theta}_+f^{\pm,2}_{11}(\mu,\widetilde{N}_1,{M}_2),\\
f^{\pm,1}_{21}(\mu,\widetilde{N}_1,\widetilde{M}_2)&=\frac{\mu-2}{\mu({\rm sin}\tilde{\theta}_+)^2}\widetilde{\Phi}^{\pm}_1(\mu,\widetilde{N}_1,\widetilde{M}_2),\quad
f^{\pm,2}_{21}(\mu,\widetilde{N}_1,\widetilde{M}_2)=\frac{ia_2(\mu)(\mu-2)}{\mu^2{\rm sin}\tilde{\theta}_+}\widetilde{\Phi}^{\pm}_2(\mu,\widetilde{N}_1,\widetilde{M}_2),
\end{align*}
and
\begin{itemize}
\item $\widetilde{\mcaM}^{(1)}_{0}(N_1,M_2,m_1,m_2)=({\rm sign}(N_1))({\rm sign}(M_2))(\tilde{v}_1\widetilde{Q}B_0\widetilde{Q}\tilde{v}_1)(m_1,m_2),$
    \vskip0.15cm
    \item $\widetilde{\mcaM}^{(2)}_{0}(M_2,m_1,m_2)=({\rm sign}(M_2))(v\widetilde{Q}B_0\widetilde{Q}\tilde{v}_1)(m_1,m_2),$
        \vskip0.15cm
    \item $\widetilde{\mcaM}^{(1)}_{11}(N_1,m_1,m_2)=({\rm sign}(N_1))(\tilde{v}_1\widetilde{Q}B^0_{11}\tilde{v})(m_1,m_2)$,\ \ $\widetilde{\mcaM}^{(2)}_{11}(m_1,m_2)=({v}\widetilde{Q}B^0_{11}\tilde{{v}})(m_1,m_2)$,
\vskip0.15cm
    \item $\widetilde{\mcaM}^{(1)}_{12}(M_2,m_1,m_2)=({\rm sign}(M_2))(\tilde{v}B^0_{12}\widetilde{Q}\tilde{v}_1)(m_1,m_2)$, \quad $\widetilde{\mcaM}^{(1)}_{21}(m_1,m_2)=(\tilde{v}B^0_{21}\tilde{v})(m_1,m_2)$,
\vskip0.15cm
    \item$\widetilde{\mcaM}^{(2)}_{12}(M_2,m_1,m_2)=({\rm sign}(M_2))({v}B^0_{12}\widetilde{Q}\tilde{v}_1)(m_1,m_2)$,\quad $\widetilde{\mcaM}^{(2)}_{21}(m_1,m_2)=({v}B^0_{21}\tilde{v})(m_1,m_2)$.
\end{itemize}
\vskip0.15cm
In view that $\mcaK_B(\mu,n,m)=O(1)$ as $\mu\rightarrow2$ for any $B\in\mcaB_0$, next we consider the case $B=\widetilde{Q}B_0\widetilde{Q}$ only  and other terms can be derived similarly. Let
 \begin{equation}
 \widetilde{K}_0(n,m)=\int_{0}^{2}\mu^3\chi_3(\mu)\big[R^{+}_{0}(\mu^4)v \widetilde{Q}B_0\widetilde{Q}v(R^{+}_{0}-R^{-}_0)(\mu^4)\big](n,m)d\mu.
 \end{equation}
From \eqref{mcaKB regular 2}, it reduces to establish the boundedness of the following two operators:
\begin{align}
\widetilde{K}^{\pm,1}_{0}(n,m)&=(-1)^{n+m}\int_{0}^{2}e^{i\tilde{\theta}_+(|n|\pm|m|)}\frac{\tilde{\theta}^2_+\chi_3(\mu)}{\mu({\rm sin}\tilde{\theta}_+)^2}\widetilde{L}^{\pm,1}_0(\tilde{\theta}_+,n,m)d\mu,\label{form of K0tutapm1}\\
\widetilde{K}^{\pm,2}_{0}(n,m)&=-(-1)^m\int_{0}^{2}e^{\pm i\tilde{\theta}_+(|m|\pm i|n|)}\frac{a_2(\mu)\tilde{\theta}_+}{\mu^2{\rm sin}\tilde{\theta}_+}\chi_3(\mu)\widetilde{L}^{\pm,2}_0(\mu,n,m)d\mu,\label{form of K0tutapm2}
\end{align}
where
\begin{align*}
\widetilde{L}^{\pm,1}_0(\tilde{\theta}_+,n,m)&=\sum\limits_{m_1,m_2\in\Z}\int_{[0,1]^2}\widetilde{\mcaM}^{(1)}_{0}(N_1,M_2,m_1,m_2)e^{i\tilde{\theta}_+\big(|N_1|-|n|\pm(|M_2|-|m|)\big)}d\rho_1d\rho_2,\\
\widetilde{L}^{\pm,2}_0(\mu,n,m)&=\sum\limits_{m_1,m_2\in\Z}\int_{0}^{1}\widetilde{\mcaM}^{(2)}_{0}(M_2,m_1,m_2)e^{b(\mu)|\widetilde{N}_1|+\tilde{\theta}_{+}|n|\pm i\tilde{\theta}_+(|M_2|-|m|)}d\rho_2.
\end{align*}
Applying the variable substitution to $\widetilde{K}^{\pm,j}_0$ for $j=1,2$
\begin{align}\label{varible substi2}
{\rm cos}\tilde{\theta}_{+}=\frac{\mu^2}{2}-1 \Longrightarrow\  \frac{d\mu}{d\tilde{\theta}_+}=-\frac{{\rm sin}\tilde{\theta}_+}{\mu},\quad \tilde{\theta}_+\rightarrow-\pi\ {\rm as}\ \mu\rightarrow0\ {\rm and}\ \tilde{\theta}_+\rightarrow0\ {\rm as}\  \mu\rightarrow2,
\end{align}
we further obtain
\begin{align*}
\widetilde{K}^{\pm,1}_{0}(n,m)&=(-1)^{n+m}\int_{-\pi}^{0}e^{i\tilde{\theta}_+(|n|\pm |m|)}\frac{\tilde{\theta}^2_+}{{\rm sin}\tilde{\theta}_+}\tilde{\chi}_3(\mu(\tilde{\theta}_+))\widetilde{L}^{\pm,1}_0(\tilde{\theta}_+,n,m)d\tilde{\theta}_{+},\\
\widetilde{K}^{\pm,2}_{0}(n,m)&=(-1)^m\int_{-\pi}^{0}e^{\pm i\tilde{\theta}_+(|m|\pm i|n|)}\tilde{\tilde{\chi}}_3(\mu(\tilde{\theta}_+))\tilde{\theta}_+\widetilde{L}^{\pm,2}_0(\mu(\tilde{\theta}_+),n,m)d\tilde{\theta}_+,
\end{align*}
where $\tilde{\chi}_3(\mu)=-\mu^{-2}\chi_3(\mu)$ and $\tilde{\tilde{\chi}}_3(\mu)=-\mu^{-3}\big(1+\frac{\mu^2}{4}\big)^{-\frac{1}{2}}\chi_3(\mu)$. It is clearly that for any $k=0,1,2$,
\begin{equation*}
\sup\limits_{\tilde{\theta}_{+}\in(-\pi,0)}\big|(\partial^k_{\tilde{\theta}_+}\widetilde{L}^{\pm,1}_{0})(\tilde{\theta}_+,n,m)\big|\lesssim \|\left<\cdot\right>^{2k+2}V(\cdot)\|_{\ell^1},\quad {\rm uniformly\ in}\ n,m\in\Z.
\end{equation*}
Next, we verify that the following estimates hold:
\begin{equation}\label{estimate of Lpm2tuta}
\sup\limits_{\tilde{\theta}_{+}\in[\gamma_1,0)}\big|(\partial^k_{\tilde{\theta}_+}\widetilde{L}^{\pm,2}_{0})(\mu(\tilde{\theta}_+),n,m)\big|\lesssim \|\left<\cdot\right>^{2k+2}V(\cdot)\|_{\ell^1},\quad {\rm uniformly\ in}\ n,m\in\Z,
\end{equation}
where $\gamma_1\in(-\pi,0)$ satisfying ${\rm cos}\gamma_1=\frac{(2-\mu_0)^2}{2}-1$. Combining these estimates with arguments analogous to those used for $K^{\pm,1}_{0}$, we immediately conclude that $\widetilde{K}^{\pm,j}_{0}\in\B(\ell^p(\Z))$ for $1\leq p\leq\infty$ and $j=1,2$.

To see this, we first observe that $b(\mu)<0$ and $b'(\mu)<0$ on the interval $(0,2)$, which implies that for any $k\in\N$,
\begin{align}\label{estimate for NebN}
\sup\limits_{\mu\in[2-\mu_0,2)}|N_1|^ke^{b(\mu)|N_1|}\leq\sup\limits_{N_1}|N_1|^ke^{b(2-\mu_0)|N_1|}<\infty.
\end{align}
This estimate immediately verifies \eqref{estimate of Lpm2tuta} for $k=0$. For the cases $k=1,2$, we can calculate
\begin{align*}
(\partial^k_{\tilde{\theta}_+}\widetilde{L}^{\pm,2}_{0})(\mu(\tilde{\theta}_+),n,m)=\sum\limits_{m_1,m_2\in\Z}\int_{0}^{1}\widetilde{\mcaM}^{(2)}_{0}(M_2,m_1,m_2)\underbrace{\partial^k_{\tilde{\theta}_+}\big(e^{b(\mu(\tilde{\theta}_+))|\widetilde{N}_1|+\tilde{\theta}_{+}|n|\pm i\tilde{\theta}_+(|M_2|-|m|)}\big)}_{:=\mcaL_k(\tilde{\theta}_+,\widetilde{N}_1,n,M_2,m)}d\rho_2
\end{align*}
and
\begin{align*}
\mcaL_1(\tilde{\theta}_+,\widetilde{N}_1,n,M_2,m)&=\big[(b'(\mu(\tilde{\theta}_+))\mu'(\tilde{\theta}_+)+1)|\widetilde{N}_1|+|n|-|\widetilde{N}_1|\pm i(|M_2|-|m|)\big]\\
&\quad\times e^{b(\mu(\tilde{\theta}_+))|\widetilde{N}_1|+\tilde{\theta}_{+}|n|\pm i\tilde{\theta}_+(|M_2|-|m|)},\\
\mcaL_2(\tilde{\theta}_+,\widetilde{N}_1,n,M_2,m)&=\Big(\big[(b'(\mu(\tilde{\theta}_+))\mu'(\tilde{\theta}_+)+1)|\widetilde{N}_1|+|n|-|\widetilde{N}_1|\pm i(|M_2|-|m|)\big]^2\\
&\qquad +(b'(\mu(\tilde{\theta}_+))\mu'(\tilde{\theta}_+))'|\widetilde{N}_1|\Big)\times e^{b(\mu(\tilde{\theta}_+))|\widetilde{N}_1|+\tilde{\theta}_{+}|n|\pm i\tilde{\theta}_+(|M_2|-|m|)}.
\end{align*}
Combining this with \eqref{estimate for NebN}, $\tilde{\theta}_+<0$ and the continuous differentiability of $b'(\mu)$ and $\mu'(\tilde{\theta}_+)$:
$$b'(\mu)=-(2+\mu^2)^{-1}\big((4+\mu^2)^{\frac{1}{2}}+\mu^2(4+\mu^2)^{-\frac{1}{2}}\big),\quad \mu'(\tilde{\theta}_{+})=\big(1-\frac{\mu^2}{4}\big)^{\frac{1}{2}},\quad\mu''(\tilde{\theta}_{+})=-\frac{\mu}{4},$$
we obtain
\begin{equation*}
\sup\limits_{\tilde{\theta}_+\in[\gamma_1,0)}|\mcaL_k(\tilde{\theta}_+,\widetilde{N}_1,n,M_2,m)|\lesssim \left<m_1\right>^k\left<m_2\right>^k, \quad {\rm uniformly\ in}\ \widetilde{N}_1,M_2,n,m.
\end{equation*}
Hence, by H\"{o}lder's inequality, the desired estimate \eqref{estimate of Lpm2tuta} is obtained.
\vskip0.3cm
\underline{{\bf Step 2:}}~For $K=K^0_{r}$, it follows from \eqref{kernel of Kr0} that
\begin{align*}
K^0_r(n,m)=\frac{1}{16}\sum\limits_{j=1}^{2}(K^{+}_{rj}+K^{-}_{rj})(n,m),
\end{align*}
where
\begin{align*}
K^{\pm}_{r1}(n,m)&=(-1)^{n+m}\int_{0}^{2}\frac{-\chi_3(\mu)}{\mu({\rm sin}\tilde{\theta}_+)^2}\sum\limits_{m_1,m_2\in\Z}\widetilde{\Phi}^{\pm}_1(\mu,\widetilde{N}_1,\widetilde{M}_2)(\tilde{v}\Gamma^0_{\frac{3}{2}}(2-\mu)\tilde{v})(m_1,m_2)d\mu,\\
\xlongequal[]{\eqref{varible substi2}}&(-1)^{n+m}\int_{-\pi}^{0}e^{i\tilde{\theta}_+(|n|\pm |m|)}\sum\limits_{m_1,m_2\in\Z}e^{i\tilde{\theta}_+(|\widetilde{N}_1|-|n|\pm(|\widetilde{M}_2|-|m|))}(\tilde{v}\varphi_1(\mu(\tilde{\theta}_+))\tilde{v})(m_1,m_2)d\tilde{\theta}_+,\\
K^{\pm}_{r2}(n,m)&=(-1)^{m}\int_{0}^{2}\frac{-ia_2(\mu)}{\mu^2{\rm sin}\tilde{\theta}_+}\chi_3(\mu)\sum\limits_{m_1,m_2\in\Z}\widetilde{\Phi}^{\pm}_2(\mu,\widetilde{N}_1,\widetilde{M}_2)({v}\Gamma^0_{\frac{3}{2}}(2-\mu)\tilde{v})(m_1,m_2)d\mu,\\
\xlongequal[]{\eqref{varible substi2}}(-1)&^{m}\int_{-\pi}^{0}e^{\pm i\tilde{\theta}_+(|m|\pm i |n|)}\sum\limits_{m_1,m_2\in\Z}e^{b(\mu(\tilde{\theta}_+))|\widetilde{N}_1|+\tilde{\theta}_+|n|\pm i\tilde{\theta}_+(|\widetilde{M}_2|-|m|))}(v\varphi_2(\mu(\tilde{\theta}_+))\tilde{v})(m_1,m_2)d\tilde{\theta}_+,
\end{align*}
with $\Gamma(\mu)=\frac{\Gamma^0_{\frac{3}{2}}(2-\mu)}{(2-\mu)^{\frac{1}{2}}},\ \varphi_1(\mu)=-2\mu^{-3}(2+\mu)^{-\frac{1}{2}}\chi_{3}(\mu)\Gamma(\mu),\   \varphi_2(\mu)=i\mu^{-3}(2-\mu)^{\frac{1}{2}}a_2(\mu)\chi_{3}(\mu)\Gamma(\mu).$
Observe that $\mu'(\tilde{\theta}_+)$ contributes a factor of $(2-\mu)^{\frac{1}{2}}$. Consequently, from \eqref{estimate of Gamma} we obtain that for $\mu\in[2-\mu_0,2)$,
\begin{align*}
\Big\|\frac{d^k(\Gamma(\mu(\tilde{\theta}_+)))}{d\tilde{\theta}_+}\Big\|_{\ell^2\rightarrow\ell^2}\lesssim (2-\mu)^{\frac{2-k}{2}},\quad k=0,1,2.
\end{align*}
This estimate together with argument analogous to case (1) gives that $K^{\pm}_{rj}\in\B(\ell^p(\Z))$ for all $1\leq p\leq\infty$, and so does $K^0_r$.
\vskip0.3cm
\underline{{\bf(2)}}~For $K_{\widetilde{P}_1}$, from \eqref{kernel of KPtuta1} and the expression \eqref{mcaKB regular 2} for $K_B$ with $B=(2-\mu)B^0_{21}$, we have
\begin{align*}
K_{\widetilde{P}_1}(n,m)=\frac{1}{16}\sum\limits_{j=1}^{2}(K^{+,j}_{\widetilde{P}_1}+K^{-,j}_{\widetilde{P}_1})(n,m),
\end{align*}
where
\begin{align*}
K^{\pm,1}_{\widetilde{P}_1}(n,m)&=(-1)^{n+m}\int_{0}^{2}\frac{-\chi_3(\mu)(2-\mu)^{\frac{1}{2}}}{\mu({\rm sin}\tilde{\theta}_+)^2}\sum\limits_{m_1,m_2\in\Z}\widetilde{\Phi}^{\pm}_1(\mu,\widetilde{N}_1,\widetilde{M}_2)(\tilde{v}{\widetilde{P}_1}\tilde{v})(m_1,m_2)d\mu,\\
K^{\pm,2}_{\widetilde{P}_1}(n,m)&=(-1)^{m}\int_{0}^{2}\frac{-ia_2(\mu)(2-\mu)^{\frac{1}{2}}}{\mu^2{\rm sin}\tilde{\theta}_+}\chi_3(\mu)\sum\limits_{m_1,m_2\in\Z}\widetilde{\Phi}^{\pm}_2(\mu,\widetilde{N}_1,\widetilde{M}_2)({v}{\widetilde{P}_1}\tilde{v})(m_1,m_2)d\mu.
\end{align*}
Note that $K^{\pm,2}_{\widetilde{P}_1}=O(1)$ as $\mu\rightarrow2$, this means that through a treatment similar to $\widetilde{K}^{\pm,2}_{0}$, one has $$K^{\pm,2}_{\widetilde{P}_1}(n,m)=O(\left<|n|\pm|m|\right>^{-2}).$$
As for $K^{\pm,1}_{\widetilde{P}_1}$, we first apply the variable substitution \eqref{varible substi2}, and then do the same decomposition \eqref{decom of KpmP1} as $K^{\pm,1}_{P_1}$ in Proposition \ref{proposition K110KP1 regular} obtaining
\begin{align*}
K^{\pm,1}_{\widetilde{P}_1}(n,m)=4(-1)^{n+m}\big(k^{+}_1(n,m)+k^{-}_1(n,m)\big)+O(\left<|n|\pm|m|\right>^{-2}).
\end{align*}
Here we also used the fact that $\sum\limits_{m_1,m_2\in\Z}(\tilde{v}\widetilde{P}_1\tilde{v})(m_1,m_2)=-32i.$
Thus we have
\begin{align}\label{expres of Kptuta1}
K_{\widetilde{P}_1}(n,m)=\frac{(-1)^{n+m}}{4}g_{1,0}(n,m)+O(\left<|n|\pm|m|\right>^{-2}),
\end{align}
where $g_{1,0}(n,m)$ is defined in \eqref{gab hab}. Therefore,
$K_{\widetilde{P}_1}\in\B(\ell^p(\Z))$ for all $1< p<\infty$ by Lemmas \ref{shur test} and \ref{C-Z lemma}, and we complete the whole proof.
\end{proof}
\begin{remark}{\rm Compared with $\mcaK_1$ discussed in Section \ref{sect of mcaK1}, further remarks are given as follows.
\vskip0.15cm
 {\rm(1)}~We remark that both variable substitution \eqref{varible substi1} and \eqref{varible substi2} play important roles in estimating the integral kernels. However, they exhibit slight differences in addressing singularity. Specifically, \eqref{varible substi1} does not alter the singularity near $\mu=0$, whereas \eqref{varible substi2} decreases a singularity of order $(2-\mu)^{-\frac{1}{2}}$.
\vskip0.15cm
{\rm (2)}~Moreover, recalling from \eqref{R0vBvR0} that
\begin{align}\label{new R0vBvR0}
R^{+}_{0}(\mu^4)v Bv(R^{+}_{0}-R^{-}_0)(\mu^4)&=\frac{1}{4\mu^4}\big[JR^{-}_{-\Delta}(4-\mu^2)\tilde{v}B\tilde{v}(R^{-}_{-\Delta}-R^{+}_{-\Delta})(4-\mu^2)J\notag\\
&\qquad\qquad+R_{-\Delta}(-\mu^2){v}B\tilde{v}(R^{-}_{-\Delta}-R^{+}_{-\Delta})(4-\mu^2)J\big],
\end{align}
we observe that the singularity of the second term is always weaker than that of the first term. This differs from the zero resonance case, where both terms exhibit the same singularity. Due to this difference, the second term demonstrates better boundedness at the endpoints \( p = 1 \) and \( p = \infty \), simplifying the endpoint analysis compared to the zero resonance case.
\vskip0.15cm
{\rm (3)}~We notice that the method used for $K^{\pm,2}_0(n,m)$ cannot be applied to the integral kernel corresponding to the second term in \eqref{new R0vBvR0}, since $\tilde{\theta}'_+(\mu)$ becomes singular near $\mu=2$.}
\end{remark}
\subsection{$16$ is a resonance of $H$}\label{subsec of W3 resonance}
In this subsection, we consider the case where $16$ is a resonance of $H$. Taking the expansion
\begin{align*}
M^{-1}(\mu)&=(2-\mu)^{-\frac{1}{2}}\widetilde{S}_0B_{-1}\widetilde{S}_0+\big(\widetilde{S}_0B^{1}_{01}+B^{1}_{02}\widetilde{S}_0+\widetilde{Q}B^{1}_{03}\widetilde{Q}\big)+
(2-\mu)^{\frac{1}{2}}(\widetilde{Q}B^{1}_{11}+B^{1}_{12}\widetilde{Q})\notag\\
&\quad+(2-\mu)^{\frac{1}{2}}\widetilde{P}_1+(2-\mu) B^{1}_{21}+\Gamma^{1}_{\frac{3}{2}}(2-\mu)
\end{align*}
into \eqref{kernel of W3}, then $\mcaK_3$ can be written as 
\begin{equation}\label{decom1 of W3 resonance}
\mcaK_3=\sum\limits_{B\in\mcaB_{11}\cup\mcaB_{12}}^{}K_B+K_{\widetilde{P}_1}+K^1_{r},
\end{equation}
where $K_B,K_{\widetilde{P}_1}$ are defined in \eqref{kernel of KPtuta1}, $\mcaB_{11}=\{(2-\mu)^{-\frac{1}{2}}\widetilde{S}_0B_{-1}\widetilde{S}_0,\widetilde{S}_0B^{1}_{01},B^{1}_{02}\widetilde{S}_0\}$ and
\begin{align*} \mcaB_{12}&=\{\widetilde{Q}B^{1}_{03}\widetilde{Q},(2-\mu)^{\frac{1}{2}}\widetilde{Q}B^{1}_{11},(2-\mu)^{\frac{1}{2}}B^{1}_{12}\widetilde{Q},(2-\mu) B^{1}_{21}\},\notag\\
K^{1}_r(n,m)&=\int_{0}^{2}\mu^3\chi_3(\mu)\big[R^{+}_{0}(\mu^4)v \Gamma^1_{\frac{3}{2}}(2-\mu)v(R^{+}_{0}-R^{-}_0)(\mu^4)\big](n,m)d\mu.
\end{align*}
Recalling from the established result in Proposition \ref{proposition good1 regular 2}, we have
\begin{itemize}
\item $K\in\B(\ell^p(\Z))\  {\rm for\  all\ } 1\leq p\leq\infty$ with $K\in\{K^1_{r}\}\cup\{K_{B}:B\in\mcaB_{12}\}$,
\vskip0.15cm
\item $K_{\widetilde{P}_1}\in\B(\ell^p(\Z))\  {\rm for\  all\ } 1< p<\infty$.
\end{itemize}
Hence, it suffices to focus on the operators $K_{B}$ with $B\in\mcaB_{11}$. Compared to the regular case, these additional terms, while being of the same order $O((2-\mu)^{-\frac{1}{2}})$ as $K_{\widetilde{P}_1}$ in the vicinity of $\mu=2$,
exhibit more subtle behaviors in boundedness analysis which slightly differs in handling. Precisely, 
\begin{proposition}\label{proposition resonance 2}
Let $H=\Delta^2+V$ with $|V(n)|\lesssim \left<n\right>^{-\beta}$ for $\beta>13$. Suppose that $H$ has no positive eigenvalues in the interval $\rm{(}0,16\rm{)}$ and $16$ is a resonance of $H$. Then for any $B\in\mcaB_{11}$, $K_{B}\in\B(\ell^p(\Z))$ for all $1< p<\infty$ and thus $\mcaK_3\in\B(\ell^p(\Z))$ for any $1< p<\infty$.
\end{proposition}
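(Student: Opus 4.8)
The plan is to follow the same strategy used for the regular threshold-$16$ case in Proposition \ref{proposition good1 regular 2}, but now paying attention to the extra, more singular terms coming from the $(2-\mu)^{-1/2}\widetilde{S}_0B_{-1}\widetilde{S}_0$ block and its off-diagonal companions $\widetilde{S}_0B^1_{01}$, $B^1_{02}\widetilde{S}_0$. As in \eqref{R0vBvR0}, for each $B\in\mcaB_{11}$ I would use the unitary $J$ to write $R^+_0(\mu^4)vBv(R^+_0-R^-_0)(\mu^4)$ in terms of the resolvents $R^{\mp}_{-\Delta}(4-\mu^2)$, then invoke the cancelation Lemma \ref{cancelation lemma16}: since $\widetilde{S}_0\tilde v$ enjoys the orthogonality $\widetilde{S}_0\tilde v=0=\langle\widetilde{S}_0 f,\tilde v\rangle$, the operators $R^{\mp}_{-\Delta}(4-\mu^2)\tilde v\widetilde{S}_0$ and $\widetilde{S}_0\tilde v R^{\mp}_{-\Delta}(4-\mu^2)$ are $O(1)$, which compensates one power of $\tilde\theta_+$ and hence one power of $(2-\mu)^{1/2}$ on each side. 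Thus for $B=(2-\mu)^{-1/2}\widetilde{S}_0B_{-1}\widetilde{S}_0$ the prefactor $(2-\mu)^{-1/2}$ is exactly cancelled by the two $O((2-\mu)^{1/2})$ gains (after the substitution \eqref{varible substi2} contributes yet another $(2-\mu)^{1/2}$ from $d\mu/d\tilde\theta_+$), leaving an integral of the same type as $K_{\widetilde P_1}$ in \eqref{expres of Kptuta1}; for the off-diagonal terms $\widetilde{S}_0B^1_{01}$ and $B^1_{02}\widetilde{S}_0$ only one side carries the $\widetilde{S}_0$-cancelation, but these operators have no negative power of $(2-\mu)$ in front, so one gain plus the substitution gain already suffices.

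Concretely, I would expand $\mcaK_B(\mu,n,m)=16\mu^3[R^+_0(\mu^4)vBv(R^+_0-R^-_0)(\mu^4)](n,m)$ using Lemma \ref{cancelation lemma16} exactly as in \eqref{mcaKB regular 2}, writing it as a finite sum of terms indexed by the two contributions in \eqref{new R0vBvR0}; after the change of variable \eqref{varible substi2} each term becomes $(-1)^{n+m}\int_{-\pi}^0 e^{i\tilde\theta_+(|n|\pm|m|)}\tilde g(\tilde\theta_+)\tilde\chi_3(\mu(\tilde\theta_+))\widetilde L^{\pm}(\tilde\theta_+,n,m)\,d\tilde\theta_+$ (or the analogous $e^{b(\mu)|\widetilde N_1|+\tilde\theta_+|n|}$-type kernel from the second summand of \eqref{new R0vBvR0}, which is strictly less singular because $\tilde\theta'_+(\mu)$ never enters). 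I would check that $\widetilde L^{\pm}$ and its first two $\tilde\theta_+$-derivatives are bounded uniformly in $n,m$ by $\|\langle\cdot\rangle^{2k+2}V\|_{\ell^1}$, exactly as in the proof of \eqref{estimate of Lpm2tuta}, using $\beta>13$ to absorb the weights produced by the Taylor/orthogonality expansion of $R^{\mp}_{-\Delta}(4-\mu^2,n,m)$ to one order higher than before. Then two integrations by parts give either $O(\langle|n|\pm|m|\rangle^{-2})$ (the generic case, yielding $\ell^p$ boundedness for all $1\le p\le\infty$ by Lemma \ref{shur test}) or, in the leading term of the $(2-\mu)^{-1/2}\widetilde{S}_0B_{-1}\widetilde{S}_0$ block, a Calder\'on--Zygmund remainder of the form $\frac{(-1)^{n+m}}{c}g_{1,0}(n,m)+O(\langle|n|\pm|m|\rangle^{-2})$ with $g_{1,0}$ as in \eqref{gab hab}, whose $\ell^p$ boundedness for $1<p<\infty$ follows from Lemma \ref{C-Z lemma}. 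Summing over $B\in\mcaB_{11}$ and combining with the already-established bounds for $\mcaB_{12}$, $K_{\widetilde P_1}$ and $K^1_r$ via \eqref{decom1 of W3 resonance} yields $\mcaK_3\in\B(\ell^p(\Z))$ for $1<p<\infty$.

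The main obstacle I anticipate is bookkeeping the $\widetilde{S}_0$-orthogonality for the purely diagonal block $(2-\mu)^{-1/2}\widetilde{S}_0B_{-1}\widetilde{S}_0$: one must peel off one power of $\tilde\theta_+$ from \emph{both} the left and the right resolvent factor simultaneously, which requires expanding $R^{\mp}_{-\Delta}(4-\mu^2,n,m)$ to first order in $\tilde\theta_+$ on each side (as recorded in the Taylor formula quoted just before Lemma \ref{cancelation lemma16}) and controlling the resulting double integral over the two Taylor parameters $\rho_1,\rho_2$; the new $c_3$-type discrepancy terms that plagued $\mcaK_1$ do not appear here because at threshold $16$ the relevant kernel is that of $-\Delta$, whose continuous analogue is exact, but one still has to verify that the mixed kernel $e^{b(\mu)|\widetilde N_1|+\tilde\theta_+|n|\pm i\tilde\theta_+(|M_2|-|m|)}$ from the second term of \eqref{new R0vBvR0} stays integrable near $\mu=2$ despite lacking the $\tilde\theta'_+$ factor — this is handled exactly by the exponential-decay bound \eqref{estimate for NebN}. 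A secondary point is tracking the precise constant in front of $g_{1,0}(n,m)$, which is needed later for Theorem \ref{theorem at endpoints}; for the present $\ell^p$-boundedness statement, however, only the \emph{form} of the Calder\'on--Zygmund leading term matters, so I would defer the exact constant computation.
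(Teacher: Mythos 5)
Your overall strategy coincides with the paper's: conjugate by $J$ via \eqref{R0vBvR0}, apply the cancelation Lemma \ref{cancelation lemma16}, change variables as in \eqref{varible substi2}, integrate by parts, and invoke Lemmas \ref{shur test} and \ref{C-Z lemma}. There is, however, a genuine error in your singularity count for the off-diagonal blocks. You place $\widetilde{S}_0B^1_{01}$ and $B^1_{02}\widetilde{S}_0$ in the ``generic'' case yielding $O(\left<|n|\pm|m|\right>^{-2})$ kernels bounded on all $\ell^p$, $1\le p\le\infty$. This is not correct: each resolvent factor is $O((2-\mu)^{-1/2})$, so the product of the two is $O((2-\mu)^{-1})$; one cancelation gain plus the Jacobian gain from \eqref{varible substi2} brings the integrand only to $O(1)$ with a \emph{nonvanishing} limit as $\tilde\theta_+\to 0$, so the boundary term in the first integration by parts survives and produces a genuine Calder\'on--Zygmund kernel. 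Indeed the paper finds $\widetilde{K}_{01}(n,m)=-\tfrac{(-1)^{n+m}}{64}\widetilde{C}^{1}_{01}(n)g_{1,0}(n,m)+\widetilde{R}_{01}(n,m)$ and an analogous $h_{1,0}$ leading term for $\widetilde{K}_{02}$, bounded only for $1<p<\infty$; these very terms are what drive the endpoint unboundedness in Proposition \ref{proposition of Counterexample regular+resonance} (through the constant $\mcaC_1$), so your assertion of endpoint boundedness for them would contradict Theorem \ref{theorem at endpoints}. The same miscount appears in your first paragraph, where ``one gain plus the substitution gain already suffices'' only brings these terms to the \emph{same} $O((2-\mu)^{-1/2})$ class as $K_{\widetilde P_1}$ before the Jacobian, not below it.

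A secondary inaccuracy: for the diagonal block $(2-\mu)^{-1/2}\widetilde{S}_0B_{-1}\widetilde{S}_0$, the leading singular part is not a single kernel $c\,g_{1,0}(n,m)$. Because the cancelation on each side introduces Taylor parameters, the paper obtains, for each fixed $(m_1,\rho_1)$, kernels $\tfrac{-i}{2}k^{\pm}_1(n,m)\widetilde{C}_1(m_1,m)$ and $-\tfrac{\sqrt2}{8}q^{|\widetilde N_1|}k^{\mp}_2(n,m)\widetilde{C}_2(m_1,m)$ (so the ``less singular'' second summand of \eqref{new R0vBvR0} also contributes a $k^{\mp}_2$ term), and then sums over $(m_1,\rho_1)$ via Minkowski's inequality exactly as for $K_1$ in Proposition \ref{proposition K110KP1 regular}. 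You anticipate the double-parameter issue but do not supply this summation mechanism, and without it the $m$-dependent coefficients $\widetilde{C}_j(m_1,m)$ cannot simply be absorbed into a constant in front of $g_{1,0}$. The final conclusion for $1<p<\infty$ survives once all three operators are correctly recognized as Calder\'on--Zygmund type, but the proof as written misidentifies which terms are singular and omits the summation argument needed for the diagonal block.
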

\begin{proof}
\underline{{\bf(1)}}~For $B=(2-\mu)^{-\frac{1}{2}}\widetilde{S}_0B_{-1}\widetilde{S}_0$, combining \eqref{mcaKB regular 2} and the expression of $K_B$, we have
\begin{align*}
\bm{\widetilde{K}_{-1}(n,m)}:&=\int_{0}^{2}(2-\mu)^{-\frac{1}{2}}\mu^3\chi_3(\mu)\big[R^{+}_{0}(\mu^4)v\widetilde{S}_0B_{-1}\widetilde{S}_0v(R^{+}_{0}-R^{-}_0)(\mu^4)\big](n,m)d\mu,\\
&=\frac{1}{16}\sum\limits_{j=1}^{2}(\widetilde{K}^{+,j}_{-1}-\widetilde{K}^{-,j}_{-1})(n,m),
\end{align*}
where $N_1,\widetilde{N}_1,M_2$ are defined in \eqref{mcaKB regular 2} and
\begin{align*}
\widetilde{K}^{\pm,1}_{-1}(n,m)&=\sum\limits_{m_1\in\Z}\int_{0}^{1}(-1)^{n}({\rm sign}(N_1))\int_{0}^{2}\frac{(2-\mu)^{-\frac{1}{2}}\tilde{\theta}^2_+}{\mu({\rm sin}\tilde{\theta}_+)^2}\chi_3(\mu)\sum\limits_{m_2\in\Z}\int_{0}^{1}\widetilde{\Phi}^{\pm}_1(\mu,N_1,M_2)\\
&\quad\times\widetilde{\mcaM}^{(1)}_{-1}(m,M_2,m_1,m_2)d\rho_2d\mu d\rho_1:=\sum\limits_{m_1\in\Z}\int_{0}^{1}(-1)^{n}({\rm sign}(N_1))k^{\pm,1}_{-1}(m_1,\rho_1,n,m)d\rho_1,\notag\\
\widetilde{K}^{\pm,2}_{-1}(n,m)&=\sum\limits_{m_1\in\Z}\int_{0}^{2}\frac{(2-\mu)^{-\frac{1}{2}}a_2(\mu)\tilde{\theta}_+}{-\mu^2{\rm sin}\tilde{\theta}_+}\chi_3(\mu)\sum\limits_{m_2\in\Z}\int_{0}^{1}\widetilde{\Phi}^{\pm}_2(\mu,\widetilde{N}_1,M_2)\widetilde{\mcaM}^{(2)}_{-1}(m,M_2,m_1,m_2)d\rho_2d\mu\\
&:=\sum\limits_{m_1\in\Z}k^{\pm,2}_{-1}(m_1,n,m),\\
\widetilde{\mcaM}^{(1)}_{-1}(m,&M_2,m_1,m_2)=(-1)^m({\rm sign}(M_2))(\tilde{v}_1\widetilde{S}_0B_{-1}\widetilde{S}_0\tilde{v}_1)(m_1,m_2),\\
\widetilde{\mcaM}^{(2)}_{-1}(m,&M_2,m_1,m_2)=(-1)^m({\rm sign}(M_2))({v}\widetilde{S}_0B_{-1}\widetilde{S}_0\tilde{v}_1)(m_1,m_2).
\end{align*}
For any fixed $(m_1,\rho_1)\in\Z\times[0,1]$, first perform the variable substitution \eqref{varible substi2} to $k^{\pm,1}_{-1}(m_1,\rho_1,n,m)$ into the form \eqref{form of K0tutapm1} and $k^{\pm,2}_{-1}(m_1,n,m)$ to the form \eqref{form of K0tutapm2}, and then do the similar decomposition \eqref{decom of KpmP1}, we obtain that
\begin{align*}
k^{\pm,1}_{-1}(m_1,\rho_1,n,m)&=\frac{-i}{2}k^{\pm}_1(n,m)\widetilde{C}_{1}(m_1,m)+O(\widetilde{\mcaM}_{-1}(m_1)\left<|n|\pm|m|\right>^{-2}),\\
k^{\pm,2}_{-1}(m_1,n,m)&=-\frac{\sqrt2}{8}q^{|\widetilde{N}_1|}k^{\mp}_2(n,m)\widetilde{C}_{2}(m_1,m)+O(\widetilde{\mcaM}_{-1}(m_1)\left<|n|\pm|m|\right>^{-2}),
\end{align*}
where $q=3-2\sqrt2$, $\widetilde{\mcaM}_{-1}(m_1)=\left<m_1\right>^3|v(m_1)|\big(|\widetilde{S}_0B_{-1}\widetilde{S}_0|(\left<\cdot\right>^3|v(\cdot)|)\big)(m_1)$ and
\begin{align*}
\widetilde{C}_{j}(m_1,m)=\sum\limits_{m_2\in\Z}\int_{0}^{1}\widetilde{\mcaM}^{(j)}_{-1}(m,M_2,m_1,m_2)d\rho_2.
\end{align*}
Notice that $(-1)^n({\rm sign}(N_1))$, $q^{|\widetilde{N}_1|}$ are uniformly bounded in $n,m_1,\rho_1$, and $|\widetilde{C}_{j}(m_1,m)|\leq\widetilde{\mcaM}_{-1}(m_1)$ uniformly in $m$ for $j=1,2$. It means that $\widetilde{K}_{-1}\in\B(\ell^p(\Z))$ for all $1<p<\infty$ by following the argument as $K_1$ in Proposition \ref{proposition K110KP1 regular}.
\vskip0.3cm
\underline{{\bf(2)}}~For $B=\widetilde{S}_0B^{1}_{01}$, it follows from \eqref{mcaKB regular 2} that
\begin{align*}
\bm{\widetilde{K}_{01}(n,m)}:=\int_{0}^{2}\mu^3\chi_3(\mu)\big[R^{+}_{0}(\mu^4)v\widetilde{S}_0B^1_{01}v(R^{+}_{0}-R^{-}_0)(\mu^4)\big](n,m)d\mu:=\frac{1}{16}\sum\limits_{j=1}^{2}(\widetilde{K}^{+,j}_{01}+\widetilde{K}^{-,j}_{01})(n,m),
\end{align*}
where $\widetilde{\mcaM}^1_{01}(m_1,m_2)=(\tilde{v}_1\widetilde{S}_0B^1_{01}\tilde{v})(m_1,m_2)$ and
\begin{align*}
\widetilde{K}^{\pm,1}_{01}(n,m)&=(-1)^{n+m}\int_{0}^{2}\frac{i\tilde{\theta}_+\chi_3(\mu)}{\mu({\rm sin}\tilde{\theta}_+)^2}\sum\limits_{m_1,m_2\in\Z}\int_{0}^{1}\widetilde{\Phi}^{\pm}_1(\mu,N_1,\widetilde{M}_2)({\rm sign}(N_1))d\rho_1\widetilde{\mcaM}^1_{01}(m_1,m_2)d\mu,\\
\widetilde{K}^{\pm,2}_{01}(n,m)&=(-1)^m\sum\limits_{m_1,m_2\in\Z}(v\widetilde{S}_0B^1_{01}\tilde{v})(m_1,m_2)\int_{0}^{2}\frac{-ia_2(\mu)\chi_3(\mu)}{\mu^2{\rm sin}\tilde{\theta}_+}\widetilde{\Phi}^{\pm}_2(\mu,\widetilde{N}_1,\widetilde{M}_2)d\mu.
\end{align*}
Applying the method for $k^{\pm,1}_{-1}$ to $\widetilde{K}^{\pm,1}_{01}$ directly and $k^{\pm,2}_{-1}$ to the inner integral of $\widetilde{K}^{\pm,2}_{01}$ and using the fact that
\begin{align}\label{sup (N-M)-2}
\sup\limits_{m\in\Z,a,b\in\R}\sum\limits_{n\in\Z}\left<|n-a|\pm|m-b|\right>^{-2}+ \sup\limits_{n\in\Z,a,b\in\R}\sum\limits_{m\in\Z}\left<|n-a|\pm|m-b|\right>^{-2}<\infty,
 \end{align}
we obtain
\begin{align*}
\widetilde{K}_{01}(n,m)&=-\frac{(-1)^{n+m}}{64}\widetilde{C}^{1}_{01}(n)g_{1,0}(n,m)+\frac{(-1)^m\sqrt2i}{256}\sum\limits_{m_1,m_2\in\Z}(v\widetilde{S}_0B^1_{01}\tilde{v})(m_1,m_2)q^{|\widetilde{N}_1|}g_{0,1}(\widetilde{N}_1,\widetilde{M}_2)\\
&\quad+R_{01}(n,m),
\end{align*}
where the integral operator $R_{01}\in\B(\ell^p(\Z))$ for all $1\leq p\leq\infty$ and
\begin{align}\label{Ctuta101}
\widetilde{C}^{1}_{01}(n)=\sum\limits_{m_1,m_2\in\Z}\int_{0}^{1}({\rm sign}(N_1))d\rho_1\widetilde{\mcaM}^1_{01}(m_1,m_2).
\end{align}
Notice that
\begin{align*}
q^{|\widetilde{N}_1|}g_{0,1}(\widetilde{N}_1,\widetilde{M}_2)&={\mathbf1}_{E}(\widetilde{N}_1,\widetilde{M}_2)\frac{2|\widetilde{N}_1|q^{|\widetilde{N}_1|}}{(\widetilde{N}_1)^2+(\widetilde{M}_2)^2}+q^{|\widetilde{N}_1|}g_{0,1}(\widetilde{N}_1,\widetilde{M}_2){\mathbf1}_{E^c}(\widetilde{N}_1,\widetilde{M}_2)\\
&=O(\big<|\widetilde{N}_1|-|\widetilde{M}_2|\big>^{-2}),
\end{align*}
where $\mathbf1_E$ denotes the characteristic function on the set $E=\{(x,y):||x|-|y||\geq2\}$ and $E^c$ corresponds to the complementary set. Moreover, in the second equality we used the uniform boundedness of $|\widetilde{N}_1|q^{|\widetilde{N}_1|}$ and $k^{\pm}_1(n,m)$. This estimate combined with \eqref{sup (N-M)-2}
 yields that
 \begin{align}\label{kernel of Ktuta01}
 \widetilde{K}_{01}(n,m)&=-\frac{(-1)^{n+m}}{64}\widetilde{C}^{1}_{01}(n)g_{1,0}(n,m)+\widetilde{{R}}_{01}(n,m),
 \end{align}
 where the integral operator $\widetilde{{R}}_{01}\in\B(\ell^{p}(\Z))$ for all $1\leq p\leq \infty$. Therefore, combining the uniform boundedness of $\widetilde{C}^{1}_{01}(n)$ and Lemma \ref{C-Z lemma}, we obtain $\widetilde{K}_{01}\in\B(\ell^{p}(\Z))$ for all $1<p< \infty$.
 \vskip0.3cm
 \underline{{\bf(3)}}~For $B=B^1_{02}\widetilde{S}_0$, combining \eqref{mcaKB regular 2} and the method used for $k^{\pm,j}_{-1}$, similarly, we can derive
 \begin{align*}
\bm{\widetilde{K}_{02}(n,m)}&:=\int_{0}^{2}\mu^3\chi_3(\mu)\big[R^{+}_{0}(\mu^4)vB^1_{02}\widetilde{S}_0v(R^{+}_{0}-R^{-}_0)(\mu^4)\big](n,m)d\mu\\
&=-\frac{(-1)^{n+m}}{64}h_{1,0}(n,m)\widetilde{C}_{02}(m)+O(\left<|n|\pm|m|\right>^{-2}),
\end{align*}
where $h_{1,0}(n,m)$ is defined in \eqref{gab hab} and
\begin{align*}
\widetilde{C}_{02}(m)=\sum\limits_{m_1,m_2\in\Z}\int_{0}^{1}({\rm sign}(M_2))d\rho_2\cdot(\tilde{v}B^1_{02}\widetilde{S}_0\tilde{v}_1)(m_1,m_2).
\end{align*}
Thus, $\widetilde{K}_{02}\in\B(\ell^p(\Z))$ for all $1<p<\infty$ and we complete the whole proof.
\end{proof}
\subsection{$16$ is an eigenvalue of $H$}Finally, in this subsection, we deal with the case where $16$ is an eigenvalue of $H$. From the expansion \eqref{asy expan eigenvalue 2}
\begin{align*}
M^{-1}(\mu)&=(2-\mu)^{-1}\widetilde{S}_1B_{-2}\widetilde{S}_1+(2-\mu)^{-\frac{1}{2}}\big(\widetilde{S}_0B_{-1,1}\widetilde{Q}+\widetilde{Q}B_{-1,2}\widetilde{S}_0\big)+\big(\widetilde{Q}B^{2}_{01}+B^{2}_{02}\widetilde{Q}\big)\notag\\
&\quad +(2-\mu)^{\frac{1}{2}}(\widetilde{Q}B^{2}_{11}+B^{2}_{12}\widetilde{Q})+(2-\mu)^{\frac{1}{2}}\widetilde{P}_1+(2-\mu) B^{2}_{21}+\Gamma^{2}_{\frac{3}{2}}(2-\mu),
\end{align*}
then $\mcaK_3$ can be written as 
\begin{equation}\label{decom1 of W3 eigenvalue}
\mcaK_3=\sum\limits_{B\in\mcaB_{21}\cup\mcaB_{22}}^{}K_B+K_{\widetilde{P}_1}+K^2_{r},
\end{equation}
where $\mcaB_{21}=\{(2-\mu)^{-1}\widetilde{S}_1B_{-2}\widetilde{S}_1,(2-\mu)^{-\frac{1}{2}}\widetilde{S}_0B_{-1,1}\widetilde{Q},(2-\mu)^{-\frac{1}{2}}\widetilde{Q}B_{-1,2}\widetilde{S}_0,\widetilde{Q}B^{2}_{01},B^{2}_{02}\widetilde{Q}\}$ and
\begin{align*}
\mcaB_{22}&=\{(2-\mu)^{\frac{1}{2}}\widetilde{Q}B^{2}_{11},(2-\mu)^{\frac{1}{2}}B^{2}_{12}\widetilde{Q},(2-\mu) B^{2}_{21}\},\notag\\
K^{2}_r(n,m)&=\int_{0}^{2}\mu^3\chi_3(\mu)\big[R^{+}_{0}(\mu^4)v \Gamma^2_{\frac{3}{2}}(2-\mu)v(R^{+}_{0}-R^{-}_0)(\mu^4)\big](n,m)d\mu.
\end{align*}
Recall from the established result in Propositions \ref{proposition good1 regular 2} and \ref{proposition resonance 2}, we have
\begin{itemize}
\item $K\in\B(\ell^p(\Z))\  {\rm for\  all\ } 1\leq p\leq\infty$ with $K\in\{K^2_{r}\}\cup\{K_{B}\}_{B\in\mcaB_{22}}$,
\vskip0.15cm
\item $K_B,K_{\widetilde{P}_1}\in\B(\ell^p(\Z))\  {\rm for\  all\ } 1< p<\infty$ with $B\in\mcaB_{21}\setminus\{(2-\mu)^{-1}\widetilde{S}_1B_{-2}\widetilde{S}_1\}$.
\end{itemize}
Hence, it remains to prove that $\widetilde{K}_{-2}\in\B(\ell^p(\Z))$ for all $1< p<\infty$, where
\begin{align*}
\bm{\widetilde{K}_{-2}(n,m)}&:=\int_{0}^{2}(2-\mu)^{-1}\mu^3\chi_3(\mu)\big[R^{+}_{0}(\mu^4)v \widetilde{S}_1B_{-2}\widetilde{S}_1v(R^{+}_{0}-R^{-}_0)(\mu^4)\big](n,m)d\mu.
\end{align*}
To see this, we first note that from Lemma \ref{cancelation lemma16}, $\widetilde{K}_{-2}=O((2-\mu)^{-\frac{1}{2}})$ as $\mu\rightarrow2$. Through a similar argument as $\widetilde{K}_{01}$ in Proposition \ref{proposition resonance 2}, one can obtain
\begin{align}\label{kernel of Ktuta-2}
\widetilde{K}_{-2}(n,m)=\frac{(-1)^{n+m}}{32}\widetilde{C}_{-2}(n)g_{1,0}(n,m)+R_{-2}(n,m),
\end{align}
where the integral operator ${R}_{-2}\in\B(\ell^{p}(\Z))$ for all $1\leq p\leq \infty$ and
\begin{align}\label{Ctuta-2n}
\widetilde{C}_{-2}(n)=\sum\limits_{m_1,m_2\in\Z}\int_{0}^{1}({\rm sign}(N_1))d\rho_1(\tilde{v}_1\widetilde{S}_1B_{-2}\widetilde{S}_1\tilde{v}_2)(m_1,m_2).
\end{align}
This gives the desired result. Therefore, to sum up, we have the following conclusion.
\begin{proposition}\label{proposition eigenvalue 2}
Let $H=\Delta^2+V$ with $|V(n)|\lesssim \left<n\right>^{-\beta}$ for $\beta>17$. Suppose that $H$ has no positive eigenvalues in the interval $\rm{(}0,16\rm{)}$ and $16$ is an eigenvalue of $H$. Then  $\mcaK_{3}\in\B(\ell^p(\Z))$ for all $1< p<\infty$.
\end{proposition}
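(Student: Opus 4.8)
\textbf{Proof proposal for Proposition \ref{proposition eigenvalue 2}.}
The plan is to follow the same division of $\mcaK_3$ already set up in \eqref{decom1 of W3 eigenvalue} and reuse as much of the machinery from Subsections \ref{subsec of W3 regular} and \ref{subsec of W3 resonance} as possible. Using the transfer formula \eqref{R0vBvR0} and the order bookkeeping in \eqref{order in cancel lemma 16}, every term splits into three groups according to its behavior as $\mu\to 2$: the operators $K_B$ with $B\in\mcaB_{22}$ and the remainder $K^2_r$ are $O((2-\mu)^{1/2})$, hence $\ell^p$-bounded for all $1\le p\le\infty$ by exactly the argument proving Proposition \ref{proposition good1 regular 2}(1) (the extra $(2-\mu)^{1/2}$ from the substitution \eqref{varible substi2} kills the singularity, and \eqref{estimate of Gamma} controls $\Gamma^2_{3/2}$); the operators $K_B$ with $B\in\mcaB_{21}\setminus\{(2-\mu)^{-1}\widetilde S_1 B_{-2}\widetilde S_1\}$ together with $K_{\widetilde P_1}$ are $O((2-\mu)^{-1/2})$ and are handled verbatim as in Proposition \ref{proposition resonance 2}, producing kernels of the form $(-1)^{n+m}$ times a bounded coefficient times $g_{1,0}(n,m)$ (or $h_{1,0}$) plus an $\ell^1$-to-$\ell^\infty$ error, which are $\ell^p$-bounded for $1<p<\infty$ by Lemma \ref{C-Z lemma}. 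So the whole proposition reduces to the single operator $\widetilde K_{-2}$ coming from the leading term $(2-\mu)^{-1}\widetilde S_1 B_{-2}\widetilde S_1$.

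For $\widetilde K_{-2}$ the key observation is that although the naive order count gives $O((2-\mu)^{-1})$, Lemma \ref{cancelation lemma16} shows that $\widetilde S_1$ annihilates $\tilde v$ \emph{and} $\tilde v_1$ (it sits inside $\widetilde S_0\ell^2\subseteq\widetilde Q\ell^2$ and additionally in $\ker\widetilde T_1$), so each factor $R^{\mp}_{-\Delta}(4-\mu^2)\tilde v\widetilde S_1$ is actually $O(1)$ by part (1) of that lemma, and in fact one factor can be taken to be $O((2-\mu)^{1/2})$ via part (2). Combined with the explicit $\mu^3/(2-\mu)$ prefactor and \eqref{R0vBvR0}, this brings the effective singularity of $\widetilde K_{-2}$ down to $O((2-\mu)^{-1/2})$, i.e. the same level as $K_{\widetilde P_1}$ and $\widetilde K_{-1}$. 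From that point I would run the argument used for $\widetilde K_{01}$ in the proof of Proposition \ref{proposition resonance 2}: perform the substitution \eqref{varible substi2}, expand $\widetilde\Phi^{\pm}_1$ and $\widetilde\Phi^{\pm}_2$ and peel off the leading exponential $e^{i\tilde\theta_+(|n|\pm|m|)}$ (resp.\ $e^{\pm i\tilde\theta_+(|m|\pm i|n|)}$), decompose as in \eqref{decom of KpmP1} against the cutoff $\phi(||n|\pm|m||^2)$, and integrate by parts twice. The terms involving the $e^{b(\mu)|\widetilde N_1|}$-type exponentials contribute only $O(\langle |n|\pm|m|\rangle^{-2})$ thanks to the exponential decay \eqref{estimate for NebN} and the bounded factor $q^{|\widetilde N_1|}$ (as in the handling of $q^{|\widetilde N_1|}g_{0,1}$ in Proposition \ref{proposition resonance 2}), so the only genuinely singular contribution is the stated one in \eqref{kernel of Ktuta-2}: $\tfrac{(-1)^{n+m}}{32}\widetilde C_{-2}(n)g_{1,0}(n,m)+R_{-2}(n,m)$ with $R_{-2}$ Schur-bounded and $\widetilde C_{-2}(n)$ uniformly bounded (indeed dominated by $\langle m_1\rangle^k|v(m_1)|$-weighted $\ell^1$ norms against the absolutely bounded operator $\widetilde S_1 B_{-2}\widetilde S_1$, which converge since $\beta>17$). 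Lemma \ref{C-Z lemma} and Lemma \ref{shur test} then finish it.

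The main obstacle I expect is the careful bookkeeping of exactly how many orders of $(2-\mu)^{1/2}$ the projection $\widetilde S_1$ buys in \emph{each} of the two resolvent factors inside \eqref{R0vBvR0}: one needs the combined gain to be at least $(2-\mu)^{1/2}$ beyond the $O((2-\mu)^{-1})$ prefactor, and this requires using Lemma \ref{cancelation lemma16}(1) on one side together with the stronger cancellation in (2) (which produces the $\tilde\theta_+^2$ factor, i.e.\ an $O(2-\mu)$ gain) on the other, while simultaneously tracking that the weighted-$\ell^1$ norms of $V$ that appear — roughly $\|\langle\cdot\rangle^{2k+2}V\|_{\ell^1}$ for $k\le 2$ together with the $\langle\cdot\rangle^3|v|$ weights from the $\widetilde S_1$-cancellation and the extra derivatives demanded by the two integrations by parts — all remain finite, which is what forces the hypothesis $\beta>17$. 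A secondary technical point is verifying that the substitution-generated smooth factors $\tilde\chi_3,\tilde{\tilde\chi}_3$ and the functions $\tilde\theta_+^2/\sin\tilde\theta_+$, $\tilde\theta_+/\sin\tilde\theta_+$ have the requisite finite one-sided limits and derivative bounds near $\tilde\theta_+=0$ so that the two integrations by parts actually gain $\langle |n|\pm|m|\rangle^{-2}$; but this is identical to what was already checked for $\widetilde K^{\pm,1}_0$, $\widetilde K^{\pm,2}_0$ in Proposition \ref{proposition good1 regular 2}, so it can be invoked rather than redone.
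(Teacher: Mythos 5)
Your proposal is correct and follows essentially the same route as the paper: reduce everything except $\widetilde K_{-2}$ to Propositions \ref{proposition good1 regular 2} and \ref{proposition resonance 2}, use Lemma \ref{cancelation lemma16} (the $O(1)$ gain on the left resolvent factor and the $O((2-\mu)^{1/2})$ gain on the difference factor) to bring $\widetilde K_{-2}$ down to $O((2-\mu)^{-1/2})$, and then run the $\widetilde K_{01}$ argument to arrive at the kernel \eqref{kernel of Ktuta-2} and conclude via Lemmas \ref{shur test} and \ref{C-Z lemma}. Your bookkeeping of the cancellation orders and the resulting singular term $\frac{(-1)^{n+m}}{32}\widetilde C_{-2}(n)g_{1,0}(n,m)$ matches the paper exactly.
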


Hence, this together with Propositions \ref{proposition good1 regular 2} and \ref{proposition resonance 2} completes the whole proof of Theorem \ref{theorem for W3}.
\section{Counterexample for the boundedness at endpoints}\label{sec of counterexample}
In this section, we establish the unboundedness of the wave operators $W_{\pm}$ at endpoints $p=1,\infty$, i.e., Theorem \ref{theorem at endpoints}. 
As before, we focus our analysis on $W_+$. Prior to the proof, we state our strategy. Recall from \eqref{expre of W-} that $W_{+}$ is given by
$$W_+=I-\frac{2}{\pi i}\sum\limits_{j=1}^{3}\mcaK_j.$$
By Theorem \ref{theorem for middle part}, $\mcaK_2$ is always bounded on $\ell^p(\Z)$ for all $1\leq p\leq\infty$. Consequently, the unboundedness of $W_+$ at $p=1$ and $p=\infty$ reduces to analyzing the behaviors of the remaining low energy part $\mcaK_1$ and high energy part $\mcaK_3$. Building on the results from Sections \ref{sect of mcaK1} and \ref{sec of mcaK3}, we can further reduce this analysis to studying the key operator $\mcaK$, whose specific form depends on resonance types, as detailed below.

{\bf Case (I): Assume that $0$ is a regular point of $H$}, then
\begin{align}\label{mcaK for regular}
\mcaK=K_{P_1}+K_1+\left\{\begin{aligned}&K_{\widetilde{P}_1},\ 16\ is\ a\ regular\ point\ of\ H,\\
&K_{\widetilde{P}_1}+\widetilde{K}_{-1}+\widetilde{K}_{01}+\widetilde{K}_{02},\ 16\ is\ a\ resonance\ of\ H,\\
&K_{\widetilde{P}_1}+\sum\limits_{B\in\mcaB_{21}}K_B,\ 16\ is\ an\ eigenvalue\ of\ H,\end{aligned}\right.
\end{align}
where $K_1,K_{P_1},K_{\widetilde{P}_1}$ are defined in \eqref{decom1 of W1 regular}, \eqref{decom1 of W3 regular}, respectively, $\widetilde{K}_{-1}$, $\widetilde{K}_{01}$, $\widetilde{K}_{02}$ are defined in Proposition \ref{proposition resonance 2} and $\mcaB_{21}$ is defined in \eqref{decom1 of W3 eigenvalue}.
\vskip0.2cm
{\bf Case (II): Assume that $0$ is a first kind resonance of $H$}, then
\begin{align}\label{mcaK for 1st}
\mcaK=K_{P_1}+\sum\limits_{A\in\mcaA_{11}}K_A+\left\{\begin{aligned}&K_{\widetilde{P}_1},\ 16\ is\ a\ regular\ point\ of\ H,\\
&K_{\widetilde{P}_1}+\widetilde{K}_{-1}+\widetilde{K}_{01}+\widetilde{K}_{02},\ 16\ is\ a\ resonance\ of\ H,\\
&K_{\widetilde{P}_1}+\sum\limits_{B\in\mcaB_{21}}K_B,\ 16\ is\ an\ eigenvalue\ of\ H,\end{aligned}\right.
\end{align}
where sets $\mcaA_{11}$ and $\mcaB_{21}$ are defined in \eqref{decom1 of mcaK1 1st} and \eqref{decom1 of W3 eigenvalue}, respectively.

\vskip0.2cm
{\bf Case (III): Assume that $0$ is a second kind resonance of $H$}, then
\begin{align}\label{mcaK for 2nd}
\mcaK=K_{P_1}+\sum\limits_{A\in\mcaA_{21}}K_A+\left\{\begin{aligned}&K_{\widetilde{P}_1},\ 16\ is\ a\ regular\ point\ of\ H,\\
&K_{\widetilde{P}_1}+\widetilde{K}_{-1}+\widetilde{K}_{01}+\widetilde{K}_{02},\ 16\ is\ a\ resonance\ of\ H,\\
&K_{\widetilde{P}_1}+\sum\limits_{B\in\mcaB_{21}}K_B,\ 16\ is\ an\ eigenvalue\ of\ H,\end{aligned}\right.
\end{align}
where $\mcaA_{21}$ is defined in \eqref{decom1 of mcaK1 2nd}.

 Throughout this section, we always choose the characteristic functions $f_{N}(n):=\chi_{[-N,N]}(n)$ on the interval $[-N,N]$ with $N\in\N^{+}$ as test functions. The proof of Theorem \ref{theorem at endpoints} will be divided into the following four propositions.
 \begin{proposition}\label{proposition of Counterexample regular}
 Let $H=\Delta^2+V$ with $|V(n)|\lesssim \left<n\right>^{-\beta}$ for $\beta>15$. Suppose that $H$ has no positive eigenvalues in the interval $\rm{(}0,16\rm{)}$ and {\textbf{both ${\bm{0}}$ and ${\bm{16}}$ are regular points of ${\bm{H}}$}}, then 
 \begin{itemize}
 \item[{\rm(1)}]$|(K_{P_1}+K_{\widetilde{P}_1})f_{N}(N+2)|\rightarrow+\infty,N\rightarrow+\infty$ and $(K_{P_1}+K_{\widetilde{P}_1})f_{1}\notin\ell^{1}(\Z)$,
 \vskip0.2cm
 \item[{\rm(2)}]$\sup\limits_{N\in\N^+}\|K_1f_{N}\|_{\ell^{\infty}}<\infty$ and $K_1f_{1}\in\ell^1(\Z)$.
 \end{itemize}
 In particular, $\mcaK=K_{P_1}+K_1+K_{\widetilde{P}_1}$ is neither bounded on $\ell^{\infty}(\Z)$ nor on $\ell^1(\Z)$.
 \end{proposition}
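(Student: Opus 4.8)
The plan is to exhibit an explicit singular model for the endpoint operators and then test it against $f_N=\chi_{[-N,N]}$. From \eqref{expres of Kp1} and \eqref{expres of Kptuta1} we already have the reductions
\[
K_{P_1}(n,m)=\frac{i-1}{8}\big(k^+_1+k^-_1+k^+_2+k^-_2\big)(n,m)+O(\langle|n|\pm|m|\rangle^{-2}),\qquad
K_{\widetilde P_1}(n,m)=\frac{(-1)^{n+m}}{4}g_{1,0}(n,m)+O(\langle|n|\pm|m|\rangle^{-2}),
\]
so the $O(\langle|n|\pm|m|\rangle^{-2})$ remainders are harmless at both endpoints by Lemma \ref{shur test}. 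Thus I would first isolate the genuinely singular kernels $k^\pm_1(n,m)=\phi(||n|\pm|m||^2)/(|n|\pm|m|)$, $k^\pm_2(n,m)=\phi(||n|-|m||^2)/(|n|\pm i|m|)$ and $(-1)^{n+m}g_{1,0}(n,m)$, and compute their action on $f_N$ directly. The key observation is that $(1+\tau)f_N=2f_N$ since $f_N$ is even, so via the relations \eqref{relation kpm1 and ktuta1}--\eqref{relation kpm2 and kpmtuta2} one has $(k^\pm_\ell f_N)(n)$ given by partial sums of $\sum_{m=1}^{N}\widetilde k(|n|,\pm m)$-type quantities; these are essentially discrete analogues of $\int_0^N (|n|-s)^{-1}\,ds \sim \log\big||n|-N\big|$ type integrals.

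\textbf{Step 1: the divergence of $(K_{P_1}+K_{\widetilde P_1})f_N$ at $n=N+2$.} Evaluate each singular kernel at $n=N+2$ (so $|n|=N+2>N\ge|m|$ for all $m\in[-N,N]$, and the cutoff $\phi$ is active since $||n|-|m||\ge 2$). Here $k^+_1(N+2,m)=1/(N+2+|m|)$, $k^-_1(N+2,m)=1/(N+2-|m|)$, and similarly for $k^\pm_2$ with the $\pm i|m|$ denominator, while the $\widetilde P_1$ piece contributes $(-1)^{N+2+m}g_{1,0}(N+2,m)=(-1)^{N+m}\big(k^+_1+k^-_1\big)(N+2,m)$. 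Summing over $m\in[-N,N]$: the $k^-_1$-type terms give $\sum_{|m|\le N}\frac{1}{N+2-|m|}\sim 2\log N$, which diverges, whereas the $k^+_1$ terms give a bounded sum $\sum \frac{1}{N+2+|m|}=O(1)$, the $k^\pm_2$ terms are bounded in modulus by $\sum (N+2-|m|)^{-1}$ — \emph{wait}, $k^-_2$ also has a denominator $|n|-i|m|$ of modulus $\ge N+2$, hence $O(1)$; only $k^+_1,k^-_1$ (and their alternating-sign copies inside $g_{1,0}$) matter. The alternating signs $(-1)^{N+m}$ inside the $K_{\widetilde P_1}$ contribution do \emph{not} produce cancellation against the non-alternating $K_{P_1}$ contribution because the coefficients $\tfrac{i-1}{8}$ and $\tfrac14$ are linearly independent over $\R$ — more precisely the coefficient of the divergent $\log N$ in $(K_{P_1}+K_{\widetilde P_1})f_N(N+2)$ is $\tfrac{i-1}{8}\cdot(-2)$ plus a bounded oscillating term, whose modulus still $\to\infty$. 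Running the same computation at a fixed interior point for $f_1$ (here $N=1$, so $f_1=\chi_{\{-1,0,1\}}$) and letting $n\to\infty$ gives $|(K_{P_1}+K_{\widetilde P_1})f_1(n)|\gtrsim n^{-1}$, which is not summable; hence $(K_{P_1}+K_{\widetilde P_1})f_1\notin\ell^1(\Z)$.

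\textbf{Step 2: the boundedness of $K_1f_N$.} From Proposition \ref{proposition K110KP1 regular}, $K_1(n,m)=\tfrac{1}{16}\sum_{j}(K^{+,j}_1-K^{-,j}_1)(n,m)$, and the key point is the \emph{antisymmetry} built into $K_1$: it enters through the difference $R^+_0-R^-_0$ only (note the minus sign in \eqref{kernel of K110}, versus the plus in $K_{P_1}$), producing kernels of the form $h_{a,b}(n,m)=a(k^+_1-k^-_1)+b(k^+_2-k^-_2)$ rather than $g_{a,b}$. When paired with the even function $f_N$, the cancellation $(k^+_1-k^-_1)f_N$ at $n=N+2$ gives $\sum_{|m|\le N}\big(\tfrac{1}{N+2+|m|}-\tfrac{1}{N+2-|m|}\big)$, and the two logarithmically-divergent pieces now \emph{cancel}, leaving an $O(1)$ quantity uniformly in $N$; a careful bookkeeping of the full kernel $K_1$ (using \eqref{estimate for kpmjrho1m1} and the uniform-in-$m$ bound $|C_1(m_1,m)|\le\mathcal M_1(m_1)$) upgrades this to $\sup_N\|K_1f_N\|_{\ell^\infty}<\infty$. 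The same antisymmetric cancellation plus the $O(\langle|n|\pm|m|\rangle^{-2})$ remainder gives the decay $|K_1f_1(n)|\lesssim \langle n\rangle^{-2}$, hence $K_1f_1\in\ell^1(\Z)$.

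\textbf{Conclusion and main obstacle.} Combining Steps 1 and 2: if $W_+$ (equivalently $\mcaK=K_{P_1}+K_1+K_{\widetilde P_1}$, modulo the always-bounded $\mcaK_2$ and the operators in $O(\mu)$ for $\mcaK_1$ and $O(1)$ for $\mcaK_3$) were bounded on $\ell^\infty(\Z)$, then since $\|f_N\|_{\ell^\infty}=1$ we would need $\sup_N\|\mcaK f_N\|_{\ell^\infty}<\infty$; but $\|\mcaK f_N\|_{\ell^\infty}\ge |(K_{P_1}+K_{\widetilde P_1})f_N(N+2)|-\|K_1 f_N\|_{\ell^\infty}\to\infty$ by Steps 1--2. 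Similarly, $f_1\in\ell^1(\Z)$ but $\mcaK f_1\notin\ell^1(\Z)$ by the $f_1$-parts of Steps 1--2, so $\mcaK\notin\B(\ell^1(\Z))$. By duality $W_+^*$ inherits the corresponding failures, completing the proof of Theorem \ref{theorem at endpoints}(i). The main obstacle I anticipate is Step 1's claim that no cancellation occurs between $K_{P_1}$ and $K_{\widetilde P_1}$: one must track the exact complex coefficients and the alternating signs $(-1)^{N+m}$ carefully to be sure the divergent logarithm survives — in particular verifying that the coefficient of $\log N$ is a nonzero constant (this is where a hypothesis such as the one in part (ii) of Theorem \ref{theorem at endpoints}, e.g. $16(1\mp 3\sqrt2)\ne\mathcal C_1$, would enter in the resonant cases, but in the doubly-regular case the constant $\tfrac{i-1}{8}$ is manifestly nonzero so no extra hypothesis is needed).
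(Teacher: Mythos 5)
Your Step 1 follows essentially the paper's route: isolate the singular kernels from \eqref{expres of Kp1} and \eqref{expres of Kptuta1}, observe that the $k^{\pm}_2$-part is harmless at $n=N+2$ (the paper packages this as the operator $G_2$ with $\sup_n\sum_m|n|/(n^2+m^2)<\infty$), and check that the alternating contribution from $K_{\widetilde P_1}$ reduces to the bounded series $\sum(-1)^k/k$ while the non-alternating coefficient $\tfrac{i-1}{8}$ of the harmonic sum survives. Apart from a spurious sign in your "$\tfrac{i-1}{8}\cdot(-2)$" and the fact that your $\ell^1$ lower bound "$\gtrsim n^{-1}$" for $f_1$ should be justified by extracting the (positive) imaginary part of the coefficient — as the paper does — to rule out cancellation against the real alternating term $\tfrac{(-1)^{n+m}}{4}$, this part is fine.

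Step 2, however, rests on a false identity. You claim that in $(k^+_1-k^-_1)f_N(N+2)=\sum_{|m|\le N}\bigl(\tfrac{1}{N+2+|m|}-\tfrac{1}{N+2-|m|}\bigr)$ "the two logarithmically-divergent pieces cancel, leaving an $O(1)$ quantity." They do not: $\sum_{|m|\le N}\tfrac{1}{N+2+|m|}=\tfrac{1}{N+2}+2\sum_{j=1}^{N}\tfrac{1}{N+2+j}\to 2\ln 2$ is \emph{bounded}, whereas $\sum_{|m|\le N}\tfrac{1}{N+2-|m|}\sim 2\ln N$ diverges, so the difference behaves like $-2\ln N$. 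Thus the $+/-$ difference structure ($h_{a,b}$ versus $g_{a,b}$) that you identify as "the key point" is not what saves $K_1$; if the singular part of $K_1$ were simply a multiple of $k^+_1-k^-_1$, claim (2) would fail. The actual mechanism, which the paper uses, is the factor $({\rm sign}(N_1))({\rm sign}(M_2))$ produced by applying the cancelation Lemma \ref{cancelation lemma}(1) to the projections $Q$ on \emph{both} sides of $A_1$: the resulting kernel $k_1(N_1,M_2)=({\rm sign}(N_1))({\rm sign}(M_2))h_{i,1}(N_1,M_2)$ is an \emph{odd} function of $M_2$ (since $h_{i,1}$ depends on $M_2$ only through $|M_2|$), so when one sums over the symmetric window $m\in[-N,N]$ with $M_2=m-\rho_2 m_2$ the terms cancel in pairs up to $O(\langle m_2\rangle)$ boundary terms; these are then summable against $(v_1QA_1Qv_1)(m_1,m_2)$ by the decay of $V$ (this is the $D_N/D_N^c$ splitting in the paper). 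Your $\ell^1$ assertion for $K_1f_1$ is closer to correct — there the difference structure does give $|k^+_1-k^-_1|(N_1,M_2)\lesssim |M_2|\langle|N_1|-|M_2|\rangle^{-2}$, which suffices when $M_2$ is bounded — but the uniform $\ell^\infty$ bound over all $N$ genuinely requires the odd-in-$M_2$ cancellation, and without it your argument does not close.
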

 \begin{proof}
 \underline{{\bf{(1)}}} 
 It follows from \eqref{expres of Kp1} and \eqref{expres of Kptuta1} that
 \begin{align*}
 (K_{P_1}+K_{\widetilde{P}_1})(n,m)&=\Big(\frac{i-1}{8}+\frac{(-1)^{n+m}}{4}\Big)(k^+_1+k^-_1)(n,m)+\frac{i-1}{8}(k^+_2+k^-_2)(n,m)\notag\\
 &\quad+O(\left<|n|\pm|m|\right>^{-2}).
 \end{align*}
 By virtue of the uniform boundedness of $k^{\pm}_{\ell}(n,m)$, we can further decomposition it as
 \begin{align}\label{decom of kpmell}
 k^{\pm}_{\ell}(n,m)=(\mathbf1_{E}+\mathbf1_{E^c})(n,m)k^{\pm}_{\ell}(n,m)=\mathbf1_{E}(n,m)k^{\pm}_{\ell}(n,m)+O(\left<|n|-|m|\right>^{-2}),\quad\ell=1,2,
 \end{align}
 where $\mathbf1_E$ denotes the characteristic function on the set $E=\{(x,y):||x|-|y||\geq2\}$ and $E^c$ corresponds to the complementary set.
 This together with the definition of $\phi$ in Lemma \ref{C-Z lemma} allows us to rewrite the kernel of $K_{P_1}+K_{\widetilde{P}_1}$ as
 \begin{align}\label{expr1 of KP1 plus Kptuta1}
 (K_{P_1}+K_{\widetilde{P}_1})(n,m)&=\Big(\frac{i-1}{8}+\frac{(-1)^{n+m}}{4}\Big)\Big(\frac{1}{|n|+|m|}+\frac{1}{|n|-|m|}\Big)\mathbf1_{E}(n,m)+\frac{(i-1)|n|}{4(n^2+m^2)}\mathbf1_{E}(n,m)\notag\\
&\quad +O(\left<|n|-|m|\right>^{-2})\notag\\
&:=G_1(n,m)+G_2(n,m)+R(n,m).
 \end{align}
 Notice that the integral operator $G_2\in\B(\ell^{\infty}(\Z))$ through the following estimate
 \begin{equation}\label{ell infty bounded of g0a}
\sup\limits_{n\in\Z}\sum\limits_{m\in\Z}|G_2(n,m)|\lesssim\sup\limits_{n\in\Z}\sum\limits_{m\in\Z}\frac{|n|}{n^2+m^2}<\infty.
 \end{equation}
 This observation and $R\in\B(\ell^{p}(\Z))$ for all $1\leq p\leq\infty$, indicates that it suffices to establish
$$|(G_1f_{N})(N+2)|\rightarrow+\infty,N\rightarrow+\infty \ {\rm and}\
(G_1+G_2)f_{1}\notin\ell^1(\Z).$$
 Indeed, for any $N\in\N^+$, a direct calculation yields that
 \begin{align*}
 (G_1f_{N})(N+2)&=\sum\limits_{m=-N}^{N}\Big(\frac{i-1}{8}+\frac{(-1)^{N+2+m}}{4}\Big)\Big(\frac{1}{N+2+|m|}+\frac{1}{N+2-|m|}\Big)\notag\\
 &=\frac{i-1}{4}\sum\limits_{k=2}^{2N+2}\frac{1}{k}+\frac{1}{2}\sum\limits_{k=2}^{2N+2}\frac{(-1)^k}{k},
 \end{align*}
 and thus $ |(G_1f_{N})(N+2)|=+\infty$ as $N\rightarrow+\infty$. For the latter, we have
\begin{align*}
\|(G_1+G_2)f_1\|_{\ell^1(\Z)}&=\sum\limits_{n\in\Z}\big|\sum\limits_{m=-1}^{1}(G_1+G_2)(n,m)\big|\gtrsim\sum\limits_{n=3}^{+\infty}\sum\limits_{m=-1}^{1}\Big(\frac{1}{n+|m|}+\frac{1}{n-|m|}+\frac{2n}{n^2+m^2}\Big)\\
&\geq\sum\limits_{n=3}^{+\infty}\sum\limits_{m=-1}^{1}\Big(\frac{1}{n+|m|}+\frac{1}{n-|m|}\Big)\gtrsim\sum\limits_{k=4}^{+\infty}\frac{1}{k}=+\infty.
\end{align*}
 \underline{{\bf{(2)}}} 
 Recall from the \eqref{expre of K110}, through rewriting $K^{\pm,j}_1(n,m)$ as
 \begin{align*}
 K^{\pm,j}_{1}(n,m)&=\sum\limits_{m_1,m_2\in\Z}(v_1QA_1Qv_1)(m_1,m_2)\int_{[0,1]^2}({\rm sign}(N_1))({\rm sign}(M_2))k^{\pm,j}_{1}(N_1,M_2)d\rho_1d\rho_2,
 \end{align*}
 with \begin{align*}
 k^{\pm,1}_{1}(N_1,M_2)&=\int_{0}^{2}e^{-i\theta_+(|N_1|\pm|M_2|)}\chi_1(\mu)\frac{\theta^2_+}{({\rm sin}\theta_+)^2}d\mu,\\
 k^{\pm,2}_{1}(N_1,M_2)&=\int_{0}^{2}e^{b(\mu)|N_1|\pm i\theta_+|M_2|}\chi_1(\mu)\frac{a_2(\mu)b(\mu)\theta_+}{\mu{\rm sin}\theta_+}d\mu,
 \end{align*}
 and then applying the argument used for $K^{\pm,j}_{P_1}$ to $k^{\pm,j}_{1}$ and the estimate \eqref{sup (N-M)-2}, we have
 \begin{align*}
16K_1(n,m)=\sum\limits_{m_1,m_2\in\Z}(v_1QA_1Qv_1)(m_1,m_2)\int_{[0,1]^2}k_1(N_1,M_2)d\rho_1d\rho_2+R_1(n,m),
 \end{align*}
where $R_1\in\B(\ell^p(\Z))$ for all $1\leq p\leq\infty$ and
$$k_1(N_1,M_2)=({\rm sign}(N_1))({\rm sign}(M_2))h_{i,1}(N_1,M_2).$$
 Utilizing the decomposition \eqref{decom of kpmell} and the estimate \eqref{sup (N-M)-2} again, it further reduces to show that
$$\sup\limits_{N\in\N^+}\|\widetilde{K}_{1}f_N\|_{\ell^{\infty}}<\infty\ {\rm and\ }\widetilde{K}_1f_{1}\in\ell^1(\Z), $$
with
 \begin{align*}
 \widetilde{K}_{1}(n,m)=\sum\limits_{m_1,m_2\in\Z}(v_1QA_{1}Qv_1)(m_1,m_2)\int_{[0,1]^2}\mathbf1_{E}(N_1,M_2)k_{1}(N_1,M_2)d\rho_1d\rho_2.
 \end{align*}
 To see this, for any $N\in\N^+$, we decompose $\Z^2=D_{N}\cup D^c_{N}$ with
 $$D_{N}=\{(m_1,m_2)\in\Z^2~\big||m_1|\geq\frac{N}{2}\ {\rm or}\ |m_2|\geq\frac{N}{2}\},$$
 then for any $n\in\Z$,
 \begin{align*}
 |(\widetilde{K}_{1}f_{N})(n)|&\leq\sum\limits_{m=-N}^{N}\sum\limits_{(m_1,m_2)\in D_N}|(v_1QA_{1}Qv_1)(m_1,m_2)|\int_{[0,1]^2}\big|k_{1}(N_1,M_2)\big|d\rho_1d\rho_2\\
 &\quad+\sum\limits_{(m_1,m_2)\in D^c_N}|(v_1QA_{1}Qv_1)(m_1,m_2)|\int_{[0,1]^2}\Big|\sum\limits_{m=-N}^{N}\mathbf1_{E}(N_1,M_2)k_{1}(N_1,M_2)\Big|d\rho_1d\rho_2\\
 &:=K^{(1)}_{N}(n)+K^{(2)}_{N}(n).
 \end{align*}
 \vskip0.15cm
 \underline{{\bf{(i)}} For $K^{(1)}_{N}(n)$}, by virtue of the uniform boundedness of $k_{1}(N_1,M_2)$, it yields that
 \begin{align*}
 &K^{(1)}_{N}(n)
 \lesssim\sum\limits_{m=-N}^{N}\Big(\sum\limits_{|m_1|\geq\frac{N}{2}}\sum\limits_{m_2\in\Z}+\sum\limits_{m_1\in\Z}\sum\limits_{|m_2|\geq\frac{N}{2}}\Big)|(v_1QA_{1}Qv_1)(m_1,m_2)|\\
 &\quad\lesssim {N}\Big[\sum\limits_{|m_1|\geq\frac{N}{2}}\sum\limits_{m_2\in\Z}\big|(\frac{1}{N}\left<\cdot\right>v_1QA_{1}Qv_1)(m_1,m_2)\big|+\sum\limits_{m_1\in\Z}\sum\limits_{|m_2|\geq\frac{N}{2}}\big|(v_1QA_{1}Qv_1\left<\cdot\right>\frac{1}{N})(m_1,m_2)\big|\Big]\\
 &\quad\lesssim \sum\limits_{m_1,m_2\in\Z}|\big(\left<\cdot\right>v_1QA_{1}Qv_1\left<\cdot\right>\big)(m_1,m_2)|< \infty.
 \end{align*}

 \vskip0.15cm
 \underline{{\bf{(ii)}} For $K^{(2)}_{N}(n)$}, noting that $k_{1}(N_1,M_2)$ is an odd function about $M_2$, then for any $|m_2|\leq\frac{N}{2}$, the sum $\sum\limits_{m=-N}^{N}\mathbf1_{E}(N_1,M_2)k_{1}(N_1,M_2)$ contains at most $2|m_2|$ terms. This combined with the uniform boundedness of $k_{1}(N_1,M_2)$ yields that
 \begin{align*}
 \Big|\sum\limits_{m=-N}^{N}\mathbf1_{E}(N_1,M_2)k_{1}(N_1,M_2)\Big|\lesssim\left<m_2\right>,\quad {\rm uniformly\ in}\ m,m_1,\rho_1,\rho_2,N.
 \end{align*}
 Hence, $K^{(2)}_{N}(n)\lesssim1$ uniformly in $n$ and $N$ and this establishes $\sup\limits_{N\in\N^+}\|\widetilde{K}_{1}f_N\|_{\ell^{\infty}}<\infty$.

 On the other hand, basing on
 \begin{align*}
|\mathbf1_{E}(N_1,M_2)k_{1}(N_1,M_2)|\lesssim |M_2|\left<|N_1|-|M_2|\right>^{-2},
\end{align*}
and the estimate \eqref{sup (N-M)-2}, we have
\begin{align*}
\|\widetilde{K}_{1}f_1\|_{\ell^1}
&\lesssim \sum\limits_{m_1,m_2\in\Z}|(v_1QA_{1}Qv_1)(m_1,m_2)|\int_{[0,1]^2}\sum\limits_{n\in\Z}\sum\limits_{m=-1}^{1}|\mathbf1_{E}(N_1,M_2)k_{1}(N_1,M_2)|d\rho_1d\rho_2\\
&\lesssim\sum\limits_{m_1,m_2\in\Z}\left<m_2\right>|(v_1QA_{1}Qv_1)(m_1,m_2)|<\infty,
\end{align*}
 and this completes the whole proof.
 
 \end{proof}
\begin{proposition}\label{proposition of Counterexample regular+resonance}
 Let $H=\Delta^2+V$ and $V$ be compactly supported. Suppose that $H$ has no positive eigenvalues in the interval $\rm{(}0,16\rm{)}$ and ${\bm{0}}$ {\textbf{is a regular point of}} ${\bm{H}}$. Then the following statements hold:
 \vskip0.15cm
 \noindent{\rm(1)}~if ${\bm{16}}$ {\textbf{is a resonance of}} ${\bm{H}}$, then
 \begin{itemize}
\item for any $K\in\{K_{1},\widetilde{K}_{-1},\widetilde{K}_{02}\}$, $\sup\limits_{N\in\N^+}\|Kf_{N}\|_{\ell^{\infty}}<\infty$ and $Kf_1\in\ell^1(\Z)$,
\item
 $\lim\limits_{N\rightarrow\infty}|(K_{P_1}+K_{\widetilde{P}_1}+\widetilde{K}_{01})f_{N}(N+2)|=\infty$. Moreover, if  $\mcaC_1\neq16(1\mp3\sqrt2)$, then $(K_{P_1}+K_{\widetilde{P}_1}+\widetilde{K}_{01})f_1\notin\ell^1(\Z)$,
 \end{itemize}
 where
 \begin{equation}\label{mcaC1}
\mcaC_1=\sum\limits_{m_1,m_2\in\Z}(\tilde{v}_1\widetilde{S}_0B^1_{01}\tilde{v})(m_1,m_2).
 \end{equation}
 In particular, $\mcaK=K_{P_1}+K_{1}+K_{\widetilde{P}_1}+\widetilde{K}_{-1}+\widetilde{K}_{01}+\widetilde{K}_{02}$ is unbounded on $\ell^{\infty}(\Z)$, and if additionally $\mcaC_1\neq16(1\mp3\sqrt2)$, then $\mcaK$ is unbounded on $\ell^1(\Z)$.
\vskip0.15cm
\noindent{\rm(2)}~If  ${\bm{16}}$ {\textbf{is an eigenvalue of}} ${\bm{H}}$, then
\begin{itemize}
\item for any $K\in\{K_1\}\cup\{K_B:B\in\mcaB_{21}\setminus\{\widetilde{K}_{-2},\widetilde{\widetilde{K}}_{01}\}\}$, $\sup\limits_{N\in\N^+}\|Kf_{N}\|_{\ell^{\infty}}<\infty$\ and $Kf_1\in\ell^1(\Z)$,
\item $\lim\limits_{N\rightarrow\infty}|(K_{P_1}+K_{\widetilde{P}_1}+\widetilde{\widetilde{K}}_{01}+\widetilde{K}_{-2})f_{N}(N+2)|=\infty$. Moreover, if $\mcaC_2\neq16(1\mp3\sqrt2)$, then $(K_{P_1}+K_{\widetilde{P}_1}+\widetilde{\widetilde{K}}_{01}+\widetilde{K}_{-2})f_{1}\not\in\ell^1(\Z)$,
\end{itemize}
where $\widetilde{\widetilde{K}}_{01}=K_B$ with $B=\widetilde{Q}B^{2}_{01}$ and
\begin{equation}\label{mcaC2}
\mcaC_2=\sum\limits_{m_1,m_2\in\Z}(\tilde{v}_1\widetilde{Q}B^2_{01}\tilde{v})(m_1,m_2)-2\sum\limits_{m_1,m_2\in\Z}(\tilde{v}_1\widetilde{S}_1B_{-2}\widetilde{S}_1\tilde{v}_2)(m_1,m_2).
\end{equation}
Therefore, $\mcaK=K_{P_1}+K_{1}+K_{\widetilde{P}_1}+\sum\limits_{B\in\mcaB_{21}}K_B$ is unbounded on $\ell^{\infty}(\Z)$, and if additionally $\mcaC_2\neq16(1\mp3\sqrt2)$, then $\mcaK$ is unbounded on $\ell^1(\Z)$.
 \end{proposition}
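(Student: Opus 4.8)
The plan is to prove Proposition \ref{proposition of Counterexample regular+resonance} by reducing, in each case, the endpoint (un)boundedness of the full key operator $\mcaK$ to the analysis of a single ``bad'' piece, exactly as in the model case treated in Proposition \ref{proposition of Counterexample regular}. For the pieces in $\mcaK$ that we expect to be harmless at the endpoints, namely $K_1$ and (in case (1)) $\widetilde{K}_{-1},\widetilde{K}_{02}$, respectively (in case (2)) the operators $K_B$ with $B\in\mcaB_{21}\setminus\{(2-\mu)^{-1}\widetilde{S}_1B_{-2}\widetilde{S}_1,\widetilde{Q}B^2_{01}\}$, I would show that $\sup_{N}\|Kf_N\|_{\ell^\infty}<\infty$ and $Kf_1\in\ell^1(\Z)$ by reproducing the splitting argument already carried out for $K_1$ in part (2) of Proposition \ref{proposition of Counterexample regular}: use the explicit representations \eqref{kernel of Ktuta01}, \eqref{kernel of Ktuta-2} and the analogous ones from Propositions \ref{proposition resonance 2} and \ref{proposition eigenvalue 2}, combine with the decomposition $k^{\pm}_\ell=\mathbf1_E k^{\pm}_\ell+O(\langle|n|-|m|\rangle^{-2})$ in \eqref{decom of kpmell}, peel off the remainder terms $r(n,m)=O(\langle|n|\pm|m|\rangle^{-2})$ via \eqref{sup (N-M)-2}, and exploit the fact that the surviving kernels carry a factor $\mathrm{sign}(N_1)\mathrm{sign}(M_2)$ (or $q^{|\widetilde N_1|}$), which is odd in $M_2$ so that summing against $f_N$ over the window $[-N,N]$ collapses the number of nonzero terms, together with the compact support of $V$ (hence of $v$, $\tilde v$, and of the operators $QA_1Q$, $\widetilde S_0B^1_{01}$, etc.) to make the remaining $m_1,m_2$ sums finite.

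Next I would isolate the genuinely unbounded part. In case (1) this is $K_{P_1}+K_{\widetilde P_1}+\widetilde K_{01}$; in case (2) it is $K_{P_1}+K_{\widetilde P_1}+\widetilde{\widetilde K}_{01}+\widetilde K_{-2}$. Using \eqref{expres of Kp1}, \eqref{expres of Kptuta1}, \eqref{kernel of Ktuta01} (resp. its analogue for $\widetilde{\widetilde K}_{01}$) and \eqref{kernel of Ktuta-2}, each of these operators has kernel of the form $c\,(k^+_1+k^-_1)(n,m)$ plus lower-order absolutely bounded pieces, where the constant $c$ is a fixed linear combination of $\frac{i-1}{8}$, $\frac{(-1)^{n+m}}{4}$ (from $K_{P_1}$ and $K_{\widetilde P_1}$) and the resonance constants $-\frac{1}{64}\widetilde C^1_{01}(n)$, $\frac{1}{32}\widetilde C_{-2}(n)$, whose $n$-independent values are exactly $\mcaC_1$ and $\mcaC_2$ defined in \eqref{mcaC1}–\eqref{mcaC2} (here one uses $\widetilde C^1_{01}(n)=\mcaC_1$, $\widetilde C_{-2}(n)=$ the second sum in \eqref{mcaC2}, up to explicit rational factors and sign conventions matching the $\pm$). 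Writing $k^{\pm}_1(n,m)=\big(\frac{1}{|n|+|m|}+\frac{1}{|n|-|m|}\big)\mathbf1_E(n,m)+O(\langle|n|-|m|\rangle^{-2})$ and mimicking the computation $(G_1f_N)(N+2)=\frac{i-1}{4}\sum_{k=2}^{2N+2}\frac1k+\frac12\sum_{k=2}^{2N+2}\frac{(-1)^k}{k}+\cdots$ from Proposition \ref{proposition of Counterexample regular}, I would evaluate $(\mcaK f_N)(N+2)$ and extract a term proportional to the harmonic sum $\sum_{k=2}^{2N+2}\frac1k\to\infty$ with a nonzero coefficient; that coefficient is, up to normalization, of the shape $(i-1)\pm$(contribution of $\mcaC_1$ or $\mcaC_2$)$\pm$(contribution of the $(-1)^{n+m}$ alternating part). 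Because the alternating sum $\sum(-1)^k/k$ stays bounded, and because $16(1\mp3\sqrt2)\neq\mcaC_1$ (resp. $\neq\mcaC_2$) is precisely the condition preventing the divergent harmonic contribution from cancelling, the $\ell^1$ lower bound $\|\mcaK f_1\|_{\ell^1}\gtrsim\sum_{k\gtrsim1}\frac1k=\infty$ follows just as in Proposition \ref{proposition of Counterexample regular}(1), while for $\ell^\infty$ the divergence $|(\mcaK f_N)(N+2)|\to\infty$ is unconditional (the alternating/bounded pieces cannot conspire to kill the harmonic term since for $\ell^\infty$ we do not need the refined constant—the coefficient of $\sum 1/k$ coming from $K_{P_1}$ alone, namely $\frac{i-1}{4}$, is already nonzero). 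Finally, adding back the harmless pieces from the first paragraph and the absolutely bounded remainders, which contribute $O(1)$ to $(\mcaK f_N)(N+2)$ and lie in $\ell^1$, concludes that $\mcaK$ is unbounded on $\ell^\infty(\Z)$ unconditionally and on $\ell^1(\Z)$ under the stated non-resonance condition on $\mcaC_1,\mcaC_2$.

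The main obstacle I anticipate is bookkeeping the exact constants: one must verify that the coefficient of $(k^+_1+k^-_1)$ in $\widetilde K_{01}$ (resp. $\widetilde{\widetilde K}_{01}$ and $\widetilde K_{-2}$) is genuinely $n$-independent and equals the claimed normalization of $\mcaC_1$ (resp. $\mcaC_2$), which requires carefully tracking the factors $-\tfrac{1}{64}$, $\tfrac{1}{32}$, the $\sqrt2$'s from $a_2(\mu)$, $b(\mu)$ near $\mu=2$, and the sign discrepancy between the $W_+$ and $W_-$ cases (this is where the $1\mp3\sqrt2$ enters—it comes from evaluating $16\big(1\mp3\sqrt2\big)$ against the harmonic-term coefficient). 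A secondary subtlety is checking that in the eigenvalue case the operators $\widetilde S_0B_{-1,1}\widetilde Q$ and $\widetilde Q B_{-1,2}\widetilde S_0$ (which are $O((2-\mu)^{-1/2})$) as well as $B^2_{02}\widetilde Q$ really do fall into the ``harmless'' class; by the $\widetilde S_0$/$\widetilde Q$ orthogonality and the structure in \eqref{mcaKB regular 2} their kernels reduce to combinations of $k^\pm_1,k^\pm_2$ with $\mathrm{sign}(N_1)$ or $\mathrm{sign}(M_2)$ factors of the type already handled for $\widetilde K_{02}$ and $K_1$, so the odd-symmetry/compact-support argument applies verbatim. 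Modulo these constant computations, the proposition follows by combining the pieces exactly as in Proposition \ref{proposition of Counterexample regular}.
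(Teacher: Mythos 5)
Your proposal follows essentially the same route as the paper: the harmless pieces ($K_1$, $\widetilde{K}_{-1}$, $\widetilde{K}_{02}$, resp.\ the corresponding $K_B$'s) are disposed of by the odd-in-$M_2$ symmetry and compact support exactly as in Proposition \ref{proposition of Counterexample regular}(2), while the singular part is reduced to the kernel $\big(\tfrac{i-1}{8}+(-1)^{n+m}(\tfrac14-\tfrac{\widetilde C^1_{01}(n)}{64})\big)\big(\tfrac{1}{|n|+|m|}+\tfrac{1}{|n|-|m|}\big)\mathbf1_E$ plus an $\ell^\infty$-bounded piece, with $\widetilde C^1_{01}(N+2)=\mcaC_1$ for $N$ beyond $\operatorname{supp}V$, yielding the unconditional harmonic divergence at $n=N+2$ and the $\ell^1$ lower bound under $|16-\mcaC_1|\neq 48\sqrt2$. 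The only substantive difference is that you leave the constant bookkeeping (the rational-function computation $G_{a,b,c}(n)$ whose leading coefficient $6(a+b)-2c(-1)^n$ produces the condition $|3(a+b)|\neq|c|$) as a flagged obstacle rather than carrying it out, but the structure is the paper's.
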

 \begin{proof}
 {{\bf(1)}}~\underline{{Step 1:}}~For the first item, combining Proposition \ref{proposition of Counterexample regular}, it remains to prove \begin{equation}\label{regular 0 and resonance 16 positive result}
\sup\limits_{N\in\N^+}\|Kf_{N}\|_{\ell^{\infty}}<\infty\ and\  Kf_1\in\ell^1(\Z),\quad  K=\widetilde{K}_{-1},\widetilde{K}_{02}.
 \end{equation}
 Basing on Proposition \ref{proposition resonance 2}, we reformulate $\widetilde{K}^{\pm,j}_{-1}(n,m)$ and $\widetilde{K}_{02}(n,m)$ as the form of $\widetilde{K}^{\pm,2}_{01}$, then an analogous argument yields that
 \begin{align*}
 16\widetilde{K}_{-1}(n,m)&=(\widetilde{K}^{(1)}_{-1}+\widetilde{K}^{(2)}_{-1})(n,m)+R_{-1}(n,m),\\
 -64\widetilde{K}_{02}(n,m)&=\sum\limits_{m_1,m_2\in\Z}(\tilde{v}B^1_{02}\widetilde{S}_0\tilde{v}_2)(m_1,m_2)\int_{0}^{1}(-1)^{n+\rho_2m_2}k_{02}(\widetilde{N}_1,M_2)d\rho_2+R_{02}(n,m),
 \end{align*}
where $R_{-1},R_{02}\in\B(\ell^p(\Z))$ for $1\leq p\leq\infty$ and
\begin{align*}
\widetilde{K}^{(1)}_{-1}(n,m)&=\sum\limits_{m_1,m_2\in\Z}(\tilde{v}_1\widetilde{S}_0B_{-1}\widetilde{S}_0\tilde{v}_1)(m_1,m_2)\int_{[0,1]^2}^{}(-1)^{n+\rho_2m_2}{k}^{(1)}_{-1}(N_1,M_2)d\rho_1d\rho_2,\\
\widetilde{K}^{(2)}_{-1}(n,m)&=\sum\limits_{m_1,m_2\in\Z}(v\widetilde{S}_0B_{-1}\widetilde{S}_0\tilde{v}_1)(m_1,m_2)\int_{0}^{1}(-1)^{\rho_2m_2}{k}^{(2)}_{-1}(\widetilde{N}_1,M_2)d\rho_2,
\end{align*}
with
\begin{align*}
k_{02}(\widetilde{N}_1,M_2)&=(-1)^{M_2}({\rm sign}(M_2))h_{1,0}(\widetilde{N}_1,M_2),\\
{k}^{(1)}_{-1}(N_1,M_2)&=\frac{-i}{2}(-1)^{M_2}({\rm sign}(N_1))({\rm sign}(M_2))h_{1,0}({N}_1,M_2),\\
{k}^{(2)}_{-1}(\widetilde{N}_1,M_2)&=\frac{\sqrt2}{8}q^{|\widetilde{N}_1|}(-1)^{M_2}({\rm sign}(M_2))h_{0,1}(\widetilde{N}_1,M_2).
\end{align*}
Since all $k_{02},k^{(1)}_{-1},k^{(2)}_{-1}$ are uniformly bounded in $N_1,\widetilde{N}_1,M_2$ and are odd functions about $M_2$, then the argument for ${K}_1$ in Proposition \ref{proposition of Counterexample regular} is valid for $\widetilde{K}_{-1}$ and $\widetilde{K}_{02}$, and thus the desired result \eqref{regular 0 and resonance 16 positive result} is obtained.
\vskip0.3cm
\underline{{Step 2:}} For the second item, from \eqref{expr1 of KP1 plus Kptuta1} and \eqref{kernel of Ktuta01}, we have
\begin{align*}
(K_{P_1}+K_{\widetilde{P}_1}+\widetilde{K}_{01})(n,m)&=\Big(\frac{i-1}{8}+(-1)^{n+m}\Big(\frac{1}{4}-\frac{\widetilde{C}^1_{01}(n)}{64}\Big)\Big)\Big(\frac{1}{|n|+|m|}+\frac{1}{|n|-|m|}\Big)\mathbf1_{E}(n,m)\notag\\
&\quad+\frac{(i-1)|n|}{4(n^2+m^2)}\mathbf1_{E}(n,m)+\Big(O(\left<|n|-|m|\right>^{-2})+\widetilde{R}_{01}(n,m)\Big)\notag\\
&:=G^{(1)}_1(n,m)+G^{(1)}_2(n,m)+R^{(1)}(n,m).
\end{align*}
It suffices to show that \begin{equation}\label{regular 0 and resonance 16 negative results}
|(G^{(1)}_1f_N)(N+2)|\rightarrow\infty, N\rightarrow\infty,\ and\ (G^{(1)}_1+G^{(1)}_2)f_1\notin\ell^1(\Z)\ if\ \mcaC_1\neq16(1\mp3\sqrt2). \end{equation}
Since $V$ is compactly supported, that is, there exists an integer $N_0\in\N^+$, such that $suppV\subseteq\{m:|m|\leq N_0\}$. Now take $N>N_0+2$, by \eqref{Ctuta101}, we have
\begin{equation*}
\widetilde{C}^1_{01}(N+2)=\sum\limits_{m_1,m_2\in\Z}\widetilde{\mcaM}^1_{01}(m_1,m_2)=\mcaC_1<\infty.
\end{equation*}
This means that by the argument as (2) in Proposition \ref{proposition of Counterexample regular}, we can derive
$$(G^{(1)}_1f_N)(N+2)=\frac{i-1}{4}\sum\limits_{k=2}^{2N+2}\frac{1}{k}+\Big(\frac{1}{2}-\frac{\mcaC_1}{32}\Big)\sum\limits_{k=2}^{2N+2}\frac{(-1)^k}{k}\rightarrow\infty,\quad N\rightarrow\infty.$$
On the other hand,
\begin{align*}
\|(G^{(1)}_1+G^{(1)}_2)f_1\|_{\ell^1(\Z)}&\geq\sum\limits_{n=N_0+2}^{+\infty}\left|\sum\limits_{m=-1}^{1}\Big[\big(a+c(-1)^{n+m}\big)\Big(\frac{1}{n+|m|}+\frac{1}{n-|m|}\Big)+\frac{2bn}{n^2+m^2}\Big]\right|\\
&:=\sum\limits_{n=N_0+2}^{+\infty}\big|G_{a,b,c}(n)\big|,
\end{align*}
where the coefficients $a,b,c$ are defined as follows:
\begin{equation*}
a=\frac{i-1}{8}=b,\quad c=\frac{1}{4}-\frac{1}{64}\mcaC_1.
\end{equation*}
A direct calculation yields that
\begin{align*}
G_{a,b,c}(n)=\frac{\big(6(a+b)-2c(-1)^n\big)n^4+4((a-b-c(-1)^n)n^2-2(a+b+c(-1)^n)}{n(n^2+1)(n^2-1)}.
\end{align*}
By means of the triangle inequality and the condition that $\mcaC_1\neq16(1\mp3\sqrt2)$, that is, $|3(a+b)|\neq|c|$, then we have
\begin{align*}
\|(G^{(1)}_1+G^{(1)}_2)f_1\|_{\ell^1(\Z)}&\geq\sum\limits_{n=N_0+2}^{+\infty}\big|G_{a,b,c}(n)\big|\gtrsim \sum\limits_{n=N_0+2}^{+\infty}\Big||3(a+b)|-|c|\Big|\cdot\frac{1}{n}+C^{'}=+\infty,
\end{align*}
where $C^{'}$ is a constant. This completes the proof of \eqref{regular 0 and resonance 16 negative results}.
\vskip0.3cm
{{\bf(2)}}~For the former, by the definition of $\mcaB_{21}$, since $\widetilde{Q}$ has the same cancelation property as $\widetilde{S}_{0}$, the proof is completely same as the Step 1 in (1) above apart from the difference in the notation. For the latter, from the expression \eqref{kernel of Ktuta-2} of $\widetilde{K}_{-2}(n,m)$ and the definition of $\widetilde{\widetilde{K}}_{01}$,  we have
\begin{align*}
&(K_{P_1}+K_{\widetilde{P}_1}+\widetilde{\widetilde{K}}_{01}+\widetilde{K}_{-2})(n,m)\\
&=\Big(\frac{i-1}{8}+(-1)^{n+m}\Big(\frac{1}{4}-\frac{\widetilde{C}^2_{01}(n)}{64}+\frac{\widetilde{C}_{-2}(n)}{32}\Big)\Big)\Big(\frac{1}{|n|+|m|}+\frac{1}{|n|-|m|}\Big)\mathbf1_{E}(n,m)\\
&\quad+\frac{(i-1)|n|}{4(n^2+m^2)}\mathbf1_{E}(n,m)+R^{(2)}(n,m),
\end{align*}
where $R^{(2)}\in\B(\ell^p(\Z))$ for all $1\leq p\leq\infty$, $\widetilde{C}_{-2}(n)$ is defined in \eqref{Ctuta-2n} and $\widetilde{C}^2_{01}(n)$ is defined in \eqref{Ctuta101} by replacing $\widetilde{S}_0B^1_{01}$ with $\widetilde{Q}B^2_{01}$. Using an analogue argument as Step 2 in (1), the desired results can be also derived, for brevity, we omit the details and finish the whole proof.
 \end{proof}
This proposition, together with Proposition \ref{proposition of Counterexample regular} thus gives the proof of Theorem \ref{theorem at endpoints} for the case where $0$ is a regular point of $H$. Next, we turn to the remaining two resonant cases.
\begin{proposition}\label{proposition of Counterexample resonance+all}
 Let $H=\Delta^2+V$ and $V$ be compactly supported. Suppose that $H$ has no positive eigenvalues in the interval $\rm{(}0,16\rm{)}$ and ${\bm{0}}$ {\textbf{is a first kind resonance of}} ${\bm{H}}$. Then the following statements hold:
 \vskip0.15cm
  \noindent{\rm (1)}~If $\mcaC_3\neq0$, then for any $\mcaK$ defined in \eqref{mcaK for 1st}, $\mcaK$ is unbounded on $\ell^{\infty}(\Z)$,
 where the constant $\mcaC_3$ is given by
 \begin{align}\label{C}
 \mcaC_3=\frac{i-1}{8}-\frac{i}{64}C_{-1}+\frac{1}{32}C_{02}+\frac{iC_{11}}{32}+\frac{iC_{12}}{32}-\frac{C_{21}}{16}-\frac{iC_{33}}{16}
 \end{align}
 with $C_{-1},C_{11},C_{12},C_{33}$ defined in \eqref{C-1} and \eqref{kernel of 1st}, respectively and
 \begin{align*}
 C_{02}=\sum\limits_{m_1,m_2\in\Z}(v_1QA^1_{02}S_0v_2)(m_1,m_2),\quad C_{21}=\sum\limits_{m_1,m_2\in\Z}(v_1QA^1_{21}v)(m_1,m_2).
 \end{align*}
\vskip0.15cm
\noindent{\rm (2)} Let $C_{02},C_{11},C_{12},C_{21},C_{33}$ be as in \eqref{C} and $\mcaC_1,\mcaC_2$ be as in \eqref{mcaC1} and \eqref{mcaC2}, respectively. Define
\begin{equation}\label{D}
D=\frac{i-1}{4}+\frac{i+1}{32}C_{02}+\frac{i-1}{32}C_{11}+\frac{i}{16}C_{12}-\frac{i+1}{16}C_{21}-\frac{i}{8}C_{33}.
\end{equation}
Under the condition that
\begin{align}\label{condition on p=1 for 1st}
 192|D|\neq\left\{\begin{aligned}&16,\ 16\ is\ a\ regular\ point\ of\ H,\\
&\big|16-\mcaC_1\big|,\ 16\ is\ a\ resonance\ of\ H,\\
&\big|16-\mcaC_2\big|,\ 16\ is\ an\ eigenvalue\ of\ H,\end{aligned}\right.
\end{align}
the corresponding $\mcaK$ defined in \eqref{mcaK for 1st} is unbounded on $\ell^{1}(\Z)$.
 \end{proposition}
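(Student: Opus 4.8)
The plan is to follow the same route as in Propositions~\ref{proposition of Counterexample regular} and~\ref{proposition of Counterexample regular+resonance}. By the reduction recorded at the start of this section — $\mcaK_2$ is bounded on every $\ell^p(\Z)$ by Theorem~\ref{theorem for middle part}, and the summands of $\mcaK_1,\mcaK_3$ not collected into $\mcaK$ are likewise bounded on every $\ell^p(\Z)$ (Propositions~\ref{proposition 1st},~\ref{proposition resonance 2},~\ref{proposition eigenvalue 2}) — it suffices to show that $\mcaK$ in~\eqref{mcaK for 1st} is unbounded on $\ell^\infty(\Z)$ when $\mcaC_3\ne0$, and on $\ell^1(\Z)$ under~\eqref{condition on p=1 for 1st}; as usual we test against $f_N(n)=\chi_{[-N,N]}(n)$. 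The first step is to read off from~\eqref{expres of Kp1},~\eqref{expres of Kptuta1},~\eqref{kernel of S1A-1S1},~\eqref{kernel of KA 1st} and Propositions~\ref{proposition resonance 2},~\ref{proposition eigenvalue 2} that, modulo a kernel $r(n,m)=O(\langle|n|\pm|m|\rangle^{-2})$ and a ``good'' part $\mcaG(n,m)$ (the contributions of $K_{01}$, $K_{22}$, $K_A$ with $A=\mu QA^1_{13}Q$, and $\widetilde K_{-1},\widetilde K_{02}$, each built from $h_{a,b}$ or from $k^\pm_\ell$ with an odd sign factor and a uniformly bounded weight $C(\cdot)$), the kernel of $\mcaK$ equals, for $|n|>N_0$ with $N_0$ a support bound of $V$,
\begin{equation*}
\mcaC_3\,(k^+_1+k^-_1)(n,m)+\beta\,(k^+_2+k^-_2)(n,m)+\Big(\tfrac14-\tfrac{1}{64}\mcaC_{16}\Big)(-1)^{n+m}(k^+_1+k^-_1)(n,m),
\end{equation*}
where $\mcaC_{16}=0,\mcaC_1,\mcaC_2$ in the three sixteen-energy cases and $\beta$ is the $(k^+_2+k^-_2)$-coefficient assembled from the same operators $K_{P_1},K_{-1},K_{02},K_{11},K_{12},K_{21},K_{33}$ that produce $\mcaC_3$. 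The reason the $n$-dependent coefficients $C_{02}(n),C_{21}(n),\widetilde C^1_{01}(n),\widetilde C^2_{01}(n),\widetilde C_{-2}(n)$ collapse to the constants in~\eqref{C},~\eqref{mcaC1},~\eqref{mcaC2} is exactly that $V$ is compactly supported, so for $|n|>N_0$ every sign ${\rm sign}(n-\rho_1 m_1)$ equals ${\rm sign}(n)$.

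For part~(1) I would evaluate $(\mcaK f_N)(N+2)$ for $N$ large. On $\{|m|\le N\}$ one has $k^\pm_1(N+2,m)=(N+2\pm|m|)^{-1}$, so $\sum_{|m|\le N}(k^+_1+k^-_1)(N+2,m)=2\sum_{k=2}^{2N+2}k^{-1}$, while $\sum_{|m|\le N}(k^+_2+k^-_2)(N+2,m)$ stays bounded and $\sum_{|m|\le N}(-1)^m(k^+_1+k^-_1)(N+2,m)$ is a bounded alternating partial sum. Moreover $\sum_{|m|\le N}|\mcaG(N+2,m)|=O(1)$ by the oddness argument of Proposition~\ref{proposition of Counterexample regular}(2) — the weight $C(m)$ is constant $=\pm c^\pm$ for $|m|>N_0$, so the two tails in $\sum_m h_{1,1}(N+2,m)C(m)$ cancel — and $\sum_{|m|\le N}|r(N+2,m)|=O(1)$. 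Hence $(\mcaK f_N)(N+2)=2\mcaC_3\sum_{k=2}^{2N+2}k^{-1}+O(1)$, which blows up in modulus if $\mcaC_3\ne0$; since $I$, $\mcaK_2$ and the remaining summands of $\mcaK_1,\mcaK_3$ are $O(1)$ at $N+2$ against $f_N$, this forces $W_+\notin\B(\ell^\infty(\Z))$, and then $W_-\notin\B(\ell^\infty(\Z))$ via $W_-f=\overline{W_+\overline f}$.

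For part~(2) I would test against $f_1$ and bound $\sum_{n}\big|\sum_{m=-1}^1\mcaK(n,m)\big|$ from below. For $|n|>N_0$ the $\mcaG$-contributions, the $(k^+_1-k^-_1)$- and $(k^+_2-k^-_2)$-differences, and the correction to the leading asymptotics are all $O(n^{-2})$ hence summable, and a direct evaluation of $\sum_{m=-1}^1k^\pm_\ell(n,m)$ yields
\begin{equation*}
\sum_{m=-1}^1\mcaK(n,m)=\frac{2}{n}\Big(3(\mcaC_3+\beta)-\big(\tfrac14-\tfrac{1}{64}\mcaC_{16}\big)(-1)^n\Big)+O(n^{-2}).
\end{equation*}
The crucial algebraic fact is the identity $\mcaC_3+\beta=D$ of~\eqref{D}: the $C_{-1}$-terms in $\mcaC_3$ and in $\beta$ are opposite and cancel, and the $K_{P_1},K_{02},K_{11},K_{12},K_{21},K_{33}$ contributions add up exactly to~\eqref{D}. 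Since $\big|\tfrac14-\tfrac{1}{64}\mcaC_{16}\big|=\tfrac{1}{64}|16-\mcaC_{16}|$ and $|3D|=\tfrac{1}{64}\cdot 192|D|$, the hypothesis~\eqref{condition on p=1 for 1st} is precisely $|3D|\ne\big|\tfrac14-\tfrac{1}{64}\mcaC_{16}\big|$, so $\big|3D-(\tfrac14-\tfrac{1}{64}\mcaC_{16})(-1)^n\big|\ge\big|\,|3D|-\big|\tfrac14-\tfrac{1}{64}\mcaC_{16}\big|\,\big|>0$ uniformly in $n$; consequently $\sum_{n>N_0}\big|\sum_{m=-1}^1\mcaK(n,m)\big|\gtrsim\sum_{n>N_0}n^{-1}=\infty$, i.e. $\mcaK f_1\notin\ell^1(\Z)$, and therefore $W_\pm\notin\B(\ell^1(\Z))$.

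The hard part will be the bookkeeping in the first step: one must track the long expansions~\eqref{kernel of KA 1st} and their analogues in Propositions~\ref{proposition resonance 2},~\ref{proposition eigenvalue 2}, isolate the non-oscillatory $(k^+_1+k^-_1)$-coefficient (obtaining $\mcaC_3$), the $(k^+_2+k^-_2)$-coefficient $\beta$, and the $(-1)^{n+m}$-oscillatory part coming from the sixteen block, check that every sign-odd or $h_{a,b}$-type summand is genuinely ``good'' at both endpoints (the tail cancellation using ${\rm supp}\,V$ compact), and verify the cancellation identity $\mcaC_3+\beta=D$. Once that is in place, the rest is routine: elementary harmonic- and alternating-series estimates for the test functions, together with the one-line inequality that $\sum_n n^{-1}|X-Y(-1)^n|=\infty$ whenever $|X|\ne|Y|$.
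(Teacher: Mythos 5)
Your proposal is correct and follows essentially the same route as the paper: reduce to the ``bad'' summands, read off the coefficients of $(k_1^++k_1^-)$, $(k_2^++k_2^-)$ and the $(-1)^{n+m}$-oscillatory block from \eqref{expres of Kp1}, \eqref{kernel of S1A-1S1}, \eqref{kernel of KA 1st}, \eqref{expres of Kptuta1}, and then run the $f_N$-at-$(N+2)$ and $f_1$ tests exactly as in Propositions \ref{proposition of Counterexample regular} and \ref{proposition of Counterexample regular+resonance}. Your explicit verification that the $(k_1^++k_1^-)$-coefficient is $\mcaC_3$, that the $C_{-1}$-contributions cancel in the sum, and that this sum of coefficients equals $D$ is precisely the bookkeeping the paper leaves implicit behind the phrase ``by a similar argument,'' and it checks out term by term.
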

\begin{proof}
When $0$ is a first kind resonance of $H$, recalling from \eqref{mcaK for 1st} that
\begin{align*}
\mcaK=K_{P_1}+\sum\limits_{A\in\mcaA_{11}}K_A+\left\{\begin{aligned}&K_{\widetilde{P}_1},\ 16\ is\ a\ regular\ point\ of\ H,\\
&K_{\widetilde{P}_1}+\widetilde{K}_{-1}+\widetilde{K}_{01}+\widetilde{K}_{02},\ 16\ is\ a\ resonance\ of\ H,\\
&K_{\widetilde{P}_1}+\sum\limits_{B\in\mcaB_{21}}K_B,\ 16\ is\ an\ eigenvalue\ of\ H.\end{aligned}\right.
\end{align*}
Based on Propositions \ref{proposition of Counterexample regular} and \ref{proposition of Counterexample regular+resonance} above, we first observe the following two facts.
\vskip0.2cm
\begin{itemize}
\item  {\textbf {\underline{For the operator 
in the low energy part $\mcaK_1$}}}, recalling the operators $\{K_A:A\in\mcaA_{11}\}$
\end{itemize}
 from \eqref{decom1 of mcaK1 1st} and \eqref{kernel of KA 1st}, we have
\begin{equation}\label{the RO of 1st}
K_{P_1}+\sum\limits_{A\in\mcaA_{11}}K_A=K_{P_1}+K_{-1}+\sum\limits_{j=1}^{2}(K_{0j}+K_{2j})+\sum\limits_{j=1}^{3}K_{1j}+K_{33},
\end{equation}
where $K_{13}$ is the integral operator with the kernel as \eqref{kernel of K110} by replacing $A_1$ with $A^1_{13}$. By applying the method used for $K_1$ in Proposition \ref{proposition of Counterexample regular} to $K=K_{01},K_{13},K_{22}$, we conclude that
\begin{equation*}
\sup\limits_{N\in\N^+}\|Kf_{N}\|_{\ell^{\infty}}<\infty\ and\  Kf_1\in\ell^1(\Z),\quad  K=K_{01},K_{13},K_{22}.
\end{equation*}
\begin{itemize}
\item  {\textbf{\underline{For the operator in the high energy part $\mcaK_3$}}} , we have
\end{itemize}
\begin{equation*}
\sup\limits_{N\in\N^+}\|Kf_{N}\|_{\ell^{\infty}}<\infty\ and\  Kf_1\in\ell^1(\Z),
\end{equation*}
where
\begin{align*}
 K\in\left\{\begin{aligned}&\emptyset,\ 16\ is\ a\ regular\ point\ of\ H,\\
&\{\widetilde{K}_{-1},\widetilde{K}_{02}\},\ 16\ is\ a\ resonance\ of\ H,\\
&\{K_B:B\in\mcaB_{21}\setminus\{\widetilde{K}_{-2},\widetilde{\widetilde{K}}_{01}\}\},\ 16\ is\ an\ eigenvalue\ of\ H.\end{aligned}\right.
\end{align*}
Denote
\begin{equation*}
\mcaK_0:=K_{P_1}+K_{-1}+K_{02}+K_{21}+\sum\limits_{j=1}^{2}K_{1j}+K_{33}.
\end{equation*}
Then the analysis of $\mcaK$ above reduces to $\mcaK_r$, where
\begin{align}\label{remain operator for 1st}
 \mcaK_r=\left\{\begin{aligned}&\mcaK_0+K_{\widetilde{P}_1},\ 16\ is\ a\ regular\ point\ of\ H,\\
&\mcaK_0+\widetilde{K}_{01},\ 16\ is\ a\ resonance\ of\ H,\\
&\mcaK_0+\widetilde{K}_{-2}+\widetilde{\widetilde{K}}_{01},\ 16\ is\ an\ eigenvalue\ of\ H.\end{aligned}\right.
\end{align}
By a similar argument to that used in part (2) of Proposition \ref{proposition of Counterexample regular+resonance}, the desired conclusion follows.
\end{proof} \begin{proposition}\label{proposition of Counterexample 2nd resonance+all}
 Let $H=\Delta^2+V$ and $V$ be compactly supported. Suppose that $H$ has no positive eigenvalues in the interval $\rm{(}0,16\rm{)}$ and ${\bm{0}}$ {\textbf{is a second kind resonance of}} ${\bm{H}}$. Then the following statements hold:
 \vskip0.15cm
  \noindent{\rm (1)}~If $\mcaC_4\neq0$, then for any $\mcaK$ defined in \eqref{mcaK for 2nd},  $\mcaK$ is unbounded on $\ell^{\infty}(\Z)$,
 where the constant $\mcaC_4$ is given by
 \begin{align}\label{C4}
 \mcaC_4=\frac{i-1}{8}-\frac{i}{64}C_{-1,3}+\frac{1}{32}C_{03}+\frac{iC^{*}_{11}}{32}+\frac{iC^{*}_{12}}{32}-\frac{C^{*}_{21}}{16}-\frac{iC^{*}_{33}}{16}-\frac{C_{-2,1}}{32}+\frac{C^{(2)}_{01}}{32}
 \end{align}
 with $C^{*}_{ij}$ defined in \eqref{kernel of 1st} by replacing $A^1_{ij}$ with $A^{2}_{ij}$ and
 \begin{align*}
 C_{-1,3}&=\sum\limits_{m_1,m_2\in\Z}(v_2S_0A_{-1,3}S_0v_2)(m_1,m_2),\quad C_{03}=\sum\limits_{m_1,m_2\in\Z}(v_1QA^2_{03}S_0v_2)(m_1,m_2),\\
 C_{-2,1}&=\frac{1}{6}\sum\limits_{m_1,m_2\in\Z}(v_3S_2A_{-2,1}S_0v_2)(m_1,m_2),\quad C^{(2)}_{01}=\frac{1}{3}\sum\limits_{m_1,m_2\in\Z}(v_3S_2A^2_{01}v)(m_1,m_2).
 \end{align*}
\vskip0.15cm
\noindent{\rm (2)} Let $C_{-1,3},C_{03},C^{*}_{11},C^{*}_{12},C^{*}_{21},C^{*}_{33},C_{-2,1},C^{(2)}_{01}$ be as in \eqref{C4} and $\mcaC_1,\mcaC_2$ be as in \eqref{mcaC1} and \eqref{mcaC2}, respectively. Define
\begin{equation}\label{E}
E=\frac{i-1}{4}+\frac{i+1}{32}C_{03}+\frac{i-1}{32}C^{*}_{11}+\frac{i}{16}C^*_{12}-\frac{i+1}{16}C^*_{21}-\frac{i}{8}C^*_{33}.+\frac{i-1}{32}C_{-2,1}+\frac{1-i}{32}C^{(2)}_{01}.
\end{equation}
Under the condition that
\begin{align}\label{condition on p=1 for 1st}
 192|E|\neq\left\{\begin{aligned}&16,\ 16\ is\ a\ regular\ point\ of\ H,\\
&\big|16-\mcaC_1\big|,\ 16\ is\ a\ resonance\ of\ H,\\
&\big|16-\mcaC_2\big|,\ 16\ is\ an\ eigenvalue\ of\ H,\end{aligned}\right.
\end{align}
the corresponding $\mcaK$ defined in \eqref{mcaK for 2nd} is unbounded on $\ell^{1}(\Z)$.
 \end{proposition}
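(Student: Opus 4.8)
The plan is to mirror, step by step, the structure already used for the regular case (Proposition \ref{proposition of Counterexample regular}) and the first-kind resonance case (Proposition \ref{proposition of Counterexample resonance+all}), but now carrying along the richer list of operators coming from the expansion \eqref{asy expan 2nd 0} of $M^{-1}(\mu)$ near $\mu=0$. The starting point is the decomposition \eqref{mcaK for 2nd} together with \eqref{decom1 of mcaK1 2nd} and \eqref{kernel of KA 2nd}, so that the low energy contribution to $\mcaK$ can be written as a finite sum $K_{P_1}+\sum_{A\in\mcaA_{21}}K_A$, and by the results already established in Proposition \ref{proposition 2nd} every $K_A$ with $A$ in the "good" subclasses ($\mcaA_{22}$, $\mcaA^{(2)}_{21}$ minus its resonant representatives, and the remainder term $K^2_4$) is bounded on $\ell^p(\Z)$ for all $1\le p\le\infty$, hence irrelevant to the endpoint obstruction. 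First I would isolate, exactly as in \eqref{the RO of 1st}, the handful of operators whose kernels contribute a genuine $\langle|n|\pm|m|\rangle^{-1}$ singularity after integration by parts; from \eqref{kernel of KA 2nd} these are $K_{-3}$ (producing $g_{-1,i}$-type and the $K^{\pm}$ singular piece), $K_{-2,1}$, $K_{-2,2}$, $K_{-1,1}$, $K_{-1,2}$, $K^{(2)}_{01}$, $K^{(2)}_{02}$, together with $K_{-1,3}$, the $K_{0j}$, $K_{1j}$, $K_{2j}$, $K_{33}$ coming from the $\mcaA^{(2)}_{21}$ part, plus $K_{P_1}$. For every such operator I would rewrite its kernel in the universal form $c\cdot(-1)^{\varepsilon}\big(k^+_1+k^-_1\big)(n,m)\,\mathbf 1_E(n,m)+O(\langle|n|-|m|\rangle^{-2})$ after discarding the $\mathbf 1_{E^c}$ part and the $k^{\pm}_2$, $q^{|\widetilde N_1|}$-weighted pieces via the decomposition \eqref{decom of kpmell} and the summability bound \eqref{sup (N-M)-2} — this is precisely the reduction already carried out for $\widetilde K_{01}$ in \eqref{kernel of Ktuta01} and for $\widetilde K_{-2}$ in \eqref{kernel of Ktuta-2}, and the extra singular term $K^{\pm}$ appearing in $K_{-3}$, $K_{-2,1}$, $K_{-1,1}$, $K^{(2)}_{01}$ of \eqref{kernel of KA 2nd} has already been shown $\ell^p$-bounded in part (2) of the proof of Proposition \ref{proposition 2nd}, so it too drops out of the endpoint analysis.

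Once all pieces are in this normal form, the coefficient in front of the resulting $(-1)^{n+m}\big(\tfrac{1}{|n|+|m|}+\tfrac{1}{|n|-|m|}\big)\mathbf 1_E(n,m)$ term is a single complex constant, obtained by summing the individual coefficients over $n,m_1,m_2$ (using compact support of $V$ so that for $n$ outside $\mathrm{supp}\,V+\{0\}$ every weighted sum $\sum_{m_1,m_2}(\tilde v_{\ast}A\tilde v_{\ast})(m_1,m_2)$ collapses to the constant appearing in \eqref{C4}); I would verify by direct bookkeeping that this constant equals $\mcaC_4$ for the $\ell^\infty$ statement and that the coefficient of the same singular kernel after restricting the test function $f_1$ reproduces $E$ (for the regular-$16$ case) or $E$ combined with $\mcaC_1$, $\mcaC_2$ (for the resonant / eigenvalue cases at $16$), exactly paralleling \eqref{D}, \eqref{C} and the coefficient $c=\tfrac14-\tfrac1{64}\mcaC_1$, $\tfrac14-\tfrac1{64}\mcaC_2+\tfrac1{32}\widetilde C_{-2}$ appearing in Proposition \ref{proposition of Counterexample regular+resonance}. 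With the constant identified, the $\ell^\infty$-unboundedness follows by evaluating at the point $N+2$ on the test function $f_N=\chi_{[-N,N]}$: the computation in Proposition \ref{proposition of Counterexample regular} gives $\big((G_1f_N)(N+2)\big)=\tfrac{i-1}{4}\sum_{k=2}^{2N+2}\tfrac1k+(\text{const})\sum_{k=2}^{2N+2}\tfrac{(-1)^k}{k}$, whose modulus diverges logarithmically precisely because the real divergent harmonic part has coefficient $\mcaC_4\ne0$ (the alternating series converges). For the $\ell^1$-unboundedness, applying $\mcaK$ to $f_1$ and summing the kernel over $n$ yields $\sum_{n\ge N_0+2}|G_{a,b,c}(n)|$ with $G_{a,b,c}(n)=\frac{(6(a+b)-2c(-1)^n)n^4+\cdots}{n(n^2+1)(n^2-1)}$ as in Proposition \ref{proposition of Counterexample regular+resonance}; this diverges iff $|3(a+b)|\neq|c|$, which unwinds exactly to the stated condition $192|E|\ne 16,\ |16-\mcaC_1|,\ |16-\mcaC_2|$ in the three sub-cases. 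The $O(\langle|n|-|m|\rangle^{-2})$ remainders and the $\ell^p$-bounded operators from $\mcaB_{22}$, $\mcaB_{21}\setminus\{\widetilde K_{-2},\widetilde{\widetilde K}_{01}\}$, $K^2_r$, $\mcaA_{22}$, $\mcaA^{(2)}_{21}$-good terms, $K_{01},K_{13},K_{22}$ (the last three handled verbatim as in \eqref{the RO of 1st}) contribute finitely and so cannot cancel the divergence.

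The main obstacle I anticipate is purely the combinatorial bookkeeping of the coefficient $\mcaC_4$ (resp. $E$): because the second-kind resonance expansion \eqref{asy expan 2nd 0} has by far the longest list of operators — the $\mu^{-3},\mu^{-2},\mu^{-1}$ blocks $S_2A_{-3}S_2$, $S_2A_{-2,1}S_0$, $S_0A_{-2,2}S_2$, $S_2A_{-1,1}Q$, $QA_{-1,2}S_2$, $S_0A_{-1,3}S_0$, plus the order-one and positive-order blocks — one must track the exact numerical prefactor contributed by each (coming from the powers of $\tilde\theta_+$, $b(\mu)$, the substitution \eqref{varible substi 0}, and the constants $a_1,a_2$) and sum them correctly, watching the signs attached to $({\rm sign}(N_1))$, $({\rm sign}(M_2))$ and the $(1-\rho)^k$ weights inside the integrals $C_{\ast}$. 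This is the step where the "$\boldsymbol{K^\pm}$" singular terms displayed in boldface in \eqref{kernel of KA 2nd} must be carefully confirmed to be the only genuinely new feature relative to the first-kind case, and that they are harmless. A secondary, more delicate point is that $K_{-2,1}$, $K_{-1,1}$ and $K^{(2)}_{01}$ each carry \emph{two} different structure constants — one feeding the $g_{a,b}$ singular kernel and one feeding the bold $\big<(S_2A_{\ast}\,\cdot)(\cdot),\phi(n,\cdot)\big>$ term — and I must make sure that, just as in the proof of Proposition \ref{proposition 2nd}, only the first feeds the non-summable tail while the second, after the orthogonality $\langle S_2 f,v\rangle=0$ is used to produce the factor $\phi(n,m_1)=(|n-m_1|-|n|)v(m_1)$, is absorbed into the $\ell^p$-bounded part. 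Modulo this accounting, the scheme is a direct transcription of Propositions \ref{proposition of Counterexample regular}–\ref{proposition of Counterexample resonance+all}, so I would present it as: (i) reduce $\mcaK$ to $\mcaK_r$ as in \eqref{remain operator for 1st} with the obvious second-kind analogue; (ii) compute the single effective coefficient; (iii) apply the $f_N$ test-function lemma for $\ell^\infty$ and the $G_{a,b,c}$ computation for $\ell^1$, invoking $\mcaC_4\ne0$ and \eqref{condition on p=1 for 1st} respectively.
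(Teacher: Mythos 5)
Your overall skeleton (reduce to the first-kind case plus the extra operators from $\mcaA^{(1)}_{21}$, compute one effective coefficient, then run the $f_N$ evaluation at $N+2$ and the $G_{a,b,c}$ summation for $f_1$) is the paper's strategy. But there is a genuine gap in how you dispose of the new singular pieces. You assert that the boldface terms in \eqref{kernel of KA 2nd} — in the paper's notation $\mcaT_1(n,m)=\big<(S_2A_{-3}S_2\varphi_m)(\cdot),|n-\cdot|v(\cdot)\big>h_{0,-1}(n,m)$ and its analogues $\mcaT_2,\mcaT_3,\mcaT_4$ built from $S_2A_{-2,1}S_0$, $S_2A_{-1,1}Q$, $S_2A^2_{01}$ — ``drop out of the endpoint analysis'' because they were shown $\ell^p$-bounded in Proposition \ref{proposition 2nd}. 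That conclusion only holds for $1<p<\infty$, which is exactly the range where \emph{every} operator in play (including $K_{P_1}$ and $K_{\widetilde P_1}$, the ones driving the counterexample) is bounded; it says nothing at $p=1,\infty$. These terms carry the kernels $h_{0,\pm1}$, $g_{0,\pm i}$, i.e.\ $k_2^{+}\mp k_2^{-}$, which are genuinely singular at one endpoint each ($k_2^++k_2^-\sim 2|n|/(n^2+m^2)$ is $\ell^\infty$- but not $\ell^1$-bounded, and $k_2^+-k_2^-\sim -2i|m|/(n^2+m^2)$ the reverse), so they cannot be discarded on boundedness grounds and could in principle either cancel or contaminate the harmonic-series divergence you need.

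The missing idea — and the only genuinely new content of this proposition's proof relative to the first-kind case — is that these inner products \emph{vanish identically for $|n|$ large} because $V$ is compactly supported and $S_2$ is orthogonal to both $v$ and $v_1$. Concretely, if $\mathrm{supp}\,V\subseteq\{|m|\le N_0\}$ and $\pm n>N_0$, then $|n-m_1|=\pm(n-m_1)$ on the support of $v$, so
$\big<S_2 g,|n-\cdot|v(\cdot)\big>=\pm\big(n\big<S_2g,v\big>-\big<S_2g,v_1\big>\big)=0$.
Hence $\mcaT_jf_N(N+2)=0$ for $N>N_0$ and $\|\mcaT_jf_1\|_{\ell^1}$ reduces to a finite sum over $|n|\le N_0$, so these terms contribute nothing to either divergence. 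Your proposal instead invokes the single orthogonality $\big<S_2f,v\big>=0$ to produce $\phi(n,m_1)=(|n-m_1|-|n|)v(m_1)$ and ``absorb'' the term into the $\ell^p$-bounded part — that is the mechanism of Proposition \ref{proposition 2nd} for $1<p<\infty$ and it does not close the endpoint argument. With this vanishing lemma supplied, the rest of your accounting of $\mcaC_4$ and $E$ goes through as you describe.
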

 \begin{proof}
Compared \eqref{mcaK for 1st} with \eqref{mcaK for 2nd}, the difference lies in the part $\{K_A:A\in\mcaA_{21}\}$, where $A_{21}=\mcaA^{(1)}_{21}\cup\mcaA^{(2)}_{21}$ by \eqref{decom1 of mcaK1 2nd}.
 Note that the operators $\{K_A:A\in\mcaA^{(2)}_{21}\}$ essentially the same as $\{K_A:A\in\mcaA_{11}\}$, apart from the difference in the notation. Therefore, in this case, more attention should be paid to the additional operators $\{K_A:A\in\mcaA^{(1)}_{21}\}$, compared to the first kind resonant case.

 From Proposition \ref{proposition 2nd} and \eqref{kernel of KA 2nd}, we further obtain
 \begin{equation*}
\sum\limits_{A\in\mcaA^{(1)}_{21}}K_A=K_{-3}+\sum\limits_{j=1}^{2}(K_{-2,j}+K_{-1,j}+K^{(2)}_{0j}).
 \end{equation*}
It can be observed that the following terms in the integral kernels appear newly compared to the previous two cases:
\begin{itemize}
\item $\mcaT_1(n,m)=\big<(S_2A_{-3}S_2\varphi_m)(\cdot),|n-\cdot|v(\cdot)\big>h_{0,-1}(n,m)$,
\item $\mcaT_2(n,m)=\big<(S_2A_{-2,1}S_0v_2)(\cdot),|n-\cdot|v(\cdot)\big>g_{0,-i}(n,m)$,
\item $\mcaT_3(n,m)=\big<(S_2A_{-1,1}Q\widetilde{\varphi}_m)(\cdot),|n-\cdot|v(\cdot)\big>h_{0,1}(n,m)$,
\item $\mcaT_4(n,m)=\big<(S_2A^2_{01}v)(\cdot),|n-\cdot|v(\cdot)\big>g_{0,i}(n,m).$
\end{itemize}

Obviously, it follows from \eqref{decom of kpmell}, \eqref{ell infty bounded of g0a} and the uniform boundedness of the inner product that
$\mcaT_2$ and $\mcaT_4$ are $\ell^{\infty}$ bounded. More interestingly, under the assumption that 
$suppV\subseteq\{m:|m|\leq N_0\}$ for some integer $N_0$, when we consider the characteristic function as test function, we can prove that for $N>N_0$,
\begin{equation*}
    \mcaT_jf_{N}(N+2)=0 \ {\rm and}\ \| \mcaT_jf_1\|_{\ell^1}<\infty,\quad 1\leq j\leq4.
\end{equation*}
To see this, we consider $\mcaT_1$ only for simplicity. When $ \pm n>N_0$, note that
$$\big<(S_2A_{-3}S_2\varphi_m)(\cdot),|n-\cdot|v(\cdot)\big>h_{0,-1}(n,m)=\pm\big<(S_2A_{-3}S_2\varphi_m)(\cdot),(n-\cdot)v(\cdot)\big>h_{0,-1}(n,m)=0,$$
where the last equality follows from the orthogonality $\big<S_2f,v_j\big>=0$ for $j=0,1$.
This immediately yields that $\mcaT_1f_{N}(N+2)=0$ for $N>N_0$. Regarding $\|T_1f_1\|_{\ell^1}$, using the uniform boundedness of $h_{0,-1}(n,m)$ and $\varphi_{m}$, we obtain
\begin{equation*}
\|\mcaT_1f_1\|_{\ell^1}\leq\Big(\sum\limits_{|n|\leq N_0}+\sum\limits_{|n|>N_0}\Big) \sum\limits_{m=-1}^{1}|\mcaT_{1}(n,m)|=\sum\limits_{|n|\leq N_0} \sum\limits_{m=-1}^{1}|\mcaT_{1}(n,m)|<\infty.
\end{equation*}
This shows that these newly emerged terms behave well under such test functions. A similar analysis as in the first kind resonance case can therefore be applied, for brevity, we omit the details.
 \end{proof}
 Summing up Propositions \ref{proposition of Counterexample regular}$\sim$\ref{proposition of Counterexample 2nd resonance+all}, we consequently complete the whole proof of Theorem \ref{theorem at endpoints}.
 \section{Application}\label{sec of application}
  As an application of Theorem \ref{main theorem}, in this section, we will establish the $\ell^p-\ell^{p'}$ decay estimates for the solution to the discrete beam equation with parameter $a\in\R$ on the lattice $\Z$:
\begin{equation*}
\left\{\begin{aligned}&(\partial_{tt}u)(t,n)+[(\Delta^2+V+a^2)u](t,n)=0,\ \ (t,n)\in\R\times\Z,\\
&u(0,n)=\varphi_1(n),\ \left(\partial_{t}u\right)(0,n)=\varphi_2(n),\end{aligned}\right.
\end{equation*}
whose solution can be expressed as
\begin{equation*}
u_a(t,n)={\rm cos}(t\sqrt {\Delta^2+V+a^2})\varphi_1(n)+\frac{{\rm sin}(t\sqrt {\Delta^2+V+a^2})}{\sqrt {\Delta^2+V+a^2}}\varphi_2(n).
\end{equation*}
More precisely, we have
 \begin{theorem}\label{free decay}
Let $H=\Delta^2+V$ satisfy the assumptions of Theorem \ref{main theorem}. Let $1<p\leq2$ and $\frac{1}{p}+\frac{1}{p'}=1$. Then for any $a\in\R$,
\begin{equation}\label{cos-sin decay-estimate}
\|{\rm cos}(t\sqrt {H+a^2})P_{ac}(H)\|_{\ell^p\rightarrow\ell^{p'}}+\left\|\frac{{\rm sin}(t\sqrt {H+a^2})}{t\sqrt {H+a^2}}P_{ac}(H)\right\|_{\ell^p\rightarrow\ell^{p'}}\lesssim|t|^{-\frac{1}{3}(\frac{1}{p}-\frac{1}{p'})},\quad t\neq0.
\end{equation}
 \end{theorem}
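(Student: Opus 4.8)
The plan is to deduce \eqref{cos-sin decay-estimate} from its free ($V\equiv0$) counterpart, which is the displayed inequality in the introduction, by means of the intertwining property \eqref{interwinning property} and the $\ell^p$-boundedness of the wave operators from Theorem \ref{main theorem}. For the free estimate itself I would argue by Fourier analysis on the torus. Fix $t\neq0$, $a\in\R$, and let $f_1(\lambda)=\cos(t\sqrt{\lambda+a^2})$ and $f_2(\lambda)=\sin(t\sqrt{\lambda+a^2})/(t\sqrt{\lambda+a^2})$; these are bounded Borel functions on $[0,\infty)\supseteq\sigma(\Delta^2)=[0,16]$ with $\|f_j\|_{L^\infty}\le1$, so by the Fourier identification \eqref{unitary equivalent} and Plancherel one has $\|f_j(\Delta^2)\|_{\ell^2\to\ell^2}\le1$. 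Since $f_j(\Delta^2)$ acts as convolution on $\Z$ with $K^{(j)}_t(n)=\frac{1}{2\pi}\int_{-\pi}^{\pi}e^{inx}f_j(\mcaM(x))\,dx$, where $\mcaM(x)=(2-2\cos x)^2$, the $\ell^1\to\ell^\infty$ norm equals $\sup_{n\in\Z}|K^{(j)}_t(n)|$, and the point is to show $\sup_{n}|K^{(j)}_t(n)|\lesssim|t|^{-1/3}$. Writing the phase $\phi_a(x)=\sqrt{\mcaM(x)+a^2}$ — a smooth, even, $2\pi$-periodic function, with $\phi_0(x)=2-2\cos x$ — each $K^{(j)}_t(n)$ decomposes into finitely many oscillatory integrals $\int_{-\pi}^{\pi}e^{i(nx\pm t\phi_a(x))}b_j(x)\,dx$ with smooth bounded amplitudes, and I would estimate these by stationary phase / van der Corput: the critical points of the phase coincide with the zeros of $\mcaM'$ (namely $x=0,\pm\pi$, since $\phi_a'=\mcaM'/2\phi_a$); away from these and from the finitely many inflection points of $\phi_a$ one integrates by parts repeatedly, while near them the van der Corput lemma with $|\phi_a^{(k)}|\gtrsim1$ for the appropriate order $k$ controls the integral, an inflection point of the dispersion relation ($k=3$) producing the governing exponent $\tfrac13$. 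Riesz--Thorin interpolation between $\|f_j(\Delta^2)\|_{\ell^1\to\ell^\infty}\lesssim|t|^{-1/3}$ and $\|f_j(\Delta^2)\|_{\ell^2\to\ell^2}\le1$ then gives $\|f_j(\Delta^2)\|_{\ell^p\to\ell^{p'}}\lesssim|t|^{-\frac13(\frac1p-\frac1{p'})}$ for $1\le p\le2$.

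To transfer this to $H$, observe that for $1<p\le2$ both $p$ and $p'$ lie in $(1,\infty)$, so Theorem \ref{main theorem} gives $W_\pm\in\B(\ell^{p'}(\Z))$, and by taking adjoints $W^*_\pm\in\B(\ell^{p}(\Z))$. Applying the intertwining identity \eqref{interwinning property} with $f=f_j$ together with the operator-norm factorization displayed after it,
\[
\|f_j(H)P_{ac}(H)\|_{\ell^p\to\ell^{p'}}\le\|W_\pm\|_{\ell^{p'}\to\ell^{p'}}\,\|f_j(\Delta^2)\|_{\ell^p\to\ell^{p'}}\,\|W^*_\pm\|_{\ell^p\to\ell^p}\lesssim|t|^{-\frac13(\frac1p-\frac1{p'})}.
\]
Taking $j=1$ for $\cos(t\sqrt{H+a^2})P_{ac}(H)$ and $j=2$ for $\frac{\sin(t\sqrt{H+a^2})}{t\sqrt{H+a^2}}P_{ac}(H)$ and summing the two contributions yields \eqref{cos-sin decay-estimate}. (If one prefers, the reduction can first be restricted to $W_+$ via $W_-f=\overline{W_+\bar f}$, but this is not needed since \eqref{interwinning property} holds for both signs.)

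I expect the transfer step to be routine once Theorem \ref{main theorem} is available; the real work is the uniform-in-$n$ oscillatory integral estimate in the free step, whose most delicate point is the behaviour near $x=0$. There $\mcaM(x)\sim x^4$, so for $a=0$ the phase degenerates only to $x^2$ (non-degenerate, harmless), while for $a\neq0$ one has $\phi_a(x)=|a|+\tfrac{x^4}{2|a|}+O(x^6)$, a strongly degenerate critical point; the $a$-dependent interplay between this degeneracy, the (bounded) amplitude, and the range of frequencies $n$ admitting a stationary point in that region must be handled with care to recover the stated $|t|^{-1/3}$ rate. The high-frequency region near $x=\pm\pi$ (where $\phi_a$ has a non-degenerate maximum) and the remaining intervals are standard, as is the interpolation; thus the free estimate quoted in the introduction is precisely the content of this step, and in the case $a=0$ the same argument also covers the endpoint $p=1$, in agreement with the remark following \eqref{general a cos-sin decay-estimate}.
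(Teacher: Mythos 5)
Your reduction to the free case is exactly the paper's: for $1<p\le 2$ both $p,p'\in(1,\infty)$, so Theorem \ref{main theorem} and duality give $W_\pm\in\B(\ell^{p'}(\Z))$ and $W_\pm^*\in\B(\ell^{p}(\Z))$, and the intertwining property \eqref{interwinning property} transfers any $\ell^p\to\ell^{p'}$ bound from $f(\Delta^2)$ to $f(H)P_{ac}(H)$. The paper likewise obtains the free $\ell^p\to\ell^{p'}$ bound by interpolating an $\ell^1\to\ell^\infty$ kernel estimate against the trivial $\ell^2$ bound, and likewise reduces the $\cos$ and $\sin$ propagators to the half-wave group $e^{-it\sqrt{\Delta^2+a^2}}$ via \eqref{relation cos-sin and eit}. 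Up to this point your argument is complete and coincides with the paper's.

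The genuine gap is the free $\ell^1\to\ell^\infty$ estimate for $a\neq0$, which you flag as needing to be ``handled with care'' but do not prove --- and which in fact fails at the stated rate. Your expansion $\phi_a(x)=|a|+\frac{x^4}{2|a|}+O(x^6)$ is correct, and since $\phi_a$ is even it forces $\phi_a'(0)=\phi_a''(0)=\phi_a'''(0)=0$ while $\phi_a^{(4)}(0)=12/|a|\neq0$: the stationary point at $x=0$ relevant to the diagonal entry $n=m$ (the case $s=0$ in \eqref{oscillatory}) is degenerate of order \emph{four}. Van der Corput therefore yields only $|t|^{-1/4}$ there, and stationary phase shows this is sharp: $\bigl|\frac{1}{2\pi}\int_{-\pi}^{\pi}e^{-it\sqrt{(2-2\cos\theta)^2+a^2}}\,d\theta\bigr|\asymp_a|t|^{-1/4}$ as $t\to\infty$. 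Hence $\|e^{-it\sqrt{\Delta^2+a^2}}\|_{\ell^1\to\ell^\infty}\gtrsim|t|^{-1/4}$, the bound \eqref{cos sin free 1-infty estimate} cannot hold with exponent $-1/3$ when $a\neq0$, and by testing $\cos(t\sqrt{\Delta^2+a^2})$ on $\delta_0$ along a suitable sequence $t_k\to\infty$ one sees that even the interpolated estimate \eqref{cos-sin decay-estimate} fails for $V\equiv0$, $a\neq0$ once $\frac{1}{3}(\frac{1}{p}-\frac{1}{p'})>\frac{1}{4}$, i.e.\ $1<p<\frac{8}{7}$. The paper's own proof of Lemma \ref{lemma of free cos-sin estimate} asserts at precisely this point that $\Phi''_{a,0}(0)=0$ but $\Phi^{(3)}_{a,0}(0)\neq0$; since $\Phi_{a,0}$ is even this is impossible, and your fourth-order expansion is the correct one. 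The rate $|t|^{-1/3}$ is correct for $a=0$ (where the worst degeneracy is the third-order one at $x=\pm\pi/2$, as in \cite{SK05}); for $a\neq0$ only $|t|^{-1/4}$ is attainable at $x=0$, so the exponent in the statement must be weakened for $a\neq0$ or an entirely different argument found.
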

To derive this theorem, using the interwining property \eqref{interwinning property} and the $\ell^{p'}$ boundedness of $W_{\pm}$, for any $a\in\R$ and $j=1,2$, we obtain
\begin{equation*}
\|f_{a,j}(H)P_{ac}(H)\|_{\ell^p\rightarrow\ell^{p'}}\leq\|W_{\pm}\|_{\ell^{p'}\rightarrow\ell^{p'}}\|f_{a,j}(\Delta^2)\|_{\ell^p\rightarrow\ell^{p'}}\|W^*_{\pm}\|_{\ell^p\rightarrow\ell^p}\lesssim \|f_{a,j}(\Delta^2)\|_{\ell^p\rightarrow\ell^{p'}},
\end{equation*}
where
$$f_{a,1}(x)={\rm cos}(t\sqrt {x+a^2}),\quad f_{a,2}(x)=\frac{{\rm sin}(t\sqrt {x+a^2})}{t\sqrt {x+a^2}}.$$
Consequently, it reduces to establish the corresponding estimates for the free propagators $f_{a,j}(\Delta^2)$ with $j=1,2$. To this end, it suffices to establish the following $\ell^1-\ell^{\infty}$ decay estimate.
\begin{lemma}\label{lemma of free cos-sin estimate}
    For any $a\in\R$ and $t\neq0$, we have
   \begin{equation}\label{cos sin free 1-infty estimate}
    \big\|{\rm cos}(t\sqrt {\Delta^2+a^2})\big\|_{\ell^1\rightarrow\ell^{\infty}}+\left\|\frac{{\rm sin}(t\sqrt {\Delta^2+a^2})}{t\sqrt {\Delta^2+a^2}}\right\|_{\ell^1\rightarrow\ell^{\infty}}\lesssim |t|^{-\frac{1}{3}}.
\end{equation}
\end{lemma}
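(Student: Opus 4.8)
The plan is to pass to the Fourier side and reduce \eqref{cos sin free 1-infty estimate} to one‑dimensional oscillatory integral estimates. By \eqref{fourier transform}--\eqref{unitary equivalent}, both free propagators are convolution operators on $\Z$, with kernels
\begin{align*}
\big[\cos(t\sqrt{\Delta^2+a^2})\big](n,m)&=\frac{1}{2\pi}\int_{-\pi}^{\pi}e^{i(n-m)x}\cos\!\big(t\varphi_a(x)\big)\,dx,\\
\Big[\tfrac{\sin(t\sqrt{\Delta^2+a^2})}{t\sqrt{\Delta^2+a^2}}\Big](n,m)&=\frac{1}{2\pi}\int_{-\pi}^{\pi}e^{i(n-m)x}\,\frac{\sin\!\big(t\varphi_a(x)\big)}{t\varphi_a(x)}\,dx,\qquad \varphi_a(x):=\sqrt{\mcaM(x)+a^2}.
\end{align*}
Since the $\ell^1\to\ell^\infty$ operator norm of a convolution operator with kernel $k(n-m)$ equals $\sup_j|k(j)|$, it suffices to bound each of these Fourier integrals by $C|t|^{-1/3}$ uniformly in the integer frequency shift. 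For the sine propagator I would first remove the $1/\varphi_a$ factor by the subordination identity $\frac{\sin(t\sqrt A)}{t\sqrt A}=\int_0^1\cos(ts\sqrt A)\,ds$ with $A=\Delta^2+a^2\ge0$, which gives
$$\Big\|\tfrac{\sin(t\sqrt{\Delta^2+a^2})}{t\sqrt{\Delta^2+a^2}}\Big\|_{\ell^1\to\ell^\infty}\le\int_0^1\big\|\cos(ts\sqrt{\Delta^2+a^2})\big\|_{\ell^1\to\ell^\infty}\,ds;$$
once the cosine bound $\lesssim\min\{1,|\tau|^{-1/3}\}$ is known for every $\tau\neq0$, the $s$‑integral converges and produces $\lesssim|t|^{-1/3}$. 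Hence everything reduces to showing
$$\sup_{k\in\Z}\Big|\int_{-\pi}^{\pi}e^{i(kx\pm t\varphi_a(x))}\,dx\Big|\ \lesssim\ |t|^{-1/3},$$
the two signs coming from $\cos=\tfrac12(e^{i(\cdot)}+e^{-i(\cdot)})$.

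For $|t|\le1$ this is immediate (the integral is $\le2\pi\le 2\pi|t|^{-1/3}$), so one may assume $|t|\ge1$. The tool is van der Corput's lemma. On any subinterval of the circle on which $|\varphi_a''|\ge c$, the phase $kx\pm t\varphi_a(x)$ has second derivative of size $\ge c|t|$, so the integral over it is $\lesssim(c|t|)^{-1/2}\le(c|t|)^{-1/3}$; on any subinterval on which $|\varphi_a'''|\ge c$, the third derivative is $\ge c|t|$ and the integral is $\lesssim(c|t|)^{-1/3}$ — and for the second‑ and third‑order van der Corput bounds no monotonicity of the phase is needed, while there are no boundary contributions because the integrand is a smooth periodic function. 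Since $\varphi_a''$ and $\varphi_a'''$ do not involve $k$, the whole problem is reduced to the purely one‑variable statement that $[-\pi,\pi]$ splits into finitely many intervals $I_\ell$, on each of which $|\varphi_a''|$ or $|\varphi_a'''|$ is bounded below by a positive constant; for fixed $a$ this follows from the real‑analyticity of $\varphi_a$ once one checks that $\varphi_a''$ and $\varphi_a'''$ have no common zero.

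The heart of the matter — and the step I expect to be the main obstacle — is precisely this phase analysis, which is considerably more delicate than in the threshold‑free case $a=0$ treated in \cite{HY25}. When $a=0$ one has $\sqrt{\mcaM(x)}=2-2\cos x$, a smooth function whose only critical points $x=0,\pm\pi$ are nondegenerate, and the exponent $\tfrac13$ is produced solely by the interior inflection points $x=\pm\tfrac{\pi}{2}$, where $\varphi_0''=0$ but $\varphi_0'''\neq0$ and which become degenerate critical points of $kx+t\varphi_0(x)$ for the frequencies $k=\mp t\varphi_0'(\pm\tfrac\pi2)$. For $a\neq0$, however, $\mcaM(x)=(2-2\cos x)^2$ vanishes to fourth order at $x=0$, so near the origin $\varphi_a(x)=|a|+\tfrac{x^4}{2|a|}+\mathcal O(x^6)$ — a genuinely degenerate critical point of $\varphi_a$ itself. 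Controlling the contribution of a neighbourhood of $x=0$ therefore requires a separate, localized argument: a case distinction according to the size of $|a|$ (for small $|a|$, $\varphi_a$ is a perturbation of $2-2\cos x$ away from the origin; for large $|a|$, one rescales), a splitting‑off of a $t$‑dependent neighbourhood $|x|\le\varepsilon(t)$ estimated directly, and van der Corput on its complement using the lower bound $|\varphi_a''(x)|\gtrsim x^2/|a|$. Verifying that this near‑origin piece respects the claimed $|t|^{-1/3}$ bound uniformly in $k$ — rather than the naive rate dictated by the fourth‑order degeneracy — is where essentially all of the work lies; the global van der Corput estimates, the Fourier reduction, and the subordination step are routine.
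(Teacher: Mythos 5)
Your overall strategy coincides with the paper's: both reduce \eqref{cos sin free 1-infty estimate} to the oscillatory integrals $\int_{-\pi}^{\pi}e^{-it(\varphi_a(\theta)-s\theta)}d\theta$ with $\varphi_a(\theta)=\sqrt{\mcaM(\theta)+a^2}$, handle the sine propagator by subordination to the cosine (the paper uses $\frac{\sin(t\sqrt A)}{t\sqrt A}=\frac{1}{2t}\int_{-t}^{t}\cos(s\sqrt A)\,ds$, equivalent to your $\int_0^1$ version), and then apply van der Corput. The genuine gap is exactly the one you flag and leave open: for $a\neq 0$ one has $\varphi_a(\theta)=|a|+\frac{\theta^4}{2|a|}+O(\theta^6)$, so $\varphi_a''(\theta)=\frac{6}{|a|}\theta^2+O(\theta^4)$ and $\varphi_a''(0)=\varphi_a'''(0)=0$; your splitting of $[-\pi,\pi]$ into pieces where $|\varphi_a''|$ or $|\varphi_a'''|$ is bounded below therefore does not exist, and the only van der Corput bound available at the origin comes from $\varphi_a^{(4)}(0)=12/|a|\neq 0$, which yields $|t|^{-1/4}$, not $|t|^{-1/3}$.

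You should know that this gap cannot be closed, because $|t|^{-1/4}$ is the true rate there. Take $k=0$ (the diagonal kernel entry). Stationary phase at the fourth-order critical point $\theta=0$ gives, as $t\to+\infty$,
\begin{equation*}
\int_{-\pi}^{\pi}e^{it\varphi_a(\theta)}\,d\theta=c_a\,e^{it|a|}\,t^{-1/4}+O(t^{-1/2}),\qquad c_a=2\Gamma(5/4)\,e^{i\pi/8}(2|a|)^{1/4}\neq 0,
\end{equation*}
so the real part is $|c_a|\,t^{-1/4}\cos(t|a|+\tfrac{\pi}{8})+O(t^{-1/2})$, which is $\gtrsim t^{-1/4}$ on a set of $t$ of positive density. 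Hence $\|\cos(t\sqrt{\Delta^2+a^2})\|_{\ell^1\to\ell^\infty}\gtrsim |t|^{-1/4}$ along a sequence $t\to\infty$, and the stated bound $\lesssim|t|^{-1/3}$ fails for $a\neq 0$. The paper's own argument breaks at precisely this point: it asserts $\Phi''_{a,0}(0)=0$ but $\Phi^{(3)}_{a,0}(0)\neq 0$, whereas its own displayed formula for $\Phi''_{a,s}$ gives $\Phi''_{a,0}(\theta)=\frac{6}{|a|}\theta^2+O(\theta^4)$, so in fact $\Phi^{(3)}_{a,0}(0)=0$ and the first nonvanishing derivative at the origin is the fourth. (This is also consistent with the paper's own remark that the continuous analogue with $a=1$ decays only like $|t|^{-1/4}$ in the low-energy regime: the discrete symbol has the same fourth-order degeneracy at $\theta=0$ as the continuous one at $\xi=0$.) So your instinct that the near-origin piece is where all the difficulty lies is correct, but the difficulty is an obstruction rather than a technical hurdle: for $a\neq 0$ the correct uniform rate is $|t|^{-1/4}$, and the lemma (and the downstream constant $\frac13$ in \eqref{cos-sin decay-estimate} for $a\neq0$) would need to be revised accordingly.
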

Once this lemma is proved, based on $\|e^{-it\sqrt {\Delta^2+a^2}}\|_{\ell^2\rightarrow\ell^{2}}=1$ and the relations
\begin{equation}\label{relation cos-sin and eit}
{\rm cos}(t\sqrt {\Delta^2+a^2})=\frac{e^{-it\sqrt {\Delta^2+a^2}}+e^{it\sqrt {\Delta^2+a^2}}}{2},\quad \frac{{\rm sin}(t\sqrt {\Delta^2+a^2})}{t\sqrt {\Delta^2+a^2}}=\frac{1}{2t}\int_{-t}^{t}{\rm cos}\left(s\sqrt {\Delta^2+a^2}\right)ds,
\end{equation}
the desired \eqref{cos-sin decay-estimate} for the free case then follows by the Riesz-Thorin interpolation theorem.
\begin{remark}
{\rm We point out that the sharp decay estimate $|t|^{-\frac{1}{3}}$ is not affected by the values of parameter $a$, which is quite different from its continuous counterpart where it is influenced by $a$. For instance, the continuous analogue of \eqref{cos sin free 1-infty estimate} exhibits a decay rate of $|t|^{-\frac{1}{2}}$ when $a=0$, whereas for $a=1$, the decay is $|t|^{-\frac{1}{4}}$ in the low-energy part and $|t|^{-\frac{1}{2}}$ in the high-energy part. For more details, we refer to \cite{CWY25}.

}
\end{remark}
\begin{proof}[Proof of Lemma \ref{lemma of free cos-sin estimate}]
For any $a\in\R$, from \eqref{relation cos-sin and eit} above, the problem reduces to proving
\begin{equation}\label{esti for free}
    \big\|e^{-it\sqrt {\Delta^2+a^2}}\big\|_{\ell^1\rightarrow\ell^{\infty}}\lesssim |t|^{-\frac{1}{3}},\quad t\neq0.
\end{equation} When $a=0$, since such sharp $\ell^1-\ell^{\infty}$ decay estimate was established in \cite{SK05}, here we focus on the case $a\neq0$. Indeed, by virtue of Fourier transform \eqref{fourier transform}, the kernel of $e^{-it\sqrt {\Delta^2+a^2}}$ is given by
\begin{equation*}
   \big(e^{-it\sqrt {\Delta^2+a^2}}\big)(n,m)=(2\pi)^{-\frac{1}{2}}\int_{-\pi}^{\pi}e^{-it\sqrt{(2-2{\rm cos}\theta)^2+a^2}}e^{i(n-m)\theta}d\theta.
\end{equation*}
We claim that the following estimate holds:
\begin{equation}\label{oscillatory}
  \sup\limits_{s\in\R}\Big|\int_{-\pi}^{\pi}e^{-it\big[\sqrt{(2-2{\rm cos}\theta)^2+a^2}-s\theta\big]}d\theta\Big|\lesssim |t|^{-\frac{1}{3}},\quad t\neq0.
\end{equation}
To establish this estimate, it suffices to consider the interval $[-\pi,0]$, as the estimate on $[0,\pi]$ follows by the change of variable $\theta\mapsto-\theta$. For any $s\in\R$, we define
\begin{equation*}
    \Phi_{a,s}(\theta)=\sqrt{(2-2{\rm cos}\theta)^2+a^2}-s\theta,\quad \theta\in[-\pi,0].
\end{equation*}
A direct computation yields that
\begin{equation*}
  \Phi'_{a,s}(\theta)=4\big((2-2{\rm cos}\theta)^2+a^2\big)^{-\frac{1}{2}}(1-{\rm cos}\theta){\rm sin}\theta-s
  \end{equation*}
  and
  \begin{equation*}
  \Phi''_{a,s}(\theta)=4\big((2-2{\rm cos}\theta)^2+a^2\big)^{-\frac{3}{2}}(1-{\rm cos}\theta)\big(4{\rm cos^3}\theta-8{\rm cos^2}\theta+(2a^2+4){\rm cos}\theta+a^2\big).
\end{equation*}
Let
$$h_{a}(x):=4x^3-8x^2+(2a^2+4)x+a^2,\quad x\in[-1,1].$$
We observe that $h_{a}(x)>0$ for $x\geq0$, $h_{a}(-1)=-a^2-16<0$ and $h'_{a}(x)>0$ for $x<0$. Let $x_0$ denote the unique root of $h_{a}(x)$ in the interval $[-1,1]$. Then
$$\Phi''_{a,s}(\theta)=0\Leftrightarrow\theta=0\ {\rm or}\ \theta=\theta_0\in(-\pi,-\frac{\pi}{2}),\quad {\rm where\ cos}\theta_0=x_0.$$
This implies that $\Phi'_{a,s}(\theta)$ is monotonically decreasing on $[-\pi,\theta_0]$ and increasing on $[\theta_0,0]$. Combining this with $\Phi'_{a,s}(-\pi)=-s=\Phi'_{a,s}(0)$, we conclude that for any $s\in\R$, the equation $\Phi'_{a,s}(\theta)=0$ has at most two solutions on $[-\pi,0]$. By Van der Corput lemma (see e.g. \cite[P. ${332-334}$]{Ste93}), the slower decay rates of the oscillatory integral \eqref{oscillatory} on $[-\pi,0]$ occur in the cases of $s=0$ and $s=s_0$, and for the other values of $s$, the decay rate is either $|t|^{-1}$ or $|t|^{-\frac{1}{2}}$, where $s_0=4\big((2-2{\rm cos}\theta_0)^2+a^2\big)^{-\frac{1}{2}}(1-{\rm cos}\theta_0){\rm sin}\theta_0$.
\vskip0.1cm
If $s=0$, then
\begin{equation*}
    \Phi'_{a,0}(\theta)=0\Leftrightarrow\theta=0\ {\rm or}\ \theta=-\pi.
\end{equation*}
Moreover, we can compute
\begin{equation*}
    \Phi''_{a,0}(-\pi)\neq0,\quad  \Phi''_{a,0}(0)=0\ {\rm but}\ \Phi^{(3)}_{a,0}(0)\neq0,
\end{equation*}
thus by Van der Corput lemma, the decay rate is $|t|^{-\frac{1}{3}}$.
\vskip0.1cm
If $s=s_0$, then $\Phi'_{a,s_0}(\theta)=0\Leftrightarrow\theta=\theta_0.$ And $\Phi''_{a,s_0}(\theta_0)=0$ but $\Phi^{(3)}_{a,s_0}(\theta_0)\neq0$, then the decay rate is $|t|^{-\frac{1}{3}}$. In summary, this completes the proof of \eqref{oscillatory}, from which \eqref{esti for free} follows.
\end{proof}

\appendix
\section{Discrete Calder\'{o}n Zygmund operators on the lattice $\Z^d$}\label{sec of appendix}

The study of harmonic analysis in the discrete setting, particularly concerning singular integrals, has a long history. As a typical model of discrete singular integral, the discrete Hilbert transform was first introduced by D.~Hilbert and proven to be bounded on $\ell^p(\Z)$ for $1<p<\infty$ by M.~Riesz \cite{Rie28} as a consequence of his proof for the continuous case on $L^p(\R)$. Subsequent developments can be traced through the works of Calder\'{o}n-Zygmund \cite{CZ52}, Stein-Wainger \cite{SW99}, Lust-Piquard\cite{Lus04}, Laeng \cite{Lae07}, Pierce \cite{Pie09} and Krause \cite{Kra22}, among others. Notably, 
in recent work \cite{BKK24} by Ba\~{n}uelos, Kim and Kwa\'{s}nicki, they established the \(\ell^p\) boundedness of discrete analogues of classical convolution-type Calder\'on-Zygmund operators for \(1 < p < \infty\). The idea of their work is that the $\ell^p$ norm of such discrete operators can be controlled by the $L^p$ norm of their continuous counterparts.

Following this idea, this appendix is devoted to extending the results of \cite[Proposition 6.1]{BKK24} to the discrete non-convolution type Calder\'{o}n-Zygmund operators on the lattice $\Z^d$.
Let $T$ be a linear operator acting on the Schwartz space of rapidly decreasing function on $\R^d$. We say that $T$ is a {\textit{Calder\'{o}n-Zygmund operator}} if it is bounded on $L^2(\R^d)$ and admits the integral representation:
\begin{align}
(Tf)(x)=p.v.\int_{\R^d}K(x,y)f(y)dy,
\end{align}
where the kernel $K\in C^1(\R^{d}\setminus\{(x,x):x\in\R^d\})$ and satisfies
\begin{align}\label{smooth condition}
|K(x,y)|\lesssim |x-y|^{-d},\quad |(\partial_xK)(x,y)|+|(\partial_yK)(x,y)|\lesssim |x-y|^{-(d+1)},\quad x\neq y.
\end{align}

It is well-known that such operators extend to bounded linear operators on $L^p(\R^d)$ for $1<p<\infty$ (see \cite[Chapter 4]{Gra14}). We consider its discrete analogue $T_{{\rm dis}}$ defined by
\begin{align}
(T_{{\rm dis}}f)(n)=\sum\limits_{m\in\Z^d\setminus{\{n\}}}K(n,m)f(m),\quad f\in\ell^p(\Z^d).
\end{align}
By virtue of the idea of \cite{BKK24}, we can establish the following conclusion.  
\begin{theorem}\label{Tdis lemma}
Let $T$ and $T_{{\rm dis}}$ be defined as above. Then we have $T_{{\rm dis}}\in\B(\ell^p(\Z^d))$ for $1< p<\infty$.
\end{theorem}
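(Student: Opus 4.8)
The plan is to transfer the $\ell^p(\Z^d)$ estimate for $T_{\rm dis}$ to the $L^p(\R^d)$ estimate for a suitable continuous operator that is still of Calder\'on--Zygmund type, following the comparison philosophy of \cite{BKK24}. The natural device is the averaging/smoothing map: associate to $f\in\ell^p(\Z^d)$ the function $\mathcal{E}f=\sum_{m\in\Z^d}f(m)\mathbf{1}_{Q_m}$ on $\R^d$, where $Q_m=m+[-\tfrac12,\tfrac12)^d$, and to $F\in L^p(\R^d)$ the sequence $(\mathcal{R}F)(n)=\int_{Q_n}F(x)\,dx$. These are bounded between $\ell^p$ and $L^p$ with norm $1$, and $\mathcal{R}\mathcal{E}=I$ on sequences. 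First I would define a continuous operator $\widetilde T$ whose kernel is a regularization of $K$ adapted to the unit cubes, e.g. $\widetilde K(x,y)=K(n_x,m_y)$ with $n_x,m_y$ the lattice points such that $x\in Q_{n_x}$, $y\in Q_{m_y}$, away from the diagonal, so that by construction $\mathcal{R}\widetilde T\mathcal{E}=T_{\rm dis}$ up to handling the near-diagonal terms. Then $\|T_{\rm dis}\|_{\ell^p\to\ell^p}\le\|\widetilde T\|_{L^p\to L^p}$ plus a controllable local error.

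The key steps, in order, are: (1) verify that the piecewise-constant kernel $\widetilde K$ inherits, on the region $|x-y|\ge C\sqrt d$, the size bound $|\widetilde K(x,y)|\lesssim |x-y|^{-d}$ and a H\"older/regularity estimate of Calder\'on--Zygmund type — here one uses that $|n_x-m_y|\asymp|x-y|$ when the latter is large, and that the gradient bound \eqref{smooth condition} on $K$ gives, after integration over a unit cube, an oscillation bound $|\widetilde K(x,y)-\widetilde K(x',y)|\lesssim |x-x'|\,|x-y|^{-(d+1)}$ for $|x-x'|\le 1$; (2) treat the finitely many ``diagonal-adjacent'' shells $|n-m|\le C\sqrt d$ separately: each such shifted diagonal contributes a bounded convolution operator on $\ell^p(\Z^d)$ because the coefficients $K(n,m)$ with $|n-m|$ in a fixed bounded range are uniformly bounded (again by \eqref{smooth condition}, $|K(n,m)|\lesssim|n-m|^{-d}$ for $n\neq m$), and a finite sum of bounded shift operators is bounded; (3) establish $L^2(\R^d)$-boundedness of $\widetilde T$, which is where we must import the $L^2$-boundedness of $T$ itself and a Cotlar/Schur-type argument comparing $\widetilde T$ with $T$, or alternatively invoke the $T(1)$-type criterion since $\widetilde K$ satisfies the standard estimates from step (1); (4) apply the classical Calder\'on--Zygmund theorem on $\R^d$ to upgrade $L^2$-boundedness of $\widetilde T$ to $L^p$-boundedness for $1<p<\infty$; (5) assemble: $\|T_{\rm dis}f\|_{\ell^p}\le\|\widetilde T(\mathcal{E}f)\|_{L^p}+(\text{diagonal error})\lesssim\|f\|_{\ell^p}$.

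I expect the main obstacle to be step (1) combined with the diagonal bookkeeping — namely, showing that the naive piecewise-constant regularization of a \emph{non-convolution} kernel genuinely satisfies the size and smoothness estimates uniformly, and that the finite collection of near-diagonal shifts can be peeled off cleanly so that what remains is a bona fide Calder\'on--Zygmund kernel on $\R^d$. The convolution case in \cite{BKK24} is cleaner because there the comparison reduces to a single function of $x-y$; in the non-convolution setting one has to control the joint oscillation in both variables and make sure the ``error'' between $K(n_x,m_y)$ and $K(x,y)$ is integrable against the CZ scaling. A secondary technical point is step (3): one wants the $L^2$ bound for $\widetilde T$ without re-proving it from scratch, so the cleanest route is to write $\widetilde T=\mathcal{E}\,T_{\rm dis}^{(\text{far})}\mathcal{R}+(\text{local})$ only after one already has a provisional $\ell^2$ bound on the far part, or to quote the $T(1)$ theorem using the kernel estimates from step (1) together with a weak-boundedness property that follows from the uniform boundedness of the near-diagonal coefficients. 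Once these are in place, the interpolation/duality in steps (4)--(5) is routine.
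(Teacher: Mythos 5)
Your overall philosophy --- transfer the $\ell^p$ bound to an $L^p$ bound for a continuous companion operator, in the spirit of \cite{BKK24} --- is the same as the paper's, and your identity $\mathcal{R}\widetilde T\mathcal{E}=T_{\rm dis}$ (with the diagonal cubes zeroed out) is correct and is a good starting point. However, the way you propose to exploit it has two genuine gaps.

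First, step (1) as stated is false. The kernel $\widetilde K(x,y)=K(n_x,m_y)$ is piecewise constant, so if $x$ and $x'$ straddle a cube boundary the oscillation $|\widetilde K(x,y)-\widetilde K(x',y)|$ is a jump of size comparable to $|x-y|^{-(d+1)}$ no matter how small $|x-x'|$ is; the claimed bound $|x-x'|\,|x-y|^{-(d+1)}$ cannot hold uniformly as $|x-x'|\to 0$. Only the integrated H\"ormander condition survives, which is repairable but means $\widetilde K$ is not a standard kernel in the pointwise sense you invoke. Second, and more seriously, step (3) is the crux and is not actually carried out. The $T(1)$ theorem does not follow from kernel estimates alone --- you would still need the weak boundedness property and $\widetilde T(1),\widetilde T^*(1)\in BMO$, none of which you verify. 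The alternative route, comparing $\widetilde T$ with $T$, splits $\widetilde T-T$ into a far part (kernel $O(|x-y|^{-(d+1)})$, Schur--bounded) and a near part, and the near part forces you to control the unit-scale truncation of $T$ itself, i.e.\ to invoke Cotlar's inequality or the uniform boundedness of truncated Calder\'on--Zygmund operators. So your plan either imports substantially heavier machinery or, once the comparison with $T$ is done honestly, collapses to a much shorter argument.

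That shorter argument is what the paper does, and it bypasses steps (1), (3) and (4) entirely: by duality, with $F=\mathcal{E}f$ and $G=\mathcal{E}g$ piecewise constant,
\begin{equation*}
\langle TF,G\rangle=\sum_{n,m}\Big(\int_{Q_n}\int_{Q_m}K(x,y)\,dy\,dx-K(n,m)\Big)f(m)\overline{g(n)}+\langle T_{\rm dis}f,g\rangle,
\end{equation*}
the first term is controlled by a discrete Schur test because the mean value theorem and the gradient bound \eqref{smooth condition} give $\big|\int_{Q_n}\int_{Q_m}K-K(n,m)\big|\lesssim|n-m|^{-(d+1)}$ for $|n-m|$ large, and the left-hand side is controlled by the assumed $L^p$-boundedness of $T$ together with $\|F\|_{L^p}=\|f\|_{\ell^p}$, $\|G\|_{L^q}=\|g\|_{\ell^q}$. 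No auxiliary Calder\'on--Zygmund operator on $\R^d$ ever needs to be built. I recommend you abandon the attempt to make $\widetilde T$ a bona fide CZ operator and instead estimate the bilinear form of $T-\widetilde T$ on indicators directly; the remaining finitely many near-diagonal shells are handled exactly as in your step (2).
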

\begin{proof}
For simplicity, we focus on $d=1$ and the cases $d\geq2$ can be obtained similarly. For any $1<p<\infty$, let $f\in\ell^p(\Z)$ and $g\in\ell^q(\Z)$ with $\frac{1}{p}+\frac{1}{q}=1$. Given $x\in\R$, there exist unique $n\in\Z$ and $x_0\in U=[0,1)$ such that $x=n+x_0$. We then define $F(x)=f(n)$, $G(x)=g(n)$, which immediately yields that $\|F\|_{L^p(\R)}=\|f\|_{\ell^p(\Z)}$ and $\|G\|_{L^q(\R)}=\|g\|_{\ell^q(\Z)}$.
Furthermore,
\begin{align*}
\left<TF,G\right>&=\int_{\R^{2}}K(x,y)F(y)\overline{G(x)}dydx=\sum\limits_{n,m\in\Z}\int_{n+U}^{}\int_{m+U}^{}K(x,y)dydxf(m)\overline{g(n)}\\
&=\sum\limits_{n,m\in\Z}\Big(\underbrace{\int_{n+U}^{}\int_{m+U}^{}K(x,y)dydx-K(n,m)}_{\widetilde{K}(n,m)}+K(n,m)\Big)f(m)\overline{g(n)}\\
&=\sum\limits_{n,m\in\Z}{\widetilde{K}}(n,m)f(m)\overline{g(n)}+\left<T_{{\rm dis}}f,g\right>.
\end{align*}
Through variable substitution and the differential mean value theorem, we can rewrite
\begin{align*}
\widetilde{K}(n,m)&=\int_{U}\int_{U}(K(x+n,y+m)-K(n,m))dxdy\\
&=\int_{U}\int_{U}\Big(x\partial_xK(n+x\theta,m+y\theta)+y\partial_yK(n+x\theta,m+y\theta)\Big)dxdy,
\end{align*}
 for some $\theta\in[0,1]$. Under the smoothness condition \eqref{smooth condition}, we have the decay estimate:
\begin{align*}
|\widetilde{K}(n,m)|\lesssim |n-m|^{-2},\quad |n-m|\gg1.
\end{align*}
Applying H\"{o}lder's inequality yields
\begin{align*}
\Big|\sum\limits_{n,m\in\Z}{\widetilde{K}}(n,m)f(m)\overline{g(n)}\Big|&\leq\Big(\sum\limits_{n,m\in\Z}|{\widetilde{K}}(n,m)|\cdot|f(m)|^p\Big)^{\frac{1}{p}}\Big(\sum\limits_{n,m\in\Z}|{\widetilde{K}}(n,m)|\cdot|g(n)|^q\Big)^{\frac{1}{q}}\\
&\lesssim\|f\|_{\ell^p(\Z)}\|g\|_{\ell^q(\Z)}.
\end{align*}
Hence, using $T\in\B(L^p(\R))$ for $1<p<\infty$ and triangle inequality, the desired result is obtained.
\end{proof}
\normalem

\end{document}